\documentclass[a4paper,10pt]{article}
\usepackage[T1]{fontenc}
\usepackage[english]{babel}
\usepackage[left= 2.8cm, bottom = 3.7cm, top = 3.5cm, right= 2.8cm]{geometry}
\usepackage[citecolor=blue,colorlinks=true]{hyperref}
\usepackage{authblk}

\usepackage{amsmath,amssymb,xcolor,latexsym,theorem}

\newcommand{\assign}{:=}
\newcommand{\backassign}{=:}
\newcommand{\cdummy}{\cdot}

\newcommand{\infixand}{\text{ and }}
\newcommand{\nobracket}{}

\newcommand{\tmcolor}[2]{{\color{#1}{#2}}}

\newcommand{\tmop}[1]{\ensuremath{\operatorname{#1}}}
\newcommand{\tmtextbf}[1]{\text{{\bfseries{#1}}}}
\newcommand{\tmtextit}[1]{\text{{\itshape{#1}}}}
\newenvironment{proof}{\noindent\textbf{Proof\ }}{\hspace*{\fill}$\Box$\medskip}

\newcommand{\nonconverted}[1]{\mbox{}}

\newtheorem{theorem}{Theorem}[section]
\newtheorem{corollary}[theorem]{Corollary}
\newtheorem{lemma}[theorem]{Lemma}

\newtheorem{proposition}[theorem]{Proposition}
{\theorembodyfont{\rmfamily}\newtheorem{remark}[theorem]{Remark}}

\numberwithin{equation}{section}

\begin{document}

\title{Fluctuation dynamics in randomly advected Navier--Stokes equations
  below critical scaling}

\author[1,2]{Arnaud Debussche\thanks{arnaud.debussche (at) ens-rennes.fr}}
\author[3]{Martina Hofmanov{\'a}\thanks{hofmanova (at) math.uni-bielefeld.de}}
\affil[1]{\small Univ Rennes, CNRS, IRMAR - UMR 6625, F-35000 Rennes, France.}
\affil[2]{\small Institut universitaire de France (IUF).
}
  \affil[3]{\small Fakult\"at f\"ur Mathematik, Universit\"at Bielefeld, Postfach 10 01 31, D-33501 Bielefeld, Germany.}

\date{}

\maketitle

{\let\thefootnote\relax
\footnotetext{\emph{2020 Mathematics Subject Classification.} Primary 35Q30, 60H15, 35B27; Secondary 60F05, 76M50, 76D05.}}

\begin{abstract}
  We study randomly advected incompressible Navier--Stokes equations, where
  the advecting field is a mean-zero, divergence-free, space-time stationary
  velocity field with smooth order-one correlations. We introduce a
  two-parameter family of models in which the advection is accelerated on a
  fast temporal scale $\varepsilon^2$ and has spatial correlation length
  $\delta$; the critical regime $\varepsilon = \delta$ corresponds to the
  natural parabolic scaling of the Navier--Stokes equation. In the full
  subcritical regime $\varepsilon = o (\delta)$, we prove a law of large
  numbers in dimensions $d = 2, 3$: the solutions converge to a deterministic
  Navier--Stokes system with an enhanced diffusion coefficient given by a
  Green--Kubo formula. In two space dimensions, under the slightly stronger
  assumption $\varepsilon = o (\delta^{1 + \iota})$ for some $\iota > 0$, we
  identify the leading-order fluctuations: after subtracting deterministic
  macroscopic corrections satisfying a nonlinear system of Navier--Stokes
  type, the rescaled fluctuations converge to a Gaussian field solving a
  linearized Navier--Stokes equation driven by multiplicative space-time white
  noise.
  
  \tmtextbf{Keywords:} Navier--Stokes equations, random advection, enhanced
  diffusion, Gaussian fluctuations, multiscale analysis
\end{abstract}

\tableofcontents

\section{Introduction}

The large-scale behavior of fluid flows subject to random advection by a
turbulent environment is a fundamental problem at the intersection of
mathematical analysis, stochastic homogenization, and the theory of
turbulence. From a physical standpoint, the effective enhancement of diffusion
by turbulent fluctuations -- referred to as eddy diffusivity or turbulent
diffusion in the physics literature -- has been recognized as a central
mechanism in macroscopic fluid dynamics since the foundational works of Taylor
{\cite{MR1577363}}, Richardson {\cite{Rich26}}, and Kolmogorov {\cite{K41}}.
From a mathematical standpoint, a rigorous derivation of effective macroscopic
behavior from a microscopic stochastic model for the Navier--Stokes equations
requires controlling the interplay between the nonlinearity and the fast
random environment, and identifying the precise structure of the randomness
that survives at the macroscopic scale. The present work establishes both a
law of large numbers and, in two space dimensions, an invariance principle for
the randomly advected Navier--Stokes equations in a natural multiscale regime,
and determines the effective macroscopic dynamics at both levels.

The mathematical analysis of random advection problems originates in the
Kraichnan model {\cite{kraichnan1968small}}, which describes passive scalar
transport by a Gaussian velocity field that is delta-correlated in time.
Despite its linear structure, the model exhibits a wide range of nontrivial
phenomena, including anomalous scaling of structure functions in the
long-range regime {\cite{Kraichnan1994}}, {\cite{GawedzkiKupiainen1995}}. The
survey {\cite{FalkKupVerg2001}} provides a comprehensive account of these
developments from a physical perspective. The present work addresses random
advection in a structurally different regime: the white-in-time Gaussian model
of Kraichnan is replaced by a velocity field with nontrivial temporal
correlations, and the transport mechanism is coupled to the full
Navier--Stokes dynamics rather than to a linear scalar equation.

Consider the Navier--Stokes equations advected by a divergence-free,
mean-zero, stationary random field $m$ with smooth correlations at unit
space-time scales,
\[ \partial_t u + u \cdummy \nabla u + \nabla p = \Delta u + m \cdummy \nabla
   u, \quad \tmop{div} u = 0. \]
To probe large scales, we perform the Navier--Stokes parabolic rescaling
$u_{\lambda} (t, x) \assign \lambda u (\lambda^2 t, \lambda x)$, $\lambda \to
\infty$, which yields the rescaled equation
\[ \partial_t u_{\lambda} + u_{\lambda} \cdummy \nabla u_{\lambda} + \nabla
   p_{\lambda} = \Delta u_{\lambda} + \lambda m_{\lambda} \cdummy \nabla
   u_{\lambda}, \quad \tmop{div} u_{\lambda} = 0, \]
where $m_{\lambda} (t, x) \assign m (\lambda^2 t, \lambda x)$. This scaling
accelerates both temporal and spatial oscillations at the parabolic rate and
is therefore the natural regime for studying macroscopic limits. Unlike
singular SPDEs driven by a scale-invariant noise, the random field $m$ has a
fixed correlation length and is not invariant under this rescaling. In
particular, the correct central limit normalization is $\lambda^{1 + d / 2}
m_{\lambda}$, so the factor $\lambda m_{\lambda}$ vanishes in the limit. This
already suggests that randomness does not survive at the law of large numbers
level: the leading macroscopic effect of the transport is deterministic, i.e.
enhanced diffusion, while stochastic contributions can only emerge at the
level of fluctuations, after the appropriate $\lambda^{d / 2}$ normalization.

This separation between a deterministic leading order and Gaussian
fluctuations is far from obvious a priori. At the nonlinear level, the
interaction between the random advection and the Navier--Stokes nonlinearity
could in principle generate persistent correlations that distort the Gaussian
character of the limit, or produce non-Gaussian fluctuations of the kind
observed in the Kraichnan model at critical scaling. The fact that the
fluctuation limit is itself a solution of a linearized Navier--Stokes system
-- driven by a multiplicative space-time white noise whose intensity is
explicitly determined by the covariance of the original random field --
reflects a remarkable simplification. The nonlinearity, rather than entangling
the two levels, acts only through the deterministic background flow $u$ around
which the fluctuations are linearized.

From a multiscale perspective, the regime described above is a threshold case:
temporal and spatial oscillations are accelerated at the same rate, so no a
priori separation of scales is available. To analyze how macroscopic behavior
emerges from such random advection, it is therefore natural to consider a
family of microscopic models in which the temporal and spatial mixing
properties of the environment can be varied independently.

Guided by this perspective, we introduce a two-parameter family of models in
which the random advection evolves on a fast temporal scale $\varepsilon^2$
and has spatial correlation length $\delta$, $m^{\varepsilon, \delta} (t, x)
\assign m (\varepsilon^{- 2} t, \delta^{- 1} x)$. The case $\varepsilon =
\delta$ corresponds precisely to the parabolic rescaling above and represents
a scaling-critical regime in which temporal and spatial mixing occur at
comparable rates. By contrast, the full subcritical regime $\varepsilon = o
(\delta)$ introduces a genuine separation between temporal and spatial mixing,
placing the problem in a setting in which averaging and fluctuation effects
can be rigorously analyzed. In this regime, we establish a law of large
numbers with an effective enhanced diffusion and, in two dimensions, identify
the corresponding Gaussian fluctuation dynamics.

The role of the interplay between temporal and spatial mixing rates in
determining macroscopic transport properties was emphasized in the physics
literature by Avellaneda and Majda {\cite{AM92}}, who demonstrated that the
form of the eddy-diffusivity equation depends sensitively on the relative
strength of temporal and spatial decorrelation and identified a phase diagram
in the parameter space of velocity statistics. In particular, the
Kolmogorov--Obukhov regime lies on a boundary separating qualitatively
distinct macroscopic behaviors. Our two-parameter family $(\varepsilon,
\delta)$ is designed to reflect this dependence in a nonlinear setting by
varying temporal and spatial mixing scales independently; the present work
analyzes the strictly subcritical regime in which temporal mixing occurs on a
faster scale than spatial mixing.

Let $\varepsilon, \delta \in (0, 1]$. We study the following randomly advected
Navier--Stokes equations on the torus $\mathbb{T}^d$, $d = 2, 3,$
\begin{equation}
  \partial_t u^{\varepsilon, \delta} + u^{\varepsilon, \delta} \cdummy \nabla
  u^{\varepsilon, \delta} + \nabla p^{\varepsilon, \delta} = \Delta
  u^{\varepsilon, \delta} + \varepsilon^{- 1} m^{\varepsilon, \delta} \cdummy
  \nabla u^{\varepsilon, \delta}, \quad \tmop{div} u^{\varepsilon, \delta} =
  0, \quad u^{\varepsilon, \delta} (0) = u_0 . \label{eq:u}
\end{equation}
Here $m^{\varepsilon, \delta} (t, x) \assign m (\varepsilon^{- 2} t, \delta^{-
1} x)$, where $m$ is a divergence-free and mean-zero, centered, space-time
stationary Ornstein--Uhlenbeck process on $L^2 (\mathbb{T}^d ; \mathbb{R}^d)$.
Its covariance is given by
\[ \mathbb{E} [m (t, x) \otimes m (s, y)] = \frac{1}{2} e^{- | t - s |} Q (x -
   y), \]
where the spatial covariance $Q : \mathbb{T}^d \rightarrow \mathbb{R}^{d
\times d}$ is of the form $Q = q \tmop{Id}_{\mathbb{R}^d} = (K \ast K)
\tmop{Id}_{\mathbb{R}^d}$. The kernel $K : \mathbb{R}^d \rightarrow
\mathbb{R}$ is assumed to be smooth, compactly supported and rotation
invariant and we denote $q = K \ast K$. Throughout the paper, we focus on the
full subcritical regime which corresponds to the setting $\varepsilon = o
(\delta)$. The critical case $\varepsilon = \delta$ lies at the threshold of
our approach and would require substantially different techniques.

For every fixed $\varepsilon, \delta \in (0, 1]$, the existence of a
probabilistically and analytically weak solution to \eqref{eq:u} in $d = 2, 3$
is obtained by Galerkin approximation and stochastic compactness argument.
Additionally, due to the divergence-free constraint on $m$ leading to the
cancellation in the $L^2$-inner product
\[ \varepsilon^{- 1} \langle u^{\varepsilon, \delta}, m^{\varepsilon, \delta}
   \cdummy \nabla u^{\varepsilon, \delta} \rangle = - \varepsilon^{- 1}
   \langle m^{\varepsilon, \delta} \cdummy \nabla u^{\varepsilon, \delta},
   u^{\varepsilon, \delta} \rangle = 0, \]
the solutions satisfy the energy inequality
\begin{equation}
  \| u_t^{\varepsilon, \delta} \|_{L^2}^2 + 2 \int_0^t \| \nabla
  u_s^{\varepsilon, \delta} \|_{L^2}^2 d s \leqslant \| u_0 \|_{L^2}^2,
  \label{eq:energy}
\end{equation}
which directly provides a pathwise uniform energy estimate for
$u^{\varepsilon, \delta}$ in $L^{\infty} (0, T ; L^2) \cap L^2 (0, T ; H^1)$.

In $d = 2$, the solutions $u^{\varepsilon, \delta}$ are unique, hence
probabilistically strong by the Gy\"ongy--Krylov type argument, and the energy
inequality becomes an equality.

Our first main result establishes a law of large numbers.

\begin{theorem}
  \label{thm:1}Let $d = 2, 3$, assume $\varepsilon = o (\delta)$ and $u_0 \in
  L^2 (\mathbb{T}^d ; \mathbb{R}^d)$ is mean- and divergence-free. Then any
  family of probabilistically and analytically weak solutions $u^{\varepsilon,
  \delta}$, $\varepsilon, \delta \in (0, 1]$, to \eqref{eq:u} satisfying the
  energy inequality \eqref{eq:energy}, converges in law along a subsequence in
  $L^2 (0, T ; L^2)$, as $\varepsilon, \delta \rightarrow 0$, to a solution of
  the deterministic Navier--Stokes system
  \begin{equation}
    \partial_t u + u \cdummy \nabla u + \nabla p = (1 + \nu) \Delta u, \quad
    \tmop{div} u = 0, \quad u (0) = u_0, \label{eq:ulim}
  \end{equation}
  where the enhanced viscosity is given by $\nu = \frac{q (0)}{16} =
  \frac{1}{16} \| K \|_{L^2}^2$ if $d = 2$ and $\nu = \frac{q (0)}{5} =
  \frac{1}{5} \| K \|_{L^2}^2$ if $d = 3$.
\end{theorem}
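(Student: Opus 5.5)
Tightness of the laws of $u^{\varepsilon,\delta}$ in $L^2(0,T;L^2)$ will come from the energy inequality \eqref{eq:energy} together with a time-regularity bound, and the limit will be identified by a perturbed test function (corrector) argument that exploits the time scale $\varepsilon^2$. The term $\varepsilon^{-1}m^{\varepsilon,\delta}\cdot\nabla u$ is large, so $\partial_t u^{\varepsilon,\delta}$ cannot be bounded uniformly in a negative Sobolev space. We therefore first remove the fast part with a corrector. Let $\mathcal{L}$ be the generator of the OU process $m$; the Poisson equation $-\mathcal{L}\chi = m$ has the solution $\chi = m$, because the OU drift is $-m$. Set
\[ v^{\varepsilon,\delta} := u^{\varepsilon,\delta} - \varepsilon\, P\big(m^{\varepsilon,\delta}\cdot\nabla u^{\varepsilon,\delta}\big), \]
with $P$ the Leray projector, or carry out the same construction at the level of a test function $\varphi$. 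Applying It\^o's formula to $\langle u^{\varepsilon,\delta},\varphi\rangle + \varepsilon\langle u^{\varepsilon,\delta}, m^{\varepsilon,\delta}\cdot\nabla\varphi\rangle$ cancels the $\varepsilon^{-1}$ term exactly. What remains is:
\begin{itemize}
\item a martingale of size $O(1)$, with quadratic variation about $\varepsilon^2\cdot\varepsilon^{-2}\int\|u\|^2\|\nabla\varphi\|^2_{\infty}$;
\item the second-order term $\langle u^{\varepsilon,\delta}, m^{\varepsilon,\delta}\cdot\nabla(m^{\varepsilon,\delta}\cdot\nabla\varphi)\rangle$, which is still oscillating in time;
\item remainders such as $\varepsilon\langle u, m\cdot\nabla(\Delta\varphi)\rangle$ and the nonlinear term paired with $\varepsilon m\cdot\nabla\varphi$.
\end{itemize}
The remainders carry the factor $\varepsilon\delta^{-k}$ from the spatial derivatives of $m(\cdot,\delta^{-1}\cdot)$, so they vanish precisely because $\varepsilon=o(\delta)$. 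With this decomposition, Aldous-type or $W^{\alpha,p}(0,T;H^{-s})$ bounds follow. Combined with $L^2(0,T;H^1)$ compactness (Aubin--Lions), they give tightness of $u^{\varepsilon,\delta}$ in $L^2(0,T;L^2)$, since the corrector itself is small in $L^2(0,T;H^{-1})$.

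The central step is the second-order term. Write $m^{\varepsilon,\delta}\otimes m^{\varepsilon,\delta} = \mathbb{E}[m\otimes m] + (\text{fluctuation})$ and apply a second corrector. It solves a Poisson equation for the quadratic observable $m\otimes m - \tfrac12 Q(0)$, which is explicit for OU: $\tfrac12(m\otimes m - \mathbb{E})$. This again costs only $\varepsilon\delta^{-k}$ times quantities controlled by $\|u\|_{L^2}$, plus a martingale that is small in the same sense.

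The main obstacle, I expect, is a subtlety hidden here. The mean part involves $\mathbb{E}[m_i(x)\partial_i m_j(x)]$ and $\mathbb{E}[m_i m_j]\partial_{ij}\varphi$. Because $Q$ is even, $\mathbb{E}[m_i\partial_i m_j](x)=\partial_i Q_{ij}(0)=0$, so no $\delta^{-1}$-large drift appears. What remains is $\tfrac12 Q(0)$ contracted with $\nabla^2\varphi$, and this yields an effective viscosity proportional to $q(0)$. Two further points need care:
\begin{itemize}
\item the projector $P$ is nonlocal, and the field $m$ must itself be divergence free, so $Q$ is not exactly $q\,\mathrm{Id}$ and must be Leray-projected;
\item the stated dimension-dependent constants $1/16$ and $1/5$ come from averaging the projected covariance over rotations.
\end{itemize}
These constants I would verify by computing $\mathbb{E}[P(m\cdot\nabla)P(m\cdot\nabla)]$ in Fourier variables, with the symbol of $P$ evaluated at large frequency $\delta^{-1}$. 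Averaging over directions produces the dimension factors. I would also need the local averaging lemma that oscillating coefficients $g(x/\delta)$ converge weakly to their mean against $u\in L^2$ strongly compact; this is available because $u^{\varepsilon,\delta}$ converges strongly in $L^2_{t,x}$.

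Finally, I pass to the limit. By Skorokhod representation, take a.s.\ convergence in $L^2(0,T;L^2)$. The nonlinear term $\langle u\otimes u,\nabla\varphi\rangle$ converges by strong $L^2$ convergence. The martingales converge to zero in $L^2(\Omega)$, and the correctors vanish. The limit is therefore a weak solution of \eqref{eq:ulim} with $\nu$ as computed; this is deterministic in law only along the subsequence, which is what the statement asserts.
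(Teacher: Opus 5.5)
Your overall architecture matches the paper's: two explicit Ornstein--Uhlenbeck correctors, remainders of size $\varepsilon\delta^{-1}$ and $\varepsilon^2\delta^{-2}$, time regularity of the corrected process, Skorokhod, and a Fourier identification of $\nu$. Note a sign slip: since the drift is $+\varepsilon^{-1}m\cdot\nabla u$, the corrector must be $u+\varepsilon\mathbb{P}(m\cdot\nabla u)$, i.e.\ $\langle u,\varphi\rangle-\varepsilon\langle u,m\cdot\nabla\varphi\rangle$; your choice doubles the singular term instead of cancelling it. Beyond that, two steps are wrong as written, and they are exactly where the structure of the problem enters.

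\textbf{First gap: the martingale of the first corrector.} You estimate its quadratic variation as $O(1)$, and at the end you simply assert that all martingales vanish. An $O(1)$ martingale would leave a stochastic transport term in the limit, so the limit would not be deterministic. Since the factors $\varepsilon$ and $\varepsilon^{-1}$ cancel, the smallness must come from $\delta$. Using $\mathcal{Q}^{1/2}_\delta\sigma_{k,\alpha}=\delta^{d/2}(\mathcal{F}_{\mathbb{R}^d}K)(\delta k)\sigma_{k,\alpha}$ and Parseval,
\[ \sum_{k,\alpha}\big|\langle(\mathcal{Q}^{1/2}_\delta\sigma_{k,\alpha})\cdot\nabla u,\varphi\rangle\big|^2=\delta^d\sum_{k,\alpha}|(\mathcal{F}_{\mathbb{R}^d}K)(\delta k)|^2\,|\langle\sigma_{k,\alpha},(\nabla\varphi)^Tu\rangle|^2\lesssim\delta^d\|u\|_{L^2}^2 . \]
What controls this is the operator norm of $\mathcal{Q}^{1/2}_\delta$, which is of order $\delta^{d/2}$. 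Its Hilbert--Schmidt norm, which is what your bound effectively uses, is of order one. This same martingale, divided by $\delta^{d/2}$, is the noise in Theorem~\ref{thm:2}.

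\textbf{Second error: the $O(1)$ drift and the value of $\nu$.} Because $u^{\varepsilon,\delta}$ is driven by $\varepsilon^{-1}\mathbb{P}(m\cdot\nabla u)$, the second-order term is $\langle u,m\cdot\nabla\mathbb{P}(m\cdot\nabla\varphi)\rangle$, with a Leray projector between the two transport operators. It is not $\langle u,m\cdot\nabla(m\cdot\nabla\varphi)\rangle$. Its mean is therefore not local in $\mathbb{E}[m_i(x)m_j(x)]$; it is the nonlocal multiplier $S_\delta=\mathbb{P}\mathbb{E}_{\nu^\delta}[n\cdot\nabla\mathbb{P}(n\cdot\nabla\,\cdot\,)]$ of Lemma~\ref{l:7}.

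Your local computation, even with the projected covariance, gives $\nu=\frac{d-1}{2d}\|K\|_{L^2}^2$, that is $\frac14$ and $\frac13$ instead of $\frac1{16}$ and $\frac15$. The stated constants come from the extra factor $\mathrm{Id}-\frac{k\otimes k}{|k|^2}$ produced by the inner projector at frequency $\ell-k$. For $|k|\sim\delta^{-1}$ this factor can be replaced by its value at $k$, with error $O(|\ell|/|k|)$; summed against $\delta^d|(\mathcal{F}_{\mathbb{R}^d}K)(\delta k)|^2$ this error is $O(\delta^{1-})$. One then takes the Riemann-sum limit and averages over rotations.

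Your closing remark about computing $\mathbb{E}[\mathbb{P}(m\cdot\nabla)\mathbb{P}(m\cdot\nabla)]$ in Fourier variables points at the right object, but the local mechanism you describe must be discarded. To pass to the limit you also need:
\begin{itemize}
\item the uniform bound $|s_\delta(\ell)|\lesssim|\ell|^2$;
\item the identity $\langle S_\delta u^{\varepsilon,\delta},\varphi\rangle=\langle u^{\varepsilon,\delta},S_\delta\varphi\rangle$ together with $S_\delta\varphi\to\nu\mathbb{P}\Delta\varphi$ in $L^2$.
\end{itemize}
No averaging lemma for coefficients $g(x/\delta)$ is involved, since the mean is a deterministic, translation-invariant operator.
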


The enhanced diffusion coefficient $\nu$ is explicitly computable in terms of
the spatial covariance $Q$ of the advection field: it is proportional to $q
(0) = \| K \|_{L^2}^2$, the common diagonal entry of $Q (0)$, reflecting the
isotropy of the environment. The precise numerical factor arises from the
Leray projection onto divergence-free fields. This formula is an instance of
the Green--Kubo relation, which expresses macroscopic transport coefficients
as time-integrated autocorrelations of microscopic currents; see, for example,
Green {\cite{Green1954}}, Kubo {\cite{MR149885}}, 
  and the review by Majda and Kramer
{\cite{MajdaKramer1999}}.

In the linear setting of passive scalar transport, integral representations
and sharp bounds for the effective diffusivity were established by Avellaneda
and Majda {\cite{AM91}}. In the present isotropic Ornstein--Uhlenbeck setting,
the enhanced viscosity admits an explicit closed-form expression, whose
precise value emerges from the interaction between the covariance structure
and the Leray projection.

\begin{remark}
  Since the limit \eqref{eq:ulim} is deterministic, the convergence in
  Theorem~\ref{thm:1} holds in probability whenever the solutions
  $u^{\varepsilon, \delta}$ are defined on a common probability space. In $d =
  2$, uniqueness of weak solutions implies that the full sequence converges in
  probability. In $d = 3$, such solutions can be constructed for a countable
  subset of vanishing parameters $\varepsilon, \delta \in (0, 1]$ by applying
  the Galerkin approximation and stochastic compactness argument jointly for
  all such parameters.
\end{remark}

\begin{theorem}
  \label{thm:2}Let $d = 2$, assume $\varepsilon = o (\delta^{1 + \iota})$ for
  some $\iota > 0$ and $u_0 \in L^2 (\mathbb{T}^2 ; \mathbb{R}^2)$ is mean-
  and divergence-free. There exist deterministic macroscopic corrections
  $v^{\varepsilon, \delta}$, $\varepsilon, \delta \in (0, 1]$, satisfying a
  Navier--Stokes type system with forcing and full quadratic self-interaction,
  such that
  \[ \frac{u^{\varepsilon, \delta} - v^{\varepsilon, \delta} - u}{\delta^{d /
     2}} \rightarrow z \quad \text{in probability in}\quad L^2 (0, T
     ; H^{- \beta}) \quad\text{for any}\quad\beta > 0. \]
  Here the limit $z$ is a Gaussian field solving the stochastic linearized
  Navier--Stokes system
  \[ d z + [z \cdummy \nabla u + u \cdummy \nabla z+ \nabla p_z] d t  = (1 +
     \nu) \Delta z d t + \chi d W \cdummy \nabla u, \quad \tmop{div} z = 0,
     \quad z (0) = 0, \]
  where $W$ is a space-time white noise on the divergence-free, mean-zero
  subspace of $L^2 (\mathbb{T}^2 ; \mathbb{R}^2)$, and $\chi =
  (\mathcal{F}_{\mathbb{R}^2} K) (0)$ denotes the noise intensity.
\end{theorem}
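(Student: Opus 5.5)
The proof will go through a corrector/Itô--Poisson expansion in the fast variable $m$. Because $m$ is an Ornstein--Uhlenbeck process with generator $\mathcal{L}$ and unit relaxation time, the fast time $\varepsilon^{-2}t$ gives the generator $\varepsilon^{-2}\mathcal{L}$. We solve the Poisson equation $\mathcal{L}\Phi = -m$, which is explicitly $\Phi = m$ up to normalization. Applying Itô's formula to the functional $\varepsilon\langle \Pi(m^{\varepsilon,\delta}\cdot\nabla u^{\varepsilon,\delta}),\varphi\rangle$, with $\Pi$ the Leray projection, rewrites the singular term $\varepsilon^{-1}m^{\varepsilon,\delta}\cdot\nabla u$ as the sum of four pieces:
\begin{itemize}
\item[(i)] a boundary term of order $\varepsilon$;
\item[(ii)] a second-order term $\varepsilon^{-1}\cdot\varepsilon\, m\cdot\nabla\Pi(m\cdot\nabla u)$, whose $m$-average produces the Itô--Stratonovich-type correction $\nu\Delta u$;
\item[(iii)] a martingale $dM^{\varepsilon,\delta}$ with quadratic variation of order $\delta^{d}$;
\item[(iv)] remainder terms from $\partial_t u$ multiplied by $\varepsilon m$.
\end{itemize}
The remainders contain $\varepsilon\,\varepsilon^{-1}m\cdot\nabla$ acting again, so they are controlled at cost $\varepsilon/\delta$ per spatial derivative on $m^{\varepsilon,\delta}$.

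The first step is to decompose $u^{\varepsilon,\delta}=u+v^{\varepsilon,\delta}+\delta^{d/2}z^{\varepsilon,\delta}$. Here $v^{\varepsilon,\delta}$ is the deterministic solution of
\[\partial_t v + (u+v)\cdot\nabla v + v\cdot\nabla u+\nabla p=(1+\nu)\Delta v + R^{\varepsilon,\delta}(u+v),\]
with $R^{\varepsilon,\delta}$ the difference between the expectation of the second-order term at finite $\delta$ and its limit $\nu\Delta$. This term is $o(1)$ but need not be $o(\delta^{d/2})$, which is exactly why $v^{\varepsilon,\delta}$ must be subtracted. Well-posedness and smallness of $v^{\varepsilon,\delta}$ follow from standard 2D Navier--Stokes energy estimates, using Theorem~\ref{thm:1} and the uniqueness of $u$.

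The second step derives the equation for $z^{\varepsilon,\delta}$. It is linearized Navier--Stokes around $u+v^{\varepsilon,\delta}$, plus $\delta^{d/2}z\cdot\nabla z$, driven by $\delta^{-d/2}dM^{\varepsilon,\delta}$, plus error terms. These errors are of two kinds:
\begin{itemize}
\item the centered fluctuation $m\otimes m - \mathbb{E}[m\otimes m]$ in (ii), which must be shown to be $o(\delta^{d/2})$ after a second corrector (Poisson equation for the quadratic functional);
\item the $\varepsilon$-order boundary and remainder terms, whose size is like $(\varepsilon/\delta)\delta^{-d/2}$ times powers of $\delta^{-1}$.
\end{itemize}
The assumption $\varepsilon=o(\delta^{1+\iota})$ is what makes these negligible after $\delta^{-d/2}=\delta^{-1}$ normalization in $d=2$, with the $\iota$ absorbing logarithmic and regularity losses.

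The third step is to compute the quadratic variation of $\delta^{-d/2}M^{\varepsilon,\delta}$ tested against $\varphi$. It equals
\[\delta^{-d}\int \langle Q(\cdot/\delta)\ast(\nabla u\,\varphi),\nabla u\,\varphi\rangle\,dt,\]
which converges to $(\int q)\|\varphi\cdot\nabla u\|^2 = \chi^2\|\cdot\|^2$ since $\int q=(\hat K(0))^2$. A martingale CLT then gives convergence of the noise to $\chi\,dW\cdot\nabla u$. Tightness of $z^{\varepsilon,\delta}$ in $L^2(0,T;H^{-\beta})$ comes from energy estimates in negative Sobolev spaces via the mild formulation, where $\beta>0$ is needed because space-time white noise in 2D is only $H^{-\beta}$-valued. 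Identifying the limit equation and using uniqueness of the linear SPDE yields convergence in law. Because the limit is a deterministic functional of the driving Gaussian structure, convergence in probability then follows: we pass to the joint law with $M$, and the martingale converges in probability as a function of $m$, which lives on the same space.

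The main obstacle is the second step: controlling the nonlinear term $\delta^{d/2}z\cdot\nabla z$ together with the $m$-fluctuation errors uniformly in weak negative norms. Since $z^{\varepsilon,\delta}$ is only bounded a priori by $\delta^{-d/2}$ from the energy estimate, I would first prove a rough bound $\|u^{\varepsilon,\delta}-u-v^{\varepsilon,\delta}\|_{L^2_tL^2}\lesssim \delta^{d/2-\kappa}$ by a Gronwall/stopping-time argument, using 2D Ladyzhenskaya. I would then bootstrap this bound to tightness of $z^{\varepsilon,\delta}$.
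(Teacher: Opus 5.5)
\textbf{There is a genuine gap, in your second step.} With only the two correctors $\varepsilon\varphi_1,\varepsilon^2\varphi_2$, the remainders are not negligible for general $\iota>0$.

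The leftover terms include $\varepsilon\langle m^{\varepsilon,\delta}\cdot\nabla\Delta u^{\varepsilon,\delta},h\rangle$ and the cubic term $\varepsilon\langle m\cdot\nabla\mathbb{P}(m\cdot\nabla\mathbb{P}(m\cdot\nabla u^{\varepsilon,\delta})),h\rangle$. Since $u^{\varepsilon,\delta}$ is controlled only at energy level, integrating by parts off $u^{\varepsilon,\delta}$ necessarily produces terms with a derivative on $m^{\varepsilon,\delta}$. This happens even when maximal regularity lets the test function absorb $2+\beta$ derivatives. The available bound is therefore $\varepsilon\delta^{-1}$, which becomes $\varepsilon\delta^{-2}=o(\delta^{\iota-1})$ after division by $\delta^{d/2}=\delta$. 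That bound blows up for $\iota<1$: the hypothesis $\varepsilon=o(\delta^{1+\iota})$ absorbs logarithms, not a full power of $\delta^{-1}$. Your count ``$(\varepsilon/\delta)\delta^{-d/2}$ times powers of $\delta^{-1}$'' is harmless only when $\iota>1$. That is exactly the paper's case $N=1$, $\Sigma=\{(0),(0,0)\}$, and there the paper's $v^{\varepsilon,\delta}$ vanishes identically.

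This is not just a weakness of the estimates. Pushing the expansion one step further, these terms carry deterministic means $\varepsilon^2S_\sigma u^{\varepsilon,\delta}$, $\sigma=(0,1),(0,0,0)$, where the $S_\sigma$ are second-order operators bounded by $\delta^{-2}$ (Proposition~\ref{c:c18}). They are thus of order $(\varepsilon/\delta)^2$, which is generically not $o(\delta)$ when $\iota<1/2$. The missing idea is the corrector hierarchy of Section~\ref{s:high}. The correctors $\varphi_\sigma$ are indexed by binary words, and each is obtained by applying $(-\mathcal{M}^\delta)^{-1}$ to the mean-free part of a previous transport or Laplacian term; Wiener chaos contractions make this explicit. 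The hierarchy runs up to the length $N$ fixed by $\iota>d/(2N)$ in \eqref{eq:N}.

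\textbf{This also determines what $v^{\varepsilon,\delta}$ must be.} At odd levels the means $\varepsilon^{M+2L-1}[\varphi_\sigma(\mathbb{P}(n\cdot\nabla u^{\varepsilon,\delta}),n)]^\bullet=\varepsilon^{M+2L-1}(S_\sigma+R_\sigma)u^{\varepsilon,\delta}$ cannot be removed by further correctors, and they act on the random $u^{\varepsilon,\delta}$. The paper keeps the correction deterministic in three moves. It shows the $S_\sigma$ are Fourier multipliers (Proposition~\ref{p:37}). It absorbs $\sum_\sigma\varepsilon^{M+2L-1}S_\sigma$ into the semigroup generator. It then forces $v^{\varepsilon,\delta}$ with $\sum_\sigma\varepsilon^{M+2L-1}S_\sigma u$.

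Your $v^{\varepsilon,\delta}$ is built on $S_\delta-\nu\mathbb{P}\Delta$ instead. In $d=2$ that operator is $O(|\ell|^4\delta^2(1+\log\delta^{-1}))$ on the mode $\ell$, and it is negligible after division by $\delta$ on $|\ell|\le\delta^{-1}$ (Lemma~\ref{l:11.3}). Including it is harmless, and your structure $R^{\varepsilon,\delta}(u+v)$ mirrors the paper's $\sum_\sigma\varepsilon^{M+2L-1}S_\sigma(u+v)$. But it misses the terms that actually force $v$.

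Two further ingredients are absent for the same reason. First, the higher correctors carry up to $M+2L$ derivatives, more than $H^{-\beta}$ duality plus maximal regularity can absorb. One therefore needs the splitting $P_{\le R}+P_{>R}$ with $R=\delta^{-1}$. High modes are controlled by the energy bound alone, since $\delta^{-1}R^{-1-\beta}\to0$. Low modes let derivatives move onto the test function at cost $R$.

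Second, your bootstrap via an $L^2$ bound $\delta^{1-\kappa}$ on $u^{\varepsilon,\delta}-u-v^{\varepsilon,\delta}$ has no mechanism. In an $L^2$ energy balance, the It\^o correction of the first-corrector martingale is of order $\|\nabla u^{\varepsilon,\delta}\|_{L^2}^2$, because $\delta^2\sum_k|(\mathcal{F}_{\mathbb{R}^2}K)(\delta k)|^2$ is of order one. The smallness $\delta^2\log\delta^{-1}$ appears only through frequency-dependent smoothing in the mild formulation.

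The paper needs no uniform a priori bound on $z^{\varepsilon,\delta}$. It writes the convective difference linearly in $z^{\varepsilon,\delta}-z$, with energy-class coefficients $u,u^{\varepsilon,\delta},v^{\varepsilon,\delta}$, so no $\delta z\cdot\nabla z$ term appears. It estimates this at critical scaling by Proposition~\ref{p:2}. It absorbs it using a Galerkin truncation of $u$, the weight $e^{-\lambda t}$, and $u^{\varepsilon,\delta}\to u$ in $L^2H^1$ (Corollary~\ref{c:14}). Since the same $W$ drives $m$, the noise converges in $L^2(\Omega)$ directly (Proposition~\ref{p:stoch}), so no tightness argument is needed.
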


The macroscopic corrections $v^{\varepsilon, \delta}$ account for
contributions that persist after rescaling but do not contribute to the
limiting fluctuation; their deterministic character is essential for
Theorem~\ref{thm:2} to constitute a genuine central limit theorem, in which
the fluctuations of $u^{\varepsilon, \delta}$ are measured around a
deterministic reference. As in related homogenization results
{\cite{MR4198718}}, deterministic intermediate-scale corrections must be
subtracted before the genuine Gaussian fluctuations become visible. Whereas
in~{\cite{MR4198718}} these corrections satisfy a linear equation, a
distinctive feature of the present nonlinear setting is that they satisfy a
Navier--Stokes type equation with full quadratic self-interaction.

The fluctuation theorem reveals the precise mechanism by which randomness
re-enters the macroscopic description after being averaged out at leading
order. The multiplicative structure of the limit noise is not a modeling
choice but an output of the analysis: it arises from the interaction of the
$\delta^{d / 2}$-normalized correctors with the nonlinear background. Its
intensity $\chi = (\mathcal{F}_{\mathbb{R}^2} K) (0)$ is determined solely by
the zero-frequency component of the covariance kernel, reflecting the
large-scale amplitude of the environment.

The strict subcriticality assumption $\varepsilon = o (\delta^{1 + \iota})$
with $\iota > 0$ is essential in our method: the quantitative power gap
guarantees that the multiscale expansion terminates at finite order. Whether
the result extends to the weaker regime $\varepsilon = o (\delta)$ or beyond
the subcritical regime altogether are natural open questions. We expect that in the critical case the enhanced diffusion
has a different form and that its expression combines the correlation function of the noise and the Green kernel
of the Stokes operator. The restriction
to $d = 2$ is structural, see Remark~\ref{r:34}.

Enhanced dissipation and regularization by noise for fluid equations driven by
transport-type stochastic perturbations have been studied extensively by
Flandoli and coauthors (see e.g. {\cite{MR4304694}}, {\cite{MR4238216}},
{\cite{MR4714774}}, {\cite{MR4265023}}, {\cite{MR4174071}},
{\cite{MR4709553}}, among others). In that literature, the stochastic forcing
is white in time -- reflecting a model in which the limit $\varepsilon
\rightarrow 0$ has already been performed, leaving only $\delta \rightarrow 0$
to be taken -- and supported on finitely many prescribed Fourier modes; both
the Fourier support and the amplitude normalization are tuned to the
approximation parameter rather than inherited from a fixed physical model.
These assumptions and the specific form of the noise considerably simplify the
analysis. Quantitative results are commonly obtained for linear equations or
at the vorticity level.

By contrast, the present work starts from a single fixed velocity field $m$
with compact spatial correlations and exponentially decaying temporal
correlations, and studies its effect across scales through the natural
two-parameter family $m^{\varepsilon, \delta} (t, x) = m (\varepsilon^{- 2} t,
\delta^{- 1} x)$. Enhanced diffusion then emerges as an output of the
multiscale analysis rather than being engineered into the model. Working at
the level of the full Navier--Stokes velocity formulation, we moreover
identify the Gaussian fluctuation dynamics, a level of description not
addressed in the works cited above. Randomness is not eliminated but
reorganized across scales, reappearing at the next order as Gaussian
fluctuations -- a perspective closer in spirit to statistical mechanics and
homogenization theory than to the noise-by-design approach of the
regularization by noise literature.

The framework of Hairer and Pardoux {\cite{MR3417505}}, {\cite{MR4198718}}
addresses homogenization for SPDEs where the random perturbation enters
without transport structure and is subcritical in the sense of regularity
structures even at parabolic scaling. By contrast, the transport term
$\varepsilon^{- 1} m^{\varepsilon, \delta} \cdot \nabla u^{\varepsilon,
\delta}$ at $\varepsilon = \delta$ has the same homogeneity as the Laplacian,
placing our equation at a critical threshold for regularity structures. This
is precisely the regime our approach does not reach. Whether the macroscopic
limit exists at critical scaling, and what form it takes, remain open.

Cannizzaro and Kiedrowski {\cite{MR4709543}} and Cannizzaro, Gubinelli and
Toninelli {\cite{MR4719971}} study critical stochastic fluid-type equations
under an explicitly invariant Gaussian measure, which permits generator-based
Wiener chaos analysis and gives access to scaling-critical and weak-coupling
regimes. No comparable invariant Gaussian structure is available for the full
randomly advected Navier--Stokes system: while the fast Ornstein--Uhlenbeck
field has a Gaussian invariant measure, the coupled Navier--Stokes evolution
does not preserve it.

\subsection{Strategy of the proof}

The proof is shaped by four structural challenges: the near-critical scaling
$\varepsilon = o (\delta^{1 + \iota})$ with small $\iota > 0$, the persistent
expectations of the corrector observables, the Navier--Stokes nonlinearity at
energy-level regularity, and the presence of the Leray projection
$\mathbb{P}$. Each creates distinct difficulties, which we describe in turn
before outlining how they are overcome.

\tmtextbf{Near-criticality and the corrector hierarchy.} To remove the large
oscillatory contributions we invert the Ornstein--Uhlenbeck generator
$-\mathcal{M}^{\delta}$ of the environment and build a hierarchy of
correctors. In the law-of-large-numbers regime the expansion terminates after
two correctors and produces a deterministic second-order correction to the
Laplacian. Its coefficient is an instance of the Green--Kubo relation,
expressing the effective diffusion coefficient $\nu$ as a time-integrated
autocorrelation of the microscopic current; the precise numerical prefactor is
determined entirely by the action of $\mathbb{P}$ on the isotropic covariance,
with the divergence-free constraint encoded in the prefactor.

For the fluctuation analysis in $d = 2$, writing $z^{\varepsilon, \delta} :=
(u^{\varepsilon, \delta} - v^{\varepsilon, \delta} - u) / \delta^{d / 2}$ for
the rescaled fluctuation variable, additional layers of the expansion become
visible. We construct a hierarchy of correctors $\varphi_{\sigma}$ indexed by
finite binary sequences $\sigma$, where the digit $0$ encodes an application
of the transport term $\varepsilon^{- 1} \mathbb{P} (m^{\varepsilon, \delta}
\cdot \nabla \cdummy)$ and the digit $1$ an application of the
Laplacian; $M_{\sigma}$ and $L_{\sigma}$ denote the respective counts, so that
$\varphi_{\sigma}$ carries the weight $\varepsilon^{M_{\sigma} + 2
L_{\sigma}}$. The hierarchy is built by iterating the inversion of
$-\mathcal{M}^{\delta}$ along branches of the expansion. Each corrector is a
polynomial functional of the Gaussian field, and the action of
$(-\mathcal{M}^{\delta})^{- 1}$ is explicit in Wiener chaos: it produces a
finite sum of terms, each obtained by pairing tensor slots against the
covariance $\frac{1}{2} \mathcal{Q}_{\delta}$, with each contraction reducing
the polynomial degree by two. The hierarchy is therefore algebraically
explicit; see Section~\ref{s:high}. Its length is governed by the truncation
condition $\iota > d / (2 N)$, which ensures that terms of generation $N$
vanish after the fluctuation rescaling. The assumption $\iota > 0$ guarantees
that such a finite $N$ exists and the expansion terminates.

\tmtextbf{Absorbing the effective operators into the semigroup.} At each level
of the hierarchy, certain expectations with respect to the invariant measure
of $m^{\varepsilon, \delta}$ do not vanish and cannot be cancelled by further
correctors. Each such expectation decomposes as $S_{\sigma} u^{\varepsilon,
\delta} + R_{\sigma} u^{\varepsilon, \delta}$, where $S_{\sigma}$ is a
second-order divergence-form operator and $R_{\sigma}$ is higher-order. The
leading part $\varepsilon^{M_{\sigma} + 2 L_{\sigma} - 1} S_{\sigma}
u^{\varepsilon, \delta}$ does not vanish after division by $\delta^{d / 2}$
and cannot be treated as an error. A natural attempt would be to include these
terms directly in the macroscopic correction $v^{\varepsilon, \delta}$, but
since they depend on $u^{\varepsilon, \delta}$ this would make
$v^{\varepsilon, \delta}$ random and invalidate the central limit theorem. The
resolution is to include the operators $\varepsilon^{M_{\sigma} + 2 L_{\sigma}
- 1} S_{\sigma}$ as perturbations of $(1 + \nu) \mathbb{P} \Delta$ into the
semigroup generator. This is viable because translation invariance of the
stationary environment makes each $S_{\sigma}$ a Fourier multiplier, and
subcriticality ensures the perturbation is small relative to the Laplacian.
The perturbed generator defines a $C_0$-semigroup via a direct Fourier
multiplier construction, with all estimates following from an elementary
energy argument at the symbol level.

\tmtextbf{The nonlinearity and critical regularity.} At the level of the
approximate equation the only available control is the pathwise uniform energy
bound $u^{\varepsilon, \delta} \in L^{\infty} (0, T ; L^2) \cap L^2 (0, T ;
H^1)$. After linearizing around the deterministic limit $u$, the convective
term must be controlled at scale-invariant regularity, with neither time
smallness nor derivative surplus available, see Section~\ref{s:convective}.
This critical structure is intrinsic to the velocity formulation: it is the
combination of the Navier--Stokes nonlinearity with the energy-level
regularity of $u^{\varepsilon, \delta}$ that forces the use of a critical
endpoint estimate for the convective term.

Higher-order correctors introduce progressively more spatial derivatives which
cannot be absorbed at energy-level regularity. We introduce a frequency
cut-off $R > 1$: the high-frequency component $P_{> R} (z^{\varepsilon,
\delta} - z)$ is controlled by uniform a priori bounds, while the
low-frequency component $P_{\leqslant R} (z^{\varepsilon, \delta} - z)$ is
estimated through the mild formulation, with each derivative either hitting
$m^{\varepsilon, \delta}$ at cost $\delta^{- 1}$ or being transferred to the
test function at cost $R$; the choice $R = \delta^{- 1}$ is the natural one
balancing the frequency truncation against the homogenization scale and
verified to be sufficient in Section~\ref{s:removal}.

To close the fluctuation estimate without any smallness assumption on the
initial data, we incorporate an exponential weight $e^{- \lambda t}$ into the
semigroup. The critical terms are then absorbed into the left-hand side by
decomposing the background velocity into a part that vanishes in probability
as $\varepsilon, \delta \rightarrow 0$ and a smooth Galerkin truncation for
which the exponential decay of the semigroup yields smallness for $\lambda$
large; see Section~\ref{s:concl}.

\tmtextbf{Macroscopic corrections.} A striking consequence of absorbing the
operators $\varepsilon^{M_{\sigma} + 2 L_{\sigma} - 1} S_{\sigma}$ into the
semigroup generator is that all intermediate-scale contributions -- those too
large to vanish after the $\delta^{d / 2}$ rescaling but too small to survive
at leading order -- can be collected into a purely deterministic correction
$v^{\varepsilon, \delta}$, with forcing terms $\varepsilon^{M_{\sigma} + 2
L_{\sigma} - 1} S_{\sigma} u$, evaluated at the deterministic limit. Since the
corrections can be of order $\varepsilon^2 \delta^{- 2}$, the quadratic
nonlinearity $v^{\varepsilon, \delta} \cdot \nabla v^{\varepsilon, \delta}$
cannot be controlled as a perturbation and must be retained: $v^{\varepsilon,
\delta}$ solves a Navier--Stokes type system with full quadratic
self-interaction; see Section~\ref{s:v}.

\tmtextbf{The Leray projection.} The Leray projection $\mathbb{P}$ complicates
the analysis in two independent ways. First, it obstructs the direct
contraction argument that would otherwise immediately yield the enhanced
diffusion coefficient: without $\mathbb{P}$, integration by parts reduces the
approximate enhanced diffusion operator $S_{\delta}$ to a plain multiple of
$\Delta$. In the presence of $\mathbb{P}$, this immediate contraction argument
breaks down: $S_{\delta}$ involves a kernel of Calder{\'o}n--Zygmund type in
which $\mathbb{P}$ is intertwined with $Q_{\delta}$, and identifying its limit
$\nu$ requires the careful Fourier analysis carried out in
Section~\ref{s:quali} and upgraded to a quantitative error estimate in
Section~\ref{s:quanti}. Second, at higher levels of the expansion, contraction
terms in the Wiener chaos produce two-variable kernels in which $\mathbb{P}$
becomes intertwined with derivatives of $Q_{\delta}$, placing them outside the
Calder{\'o}n--Zygmund framework. We bypass this by estimating inner products
before taking expectations, so that at most one copy of $m^{\varepsilon,
\delta}$ appears in any bound, replacing the composition of two singular
operators by a product of separate $L^p$-estimates; see Section~\ref{s:gen}.

\tmtextbf{Summary.} The proof combines inversion of the Ornstein--Uhlenbeck
generator, Wiener chaos contraction identities, Fourier multiplier semigroup
theory, and sharp derivative counting under minimal regularity.
Near-criticality is handled by a finite corrector hierarchy of length dictated
by $\iota$, with the persistent expectations absorbed into a perturbed
semigroup generator controlled by elementary symbol estimates. The macroscopic
corrections solve a full Navier--Stokes type system to accommodate their
potentially large size. The critical nonlinearity is tamed by frequency
localization and an exponential weight argument, closed by a final absorption
argument at $R = \delta^{- 1}$. The Leray projection is handled throughout by
a probabilistic approach to kernel estimates that avoids the composition of
singular operators.

\subsection{Organization of the paper}

Section~\ref{s:noise} introduces the setup, notation, and the
Ornstein--Uhlenbeck process $m^{\varepsilon, \delta}$. Section~\ref{s:2}
carries out the second-order expansion producing the enhanced diffusion
operator. Section~\ref{s:quali} identifies its limit via Fourier analysis.
Section~\ref{s:LLN} completes the proof of Theorem~\ref{thm:1} via stochastic
compactness. Section~\ref{s:high} develops the higher-order corrector
hierarchy via Wiener chaos. Section~\ref{s:setup} sets up the decomposition of
the fluctuation dynamics and introduces the frequency cut-off.
Section~\ref{s:semigroup} constructs the semigroup. Section~\ref{s:convective}
establishes the endpoint convective estimate. Section~\ref{s:10} constructs
the limiting fluctuation $z$ and the macroscopic correction $v^{\varepsilon,
\delta}$. Section~\ref{s:quanti} establishes the quantitative enhanced
diffusion estimates. Section~\ref{s:error} and Section~\ref{s:concl} treat the
remainder terms and carry out the final absorption argument, completing the
proof of Theorem~\ref{thm:2}. Appendix~\ref{s:a} collects auxiliary technical
results.

\subsection*{Acknowledgment} The research of M.H. was funded by the European Research Council
  (ERC) under the European Union's Horizon 2020 research and innovation
  programme (grant agreement No. 949981). A.D. benefits from the support of the French government ``Investissements d'Avenir'' program integrated to France 2030, bearing the following reference ANR-11-LABX-0020-01.

\section{Setup and assumptions}\label{s:noise}

\subsection{Notations}\label{s:notation}

We consider the $d$-dimensional torus $\mathbb{T}^d = [- \pi, \pi]^d$ and
denote by $H$ the subspace of $L^2$-integrable mean- and divergence-free
vector fields $f : \mathbb{T}^d \rightarrow \mathbb{R}^d$. We
denote by $\langle \cdummy, \cdummy \rangle$ the associated $L^2$-inner
product. By $\mathbb{P}$ we denote the Leray projection onto $H$ and we
recall that it is continuous on the Lebesgue space $L^p (\mathbb{T}^d ;
\mathbb{R}^d)$ for all $p \in (1, \infty)$. For $k \in \mathbb{Z}_0^d \assign
\mathbb{Z}^d \setminus \{ 0 \}$, let $\{ a_{k, 1}, \ldots, a_{k, d - 1}, k / |
k | \}$ be an orthonormal basis of $\mathbb{R}^d$ so that $a_{k, \alpha} =
a_{- k, \alpha}$ and define $\sigma_{k, \alpha} \assign (2 \pi)^{- d / 2}
a_{k, \alpha} e^{i k \cdummy}$. Then $(\sigma_{k, \alpha})_{k \in
\mathbb{Z}_0^d, \alpha = 1, \ldots ,d - 1}$ is an orthonormal basis of $H$. The
Fourier transform and its inverse are defined as
\[ \mathcal{F}f (\ell) \assign \hat{f} (\ell) \assign \int_{\mathbb{T}^d} f
   (x) e^{- i \ell \cdummy x} d x, \qquad \mathcal{F}^{- 1} f (x) \assign (2
   \pi)^{- d} \sum_{\ell \in \mathbb{Z}^d} f (\ell) e^{i \ell \cdummy x} . \]
On the full space $\mathbb{R}^d$ we define similarly
\[ \mathcal{F}_{\mathbb{R}^d} f (\ell) \assign \int_{\mathbb{R}^d} f (x) e^{-
   i \ell \cdummy x} d x, \qquad \mathcal{F}_{\mathbb{R}^d}^{- 1} f (x)
   \assign (2 \pi)^{- d} \int_{\mathbb{R}^d} f (\ell) e^{i \ell \cdummy x} d
   \ell \]
and we recall the Fourier--Plancherel formula $\| \mathcal{F}_{\mathbb{R}^d} f
\|_{L^2} = (2 \pi)^{d / 2} \| f \|_{L^2}$.

\tmcolor{black}{For $p \in [1, \infty]$, $L^p = L^p (\mathbb{T}^d)$ denotes the
usual Lebesgue space, with the same notation for vector-valued functions. For
$k \in \mathbb{N}_0$ and $p \in [1, \infty]$, $W^{k, p} = W^{k, p}
(\mathbb{T}^d)$ denotes the classical Sobolev space with norm $\|f\|_{W^{k,
p}} \assign \sum_{| \beta | \leqslant k} \| \partial^{\beta} f\|_{L^p}$. For
$s \in \mathbb{R}$, the Sobolev space $H^s = H^s (\mathbb{T}^d)$ is defined by
the norm
\[ \|f\|_{H^s}^2 \assign (2 \pi)^{- d} \sum_{k \in \mathbb{Z}^d} (1 + |k|^2)^s
   \hspace{0.17em} | \hat{f} (k) |^2, \]
and for vector fields the norm is taken componentwise; with a slight abuse of
notation we write $H^s$ also for the subspace of mean- and divergence-free
vector fields, so that $H^0 = H$. Duality pairings between $H^s$ and $H^{- s}$
extend $\langle \cdummy, \cdummy \rangle$.

Let $(\Delta_j)_{j \geqslant - 1}$ denote the Littlewood--Paley blocks
associated with a standard dyadic partition of unity on $\mathbb{T}^d$ (see,
e.g., {\cite[Chapter~2]{MR2768550}}). For $s \in \mathbb{R}$, $p, q \in [1,
\infty]$, the Besov space $B^s_{p, q} = B^s_{p, q} (\mathbb{T}^d)$ is defined
by the norm
\[ \|f\|_{B^s_{p, q}} \assign \| (2^{js} \| \Delta_j f\|_{L^p})_{j \geqslant -
   1} \|_{\ell^q}, \]
so that $B^s_{2, 2} = H^s$ with equivalent norms.

For $p \in (1, \infty)$, $q \in [1, \infty]$, the Lorentz space $L^{p, q} =
L^{p, q} (\mathbb{T}^d)$ is defined via the decreasing rearrangement
$f^{\ast}$ by the quasi-norm
\[ \|f\|_{L^{p, q}} \assign \| t^{1 / p} f^{\ast} (t) \|_{L^q \left( (0,
   \infty), \frac{d t}{t} \right)}, \]
so that $L^{p, p} = L^p$; we refer to {\cite[Section~1.4]{MR2445437}} for its
basic properties, in particular H{\"o}lder's inequality and O'Neil's
convolution inequality {\cite{ONeil1963}} used in Appendix~\ref{s:a4}.

For a Banach space $X$, $L^q (0, T ; X)$ and $C ([0, T] ; X)$ denote the usual
spaces of Bochner integrable, respectively continuous, functions $[0, T] \to
X$, and for $\gamma \in (0, 1)$, $W^{\gamma, 2} (0, T ; X)$ denotes the
fractional Sobolev space of $f \in L^2 (0, T ; X)$ with
\[ [f]_{W^{\gamma, 2} (0, T ; X)}^2 \assign \int_0^T \hspace{-0.17em}
   \hspace{-0.17em} \int_0^T \frac{\|f (t) - f (s)\|_X^2}{|t - s|^{1 + 2
   \gamma}} d s d t < \infty, \]
normed by $\|f\|_{W^{\gamma, 2} (0, T ; X)}^2 \assign \|f\|_{L^2 (0, T ; X)}^2
+ [f]_{W^{\gamma, 2} (0, T ; X)}^2$.

Throughout, $a \lesssim b$ means $a \leqslant C b$ for a constant $C$
independent of the relevant parameters, in particular of $\varepsilon$ and
$\delta$; dependence on other quantities is indicated by subscripts if
necessary.} We use the notation $p+$ and $p-$ for exponents slightly larger and smaller than $p$, respectively. Their values may vary from line to line and are chosen whenever needed so that H\"older's inequality and related interpolation estimates apply.

\subsection{Noise construction}\label{s:m}

\tmcolor{black}{Throughout, $(\Omega, \mathcal{F}, (\mathcal{F}_t)_{t \geqslant
0}, \mathbf{P})$ denotes a filtered probability space satisfying the usual
conditions, and $\mathbb{E}$ the corresponding expectation.} Let $m_t (n)$,
for $t \geqslant 0$, with initial condition $m_0 = n$, be a centered ergodic
Markov process on a suitable subspace $E$ of $H$ with invariant measure $\nu^{1}$ (the superscript anticipates the rescaled family $\nu^\delta$
 introduced below)
and generator $\mathcal{M}$.

We write $m_t$, $t \geqslant 0$, to denote the stationary process (i.e.,
initialized with $n \sim \nu^{1}$) and distinguish $m_t (n)$ for the process
started from deterministic initial data $n$ from $m_t (x)$ for the stationary
process evaluated at $x \in \mathbb{T}^d$.

We additionally assume that the process $m$ is stationary with respect to
spatial translations, i.e., the law of $m_t (\cdummy + y)$ is the same as that
of $m_t$ for all $y \in \mathbb{T}^d$. Under these assumptions, we define the
effective spatial covariance kernel $Q^{i j} : \mathbb{T}^d \rightarrow
\mathbb{R}$ by
\begin{equation}
  Q^{i j} (x - y) = \int_0^{\infty} \mathbb{E} [m_0^i (x) m^j_t (y) + m^j_t
  (x) m_0^i (y)] d t, \qquad x, y \in \mathbb{T}^d . \label{eq:Q1}
\end{equation}
We further assume for simplicity that the components $m^i$, $m^j$ are
independent unless $i = j$, and that all components have the same variance.
Consequently, the matrix-valued kernel $Q  : \mathbb{T}^d
\rightarrow \mathbb{R}^{d \times d}$ is a multiple of the identity matrix,
symmetric and $Q (x) = q (x) \tmop{Id}_{\mathbb{R}^d}$ for some scalar
function $q  : \mathbb{T}^d \rightarrow \mathbb{R}$ with $q
(x) = q (- x)$. We assume that $q = K \ast K$ where the convolution kernel $K
: \mathbb{R}^d \rightarrow \mathbb{R}$ is smooth, compactly supported in a
small ball around the origin, symmetric (i.e. $K (x) = K (- x)$), and
isotropic (i.e. rotation invariant $K (x) = K (R x)$ for every rotation matrix
$R \in \tmop{SO} (d)$).

For $\delta \in (0, 1]$, we define the rescaled process $m^{\delta}$ as
$m^{\delta}_t (x) \assign m_t (\delta^{- 1} x)$, where $m_t$ denotes the
stationary process introduced above. The process $m$ is defined on
$\mathbb{T}^d = [- \pi, \pi]^d$ so $m^{\delta}$ is defined on $(\delta
\mathbb{T})^d$ and assuming $\delta^{- 1} \in \mathbb{N}$ we can understand it
also to be defined on $\mathbb{T}^d$: indeed, for $k \in \mathbb{Z}^d$
\[ m^{\delta} (x + 2 \pi k) = m (\delta^{- 1} x + \delta^{- 1} 2 \pi k) = m
   (\delta^{- 1} x) = m^{\delta} (x), \]
where for the second equality we require $\delta^{- 1} 2 \pi k \in 2 \pi
\mathbb{Z}^d$ hence $\delta^{- 1} \in \mathbb{N}$. Let us keep this assumption
from now on and consider all processes on the fixed torus $\mathbb{T}^d$. Similarly, since we are concerned with the limit $\varepsilon,\delta\to 0$, we assume throughout and without loss of generality that $\varepsilon\leqslant 1/2$.

Since~$m$~is stationary both in time and in space, the rescaled
process~$m^{\delta}$~remains stationary in time and space. We denote its law
at each time by~$\nu^{\delta}$, which is the pushforward measure
of~$\nu^{1}$~under the rescaling map~$n \mapsto (\Theta_{\delta} n) (\cdummy) = n
(\delta^{- 1} \cdot)$. That is: $m^{\delta}_t \sim \nu^{\delta}$ with
$\nu^{\delta}$ being the law of $\Theta_{\delta} n$ where $n \sim \nu^{1}$. Since the
process $m$ is $E$-valued, $m^{\delta}$ takes values in $\Theta_{\delta} E$ and for
any observable $\varphi : \Theta_{\delta} E \rightarrow V$ with a separable Banach
space $V$, it holds $\mathbb{E}_{\nu^{\delta}} [\varphi (n)] =\mathbb{E}_{\nu^{1}}
[\varphi (n (\delta^{- 1} \cdummy))]$. Consequently, by definition
of~$\nu^{\delta}$, we have for $s \in \mathbb{N}_0$ and $p \in [1, \infty]$
\begin{equation}
  \mathbb{E}_{\nu^{\delta}} [\| \nabla^s n \|_{L^p}] =\mathbb{E}_{\nu^{1}} [\|
  \nabla^s (n (\delta^{- 1} \cdummy)) \|_{L^p}] = \delta^{- s}
  \mathbb{E}_{\nu^{1}} [\| (\nabla^s n) (\delta^{- 1} \cdummy) \|_{L^p}] =
  \delta^{- s} \mathbb{E}_{\nu^{1}} [\| \nabla^s n \|_{L^p}] . \label{eq:7ddd}
\end{equation}
Since $\nu^{1}$ is centered, $\nu^{\delta}$ is centered as well. For an observable
$\varphi : \Theta_{\delta} E \rightarrow V$ we write $[\varphi]^{\bullet} :
=\mathbb{E}_{\nu^{\delta}} [\varphi] = \int \varphi \ 
d \nu^{\delta}$ for its expectation under $\nu^{\delta}$ and
$[\varphi]^{\circ} \assign \varphi - [\varphi]^{\bullet}$ for its mean-free
part.

Furthermore, the associated covariance kernel $Q_{\delta}$ satisfies
$Q_{\delta} = q_{\delta} \tmop{Id}_{\mathbb{R}^d}$ with $q_{\delta} (\cdummy)
= q (\delta^{- 1} \cdummy)$. We now define the rescaled kernel $K_{\delta} :
\mathbb{R}^d \rightarrow \mathbb{R}$ by $K_{\delta} (\cdummy) = \delta^{- d /
2} K (\delta^{- 1} \cdummy)$. Its support lies in a small ball around the
origin, and we extend $K_{\delta}$ periodically to the torus $\mathbb{T}^d$
without changing notation. Hence $q_{\delta} = K_{\delta} \ast K_{\delta}$.
Indeed,
\[ \int_{\mathbb{T}^d} K_{\delta} (x - z) K_{\delta} (z) d z = \delta^{- d}
   \int_{\mathbb{T}^d} K (\delta^{- 1} x - \delta^{- 1} z) K (\delta^{- 1} z)
   d z \]
\[ = \int_{\mathbb{R}^d} K (\delta^{- 1} x - z) K (z) d z = q (\delta^{- 1} x)
   = q_{\delta} (x) . \]
Finally, we define for $f : \mathbb{T}^d \rightarrow \mathbb{R}^d$ the
operator $\mathcal{Q}^{1 / 2}_{\delta} f \assign K_{\delta} \ast f$, in
coordinates
\[ (\mathcal{Q}^{1 / 2}_{\delta} f)^j (x) = \int_{\mathbb{T}^d} K_{\delta} (x
   - y) f^j (y) d y, \]
and it follows that $\mathcal{Q}_{\delta} f = Q_{\delta} \ast f$.

Using the basis $(\sigma_{k, \alpha})_{k \in \mathbb{Z}^d \setminus \{ 0 \},
\alpha \in \{ 1, \ldots, d - 1 \}}$ introduced in Section~\ref{s:notation},
\[ (\mathcal{Q}^{1 / 2}_{\delta} \sigma_{k, \alpha}) (x) = (K_{\delta} \ast
   \sigma_{k, \alpha}) (x) = (2 \pi)^{- d / 2} a_{k, \alpha}
   \int_{\mathbb{T}^d} K_{\delta} (y) e^{i k \cdummy (x - y)} d y \]
and since $K_{\delta}$ was obtained by first rescaling and then periodizing, a
change of variables yields
\begin{equation}
  = (2 \pi)^{- d / 2} \delta^{d / 2} \int_{\mathbb{R}^d} K (y) e^{- i (\delta
  k) \cdummy y} d y a_{k, \alpha} e^{i k \cdummy x} = \delta^{d / 2}
  (\mathcal{F}_{\mathbb{R}^d} K) (\delta k) \sigma_{k, \alpha} (x),
  \label{eq:eigen}
\end{equation}
where $\mathcal{F}_{\mathbb{R}^d}$ denotes the Fourier transform on
$\mathbb{R}^d$. In particular, $\mathcal{Q}^{1 / 2}_{\delta}$ acts diagonally
on this basis with eigenvalues $\delta^{d / 2} (\mathcal{F}_{\mathbb{R}^d} K)
(\delta k)$.

Let $V$ be a separable Banach space and $f : E \rightarrow V$ be a mean-zero
observable, i.e., $\int f d \nu^{1} = 0$. The Poisson equation $-\mathcal{M}
\varphi = f$ is solved by $\varphi = (-\mathcal{M})^{- 1} f = \int_0^{\infty}
P_t f d t$, where $P_t \assign e^{t\mathcal{M}}$ is the Markov semigroup. The
analogous statement holds for $\mathcal{M}^{\delta}$ with semigroup
$P^{\delta}_t = e^{t\mathcal{M}^{\delta}}$.

\subsection{Ornstein--Uhlenbeck process}\label{s:OU}

For the remainder of the paper, the processes $m$ and $m^{\delta}$ are
stationary solutions to
\begin{equation}
  d m = - m d t +\mathcal{Q}^{1 / 2} d W, \qquad d m^{\delta} = - m^{\delta} d
  t +\mathcal{Q}^{1 / 2}_{\delta} d W^{\delta}, \label{eq:m}
\end{equation}
for some two-sided cylindrical Wiener processes $W$ and $W^{\delta}$ on $H$, given by
the formulas
\[ m_t = \int_{- \infty}^t e^{- (t - r)} \mathcal{Q}^{1 / 2} d W_{r}, \qquad
   m^{\delta}_t = \int_{- \infty}^t e^{- (t - r)} \mathcal{Q}_{\delta}^{1 / 2}
   d W_{r}^{\delta} . \]
We define the Cameron--Martin space $\mathcal{H}=\mathcal{Q}^{1 / 2} H$ and
let $(\sigma_k)_{k \in \mathbb{N}}$ be a complete orthonormal system in
$\mathcal{H}$. Then $Q (x - y) = \sum_{k \in \mathbb{N}} \sigma_k (x) \otimes
\sigma_k (y)$, where the series converges absolutely, uniformly on compact
sets. A standard computation gives
\[ \mathbb{E} [m_t (x) \otimes m_s (y)] = \frac{1}{2} e^{- | t - s |} Q (x -
   y) . \]
Recall that in Section \ref{s:m} we defined $Q = (K \ast K)
\tmop{Id}_{\mathbb{R}^d} = q \tmop{Id}_{\mathbb{R}^d}$. Since the components
of the Ornstein--Uhlenbeck process $m$ are independent, the right hand side of
\eqref{eq:Q1} reduces to
\[ \int_0^{\infty} \mathbb{E} [m^i_0 (x) m_t^i (y) + m^i_t (x) m^i_0 (y)] d t
   = \int_0^{\infty} \frac{1}{2} e^{- t} q (x - y) + \frac{1}{2} e^{- t} q (x
   - y) d t = q (x - y), \]
and $m$ satisfies the assumptions of Section~\ref{s:m}.

We proceed similarly with the rescaled Ornstein--Uhlenbeck process
$m^{\delta}$. Here, we denote by $(\sigma_{\delta, k})_{k \in \mathbb{N}}$ a
complete orthonormal system in the Cameron--Martin space $\mathcal{H}_{\delta}
\assign \mathcal{Q}^{1 / 2}_{\delta} H$. So that again
\[ \sum_{k \in \mathbb{N}} \sigma_{\delta, k} (x) \otimes \sigma_{\delta, k}
   (y) = Q_{\delta} (x - y) = Q (\delta^{- 1} (x - y)) \]
and
\[ \mathbb{E} [m^{\delta}_t (x) \otimes m^{\delta}_s (y)] = \frac{1}{2} e^{- |
   s - t |} Q (\delta^{- 1} (x - y)) . \]
The invariant measure of the Ornstein--Uhlenbeck process $m$ is $\nu^{1}
=\mathcal{N} \left( 0, \frac{1}{2} \mathcal{Q} \right)$, and that of the
rescaled process $m^{\delta}$ is $\nu^{\delta} = \mathcal{N} \left( 0, \frac{1}{2}
\mathcal{Q}_{\delta} \right)$. This Gaussian structure underlies the explicit
Wiener chaos analysis of $(-\mathcal{M}^{\delta})^{- 1}$ carried out in
Section~\ref{s:chaos}.

Since $\mathcal{Q}^{1 / 2}$ acts diagonally on the Fourier basis $(\sigma_{k,
\alpha})_{k \in \mathbb{Z}^d_0, \alpha \in \{ 1, \ldots, d - 1 \}}$ with
eigenvalues $(\mathcal{F}_{\mathbb{R}^d} K) (k)$ by \eqref{eq:eigen}, the
process $m$ admits the Fourier representation in the sense that in law
\begin{equation}
  m \sim \frac{1}{\sqrt{2}} \sum_{k \in \mathbb{Z}^d_0, \alpha \in \{ 1,
  \ldots, d - 1 \}} (\mathcal{F}_{\mathbb{R}^d} K) (k) \sigma_{k, \alpha}
  \xi_{k, \alpha}, \label{eq:fourier}
\end{equation}
where $(\xi_{k, \alpha})_{k, \alpha}$ are mutually independent $\mathcal{N}
(0, 1)$-distributed random variables with $\xi_{- k, \alpha} =
\overline{\xi_{k, \alpha}}$. The same diagonalization gives the time-dependent
process as
\begin{equation}
  m_t \sim \frac{1}{\sqrt{2}} \sum_{k \in \mathbb{Z}^d_0, \alpha \in \{ 1,
  \ldots, d - 1 \}} (\mathcal{F}_{\mathbb{R}^d} K) (k) \sigma_{k, \alpha}
  \zeta_{k, \alpha} (t), \label{eq:9999}
\end{equation}
where $(\zeta_{k, \alpha})_{k, \alpha}$ are independent stationary
Ornstein--Uhlenbeck processes with $\zeta_{- k, \alpha} = \overline{\zeta_{k,
\alpha}}$ and $\mathbb{E} [\zeta_{k, \alpha} (t) \zeta_{k, \alpha} (s)] = e^{-
| t - s |}$.

The following estimates are standard consequences of the Borell--TIS inequality
\cite{MR2319516}, Chapter 2; see Appendix~\ref{s:a1}.

\begin{proposition}
  \label{c:sup}Let $m^{\varepsilon, \delta}$ be the rescaled
  Ornstein--Uhlenbeck process with $m^{\varepsilon, \delta}_t =
  m^{\delta}_{\varepsilon^{- 2} t}$. For all $s \in \mathbb{N}_0$ and $q \in
  [1, \infty)$ and every $t \in [0, T]$,
  \begin{equation}
    (\mathbb{E} [\| \nabla^s m^{\varepsilon, \delta}_t \|_{L^{\infty}}^q])^{1
    / q} \lesssim \delta^{- s}, \label{eq:m13}
  \end{equation}
  \begin{equation}
    (\mathbb{E} [\sup_{t \in [0, T]} \| \nabla^s m^{\varepsilon, \delta}_t
    \|_{L^{\infty}}^q])^{1 / q} \lesssim \delta^{- s} (\log \varepsilon^{-
    1})^{1 / 2} . \label{eq:supm}
  \end{equation}
\end{proposition}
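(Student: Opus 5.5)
First reduce to the unscaled process. Since $m^{\varepsilon,\delta}_t(x) = m_{\varepsilon^{-2}t}(\delta^{-1}x)$ with $m$ stationary in time, the chain rule gives
\[ \|\nabla^s m^{\varepsilon,\delta}_t\|_{L^\infty} = \delta^{-s}\, \|\nabla^s m_{\varepsilon^{-2}t}\|_{L^\infty(\mathbb{T}^d)}. \]
This uses that $\delta^{-1}\in\mathbb{N}$, so the rescaled torus covers $\mathbb{T}^d$ an integer number of times. Hence \eqref{eq:m13} reduces to $\mathbb{E}\|\nabla^s m_0\|_{L^\infty}^q<\infty$. Likewise \eqref{eq:supm} reduces to
\[ \Big(\mathbb{E}\sup_{r\in[0,\varepsilon^{-2}T]}\|\nabla^s m_r\|_{L^\infty}^q\Big)^{1/q} \lesssim (\log \varepsilon^{-1})^{1/2}. \]
The order-one smoothness of $m$ is the only input needed.

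\textbf{Fixed time.} By \eqref{eq:fourier} and the smoothness and compact support of $K$, $\mathcal{F}_{\mathbb{R}^d}K$ decays faster than any polynomial. So $\nabla^s m_0$ is a centered Gaussian field on the compact set $\mathbb{T}^d$ whose covariance $\frac12\nabla^s\nabla^s q$ is smooth. By Kolmogorov's continuity criterion, or by the summability $\sum_k |k|^{s+\cdot}|\mathcal{F}K(k)|<\infty$ together with Gaussian moment bounds, the field has continuous paths and $\mathbb{E}\|\nabla^s m_0\|_{L^\infty}<\infty$. Fernique's theorem, or equivalently Borell--TIS, then gives finiteness of all moments, which proves \eqref{eq:m13}.

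\textbf{Supremum in time.} View $X(r,x,\alpha)=\partial^\alpha m_r(x)$, with $|\alpha|=s$ and $(r,x)\in[0,L]\times\mathbb{T}^d$ where $L=\varepsilon^{-2}T$, as a centered Gaussian field. Its variance is bounded uniformly, by $\sigma^2=\frac12\max_\alpha|\partial^{2\alpha}q(0)|$. Split $[0,L]$ into $N\approx L$ unit intervals $I_j$. On each $I_j$ the field is stationary in $r$. Using the mild formula $m_r=e^{-(r-a)}m_a+\int_a^r e^{-(r-u)}\mathcal{Q}^{1/2}dW_u$, the increment $\mathbb{E}|X(r,x)-X(r',x')|^2$ is bounded by $C(|r-r'|+|x-x'|^2)$, so by Dudley's entropy bound $\mathbb{E}\sup_{I_j\times\mathbb{T}^d}|X|\le C_0$ uniformly in $j$. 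Borell--TIS then gives
\[ \mathbf{P}\Big(\sup_{I_j}|X|>C_0+u\Big)\le e^{-u^2/2\sigma^2}. \]
A union bound over the $N$ intervals yields $\mathbf{P}(\sup_{[0,L]}|X|>C_0+u)\le N e^{-u^2/2\sigma^2}$. Integrating this tail, $\mathbb{E}\sup|X|^q = \int_0^\infty q u^{q-1}\mathbf{P}(\cdot>u)\,du$, splitting at $u_0=C_0+\sigma\sqrt{2\log N}$, gives $\lesssim (1+\log N)^{q/2}$. Since $\log N\approx 2\log\varepsilon^{-1}+\log T$ and $\varepsilon\le 1/2$, this is $\lesssim_T (\log\varepsilon^{-1})^{q/2}$, proving \eqref{eq:supm}.

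The main technical point is the uniform bound $\mathbb{E}\sup_{I_j\times\mathbb{T}^d}|X|\le C_0$. This needs a Hölder modulus of the OU field in time. The field is only $1/2$-Hölder in time, but the increment bound $C|r-r'|$ suffices for Dudley's integral $\int_0^{\sigma}\sqrt{\log N(\eta)}\,d\eta<\infty$, since the covering numbers are polynomial in $1/\eta$. Equivalently one may apply Kolmogorov's criterion with Gaussian hypercontractivity on the unit cylinder.
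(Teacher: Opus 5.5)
Your proposal is correct and follows essentially the paper's route. Both arguments make the scaling reduction to the unit-scale field, split the time interval $[0,\varepsilon^{-2}T]$ into unit blocks, bound the expected supremum on each block uniformly by chaining (Kolmogorov in the paper; Dudley or Kolmogorov in yours), and then combine Borell--TIS with a union bound over the $N \approx \varepsilon^{-2}T$ blocks to get the $\sqrt{\log N}$ factor.

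The differences are cosmetic. For the fixed-time moments you use Fernique, where the paper uses Sobolev embedding plus hypercontractivity. For the $q$-th moments of the global supremum you integrate the union-bound tail directly, where the paper applies Borell--TIS a second time to the whole supremum.
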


\begin{proof}
  Since $m^{\varepsilon, \delta}_t (x) = m_{\varepsilon^{- 2} t}^{\delta} (x)
  = m (\varepsilon^{- 2} t, \delta^{- 1} x)$, Sobolev embedding implies for
  some $p > d$
  \[ \| \nabla^s m^{\varepsilon, \delta}_t \|_{L^{\infty}} = \delta^{- s} \|
     (\nabla^s m_{\varepsilon^{- 2} t}) (\delta^{- 1} \cdummy) \|_{L^{\infty}}
     = \delta^{- s} \| \nabla^s m_{\varepsilon^{- 2} t} \|_{L^{\infty}}
     \lesssim \delta^{- s} \| (- \Delta)^{(s + 1) / 2} m_{\varepsilon^{- 2} t}
     \|_{L^p} . \]
  Since $\mathcal{F}_{\mathbb{R}^d} K$ is a Schwartz function, the pointwise
  variance satisfies $\mathbb{E} [| (- \Delta)^{(s + 1) / 2} m_t (x) |^2]
  \lesssim 1$ uniformly in $t, x$, and \eqref{eq:m13} follows by
  hypercontractivity.
  
  The estimate \eqref{eq:supm} follows from Proposition~\ref{p:cont} applied
  to the centered stationary Gaussian field
  \[ X_{t, x} \assign \nabla^s m_t (x), \qquad (t, x) \in [0, \varepsilon^{-
     2} T] \times \mathbb{T}^d, \]
  once we verify its assumptions with $\sigma \lesssim 1$ and $\kappa
  \lesssim 1$, uniformly in $\varepsilon$. The pointwise variance bound
  $\mathbb{E} [X_{t, x}^2] \lesssim 1$ was established above. To verify
  \eqref{eq:block}, note first that by stationarity of $m$ in time, all
  blocks have the same law and it suffices to consider $j = 0$. Since
  $\mathcal{F}_{\mathbb{R}^d} K$ is a Schwartz function, the spectral
  representation \eqref{eq:9999} yields, for any fixed $\beta\in (0,1)$,
  \[ \mathbb{E} [| X_{t, x} - X_{t, y} |^2] \lesssim | x - y |^{2 \beta}
     \sum_{k \in \mathbb{Z}^d_0} | (\mathcal{F}_{\mathbb{R}^d} K) (k) |^2 | k
     |^{2 s + 2 \beta} \lesssim | x - y |^{2 \beta}, \]
  \[ \mathbb{E} [| X_{t, x} - X_{s, x} |^2] \lesssim | t - s | \sum_{k \in
     \mathbb{Z}^d_0} | (\mathcal{F}_{\mathbb{R}^d} K) (k) |^2 | k |^{2 s}
     \lesssim | t - s |, \]
  using $| \sigma_{k, \alpha} (x) - \sigma_{k, \alpha} (y) | \lesssim | k
  |^{\beta} | x - y |^{\beta}$ and $\mathbb{E} [| \zeta_{k, \alpha} (t) -
  \zeta_{k, \alpha} (s) |^2] \leqslant 2 \min (| t - s |, 1)$ respectively.
  By the triangle inequality and hypercontractivity (as $X_{t, x} - X_{s, y}$
  lies in the first Wiener chaos), for every $n \in \mathbb{N}$,
  \[ \mathbb{E} [| X_{t, x} - X_{s, y} |^{2 n}] \lesssim_n (| t - s | + | x -
     y |^{2 \beta})^n . \]
  Choosing $n$ sufficiently large, the Kolmogorov continuity theorem applied
  on the fixed parameter set $[0, 1] \times \mathbb{T}^d$ provides a
  continuous modification together with the bound
  \[ \mathbb{E} \Big[ \sup_{(t, x) \in [0, 1] \times \mathbb{T}^d} | X_{t, x}
     | \Big] \lesssim \mathbb{E} [| X_{0, x_0} |] +\mathbb{E} \Big[
     \sup_{(t, x) \neq (s, y)} \frac{| X_{t, x} - X_{s, y} |}{(| t - s | + |
     x - y |^{2 \beta})^{\gamma}} \Big] \lesssim 1 \]
 for some $x_{0}\in \mathbb{T}^{d}$ and a suitable $\gamma > 0$, with a constant depending only on $d$, $\beta$
  and $s$, but not on $\varepsilon$ or $T$. This verifies \eqref{eq:block}
  with $\kappa \lesssim 1$. Consequently, Proposition~\ref{p:cont} yields
  \[ \Big( \mathbb{E} \Big[ \sup_{(t, x) \in [0, \varepsilon^{- 2} T] \times
     \mathbb{T}^d} | X_{t, x} |^q \Big] \Big)^{1 / q} \lesssim_q \big( 1 +
     \log (\varepsilon^{- 2} T) \big)^{1 / 2} \lesssim_T (1 + \log
     \varepsilon^{- 1})^{1 / 2}\lesssim_T (\log
     \varepsilon^{- 1})^{1 / 2}, \]
     using $\varepsilon\leqslant 1/2$.
Hence \eqref{eq:supm} follows in view of $\| \nabla^s m^{\varepsilon,
  \delta}_t \|_{L^{\infty}} = \delta^{- s} \| \nabla^s m_{\varepsilon^{- 2}
  t} \|_{L^{\infty}}$ established above.
\end{proof}

\section{Second order expansion}\label{s:2}

To establish the law of large numbers, Theorem~\ref{thm:1}, the full machinery
of Section~\ref{s:high} is not required. We instead give a direct,
self-contained proof. Let $m$ be the Ornstein--Uhlenbeck process defined in
Section~\ref{s:OU} and set $m^{\varepsilon, \delta} (t, x) \assign m
(\varepsilon^{- 2} t, \delta^{- 1} x)$. The Markov generator of the pair
$(u^{\varepsilon, \delta}, m^{\varepsilon, \delta})$ is given by
\[ \mathcal{L}^{\varepsilon, \delta} \varphi (u, n) = \varepsilon^{- 2}
   \mathcal{M}^{\delta} \varphi (u, n) + \varepsilon^{- 1} \langle \mathbb{P}
   (n \cdummy \nabla u), D_u \varphi \rangle + \langle \Delta u, D_u \varphi
   \rangle - \langle \mathbb{P} (u \cdummy \nabla u), D_u \varphi \rangle, \]
where
\begin{equation}
  \varepsilon^{- 2} \mathcal{M}^{\delta} \varphi \assign - \varepsilon^{- 2}
  \langle n, D_n \varphi \rangle + \frac{\varepsilon^{- 2}}{2} \tmop{Tr}
  (\mathcal{Q}_{\delta} D^2_n \varphi) . \label{eq:Mdelta}
\end{equation}
Our goal is to identify the limit dynamics as $\varepsilon, \delta \rightarrow
0$. To this end, fix a smooth divergence-free test function $h$ and set
$\varphi (u) \assign \langle u, h \rangle$. We seek two correctors
$\varepsilon \varphi_1 (u, n)$ and $\varepsilon^2 \varphi_2 (u, n)$ designed
to cancel the singular terms in the evolution of $\varphi (u^{\varepsilon,
\delta})$. Expanding $\mathcal{L}^{\varepsilon, \delta} (\varphi + \varepsilon
\varphi_1 + \varepsilon^2 \varphi_2)$, one obtains
\[ \mathcal{L}^{\varepsilon, \delta} (\varphi + \varepsilon \varphi_1 +
   \varepsilon^2 \varphi_2) (u, n) = \varepsilon^{- 2} \mathcal{M}^{\delta}
   \varphi (u) \]
\[ + \varepsilon^{- 1} \langle \mathbb{P} (n \cdummy \nabla u), D_u \varphi
   \rangle + \varepsilon^{- 1} \mathcal{M}^{\delta} \varphi_1 (u, n) \]
\[ + \langle \Delta u, D_u \varphi \rangle - \langle \mathbb{P} (u \cdummy
   \nabla u), D_u \varphi \rangle + \langle \mathbb{P} (n \cdummy \nabla u),
   D_u \varphi_1 \rangle +\mathcal{M}^{\delta} \varphi_2 (u, n) \]
\begin{equation}
  + \varepsilon \langle \Delta u, D_u \varphi_1 \rangle - \varepsilon \langle
  \mathbb{P} (u \cdummy \nabla u), D_u \varphi_1 \rangle + \varepsilon \langle
  \mathbb{P}(n \cdummy \nabla u), D_u \varphi_2 \rangle \label{eq:kl1}
\end{equation}
\begin{equation}
  + \varepsilon^2 \langle \Delta u, D_u \varphi_2 \rangle - \varepsilon^2
  \langle \mathbb{P} (u \cdummy \nabla u), D_u \varphi_2 \rangle .
  \label{eq:kl2}
\end{equation}
The first term on the right hand side vanishes since $\varphi$ is independent
of $n$. We choose $\varphi_1$ to cancel the $\varepsilon^{- 1}$ terms, that
is, we require
\[ -\mathcal{M}^{\delta} \varphi_1 (u, n) = \langle \mathbb{P} (n \cdummy
   \nabla u), D_u \varphi \rangle = \langle n \cdummy \nabla u, h \rangle . \]
Since $m^{\varepsilon, \delta}$ satisfies the SPDE
\begin{equation}
  d m^{\varepsilon, \delta} = - \varepsilon^{- 2} m^{\varepsilon, \delta} d t
  + \varepsilon^{- 1} \mathcal{Q}^{1 / 2}_{\delta} d W, \label{eq:med}
\end{equation}
it is natural to take $\varphi_1 (u, n) = \langle n \cdummy \nabla u, h
\rangle$. Indeed, applying It{\^o}'s formula to $\varepsilon \varphi_1
(u^{\varepsilon, \delta}, m^{\varepsilon, \delta})$, the drift term in
\eqref{eq:med} produces the term $- \varepsilon^{- 1} \langle m^{\varepsilon,
\delta} \cdummy \nabla u^{\varepsilon, \delta}, h \rangle$, as required.

Turning to the $\varepsilon^0$ line, we choose $\varphi_2$ to cancel the
$n$-dependent part of $\langle \mathbb{P} (n \cdummy \nabla u), D_u \varphi_1
\rangle +\mathcal{M}^{\delta} \varphi_2 (u, n)$. Since $\langle \mathbb{P} (n
\cdummy \nabla u), D_u \varphi_1 \rangle = \langle n \cdummy \nabla \mathbb{P}
(n \cdummy \nabla u), h \rangle$, taking $\varphi_2 (u, n) = \frac{1}{2}
\langle n \cdummy \nabla \mathbb{P} (n \cdummy \nabla u), h \rangle$ and
applying It{\^o}'s formula to $\varepsilon^2 \varphi_2 (u^{\varepsilon,
\delta}, m^{\varepsilon, \delta})$, the drift in \eqref{eq:med} produces $-
\langle m^{\varepsilon, \delta} \cdummy \nabla \mathbb{P} (m^{\varepsilon,
\delta} \cdummy \nabla u^{\varepsilon, \delta}), h \rangle$, cancelling the
contribution of $\langle \mathbb{P} (n \cdummy \nabla u), D_u \varphi_1
\rangle$. The remaining term, arising from the second summand in
\eqref{eq:Mdelta} acting on $\varphi_2$, that is,
\begin{equation}
  \langle S_{\delta} u^{\varepsilon, \delta}, h \rangle \assign \frac{1}{2}
  \tmop{Tr} (\mathcal{Q}_{\delta} D^2_n \varphi_2 (u^{\varepsilon, \delta},
  m^{\varepsilon, \delta})), \label{eq:Sd}
\end{equation}
is responsible for the enhanced diffusion and is analyzed in
Section~\ref{s:quali}. As $\varphi_2$ is quadratic in $n$, $S_{\delta}$ is
indeed an operator acting on $u$.

\section{Qualitative enhanced diffusion}
\label{s:quali}

Using $\overline{\sigma_{k, \alpha}} = \sigma_{- k, \alpha}$, which holds
since $a_{k, \alpha} = a_{- k, \alpha} \in \mathbb{R}^d$, together with
\eqref{eq:eigen}, we compute
\[ \langle S_{\delta} v, h \rangle = \frac{1}{2} \tmop{Tr} (\mathcal{Q}^{1 /
   2}_{\delta} D^2_n \varphi_2 (v, n) \mathcal{Q}^{1 / 2}_{\delta}) =
   \frac{1}{2} \sum_{k, \alpha} \langle D^2_n \varphi_2 (v, n) \mathcal{Q}^{1
   / 2}_{\delta} \sigma_{k, \alpha}, \mathcal{Q}^{1 / 2}_{\delta} \sigma_{- k,
   \alpha} \rangle \]
\[ = \frac{\delta^d}{4} \sum_{k, \alpha} | (\mathcal{F}_{\mathbb{R}^d} K)
   (\delta k) |^2 (\langle \sigma_{k, \alpha} \cdummy \nabla \mathbb{P}
   (\sigma_{- k, \alpha} \cdummy \nabla v), h \rangle + \langle \sigma_{- k,
   \alpha} \cdummy \nabla \mathbb{P} (\sigma_{k, \alpha} \cdummy \nabla v), h
   \rangle) \]
\[ = \frac{\delta^d}{2} \sum_{k, \alpha} | (\mathcal{F}_{\mathbb{R}^d} K)
   (\delta k) |^2 \langle \mathbb{P} (\sigma_{k, \alpha} \cdummy \nabla
   \mathbb{P} (\sigma_{- k, \alpha} \cdummy \nabla v)), h \rangle, \]
by symmetry in $k \mapsto - k$. In view of the Fourier representation of the
stationary Ornstein--Uhlenbeck process $m^{\delta}$ from Section~\ref{s:OU},
this translates to $S_{\delta} v =\mathbb{P}\mathbb{E}_{\nu^{\delta}} [n
\cdummy \nabla \mathbb{P} (n \cdummy \nabla v)]$, where $\nu^{\delta}$ is the
invariant measure of $m^{\delta}$. From this representation, one immediately
observes that $S_{\delta}$ is translation-invariant, self-adjoint,
non-positive, and of second order. Formally, $S_{\delta}$ may be viewed as a
divergence-form operator with a nonlocal coefficient involving the convolution
kernel $Q_{\delta} (x) \otimes \mathbb{P} (x)$, where $\mathbb{P} (x)$ denotes
the Leray kernel. This kernel exhibits Calder{\'o}n--Zygmund type
singularities that are uniform in $\delta$, in the sense that the spatial
localization by $Q_{\delta}$ does not alter the underlying singular structure.
By construction, $S_{\delta}$ commutes with the Leray projection.

\begin{lemma}
  \label{l:7}$S_{\delta}$ is a Fourier multiplier operator, $\mathcal{F}
  [S_{\delta} v] (\ell) = s_{\delta} (\ell) \hat{v} (\ell)$, $\ell \in
  \mathbb{Z}^d$, with symbol
  \[ s_{\delta} (\ell) = - \frac{(2 \pi)^{- d}}{2} \left( \tmop{Id} -
     \frac{\ell \otimes \ell}{| \ell |^2} \right) \delta^d \sum_{k \in
     \mathbb{Z}_0^d, k\neq\ell, \alpha = 1, \ldots, d - 1} | (\mathcal{F}_{\mathbb{R}^d}
     K) (\delta k) |^2 (a_{k, \alpha} \cdummy \ell)^2 \left( \tmop{Id} -
     \frac{(\ell - k) \otimes (\ell - k)}{| \ell - k |^2} \right) . \]
\end{lemma}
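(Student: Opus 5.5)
We start from the representation derived just above the lemma,
\[ \langle S_{\delta} v, h \rangle = \frac{\delta^d}{2} \sum_{k, \alpha} | (\mathcal{F}_{\mathbb{R}^d} K) (\delta k) |^2 \langle \mathbb{P} (\sigma_{k, \alpha} \cdummy \nabla \mathbb{P} (\sigma_{- k, \alpha} \cdummy \nabla v)), h \rangle , \]
and compute the inner operator on a single Fourier mode $v = \hat v(\ell)(2\pi)^{-d} e^{i \ell \cdummy x}$, with $\hat v(\ell) \perp \ell$ since $v$ is divergence-free. Translation invariance, already observed for $S_\delta$, shows that $S_\delta$ maps $e^{i\ell\cdummy x}$ into multiples of itself. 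It therefore suffices to identify the matrix $s_\delta(\ell)$ mode by mode. On the $\ell$-th mode the Leray projection acts by multiplication with $\mathrm{Id} - \ell\otimes\ell/|\ell|^2$, and for $\ell = 0$ both sides vanish.

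\textbf{Step 1.} Compute $\sigma_{-k,\alpha} \cdummy \nabla v = (2\pi)^{-d/2} (a_{k,\alpha} \cdummy i\ell)\, v\, e^{-ik\cdummy x}$, using $a_{-k,\alpha} = a_{k,\alpha}$. This is a single mode at frequency $\ell - k$. If $\ell = k$, it is a constant; the mean-free Leray projection $\mathbb{P}$ kills it, since $H$ consists of mean-free fields. This explains the exclusion $k \neq \ell$ in the sum. Otherwise, $\mathbb{P}$ applies the matrix $\Pi_{\ell-k} := \mathrm{Id} - (\ell-k)\otimes(\ell-k)/|\ell-k|^2$ to the vector $\hat v(\ell)$.

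\textbf{Step 2.} Apply $\sigma_{k,\alpha} \cdummy \nabla$ to the frequency-$(\ell-k)$ mode. This multiplies by $(2\pi)^{-d/2} e^{ik\cdummy x}\, a_{k,\alpha}\cdummy i(\ell-k)$ and returns the frequency to $\ell$. Since $a_{k,\alpha}\perp k$, we have $a_{k,\alpha}\cdummy(\ell-k) = a_{k,\alpha}\cdummy \ell$. The product of the two scalar factors is therefore $(2\pi)^{-d}\, i^2 (a_{k,\alpha}\cdummy\ell)^2 = -(2\pi)^{-d}(a_{k,\alpha}\cdummy \ell)^2$.

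\textbf{Step 3.} The outer $\mathbb{P}$ gives the factor $\mathrm{Id} - \ell\otimes\ell/|\ell|^2$. Collecting the prefactor $\delta^d/2$ and summing over $k,\alpha$ yields exactly the stated symbol. Here we use $\mathbb{P}$ self-adjoint and $h$ arbitrary, so that $\mathcal{F}[S_\delta v](\ell) = s_\delta(\ell)\hat v(\ell)$ for all smooth $v$.

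The main obstacle is justifying that the series defining $s_\delta(\ell)$, and the interchange of the sum with the inner product, converge. For fixed $\delta$, $(\mathcal{F}_{\mathbb{R}^d}K)(\delta k)$ is Schwartz in $k$. Hence the sum converges absolutely for each $\ell$, with $|s_\delta(\ell)| \lesssim |\ell|^2$ (uniformly in $\delta$, by a Riemann-sum bound). This suffices to define $S_\delta$ first on trigonometric polynomials and then to extend it by density as a second-order operator.

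A minor bookkeeping point is the consistency of the normalization conventions for $\mathcal{F}$ and $\sigma_{k,\alpha}$. This produces the single factor $(2\pi)^{-d}$, coming from the two factors $(2\pi)^{-d/2}$.
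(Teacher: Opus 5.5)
Your proposal is correct and follows essentially the paper's own argument: track the frequency shifts produced by multiplication with $\sigma_{\mp k,\alpha}$, use $a_{k,\alpha}\perp k$ to reduce both scalar factors to $i\,(a_{k,\alpha}\cdot\ell)$, and collect the Leray symbols at $\ell-k$ and at $\ell$. Working on a single Fourier mode instead of a general $\hat v(\ell+k)$ is only cosmetic, and the excluded term $k=\ell$ already vanishes because $a_{\ell,\alpha}\cdot\ell=0$ (the reason the paper gives), so you do not need the mean-free property of $\mathbb{P}$ there.
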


\begin{proof}
  Since $\sigma_{- k, \alpha} = (2 \pi)^{- d / 2} a_{k, \alpha} e^{- i k
  \cdummy}$, multiplication by $\sigma_{- k, \alpha}$ shifts the Fourier
  frequency by $k$:
  \[ \mathcal{F} [\sigma_{- k, \alpha} \cdummy f] (\ell) = (2 \pi)^{- d / 2}
     a_{k, \alpha} \cdummy \hat{f} (\ell + k) . \]
  Since $a_{k, \alpha} \cdummy k = 0$, applying this to $\nabla v$ gives
  \[ \mathcal{F} [\sigma_{- k, \alpha} \cdummy \nabla v] (\ell) = (2 \pi)^{- d
     / 2} a_{k, \alpha} \cdummy \mathcal{F} [\nabla v] (\ell + k) = (2 \pi)^{-
     d / 2} i (a_{k, \alpha} \cdummy \ell) \hat{v} (\ell + k) . \]
  Applying the Leray projection $\mathcal{F} [\mathbb{P}f] (\ell) = \left(
  \tmop{Id} - \frac{\ell \otimes \ell}{| \ell |^2} \right) \hat{f} (\ell)$ and
  then $\nabla$:
  \[ \mathcal{F} [\mathbb{P} (\sigma_{- k, \alpha} \cdummy \nabla v)] (\ell) =
     (2 \pi)^{- d / 2} \left( \tmop{Id} - \frac{\ell \otimes \ell}{| \ell |^2}
     \right) i (a_{k, \alpha} \cdummy \ell) \hat{v} (\ell + k), \]
  \[ \mathcal{F} [\nabla \mathbb{P} (\sigma_{- k, \alpha} \cdummy \nabla v)]
     (\ell) = - (2 \pi)^{- d / 2} \left( \tmop{Id} - \frac{\ell \otimes
     \ell}{| \ell |^2} \right) (a_{k, \alpha} \cdummy \ell) \ell \otimes
     \hat{v} (\ell + k) . \]
  Now, $\sigma_{k, \alpha}$ shifts the frequency by $- k$, so evaluating the
  above at $\ell - k$ for $k\neq \ell$  (the term $k=\ell$ vanishes due to $a_{k,\alpha}\cdot k=0$):
  \[ \mathcal{F} [\sigma_{k, \alpha} \cdummy \nabla \mathbb{P} (\sigma_{- k,
     \alpha} \cdummy \nabla v)] (\ell) = - (2 \pi)^{- d} (a_{k, \alpha}
     \cdummy \ell)^2 \left( \tmop{Id} - \frac{(\ell - k) \otimes (\ell - k)}{|
     \ell - k |^2} \right) \hat{v} (\ell) . \]
  A final application of $\mathbb{P}$ and summation over $k, \alpha$ with
  weights $\frac{\delta^d}{2} | (\mathcal{F}_{\mathbb{R}^d} K) (\delta k) |^2$
  yields the stated formula.
\end{proof}

Before we proceed, note that since $\{ a_{k, 1}, \ldots, a_{k, d - 1}, k / | k
| \}$ is an orthonormal basis of $\mathbb{R}^d$, it holds for every $\ell \in
\mathbb{R}^d$
\[ \ell = \sum_{\alpha = 1, \ldots, d - 1} (a_{k, \alpha} \cdummy \ell) a_{k,
   \alpha} + \left( \frac{k}{| k |} \cdummy \ell \right) \frac{k}{| k |}, \]
hence
\[ | \ell |^2 = \sum_{\alpha = 1, \ldots, d - 1} (a_{k, \alpha} \cdummy
   \ell)^2 + \left( \frac{k}{| k |} \cdummy \ell \right)^2, \]
and finally
\begin{equation}
  \sum_{\alpha = 1, \ldots, d - 1} (a_{k, \alpha} \cdummy \ell)^2 = | \ell |^2
  - \left( \frac{k}{| k |} \cdummy \ell \right)^2 = | \ell |^2 \left( 1 -
  \frac{(k  \cdummy \ell)^2}{| k |^2 | \ell |^2} \right) = | \ell |^2
  \sin^2 (\angle_{k, \ell}), \label{eq:22}
\end{equation}
where $\angle_{k, \ell}$ denotes the angle between the vectors $k, \ell$.

The key step is to replace the Leray projector at $\ell - k$ by the one at
$k$, introducing the simplified symbol $\tilde{s}_{\delta} (\ell)$, which
enables the explicit computation of the limit in Lemma~\ref{lem:5}.

\begin{lemma}
  \label{lem:3}For every $\ell \in \mathbb{Z}^d$ it holds $\lim_{\delta
  \rightarrow 0} (s_{\delta} (\ell) - \tilde{s}_{\delta} (\ell)) = 0$, where
  \begin{equation}
    \tilde{s}_{\delta} (\ell) = - \frac{(2 \pi)^{- d}}{2} \left( \tmop{Id} -
    \frac{\ell \otimes \ell}{| \ell |^2} \right) | \ell |^2 \delta^d \sum_{k
    \in \mathbb{Z}_0^d} | (\mathcal{F}_{\mathbb{R}^d} K) (\delta k) |^2 \sin^2
    (\angle_{k, \ell}) \left( \tmop{Id} - \frac{k \otimes k}{| k |^2} \right)
    . \label{eq:stilde}
  \end{equation}
\end{lemma}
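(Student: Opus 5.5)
Fix $\ell \in \mathbb{Z}^d$; for $\ell = 0$ both symbols vanish, because of the factor $(a_{k,\alpha}\cdot\ell)^2$, so assume $\ell \neq 0$. By \eqref{eq:22} the sum over $\alpha$ in $s_\delta(\ell)$ collapses to $|\ell|^2\sin^2(\angle_{k,\ell})$. The two symbols then differ only in the Leray projector: $s_\delta$ uses the one at $\ell - k$, and $\tilde{s}_\delta$ uses the one at $k$. The common prefactor $-\frac{(2\pi)^{-d}}{2}(\mathrm{Id} - \ell\otimes\ell/|\ell|^2)|\ell|^2$ is bounded and independent of $\delta$. It therefore suffices to show
\[ \delta^d \sum_{k \in \mathbb{Z}^d_0,\, k \neq \ell} |(\mathcal{F}_{\mathbb{R}^d} K)(\delta k)|^2 \sin^2(\angle_{k,\ell}) \left| \frac{(\ell - k)\otimes(\ell - k)}{|\ell - k|^2} - \frac{k\otimes k}{|k|^2} \right| \longrightarrow 0 , \]
together with the observation that the single term $k = \ell$ omitted from $s_\delta$ but present in $\tilde{s}_\delta$ contributes $\delta^d |\mathcal{F}_{\mathbb{R}^d}K(\delta\ell)|^2 \sin^2(0)\cdot(\dots) = 0$. (It is bounded by $\delta^d\|\mathcal{F}_{\mathbb{R}^d}K\|_\infty^2$ in any case.)

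The key estimate is the Lipschitz property of $\omega \mapsto \omega\otimes\omega/|\omega|^2$ away from the origin. For $|k| \geqslant 2|\ell|$ we have $|\ell - k| \geqslant |k|/2$, and hence
\[ \left| \frac{(\ell - k)\otimes(\ell - k)}{|\ell - k|^2} - \frac{k\otimes k}{|k|^2} \right| \lesssim \frac{|\ell|}{|k|} . \]
For the finitely many $k$ with $0 < |k| < 2|\ell|$, the difference is bounded by $2$. There are at most $C|\ell|^d$ such $k$, so this part of the sum is $\lesssim \delta^d |\ell|^d \|\mathcal{F}_{\mathbb{R}^d}K\|_\infty^2 \to 0$.

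For the far range, write the sum as a Riemann sum in the variable $\xi = \delta k$. The far-range sum is bounded by
\[ |\ell|\,\delta \cdot \delta^d \sum_{k\neq 0} \frac{|(\mathcal{F}_{\mathbb{R}^d} K)(\delta k)|^2}{|\delta k|} . \]
Since $\mathcal{F}_{\mathbb{R}^d}K$ is Schwartz and $|\xi|^{-1}$ is locally integrable in $d \geqslant 2$, the lattice sum $\delta^d\sum_{k\neq0} g(\delta k)$ with $g(\xi) = |\mathcal{F}_{\mathbb{R}^d}K(\xi)|^2/|\xi|$ stays bounded uniformly in $\delta$. To see this, split into $|\delta k| \leqslant 1$ and $|\delta k| > 1$. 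The second part is bounded by the rapid decay of $g$. The first is at most $\delta^{d-1}\sum_{0<|k|\leqslant\delta^{-1}}|k|^{-1} \lesssim \delta^{d-1}\delta^{-(d-1)} = O(1)$. Hence the far range is $O(|\ell|\delta) \to 0$.

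The main point requiring care is this uniform bound on the weighted lattice sum near the origin, where $1/|\delta k|$ blows up. The elementary counting above handles it because $d \geqslant 2$. Everything else consists of bounded factors and finitely many terms, each of size $O(\delta^d)$. The convergence is in fact quantitative, of order $O_\ell(\delta)$, which anticipates the quantitative version in Section~\ref{s:quanti}.
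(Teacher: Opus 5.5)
Your proof is correct and is essentially the paper's argument. You collapse the $\alpha$-sum with \eqref{eq:22}, bound the difference of the two rank-one projections by a constant for small $|k|$ and by $\lesssim |\ell|/|k|$ otherwise, and then control the resulting Schwartz-weighted lattice sum. The differences are minor: you split at $|k| = 2|\ell|$ and prove the Lipschitz bound directly instead of citing Lemma~5.5 of the reference; you handle the $k=\ell$ term explicitly; and your shell count $\delta^{d-1}\sum_{0<|k|\leqslant\delta^{-1}}|k|^{-1}\lesssim 1$ gives the rate $O_\ell(\delta)$ rather than the paper's $O_\ell(\delta^{1-\kappa})$.
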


\begin{proof}
  The case $\ell = 0$ is immediate since $s_{\delta} (0) = \tilde{s}_{\delta}
  (0) = 0$. For $\ell \neq 0$, we use the identity \eqref{eq:22} to perform
  the $\alpha$ sum and reduce to controlling $s_{\delta} (\ell) -
  \tilde{s}_{\delta} (\ell)$ via the estimate
  \[ \left| \frac{(\ell - k) \otimes (\ell - k)}{| \ell - k |^2} - \frac{k
     \otimes k}{| k |^2} \right| \leqslant \left\{ \begin{array}{llll}
       2, &  & \tmop{if} & | k | \leqslant 1,\\
       4 | \ell | / | k |, &  & \tmop{if} & | k | > 1,
     \end{array} \right. \]
  from Lemma 5.5 in {\cite{MR4265023}}. Splitting the sum accordingly and
  using $\sin^2 (\angle_{k, \ell}) \leqslant 1$, we bound the difference
  $s_{\delta} (\ell) - \tilde{s}_{\delta} (\ell)$ by a fixed multiple of
  \[ \delta^d \sum_{k \in \mathbb{Z}_0^d,k\neq\ell} | (\mathcal{F}_{\mathbb{R}^d} K)
     (\delta k) |^2 \sin^2 (\angle_{k, \ell}) \left| \frac{(\ell - k) \otimes
     (\ell - k)}{| \ell - k |^2} - \frac{k \otimes k}{| k |^2} \right| \]
  \[ \lesssim \delta^d \sum_{k \in \mathbb{Z}_0^d, | k | \leqslant 1} |
     (\mathcal{F}_{\mathbb{R}^d} K) (\delta k) |^2 + | \ell | \delta^d \sum_{k
     \in \mathbb{Z}_0^d, | k | > 1} | (\mathcal{F}_{\mathbb{R}^d} K) (\delta
     k) |^2 \frac{1}{| k |} \]
  \[ \lesssim \delta^d \sup_k | (\mathcal{F}_{\mathbb{R}^d} K) (\delta k) |^2
     + | \ell | \delta^d \sum_{k \in \mathbb{Z}_0^d, | k | > 1} |
     (\mathcal{F}_{\mathbb{R}^d} K) (\delta k) |^2 \frac{| k |^{d - 1 +
     \kappa}}{| k |^{d + \kappa}} \]
  \[ \lesssim \delta^d + | \ell | \delta^{1 - \kappa} \sup_k \{ |
     (\mathcal{F}_{\mathbb{R}^d} K) (\delta k) |^2 | \delta k |^{d - 1 +
     \kappa} \} \lesssim \delta^d + | \ell | \delta^{1 - \kappa}, \]
  for any $\kappa \in (0, 1)$, where both suprema are finite since
  $\mathcal{F}_{\mathbb{R}^d} K$ is a Schwartz function. Thus, the right hand
  side vanishes as $\delta \rightarrow 0$, proving the claim.
\end{proof}

\begin{lemma}
  \label{lem:5}For every $\ell \in \mathbb{Z}^d$ it holds
  \[ \lim_{\delta \rightarrow 0} \tilde{s}_{\delta} (\ell) = - \nu \left(
     \tmop{Id} - \frac{\ell \otimes \ell}{| \ell |^2} \right) | \ell |^2,
     \qquad \tmop{where} \qquad \nu = \left\{ \begin{array}{llll}
       \frac{1}{16} \| K \|_{L^2}^2, &  &  & d = 2,\\
       \frac{1}{5} \| K \|_{L^2}^2, &  &  & d = 3.
     \end{array} \right. \]
\end{lemma}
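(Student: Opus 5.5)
The case $\ell = 0$ is trivial, since both sides vanish. For $\ell \neq 0$ I would read \eqref{eq:stilde} as a Riemann sum. Set
\[ g_\ell (\xi) \assign | (\mathcal{F}_{\mathbb{R}^d} K) (\xi) |^2 \sin^2 (\angle_{\xi, \ell}) \Big( \tmop{Id} - \frac{\xi \otimes \xi}{| \xi |^2} \Big), \qquad \xi \in \mathbb{R}^d \setminus \{ 0 \} . \]
Then $\delta^d \sum_{k \in \mathbb{Z}^d_0} g_\ell (\delta k)$ is a Riemann sum for $\int_{\mathbb{R}^d} g_\ell (\xi) d \xi$ on the lattice $\delta \mathbb{Z}^d$, whose cells have volume $\delta^d$.

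The angular factor $\sin^2 (\angle_{\xi, \ell}) (\tmop{Id} - \xi \otimes \xi / | \xi |^2)$ is bounded by a constant and is continuous on $\mathbb{R}^d \setminus \{ 0 \}$. It is homogeneous of degree zero. Its only discontinuity is at the origin, a null set, and the single lattice point $k = 0$ is excluded anyway. The factor $| \mathcal{F}_{\mathbb{R}^d} K |^2$ is Schwartz. I would therefore compare the sum with the integral cell by cell: on cells at distance $\geqslant r$ from the origin use uniform continuity, control the tail $| \xi | > R$ by Schwartz decay, and note that the cells near the origin contribute $O (r^d)$. Letting $\delta \to 0$, then $r \to 0$, then $R \to \infty$ gives
\[ \lim_{\delta \to 0} \tilde{s}_\delta (\ell) = - \frac{(2 \pi)^{- d}}{2} \Big( \tmop{Id} - \frac{\ell \otimes \ell}{| \ell |^2} \Big) | \ell |^2 \int_{\mathbb{R}^d} g_\ell (\xi) d \xi . \]
This Riemann-sum justification is the only analytic step. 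I expect it to be routine, the main care being the discontinuity at the origin.

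Next I would exploit isotropy. $K$ is rotation invariant, so $\mathcal{F}_{\mathbb{R}^d} K$ is radial. In polar coordinates $\xi = r \omega$ the integral factorizes as
\[ \int_{\mathbb{R}^d} g_\ell = \Big( \int_0^\infty | (\mathcal{F}_{\mathbb{R}^d} K) (r) |^2 r^{d - 1} d r \Big) A_\ell, \qquad A_\ell \assign \int_{S^{d - 1}} (1 - (\omega \cdot e)^2) (\tmop{Id} - \omega \otimes \omega) d \sigma (\omega), \]
with $e = \ell / | \ell |$. By Plancherel, $\int_0^\infty | \mathcal{F}_{\mathbb{R}^d} K (r) |^2 r^{d - 1} d r = (2 \pi)^d \| K \|_{L^2}^2 / | S^{d - 1} |$. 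By rotational symmetry about $e$, $A_\ell = a \tmop{Id} + b\, e \otimes e$. After composing with $\tmop{Id} - e \otimes e$, only the value $a = \int_{S^{d - 1}} (1 - (\omega \cdot e)^2) (1 - (\omega \cdot f)^2) d \sigma$ survives, for any unit vector $f \perp e$. The limit is then $- \nu (\tmop{Id} - e \otimes e) | \ell |^2$ with $\nu = \frac{1}{2} (2 \pi)^{- d} \cdot (2 \pi)^d \| K \|^2_{L^2} \, a / | S^{d - 1} | = a \| K \|_{L^2}^2 / (2 | S^{d - 1} |)$.

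It remains to compute $a$ explicitly.
\begin{itemize}
\item For $d = 2$, with $e = (1, 0)$, $f = (0, 1)$ and $\omega = (\cos \theta, \sin \theta)$, the integrand is $\sin^2 \theta \cos^2 \theta$. Hence $a = \pi / 4$ and $\nu = (\pi / 4) / (4 \pi) \| K \|^2_{L^2} = \frac{1}{16} \| K \|_{L^2}^2$.
\item For $d = 3$, the integrand is $1 - x^2 - y^2 + x^2 y^2$ on $S^2$. Using $\int_{S^2} x^2 = 4 \pi / 3$ and $\int_{S^2} x^2 y^2 = 4 \pi / 15$, we get $a = 4 \pi - 8 \pi / 3 + 4 \pi / 15 = 8 \pi / 5$. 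Hence $\nu = (8 \pi / 5) / (8 \pi) \| K \|^2_{L^2} = \frac{1}{5} \| K \|^2_{L^2}$.
\end{itemize}
Both values match the statement.
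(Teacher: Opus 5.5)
Your proposal is correct and follows essentially the same route as the paper: Riemann-sum convergence to the integral $J(\ell)$ (the paper invokes its Lemma~\ref{l:riemann}, whose proof uses the same near-origin, compact-annulus and tail splitting you describe), isotropy reducing the matrix to a multiple of $\mathrm{Id} - \ell\otimes\ell/|\ell|^2$ after composition with the Leray factor, and an explicit angular integral combined with Plancherel giving $\nu = \frac{1}{16}\|K\|_{L^2}^2$ for $d=2$ and $\nu = \frac{1}{5}\|K\|_{L^2}^2$ for $d=3$. The only difference is cosmetic: you obtain the angular constant from the form $A_\ell = a\,\mathrm{Id} + b\, e\otimes e$ and the sphere moments $\int_{S^2} x^2 = 4\pi/3$, $\int_{S^2} x^2y^2 = 4\pi/15$, whereas the paper rotates $\ell$ to $f_d$, shows the rotated matrix is diagonal, and integrates in explicit polar/spherical coordinates.
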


\begin{proof}
  Recall from \eqref{eq:stilde} that
  \[ \tilde{s}_{\delta} (\ell) = - \frac{(2 \pi)^{- d}}{2} \left( \tmop{Id} -
     \frac{\ell \otimes \ell}{| \ell |^2} \right) | \ell |^2 \delta^d \sum_{k
     \in \mathbb{Z}_0^d} | (\mathcal{F}_{\mathbb{R}^d} K) (\delta k) |^2
     \sin^2 (\angle_{\delta k, \ell}) \left( \tmop{Id} - \frac{\delta k
     \otimes \delta k}{| \delta k |^2} \right) . \]
  By Lemma~\ref{l:riemann}, it holds
  \[ \delta^d \sum_{k \in \mathbb{Z}_0^d} | (\mathcal{F}_{\mathbb{R}^d} K)
     (\delta k) |^2 \sin^2 (\angle_{\delta k, \ell}) \left( \tmop{Id} -
     \frac{\delta k \otimes \delta k}{| \delta k |^2} \right) \]
  \begin{equation}
    \rightarrow \int_{\mathbb{R}^d} | (\mathcal{F}_{\mathbb{R}^d} K) (k) |^2
    \sin^2 (\angle_{k, \ell}) \left( \tmop{Id} - \frac{k \otimes k}{| k |^2}
    \right) d k \backassign J (\ell), \label{eq:J}
  \end{equation}
  since the function $k \mapsto | (\mathcal{F}_{\mathbb{R}^d} K) (k) |^2
  \sin^2 (\angle_{k, \ell}) \left( \mathrm{Id} - \frac{k \otimes k}{| k |^2} \right)$ is
  continuous away from zero, bounded near zero and rapidly decaying at
  infinity since $\mathcal{F}_{\mathbb{R}^d} K$ is Schwartz.
  
  Step 1: Rotation and diagonal structure. Since the integrand defining $J
  (\ell)$ depends on $\ell$ only through its direction $\ell / | \ell |$, so
  does $J (\ell)$. Let $R = R_{\ell / | \ell |}$ be the rotation matrix such
  that $R \ell / | \ell | = f_d$ where $(f_j)_{j = 1, \ldots, d}$ is the
  canonical orthonormal basis of $\mathbb{R}^d$. The matrix is given by $$R =
  (a_{\ell, 1}, \ldots, a_{\ell, d - 1}, \ell / | \ell |)^{\ast}$$ and
  satisfies $R a_{\ell, \alpha} = f_{\alpha}$ for $\alpha = 1, \ldots, d - 1$.
  By radial symmetry of $\mathcal{F}_{\mathbb{R}^d} K$,
  \[ R J (\ell) R^{\ast} = \int_{\mathbb{R}^d} | (\mathcal{F}_{\mathbb{R}^d}
     K) (R k) |^2 \sin^2 (\angle_{R k, f_d}) \left( \tmop{Id} - \frac{R k
     \otimes R k}{| R k |^2} \right) d k \]
  \[ = \int_{\mathbb{R}^d} | (\mathcal{F}_{\mathbb{R}^d} K) (k) |^2 \sin^2
     (\angle_{k, f_d}) \left( \tmop{Id} - \frac{k \otimes k}{| k |^2} \right)
     d k. \]
  The off-diagonal entries of $R J (\ell) R^{\ast}$ vanish since the integrand
  is odd under the reflection $k_i \mapsto - k_i$ for $i \neq j$. By the
  rotational symmetry of the integrand within the subspace $f_d^{\perp} =
  \tmop{span} \{ f_1, \ldots, f_{d - 1} \}$, the upper-left $(d - 1) \times (d
  - 1)$ block is a scalar multiple of $\tmop{Id}_{\mathbb{R}^{d - 1}}$, so $R
  J (\ell) R^{\ast} = \tmop{diag} (c, \ldots, c, \tilde{c})$, for some
  constants $c$, $\tilde{c}$ depending only on $K$ and $d$.
  
  Step 2: Explicit computation of $c$, case $d = 3$. In spherical coordinates
  $k_1 = r \sin \psi \cos \varphi$, $k_2 = r \sin \psi \sin \varphi$, $k_3 = r
  \cos \psi$, $\angle_{k, f_3} = \psi$ and $(\tmop{Id} - k \otimes k / | k
  |^2)_{11} = 1 - \sin^2 \psi \cos^2 \varphi$. Using radiality of $K$, i.e.
  $(\mathcal{F}_{\mathbb{R}^d} K) (k) =\varkappa (| k |)$, we obtain
  \[ c = (R J (\ell) R^{\ast})_{11} = \int_0^{\infty} d r\,\varkappa^2 (r)
     \int_0^{\pi} d \psi \int_0^{2 \pi} d \varphi \sin^2 \psi (1 - \sin^2 \psi
     \cos^2 \varphi) r^2 \sin \psi \]
  \[ = \int_0^{\infty} d r\,\varkappa^2 (r) r^2 \left( 2 \pi \int_0^{\pi} d
     \psi \sin^3 \psi - \pi \int_0^{\pi} d \psi \sin^5 \psi \right) =
     \int_0^{\infty} d r\,\varkappa^2 (r) r^2 \left( \frac{2 \pi \cdummy 4}{3}
     - \frac{\pi \cdummy 16}{15} \right) \]
  \[ = \frac{8 \pi}{5} \int_0^{\infty} d r\,\varkappa^2 (r) r^2 . \]
  Converting back to Euclidean coordinates
  \[ c = \frac{2}{5} \int_0^{\infty} d r\,\varkappa^2 (r) \int_0^{\pi} d \psi
     \int_0^{2 \pi} d \varphi r^2 \sin \psi = \frac{2}{5} \int_{\mathbb{R}^3}
     | (\mathcal{F}_{\mathbb{R}^3} K) (k) |^2 d k = \frac{2}{5} (2 \pi)^3 \| K
     \|_{L^2}^2 . \]
  Step 3: Explicit computation of $c$, case $d = 2$. In polar coordinates $k_1
  = r \cos \theta$, $k_2 = r \sin \theta$, we have $\angle_{k, f_2} = \pi / 2
  - \theta$, hence $\sin^2 (\angle_{k, f_2}) = \cos^2 (\theta)$, and
  $(\tmop{Id} - k \otimes k / | k |^2)_{11} = 1 - \cos^2 \theta = \sin^2
  \theta$. Using radiality of $K$, we obtain
  \[ c = (R J (\ell) R^{\ast})_{11} = \int_0^{\infty} d r\,\varkappa^2 (r) r
     \int_0^{2 \pi} d \theta \cos^2 \theta \sin^2 \theta = \frac{\pi}{4}
     \int_0^{\infty} d r\,\varkappa^2 (r) r, \]
  hence in Euclidean coordinates,
  \[ c = \frac{1}{8} \int_0^{\infty} d r\,\varkappa^2 (r) r \int_0^{2 \pi} d
     \theta = \frac{1}{8} \int_{\mathbb{R}^2} | (\mathcal{F}_{\mathbb{R}^2} K)
     (k) |^2 d k = \frac{1}{8} (2 \pi)^2 \| K \|_{L^2}^2 . \]
  Step 4: Conclusion. We have established that $R J (\ell) R^{\ast}$ is
  diagonal with $(R J (\ell) R^{\ast})_{j j} = 2 \nu (2 \pi)^d$ for $j = 1,
  \ldots, d - 1$ and some value in the last diagonal entry. Combined with the
  factor $\frac{(2 \pi)^{- d}}{2} \left( \mathrm{Id} - \frac{\ell \otimes \ell}{| \ell
  |^2} \right)$ from \eqref{eq:stilde}, we compute
  \[ \frac{(2 \pi)^{- d}}{2} \left( \mathrm{Id} - \frac{\ell \otimes \ell}{| \ell |^2}
     \right) J = \frac{(2 \pi)^{- d}}{2} R^{\ast} \left[ R \left( \mathrm{Id} -
     \frac{\ell \otimes \ell}{| \ell |^2} \right) R^{\ast} \right] [R J
     R^{\ast}] R \]
  \[ = R^{\ast} (\mathrm{Id} - f_d \otimes f_d) \tmop{diag} \left( \nu, \ldots, \nu,
     \frac{(2 \pi)^{- d}}{2} \tilde{c} \right) R = \nu R^{\ast} \tmop{diag}
     (1, \ldots, 1, 0) R, \]
  where in the last step we used that $\tmop{Id} - f_d \otimes f_d =
  \tmop{diag} (1, \ldots, 1, 0)$ kills the last diagonal entry, so the value
  of $\tilde{c}$ plays no role. The resulting operator acts on the orthonormal
  basis $\{ a_{\ell, 1}, \ldots, a_{\ell, d - 1}, \ell / | \ell | \}$ as
  \[ \nu R^{\ast} \tmop{diag} (1, \ldots, 1, 0) R \frac{\ell}{| \ell |} = \nu
     R^{\ast} \tmop{diag} (1, \ldots, 1, 0) f_d = 0, \]
  \[ \nu R^{\ast} \tmop{diag} (1, \ldots, 1, 0) R a_{\ell, \alpha} = \nu
     R^{\ast} f_{\alpha} = \nu a_{\ell, \alpha}, \qquad \alpha = 1, \ldots, d
     - 1, \]
  which coincides with $\nu \left( \tmop{Id} - \frac{\ell \otimes \ell}{| \ell
  |^2} \right)$ on the orthonormal basis, hence the two operators are equal.
  Therefore, $\lim_{\delta \rightarrow 0} \tilde{s}_{\delta} (\ell) = - \nu
  \left( \tmop{Id} - \frac{\ell \otimes \ell}{| \ell |^2} \right) | \ell |^2$,
  completing the proof.
\end{proof}

\begin{proposition}
  \label{p:11re}Let $K$ be smooth, compactly supported and radially symmetric,
  and let $\nu$ be as in Lemma~\ref{lem:5}. Then for every smooth
  divergence-free $v$ and every $\gamma \in \mathbb{R}$, $S_{\delta} v
  \rightarrow \nu \mathbb{P} \Delta v$ in $H^{\gamma}$ as $\delta \rightarrow
  0$. Moreover, the symbol $s_{\delta} (\ell)$ satisfies the uniform bound $|
  s_{\delta} (\ell) | \lesssim (1 + \| K \|_{L^2}^2) | \ell |^2$ for all
  $\delta \in (0, \delta_0]$ and some $\delta_0 \in (0, 1)$.
\end{proposition}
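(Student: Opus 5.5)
The plan is to work entirely on the Fourier side, using Lemma~\ref{l:7}. Since $v$ is smooth and divergence-free, $\mathcal{F}[\nu\mathbb{P}\Delta v](\ell) = -\nu |\ell|^2 (\mathrm{Id} - \ell\otimes\ell/|\ell|^2)\hat v(\ell)$. This is exactly the limit symbol from Lemma~\ref{lem:5} applied to $\hat v(\ell)$. Consequently
\[ \|S_\delta v - \nu\mathbb{P}\Delta v\|_{H^\gamma}^2 = (2\pi)^{-d}\sum_{\ell} (1+|\ell|^2)^{\gamma} \Big| \big(s_\delta(\ell) + \nu|\ell|^2(\mathrm{Id} - \tfrac{\ell\otimes\ell}{|\ell|^2})\big)\hat v(\ell)\Big|^2 . \]
For each fixed $\ell$, Lemmas~\ref{lem:3} and~\ref{lem:5} show that the summand tends to zero. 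I would conclude by dominated convergence over $\ell\in\mathbb{Z}^d$. The dominating sequence is $C(1+|\ell|^2)^{\gamma+2}|\hat v(\ell)|^2$, which is summable because $\hat v$ decays faster than any polynomial. The whole argument therefore reduces to the uniform symbol bound, which is the second claim and which I prove first.

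\textbf{Uniform bound.} From Lemma~\ref{l:7} and the identity \eqref{eq:22}, using that Leray projectors have operator norm at most $1$ and that $(a_{k,\alpha}\cdot\ell)^2\le|\ell|^2$, we get
\[ |s_\delta(\ell)| \lesssim |\ell|^2\, \delta^d \sum_{k\in\mathbb{Z}^d_0} |(\mathcal{F}_{\mathbb{R}^d}K)(\delta k)|^2 . \]
It remains to bound the Riemann sum $\delta^d\sum_{k\neq 0} |\widehat K(\delta k)|^2$ uniformly for small $\delta$ by a multiple of $\int_{\mathbb{R}^d}|\widehat K|^2 + \text{(small)} = (2\pi)^d\|K\|_{L^2}^2 + o(1)$. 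I would obtain this from Lemma~\ref{l:riemann}, which gives convergence of such sums since $|\widehat K|^2$ is Schwartz. Choosing $\delta_0$ so that the sum is at most $(2\pi)^d\|K\|_{L^2}^2 + 1$ for $\delta\le\delta_0$ yields $|s_\delta(\ell)|\lesssim(1+\|K\|_{L^2}^2)|\ell|^2$.

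If a direct bound is preferred over Lemma~\ref{l:riemann}, I would compare each term with the integral of $|\widehat K|^2$ over the cube of side $\delta$ centred at $\delta k$. A mean-value estimate bounds the error by $\delta^{d+1}\sup_{|\xi-\delta k|\le\delta}|\nabla|\widehat K|^2(\xi)|$, which is summable over $k$ with total $O(\delta)$ thanks to rapid decay.

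The main obstacle is the domination step, since the pointwise limits of Lemmas~\ref{lem:3} and~\ref{lem:5} are not uniform in $\ell$. The uniform quadratic symbol bound handles this, combined with the fact that the limit symbol is also $O(|\ell|^2)$ and that the smoothness of $v$ absorbs any polynomial weight. This is what makes the convergence hold for every $\gamma\in\mathbb{R}$. The case $\ell=0$ is trivial because both symbols vanish there, and $\hat v(0)=0$ in any event.
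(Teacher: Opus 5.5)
Your proposal is correct and follows essentially the paper's proof. You use the same uniform bound $|s_\delta(\ell)|\lesssim |\ell|^2\,\delta^d\sum_{k}|(\mathcal{F}_{\mathbb{R}^d}K)(\delta k)|^2$, controlled via Lemma~\ref{l:riemann}, together with the pointwise convergence from Lemmas~\ref{l:7}, \ref{lem:3} and \ref{lem:5}; your dominated-convergence step over $\ell$ is simply a compact form of the paper's explicit split into $|\ell|\le M$ and $|\ell|>M$, where the high modes are bounded by $M^{-2\kappa}\|v\|_{H^{\gamma+2+\kappa}}^2$.
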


\begin{proof}
  By Lemma~\ref{l:7}, Lemma~\ref{lem:3} and Lemma~\ref{lem:5}, $s_{\delta}
  (\ell) \rightarrow - \nu \left( \tmop{Id} - \frac{\ell \otimes \ell}{| \ell
  |^2} \right) | \ell |^2$ pointwise for every $\ell \in \mathbb{Z}^d$. For
  the uniform bound, using $| a_{k, \alpha} \cdummy \ell | \leqslant | \ell |$
  and $\left| \tmop{Id} - \frac{(\ell - k) \otimes (\ell - k)}{| \ell - k |^2}
  \right| \leqslant 1$,
  \[ | s_{\delta} (\ell) | \lesssim | \ell |^2 \delta^d \sum_{k \in
     \mathbb{Z}^d_0} | (\mathcal{F}_{\mathbb{R}^d} K) (\delta k) |^2 \lesssim
     (1 + \| K \|_{L^2}^2) | \ell |^2 \]
  for $\delta \in (0, \delta_0]$, where the last step uses
  Lemma~\ref{l:riemann} to bound the Riemann sum uniformly. For the
  convergence in $H^{\gamma}$, write
  \[ \| S_{\delta} v - \nu \mathbb{P} \Delta v \|_{H^{\gamma}}^2 \leqslant
     \sum_{\ell \in \mathbb{Z}^d, | \ell | \leqslant M} (1 + | \ell
     |^2)^{\gamma} \left| \left( s_{\delta} (\ell) + \nu \left( \tmop{Id} -
     \frac{\ell \otimes \ell}{| \ell |^2} \right) | \ell |^2 \right) \right|^2
     | \hat{v} (\ell) |^2 \]
  \[ + \sum_{\ell \in \mathbb{Z}^d, | \ell | > M} (1 + | \ell |^2)^{\gamma}
     \left| \left( s_{\delta} (\ell) + \nu \left( \tmop{Id} - \frac{\ell
     \otimes \ell}{| \ell |^2} \right) | \ell |^2 \right) \right|^2 | \hat{v}
     (\ell) |^2 . \]
  The first term vanishes for every fixed $M > 0$ as $\delta \rightarrow 0$.
  The second term is bounded, for any $\kappa > 0$, by
  \[ \lesssim (1 + \| K \|^2_{L^2})^2 \sum_{\ell \in \mathbb{Z}^d, | \ell | >
     M} (1 + | \ell |^2)^{\gamma} | \ell |^4 | \hat{v} (\ell) |^2 \lesssim
     \frac{1}{M^{2 \kappa}} (1 + \| K \|^2_{L^2})^2 \| v \|_{H^{\gamma + 2 +
     \kappa}}^2 . \]
  Choosing first $M$ large and then $\delta$ small gives the result.
\end{proof}

\section{Law of large numbers}\label{s:LLN}

\subsection{Error terms}\label{s:err}

We estimate the error terms in \eqref{eq:kl1} and \eqref{eq:kl2} as well as
the boundary terms at time 0 and $t$ appearing after the application of
It{\^o}'s formula to $\varphi (u^{\varepsilon, \delta}) + \varepsilon
\varphi_1 (u^{\varepsilon, \delta}, m^{\varepsilon, \delta}) + \varepsilon^2
\varphi_2 (u^{\varepsilon, \delta}, m^{\varepsilon, \delta})$, where $\varphi
(u) = \langle u, h \rangle$, $\varphi_1 (u, n) = \langle n \cdummy \nabla u, h
\rangle$, and $\varphi_2 (u, n) = \frac{1}{2} \langle n \cdummy \nabla
\mathbb{P} (n \cdummy \nabla u), h \rangle$, for a smooth divergence-free test
function $h$.

The boundary terms are controlled using the uniform bound on $m^{\varepsilon,
\delta}$ from \eqref{eq:m13} and the energy estimate \eqref{eq:energy}:
\[ \mathbb{E} [| \varepsilon \varphi_1 (u^{\varepsilon, \delta}_t,
   m^{\varepsilon, \delta}_t) |] \lesssim \varepsilon \mathbb{E} [\|
   m^{\varepsilon, \delta}_t \|_{L^2} \| u^{\varepsilon, \delta}
   \|_{L^{\infty} L^2}] \lesssim \varepsilon, \]
\[ \mathbb{E} [| \varepsilon^2 \varphi_2 (u^{\varepsilon, \delta}_t,
   m^{\varepsilon, \delta}_t) |] \lesssim \varepsilon^2 \mathbb{E} [\|
   m^{\varepsilon, \delta}_t \|_{L^4} (\| \nabla m^{\varepsilon, \delta}_t
   \|_{L^4} + \| m^{\varepsilon, \delta}_t \|_{L^4}) \| u^{\varepsilon,
   \delta} \|_{L^{\infty} L^2}] \lesssim \varepsilon^2 \delta^{- 1} . \]
For the remaining error terms from \eqref{eq:kl1} and \eqref{eq:kl2}, the
estimates follow a common structure: we bound $\nabla u^{\varepsilon, \delta}$
in $L^2 L^{2}$ via the energy estimate, and control $m^{\varepsilon, \delta}$
and its derivatives in appropriate $L^p$-spaces using \eqref{eq:m13}. A key
observation governing the derivative counting is that among all derivatives
appearing in a given term, precisely one is always tied to $u^{\varepsilon,
\delta}$, while each remaining derivative either hits $m^{\varepsilon,
\delta}$, contributing a factor of $\delta^{- 1}$, or is transferred to the
test function $h$ via integration by parts at no $\delta$ cost. Specifically,
we immediately obtain
\[ \mathbb{E} \left[ \left| \int_0^t \varepsilon \langle \Delta
   u^{\varepsilon, \delta}, D_u \varphi_1 (u^{\varepsilon, \delta},
   m^{\varepsilon, \delta}) \rangle d s \right| \right] = \varepsilon
   \mathbb{E} \left[ \left| \int_0^t \langle m^{\varepsilon, \delta} \cdummy
   \nabla (\Delta u^{\varepsilon, \delta}), h \rangle d s \right| \right] \]
\[ = \varepsilon \mathbb{E} \left[ \left| \int_0^t \langle \nabla
   u^{\varepsilon, \delta}, \nabla (m^{\varepsilon, \delta} \cdummy \nabla h)
   \rangle d s \right| \right] \lesssim \varepsilon \mathbb{E} \left[ \int_0^t
   \| \nabla u^{\varepsilon, \delta} \|_{L^2} (\| m^{\varepsilon, \delta}
   \|_{L^2} + \| \nabla m^{\varepsilon, \delta} \|_{L^2}) d s \right] \]
\[ \lesssim \varepsilon \left( \mathbb{E} \left[ \int_0^t \| \nabla
   u^{\varepsilon, \delta} \|_{L^2}^2 d s \right] \right)^{1 / 2} (\mathbb{E}
   [\| m^{\varepsilon, \delta} \|^2_{L^2} + \| \nabla m^{\varepsilon, \delta}
   \|^2_{L^2}])^{1 / 2} \lesssim \varepsilon \delta^{- 1}, \]
and similarly
\[ \mathbb{E} \left[ \left| \varepsilon \int_0^t \langle \mathbb{P}(m^{\varepsilon,
   \delta} \cdummy \nabla u^{\varepsilon, \delta}), D_u \varphi_2
   (u^{\varepsilon, \delta}, m^{\varepsilon, \delta}) \rangle d s \right|
   \right] \]
\[ =\mathbb{E} \left[ \left| \frac{\varepsilon}{2} \int_0^t \langle
   m^{\varepsilon, \delta} \cdummy \nabla \mathbb{P} (m^{\varepsilon, \delta}
   \cdummy \nabla \mathbb{P} (m^{\varepsilon, \delta} \cdummy \nabla
   u^{\varepsilon, \delta})), h \rangle d s \right| \right] \lesssim
   \varepsilon \delta^{- 1}, \]
\[ \mathbb{E} \left[ \left| \varepsilon^2 \int_0^t \langle \Delta
   u^{\varepsilon, \delta}, D_u \varphi_2 (u^{\varepsilon, \delta},
   m^{\varepsilon, \delta}) \rangle d s \right| \right] =\mathbb{E} \left[
   \left| \frac{\varepsilon^2}{2} \int_0^t \langle m^{\varepsilon, \delta}
   \cdummy \nabla \mathbb{P} (m^{\varepsilon, \delta} \cdummy \nabla (\Delta
   u^{\varepsilon, \delta})), h \rangle d s \right| \right] \lesssim
   \varepsilon^2 \delta^{- 2} . \]
For the convective term of the first corrector, we obtain
\[ \mathbb{E} \left[ \left| \varepsilon \int_0^t \langle \mathbb{P}
   (u^{\varepsilon, \delta} \cdummy \nabla u^{\varepsilon, \delta}), D_u
   \varphi_1 (u^{\varepsilon, \delta}, m^{\varepsilon, \delta}) \rangle d s
   \right| \right] =\mathbb{E} \left[ \left| \varepsilon \int_0^t \langle
   m^{\varepsilon, \delta} \cdummy \nabla \mathbb{P} (u^{\varepsilon, \delta}
   \cdummy \nabla u^{\varepsilon, \delta}), h \rangle d s \right| \right] \]
\[ \lesssim \varepsilon \mathbb{E} \left[ \int_0^t \| m^{\varepsilon, \delta}
   \|_{L^{\infty}} \| \mathbb{P} (u^{\varepsilon, \delta} \cdummy \nabla
   u^{\varepsilon, \delta}) \|_{L^{1+}} d s \right] \lesssim \varepsilon
   \mathbb{E} \left[ \int_0^t \| m^{\varepsilon, \delta} \|_{L^{\infty}} \|
   u^{\varepsilon, \delta} \|_{L^{2+}} \| \nabla u^{\varepsilon, \delta} \|_{L^2}
   d s \right] \lesssim \varepsilon , \]
   with exponents as in Section~\ref{s:notation}.
  In the last step we used interpolation between $L^{\infty}H$ and $L^{2}H^{1}$ to obtain a bound of
$u^{\varepsilon,\delta}$ in $L^{\infty-}L^{2+}$ with $\infty-$ denoting a large but finite exponent.

By a similar argument, the convective term of the second corrector is bounded
as
\[ \mathbb{E} \left[ \left| \varepsilon^2 \int_0^t \langle \mathbb{P}
   (u^{\varepsilon, \delta} \cdummy \nabla u^{\varepsilon, \delta}), D_u
   \varphi_2 (u^{\varepsilon, \delta}, m^{\varepsilon, \delta}) \rangle d s
   \right| \right] \]
\[ =\mathbb{E} \left[ \left| \frac{\varepsilon^2}{2} \int_0^t \langle
   m^{\varepsilon, \delta} \cdummy \nabla \mathbb{P} (m^{\varepsilon, \delta}
   \cdummy \nabla \mathbb{P} (u^{\varepsilon, \delta} \cdummy \nabla
   u^{\varepsilon, \delta})), h \rangle d s \right| \right] \lesssim
   \varepsilon^2 \delta^{- 1} . \]
\subsection{Martingale terms}\label{s:noise1}

The stochastic integral associated to the first corrector $\varphi_1$ is
estimated by Burkholder--Davis--Gundy's inequality, \eqref{eq:eigen}, the fact
that $\mathcal{F}_{\mathbb{R}^d} K$ is a Schwartz function and Parseval's
identity as
\[ \mathbb{E} \left[ \sup_{t \in [0, T]} \left| \int_0^t \langle
   \mathcal{Q}^{1 / 2}_{\delta} d W \cdummy \nabla u^{\varepsilon, \delta}, h
   \rangle \right|^2 \right] \lesssim \mathbb{E} \left[ \int_0^T \sum_{k,
   \alpha} | \langle (\mathcal{Q}^{1 / 2}_{\delta} \sigma_{k, \alpha}) \cdummy
   \nabla u^{\varepsilon, \delta}, h \rangle |^2 d s \right] \]
\[ =\mathbb{E} \left[ \int_0^T \delta^d \sum_{k, \alpha} |
   (\mathcal{F}_{\mathbb{R}^d} K) (\delta k) |^2 | \langle \sigma_{k, \alpha}
   \cdummy \nabla u^{\varepsilon, \delta}, h \rangle |^2 d s \right] \]
\[ \lesssim \delta^d \mathbb{E} \left[ \int_0^T \sum_{k, \alpha} | \langle
   \sigma_{k, \alpha}, (\nabla h)^T u^{\varepsilon, \delta} \rangle |^2 d s
   \right] \lesssim \delta^d \mathbb{E} \left[ \int_0^T \| u^{\varepsilon, \delta}
   \|_{L^2}^2 d s \right] \lesssim \delta^d . \]

The stochastic integrals associated to the second corrector $\varphi_2$ read
as
\[ \frac{\varepsilon}{2} \left( \int_0^t \langle \mathcal{Q}^{1 / 2}_{\delta}
   d W \cdummy \nabla \mathbb{P} (m^{\varepsilon, \delta} \cdummy \nabla
   u^{\varepsilon, \delta}), h \rangle + \int_0^t \langle m^{\varepsilon,
   \delta} \cdummy \nabla \mathbb{P} (\mathcal{Q}^{1 / 2}_{\delta} d W \cdummy
   \nabla u^{\varepsilon, \delta}), h \rangle \right) . \]
We apply It{\^o}'s isometry, \eqref{eq:eigen} and \eqref{eq:supm} to estimate
\[ \varepsilon^2 \mathbb{E} \left[ \left| \int_0^t \langle \mathcal{Q}^{1 /
   2}_{\delta} d W \cdummy \nabla \mathbb{P} (m^{\varepsilon, \delta} \cdummy
   \nabla u^{\varepsilon, \delta}), h \rangle \right|^2 \right] =
   \varepsilon^2 \mathbb{E} \left[ \int_0^t \sum_{k, \alpha} | \langle
   (\mathcal{Q}^{1 / 2}_{\delta} \sigma_{k, \alpha}) \cdummy \nabla \mathbb{P}
   (m^{\varepsilon, \delta} \cdummy \nabla u^{\varepsilon, \delta}), h \rangle
   |^2 d s \right] \]
\[ = \varepsilon^2 \mathbb{E} \left[ \int_0^t \delta^d \sum_{k, \alpha} |
   (\mathcal{F}_{\mathbb{R}^d} K) (\delta k) |^2 | \langle \sigma_{k, \alpha}
   \cdummy \nabla \mathbb{P} (m^{\varepsilon, \delta} \cdummy \nabla
   u^{\varepsilon, \delta}), h \rangle |^2 d s \right] \]
\[ \lesssim \varepsilon^2 \mathbb{E} \left[ \int_0^t \delta^d \sum_{k, \alpha}
   | (\mathcal{F}_{\mathbb{R}^d} K) (\delta k) |^2 \| \sigma_{k, \alpha}
   \|_{L^4}^2 \| m^{\varepsilon, \delta} \|_{L^4}^2 \| \nabla u^{\varepsilon,
   \delta} \|_{L^2}^2 \right] \lesssim \varepsilon^2 \log \varepsilon^{- 1} .
\]
\subsection{Uniform time regularity}\label{s:time}

We will show that the quantity $u^{\varepsilon, \delta} + \varepsilon
\mathbb{P} (m^{\varepsilon, \delta} \cdummy \nabla u^{\varepsilon, \delta})$
is uniformly bounded in $L^2 (\Omega ; W^{\gamma, 2} (0, T ; H^{- \theta}))$
for some $\theta > 0$ and any $\gamma \in (0, 1 / 2)$. While $u^{\varepsilon,
\delta}$ alone does not enjoy uniform time regularity, the first corrector
term $\varepsilon \mathbb{P} (m^{\varepsilon, \delta} \cdot \nabla
u^{\varepsilon, \delta})$ vanishes in the limit, so that the uniform time
regularity of the sum suffices to recover strong convergence of
$u^{\varepsilon, \delta}$ in $L^2 (0, T ; H)$, as needed for the passage to
the limit in the convective term.

It{\^o}'s formula yields
\[ d [u^{\varepsilon, \delta} + \varepsilon \mathbb{P} (m^{\varepsilon,
   \delta} \cdummy \nabla u^{\varepsilon, \delta})] = [\Delta u^{\varepsilon,
   \delta} -\mathbb{P} (u^{\varepsilon, \delta} \cdummy \nabla u^{\varepsilon,
   \delta}) +\mathbb{P} (m^{\varepsilon, \delta} \cdummy \nabla \mathbb{P}
   (m^{\varepsilon, \delta} \cdummy \nabla u^{\varepsilon, \delta}))] d t \]
\begin{equation}
  + [\varepsilon \mathbb{P} (m^{\varepsilon, \delta} \cdummy \nabla (\Delta
  u^{\varepsilon, \delta})) - \varepsilon \mathbb{P} (m^{\varepsilon, \delta}
  \cdummy \nabla \mathbb{P} (u^{\varepsilon, \delta} \cdummy \nabla
  u^{\varepsilon, \delta}))] d t + d\mathbb{P}M^{\varepsilon, \delta},
  \label{eq:118}
\end{equation}
where $M^{\varepsilon, \delta}$ is the martingale term associated to the first
corrector, treated in Section~\ref{s:noise1}. It was shown that
$M^{\varepsilon, \delta} \rightarrow 0$ in $L^2 (\Omega ; C ([0, T] ; H^{-
\theta}))$ for some $\theta > 0$, and Lemma~2.1 in {\cite{MR1339739}} then
yields $M^{\varepsilon, \delta} \rightarrow 0$ in $L^2 (\Omega ; W^{\gamma, 2}
(0, T ; H^{- \theta}))$ for every $\gamma \in (0, 1 / 2)$.

Our goal now is to show a uniform in time regularity of the time integral of
the remaining terms on the right hand side of \eqref{eq:118} which we jointly
denote by $N^{\varepsilon, \delta}$. By \eqref{eq:energy} and \eqref{eq:m13}
and Ladyzhenskaya inequality we have
\[ \mathbb{E} \left[ \left| \int_s^t \langle \Delta u^{\varepsilon, \delta}, h
   \rangle d r \right|^2 \right] \lesssim \mathbb{E} \left[ \left( \int_s^t \|
   u^{\varepsilon, \delta} \|_{L^2} d r \right)^2 \right] \lesssim | t - s
   |^2, \]
\[ \mathbb{E} \left[ \left| \int_s^t \langle u^{\varepsilon, \delta} \cdummy
   \nabla u^{\varepsilon, \delta}, h \rangle d r \right|^2 \right] \lesssim
   \mathbb{E} \left[ \left| \int_s^t \| u^{\varepsilon, \delta} \|_{L^2}^2 d r
   \right|^2 \right] \lesssim | t - s |^2, \]
\[ \mathbb{E} \left[ \left| \int_s^t \langle m^{\varepsilon, \delta} \cdummy
   \nabla \mathbb{P} (m^{\varepsilon, \delta} \cdummy \nabla u^{\varepsilon,
   \delta}), h \rangle d r \right|^2 \right] \lesssim \mathbb{E} \left[ \left(
   \int_s^t \| m^{\varepsilon, \delta} \|_{L^4}^2 \| \nabla u^{\varepsilon,
   \delta} \|_{L^2} d r \right)^2 \right] \lesssim | t - s | . \]
The remaining terms are controlled similarly to Section~\ref{s:err} as follows
\[ \varepsilon^2 \mathbb{E} \left[ \left| \int_s^t \langle m^{\varepsilon,
   \delta} \cdummy \nabla (\Delta u^{\varepsilon, \delta}), h \rangle d r
   \right|^2 \right] \lesssim \varepsilon^2 \mathbb{E} \left[ \left( \int_s^t
   \| m^{\varepsilon, \delta} \|_{H^1} \| \nabla u^{\varepsilon, \delta}
   \|_{L^2} d r \right)^2 \right] \lesssim \varepsilon^2 \delta^{- 2} | t - s
   | \lesssim | t - s |, \]
\[ \varepsilon^2 \mathbb{E} \left[ \left| \int_s^t \langle m^{\varepsilon,
   \delta} \cdummy \nabla \mathbb{P} (u^{\varepsilon, \delta} \cdummy \nabla
   u^{\varepsilon, \delta}), h \rangle d r \right|^2 \right] \lesssim
   \varepsilon^2 \mathbb{E} \left[ \left( \int_s^t \| m^{\varepsilon, \delta}
   \|_{L^{\infty}} \| u^{\varepsilon, \delta} \|_{L^{2+}} \| \nabla
   u^{\varepsilon, \delta} \|_{L^2} d r \right)^2 \right] 
   \]
   \[\lesssim
   \varepsilon^2 | t - s |^{1-\kappa} \]
   for arbitrarily small $\kappa\in (0,1)$.
Altogether, we have shown that for every $\kappa \in (0, 1)$
\[ \mathbb{E} [\| \langle N^{\varepsilon, \delta}, h \rangle \|_{W^{\gamma, 2}
   (0, T)}^2] = \int_0^T \int_0^T \frac{\mathbb{E} [| \langle N^{\varepsilon,
   \delta}_t, h \rangle - \langle N^{\varepsilon, \delta}_s, h \rangle |^2]}{|
   t - s |^{2 \gamma + 1}} d s d t \lesssim \int_0^T \int_0^T \frac{1}{| t - s
   |^{2 \gamma+\kappa}} d s d t \]
which is finite provided $2 \gamma+\kappa < 1$. For any
 $\gamma\in (0,1 / 2)$, it suffices
 to choose $\kappa\in (0,1- 2\gamma)$. Hence $N^{\varepsilon, \delta}$ is
uniformly bounded in $L^2 (\Omega ; W^{\gamma, 2} (0, T ; H^{- \theta}))$ for
some $\theta > 0$ and any $\gamma \in (0, 1 / 2)$. Combining it with the
vanishing of $M^{\varepsilon, \delta}$, this yields the desired uniform bound
of $u^{\varepsilon, \delta} + \varepsilon \mathbb{P}(m^{\varepsilon, \delta} \cdummy
\nabla u^{\varepsilon, \delta})$ in $L^2 (\Omega ; W^{\gamma, 2} (0, T ; H^{-
\theta}))$ for some $\theta > 0$ and any $\gamma \in (0, 1 / 2)$.

\subsection{Compactness and passage to the limit}\label{s:comp}

By the energy estimate \eqref{eq:energy}, the family $u^{\varepsilon,
\delta}$, $\varepsilon, \delta \in (0, 1)$, is tight in
\[ \mathcal{X}_u \assign (L^{\infty} (0, T ; H), w^{\ast}) \cap (L^2 (0, T ;
   H^1), w), \]
where $w^{\ast}$ and $w$ denote the weak-star and weak topology respectively.
Setting $w^{\varepsilon, \delta} \assign u^{\varepsilon, \delta} + \varepsilon
\mathbb{P} (m^{\varepsilon, \delta} \cdummy \nabla u^{\varepsilon, \delta})$,
it follows from Section~\ref{s:time} and the compact embedding $W^{\gamma', 2}
([0, T] ; H^{- \theta'}) \subset W^{\gamma, 2} ([0, T] ; H^{- \theta})$ for
$\gamma' > \gamma$ and $\theta' < \theta$, that the family $w^{\varepsilon,
\delta}, \varepsilon, \delta \in (0, 1)$, is tight in
\[ \mathcal{X}_w \assign W^{\gamma, 2} ([0, T] ; H^{- \theta}), \]
for some $\theta > 0$ and any $\gamma \in (0, 1 / 2)$. Finally, the law of the
cylindrical Wiener process on $H$ is tight in
\[ \mathcal{X}_W \assign C ([0, T] ; H^{- d / 2 - \kappa}) \]
for any $\kappa > 0$.

We apply Skorokhod--Jakubowski representation theorem {\cite{J98}} to the
triple $(u^{\varepsilon, \delta}, w^{\varepsilon, \delta}, W)$ as a random
variable in the sub-Polish path space $\mathcal{X} \assign \mathcal{X}_u
\times \mathcal{X}_w \times \mathcal{X}_W$. Consequently, there exist a
sequence $\varepsilon, \delta \rightarrow 0$, a probability space
$(\tilde{\Omega}, \tilde{\mathcal{F}}, \tilde{\mathbf{P}})$ and random
variables $(\tilde{u}^{\varepsilon, \delta}, \tilde{w}^{\varepsilon, \delta},
\tilde{W}^{\varepsilon, \delta})$ with the same joint law as $(u^{\varepsilon,
\delta}, w^{\varepsilon, \delta}, W)$, such that $(\tilde{u}^{\varepsilon,
\delta}, \tilde{w}^{\varepsilon, \delta}, \tilde{W}^{\varepsilon, \delta})
\rightarrow (\tilde{u}, \tilde{w}, \tilde{W})$ $\tilde{\mathbf{P}}$-a.s. in
$\mathcal{X}$.

Let $\tilde{m}^{\varepsilon, \delta}$ be the stationary Ornstein--Uhlenbeck
process driven by $\tilde{W}^{\varepsilon, \delta}$, i.e. satisfying
\eqref{eq:med} with $W$ replaced by $\tilde{W}^{\varepsilon, \delta}$. Since
$\tilde{m}^{\varepsilon, \delta}$ is a measurable function of
$\tilde{W}^{\varepsilon, \delta}$, equality of joint laws gives
$\tilde{w}^{\varepsilon, \delta} = \tilde{u}^{\varepsilon, \delta} +
\varepsilon \mathbb{P} (\tilde{m}^{\varepsilon, \delta} \cdummy \nabla
\tilde{u}^{\varepsilon, \delta})$ $\tilde{\mathbf{P}}$-a.s., and
$\tilde{u}^{\varepsilon, \delta}$ satisfies \eqref{eq:u} with $m^{\varepsilon,
\delta}$ replaced by $\tilde{m}^{\varepsilon, \delta}$. All uniform estimates
carry over to the new probability space.

It remains to show that $\tilde{u}^{\varepsilon, \delta} \rightarrow
\tilde{u}$ in $L^2 (0, T ; H)$, which is needed to pass to the limit in the
convective term. Since $\varepsilon \mathbb{P} (\tilde{m}^{\varepsilon,
\delta} \cdummy \nabla \tilde{u}^{\varepsilon, \delta}) \rightarrow 0$ in $L^2
(\tilde{\Omega} ; L^2 (0, T ; H^{- \theta}))$ for some $\theta > 0$, passing
to a subsequence we have $\tilde{u}^{\varepsilon, \delta} =
\tilde{w}^{\varepsilon, \delta} - \varepsilon \mathbb{P}
(\tilde{m}^{\varepsilon, \delta} \cdummy \nabla \tilde{u}^{\varepsilon,
\delta}) \rightarrow \tilde{w}$ in $L^2 (0, T ; H^{- \theta})$
$\tilde{\mathbf{P}}$-a.s. By uniqueness of the limit, $\tilde{w} = \tilde{u}$.
Since $\tilde{u}^{\varepsilon, \delta}$ is $\tilde{\mathbf{P}}$-a.s. bounded
in $L^2 (0, T ; H^1)$, interpolation between $L^2 (0, T ; H^{- \theta})$ and
$L^2 (0, T ; H^1)$ yields convergence in $L^2 (0, T ; H)$.

To conclude, we write down the equation satisfied by $\varphi
(\tilde{u}^{\varepsilon, \delta}) + \varepsilon \varphi_1
(\tilde{u}^{\varepsilon, \delta}, \tilde{m}^{\varepsilon, \delta}) +
\varepsilon^2 \varphi_2 (\tilde{u}^{\varepsilon, \delta},
\tilde{m}^{\varepsilon, \delta})$ and pass to the limit following the
arguments of Proposition~\ref{p:11re}, Section~\ref{s:err} and
Section~\ref{s:noise1}. Since $\tilde{u}^{\varepsilon, \delta}$ and
$\tilde{m}^{\varepsilon, \delta}$ have the same laws as $u^{\varepsilon,
\delta}$ and $m^{\varepsilon, \delta}$, the estimates established in
Section~\ref{s:err} and Section~\ref{s:noise1} on the original probability
space apply equally on the new space, and the error terms vanish in the same
sense. Moreover, the term $\langle S_{\delta} u^{\varepsilon, \delta}, h
\rangle$ is handled by writing $\langle S_{\delta} u^{\varepsilon, \delta}, h
\rangle = \langle u^{\varepsilon, \delta}, S_{\delta} h \rangle$ using
self-adjointness of $S_{\delta}$, and then passing to the limit using the
convergence $S_{\delta} h \rightarrow \nu \mathbb{P} \Delta h$ in $L^2$ from
Proposition~\ref{p:11re} (since $h$ is smooth) together with the strong
convergence of $\tilde{u}^{\varepsilon, \delta}$ to $\tilde{u}$ in $L^2 (0, T
; H)$. We conclude that $\tilde{u}$ satisfies the deterministic Navier--Stokes
equations with enhanced diffusion
\[ \partial_t \tilde{u} +\mathbb{P} (\tilde{u} \cdummy \nabla \tilde{u}) = (1
   + \nu) \Delta \tilde{u}, \quad \tmop{div} \tilde{u} = 0, \quad \tilde{u}
   (0) = u_0, \]
with $\nu = \frac{1}{16} \| K \|_{L^2}^2$ in $d = 2$ and $\nu = \frac{1}{5} \|
K \|_{L^2}^2$ in $d=3$, completing the proof of Theorem~\ref{thm:1}.

\begin{remark}
  The estimates established in Section~\ref{s:err} include several error
  terms, most of which vanish even in the critical case $\varepsilon =
  \delta$. The only terms that do not vanish are bounded by $\varepsilon
  \delta^{- 1}$ and $(\varepsilon \delta^{- 1})^2$, which in the critical case
  remain bounded uniformly in $\varepsilon, \delta$. Moreover, the time
  regularity argument of Section~\ref{s:time} likewise goes through in the
  critical case, hence the compactness argument above applies in the critical
  case as well, yielding the existence of a subsequential limit in the strong
  $L^2 (0, T ; H)$ topology. However, since some of the error terms do not
  tend to zero, the present argument does not suffice to identify the limit
  equation, and the critical case remains open.
\end{remark}

\subsection{Strong convergence in two dimensions}

\begin{corollary}
  \label{c:14}Let $d = 2$ and $u_0 \in L^2$. Then the approximate solutions
  $u^{\varepsilon, \delta}$, $\varepsilon, \delta \in (0, 1]$, defined on a
  common probability space converge to the unique Leray solution $u$ of
  \eqref{eq:ulim} strongly in $L^2 (0, T ; H^1)$ in probability as
  $\varepsilon, \delta \rightarrow 0$.
\end{corollary}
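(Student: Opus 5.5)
The plan is to upgrade the convergence in probability in $L^2(0,T;H)$ from Section~\ref{s:comp} to $L^2(0,T;H^1)$ by a norm-convergence argument. In $d=2$ the limit $u$ of \eqref{eq:ulim} is unique, so the Skorokhod--Jakubowski argument of Section~\ref{s:comp} applies to every subsequence. Because the limit is deterministic, convergence in law upgrades to convergence in probability on the common space, for the whole family. We also already know $u^{\varepsilon,\delta}\rightharpoonup u$ weakly in $L^2(0,T;H^1)$, pathwise along the Skorokhod representation. Hence it suffices to prove
\[ \int_0^T \|\nabla u^{\varepsilon,\delta}_s\|_{L^2}^2 ds \rightarrow \int_0^T \|\nabla u_s\|_{L^2}^2 ds \quad\text{in probability.} \]
Since $L^2(0,T;H^1)$ is a Hilbert space, this together with weak convergence gives strong convergence.

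The first step is to use that in $d=2$ the energy inequality \eqref{eq:energy} is an equality. Integrating it in $t$ turns it into a statement about $L^2(0,T;H)$ and $L^2(0,T;H^1)$ norms:
\[ \int_0^T \|u^{\varepsilon,\delta}_t\|_{L^2}^2 dt + 2\int_0^T (T-s)\|\nabla u^{\varepsilon,\delta}_s\|_{L^2}^2 ds = T\|u_0\|_{L^2}^2 . \]
The first term converges by the $L^2(0,T;H)$ convergence. Consequently the weighted dissipation $\int_0^T (T-s)\|\nabla u^{\varepsilon,\delta}_s\|^2 ds$ converges in probability to the deterministic number $\tfrac12\bigl(T\|u_0\|^2-\int_0^T\|u_t\|^2dt\bigr)$. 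Repeating this on every subinterval $[t_0,T]$, starting from $\|u^{\varepsilon,\delta}_{t_0}\|^2$, which converges for a.e. $t_0$ along subsequences, identifies the limit of $\int_{t_0}^{t_1}\|\nabla u^{\varepsilon,\delta}\|^2$ for all intervals. It therefore identifies the weak-$\ast$ limit $\mu$ of the measures $\|\nabla u^{\varepsilon,\delta}_s\|^2ds$.

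The second step compares $\mu$ with the energy balance of the limit. For the Leray solution of \eqref{eq:ulim} in $d=2$ this balance is
\[ \|u_t\|^2+2(1+\nu)\int_0^t\|\nabla u\|^2 = \|u_0\|^2 , \]
so the previous step yields $\mu=(1+\nu)\|\nabla u_s\|^2ds$. I then need to see how the dissipation splits between the macroscopic gradient $\nabla u$ and the corrector contributions. I would write
\[ u^{\varepsilon,\delta}=u+\varepsilon\mathbb{P}(m^{\varepsilon,\delta}\cdot\nabla u)+r^{\varepsilon,\delta} \]
and estimate $\nabla r^{\varepsilon,\delta}$ in $L^2L^2$ through the error bounds of Section~\ref{s:err} and the Itô expansion \eqref{eq:118}. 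The corrector gradient $\varepsilon\nabla\mathbb{P}(m^{\varepsilon,\delta}\cdot\nabla u)$ is of size $\varepsilon\delta^{-1}\to0$ by \eqref{eq:m13}.

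The main obstacle is the bookkeeping of this second step. Matching the dissipation rates requires the excess $\nu\int\|\nabla u\|^2$ in $\mu$ to be carried by $\nabla r^{\varepsilon,\delta}$, and carrying it is incompatible with $\nabla r^{\varepsilon,\delta}\to0$. So I expect this to be exactly where the argument must be checked with care. I would first verify whether the exact cancellation $\langle u,m\cdot\nabla u\rangle=0$ really forces unit viscosity in the pre-limit energy identity. If it does, the norm-convergence route cannot close, and the strong $H^1$ statement would need to be re-examined.
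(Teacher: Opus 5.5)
\textbf{The corollary is false, and your first step essentially proves it.} The pre-limit identity really has unit viscosity. In \eqref{eq:u}, $m^{\varepsilon,\delta}$ enters as a divergence-free coefficient that is continuous in time; it is not a $dW$-transport noise. So $\|u^{\varepsilon,\delta}_t\|_{L^2}^2$ is differentiated without any It\^o correction, and the transport term drops out by skew-symmetry. This is exactly \eqref{eq:energy}, with equality in $d=2$.

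The Leray solution of \eqref{eq:ulim}, by contrast, satisfies $\|u_t\|_{L^2}^2+2(1+\nu)\int_0^t\|\nabla u_s\|_{L^2}^2ds=\|u_0\|_{L^2}^2$. Suppose $u^{\varepsilon,\delta}\to u$ in $L^2(0,T;H^1)$ in probability. Passing to the limit in your integrated identity (the weight $T-s$ is bounded) gives
\[ \int_0^T\|u_t\|_{L^2}^2dt+2\int_0^T(T-s)\|\nabla u_s\|_{L^2}^2ds=T\|u_0\|_{L^2}^2 . \]
Subtracting the integrated energy equality of $u$ leaves $2\nu\int_0^T(T-s)\|\nabla u_s\|_{L^2}^2ds=0$. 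Since $\nu=\frac{1}{16}\|K\|_{L^2}^2>0$, this forces $u\equiv0$, i.e.\ $u_0=0$. Hence the statement fails whenever $K\neq0$ and $u_0\neq0$.

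Combining Theorem~\ref{thm:1} with weak convergence gives the exact defect:
\[ \|\nabla(u^{\varepsilon,\delta}-u)\|_{L^2(0,t;L^2)}^2\to\nu\|\nabla u\|_{L^2(0,t;L^2)}^2 \]
in probability for every $t\in(0,T)$. The eddy-viscous dissipation is carried by small-scale oscillations of $\nabla u^{\varepsilon,\delta}$ that vanish only weakly. So no decomposition $u^{\varepsilon,\delta}=u+\varepsilon\mathbb{P}(m^{\varepsilon,\delta}\cdot\nabla u)+r^{\varepsilon,\delta}$ with $\nabla r^{\varepsilon,\delta}\to0$ in $L^2L^2$ can exist. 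Your second step fails for a structural reason, not a bookkeeping one, and you should state this conclusion firmly rather than hedge.

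\textbf{Where the paper's proof goes wrong.} In \eqref{eq:energyeq}, the energy equality of the limit $u$ is written with dissipation coefficient $2$ instead of $2(1+\nu)$. With the correct coefficient,
\[ \|u_t\|_{L^2}^2+2\int_0^t\|\nabla u_s\|_{L^2}^2ds=\|u_0\|_{L^2}^2-2\nu\int_0^t\|\nabla u_s\|_{L^2}^2ds<\|u_0\|_{L^2}^2 . \]
The lower semicontinuity argument then gives only a strict inequality for the gradient term, and no norm convergence follows.

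What does hold is weak convergence in $L^2(0,T;H^1)$. Interpolating the $L^2(0,T;H)$ convergence against the uniform $L^2(0,T;H^1)$ bound also gives convergence in probability in $L^2(0,T;H^s)$ for every $s<1$. The later steps that invoke this corollary therefore need a different justification: the bound on \eqref{eq:500} in Proposition~\ref{p:stoch}, and the smallness of $\|u^{\varepsilon,\delta}-u\|_{L^{2/\beta}H^{\beta}\cap L^4L^4}$ in Section~\ref{s:concl}.
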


\begin{proof}
  In two dimensions, the energy inequality \eqref{eq:energy} becomes an
  equality and the same is valid for the limit solution $u$: for all $t
  \geqslant 0$
  \begin{equation}
    \| u_t^{\varepsilon, \delta} \|_{L^2}^2 + 2 \int_0^t \| \nabla
    u_s^{\varepsilon, \delta} \|_{L^2}^2 d s = \| u_0 \|_{L^2}^2 = \| u_t
    \|_{L^2}^2 + 2 \int_0^t \| \nabla u_s \|_{L^2}^2 d s. \label{eq:energyeq}
  \end{equation}
  By weak lower semicontinuity
  \[ \| u_t \|_{L^2}^2 \leqslant \liminf_{\varepsilon, \delta \rightarrow 0}
     \| u^{\varepsilon, \delta}_t \|_{L^2}^2, \qquad \int_0^t \| \nabla u_s
     \|_{L^2}^2 d s \leqslant \liminf_{\varepsilon, \delta \rightarrow 0}
     \int_0^t \| \nabla u^{\varepsilon, \delta}_s \|_{L^2}^2 d s. \]
  Since the left hand sides sum to $\| u_0 \|_{L^2}^2$ by \eqref{eq:energyeq},
  equality must hold in both, giving
  \[ \| u_t^{\varepsilon, \delta} \|_{L^2}^2 \rightarrow \| u_t \|_{L^2}^2,
     \qquad \int_0^t \| \nabla u_s^{\varepsilon, \delta} \|_{L^2}^2 d s
     \rightarrow \int_0^t \| \nabla u_s \|_{L^2}^2 d s. \]
  Strong convergence in $L^2 (0, T ; H^1)$ then follows by combining norm
  convergence with weak convergence in $L^2 (0, T ; H^1)$.
\end{proof}

\section{Higher order expansion}\label{s:high}

The length $N$ of the corrector expansion is dictated by the size of $\iota >
0$ in the strict subcriticality condition $\varepsilon = o (\delta^{1 +
\iota})$. Specifically, given $\iota > 0$, we determine $N \in \mathbb{N}$ as
the smallest integer satisfying the strict inequality
\begin{equation}
  \iota > \frac{d}{2 N} . \label{eq:N}
\end{equation}
Under this condition, we have $\varepsilon^N \delta^{- N - d / 2} \rightarrow
0$, meaning the corrector terms of order $N$ vanish after the rescaling by
$\delta^{d / 2}$. The corrector terms of lower order must be cancelled by
further correctors.

Moreover, the strict inequality in \eqref{eq:N} permits one to absorb an
additional logarithmic blow up obtained from $L^{\infty}$ in time bounds for
the Ornstein--Uhlenbeck process (cf. Section~\ref{s:OU}). These are required
if we estimate the fluctuation dynamics in $L^2$ with respect to time, since
the velocity gradient belongs only to $L^2$ in time.

In order to make the expansion systematic, recall that for a general
observable $\varphi (u, n)$, the generator acts like
\begin{equation}
  \mathcal{L}^{\varepsilon, \delta} \varphi (u, n) =
  \tmcolor{blue}{\varepsilon^{- 2} \mathcal{M}^{\delta} \varphi (u, n)} +
  \tmcolor{magenta}{\varepsilon^{- 1} \langle \mathbb{P} (n \cdummy \nabla u),
  D_u \varphi \rangle} + \tmcolor{magenta}{\langle \Delta u, D_u \varphi
  \rangle} - \langle \mathbb{P} (u \cdummy \nabla u), D_u \varphi \rangle,
  \label{eq:gen}
\end{equation}
where in the case of the Ornstein--Uhlenbeck process, the generator of
$m^{\varepsilon, \delta} (t, x) = m (\varepsilon^{- 2} t, \delta^{- 1} x)$ is
\[ \varepsilon^{- 2} \mathcal{M}^{\delta} \varphi \assign - \varepsilon^{- 2}
   \langle n, D_n \varphi \rangle + \frac{\varepsilon^{- 2}}{2} \tmop{Tr}
   (\mathcal{Q}_{\delta} D^2_n \varphi) . \]
We always use the blue term associated to each corrector to cancel the mean-free part of one of the magenta terms of another observable. The terms coming
from the convective term are of lower order and do not cause any problems.

To make the expansion systematic, we denote by $\sigma$ a finite binary
sequence, assigning the application of the operator $\varepsilon^{- 1}
\mathbb{P} (n \cdummy \nabla \cdummy)$ (as appearing in the first magenta term
in \eqref{eq:gen}) to the number $0$, whereas the application of $\Delta$ (in
the second magenta term in \eqref{eq:gen}) is assigned to $1$. Denoting by $|
\sigma |$ the length of $\sigma$, we define $L \assign L_{\sigma} \assign
\sum_{i = 1}^{| \sigma |} \sigma_i$ to be the number of times we used
$\Delta$, and $M \assign M_{\sigma} \assign | \sigma | - L$ is the number of
times we used $\varepsilon^{- 1} \mathbb{P} (n \cdummy \nabla \cdummy)$.

The relevant sequences $\sigma$ are taken from the index set $\Sigma$ defined
recursively as follows: $(0), (0, 0) \in \Sigma$ (these are the first and
second corrector which are always included); and if $\sigma \in \Sigma$ and $M
+ 2 L - 1 < N$ then $(\sigma, 0) \in \Sigma$; if $\sigma \in \Sigma$ and $M +
2 L < N$ then $(\sigma, 1) \in \Sigma$. Note that this is consistent with the
situation for $N = 1$. Indeed, in this case  we only have the first and
second corrector, i.e. $\Sigma = \{ (0), (0, 0) \}$. In case $N = 2$ we
additionally include $\sigma = (0, 1)$ and $\sigma = (0, 0, 0)$.

Considering the observable $\varphi (u) = \langle u, h \rangle$ for a
sufficiently smooth test function $h$, the correctors are defined recursively
as
\[ \varphi_{(0)} (u, n) \assign (-\mathcal{M}^{\delta})^{- 1} \langle
   \mathbb{P} (n \cdummy \nabla u), D_u \varphi \rangle, \]
\begin{equation}
  \varphi_{(\sigma, 0)} (u, n) \assign (-\mathcal{M}^{\delta})^{- 1} [\langle
  \mathbb{P} (n \cdummy \nabla u), D_u \varphi_{\sigma} \rangle]^{\circ},
  \label{eq:aa1}
\end{equation}
\begin{equation}
  \varphi_{(\sigma, 1)} (u, n) \assign (-\mathcal{M}^{\delta})^{- 1} [\langle
  \Delta u, D_u \varphi_{\sigma} \rangle]^{\circ}, \label{eq:aa2}
\end{equation}
where $(\sigma, 0)$ and $(\sigma, 1)$ are the multi-indices obtained by
appending 0 or 1 to $\sigma$, respectively.

Observe that each corrector satisfies $\varphi_{\sigma} (\cdummy, n) : L^2
(\mathbb{T}^d) \rightarrow \mathbb{R}$ and is linear and bounded, that is,
$\varphi_{\sigma} (\cdummy, n) \in (L^2 (\mathbb{T}^d))^{\ast} \cong L^2
(\mathbb{T}^d)$ and $\varphi_{\sigma} (u, n) = \langle u, \Phi_{\sigma} (n)
\rangle$ for $\Phi_{\sigma} (n) \in L^2 (\mathbb{T}^d)$ and we identify
$\varphi_{\sigma}$ with $\Phi_{\sigma}$. Then the Fr\'echet derivative satisfies
$D_u \varphi_{\sigma} = \Phi_{\sigma}$ and \eqref{eq:aa1}, \eqref{eq:aa2}
rewrite as
\begin{equation}
  \varphi_{(\sigma, 0)} (u, n) \assign (-\mathcal{M}^{\delta})^{- 1}
  [\varphi_{\sigma} (\mathbb{P} (n \cdummy \nabla u), n)]^{\circ},
  \label{eq:aa11}
\end{equation}
\begin{equation}
  \varphi_{(\sigma, 1)} (u, n) \assign (-\mathcal{M}^{\delta})^{- 1}
  [\varphi_{\sigma} (\Delta u, n)]^{\circ} . \label{eq:aa22}
\end{equation}
Each corrector $\varphi_{\sigma}$ is additionally multiplied by a certain
power of $\varepsilon$, which is fully determined by the parameters $(M, L)$.
Specifically, the first corrector $\varphi_1 = \varphi_{(0)}$ is multiplied by
$\varepsilon$ and has $M = 1$, $L = 0$. The second corrector $\varphi_2 =
\varphi_{(0, 0)}$ is quadratic in $n$, i.e. $M = 2$, and is multiplied by
$\varepsilon^2$. Generally, this leads to the prefactor $\varepsilon^{- M + 2
| \sigma |} = \varepsilon^{M + 2 L}$ since: each application of
$\varepsilon^{- 1} n \cdummy \nabla$ gives $\varepsilon^{- 1}$ hence we obtain
$\varepsilon^{- M}$ and for each new generation, i.e. for each application of
the It{\^o} formula, i.e. application of either $\varepsilon^{- 1} n \cdummy
\nabla$ or $\Delta$ we include $\varepsilon^2$, meant to cancel
$\varepsilon^{- 2}$ in the blue term in \eqref{eq:gen}.

Accordingly, the correctors are of the form $\varepsilon^{M + 2 L}
\varphi_{\sigma}$ and the goal is then to apply the It{\^o} formula to
\[ \varphi (u^{\varepsilon, \delta}_t) + \sum_{\sigma \in \Sigma}
   \varepsilon^{M + 2 L} \varphi_{\sigma} (u^{\varepsilon, \delta}_t,
   m^{\varepsilon, \delta}_t) . \]
In the Ornstein--Uhlenbeck case, there is a simple way to obtain explicit
formulas for the correctors based on the Wiener chaos expansion, which we
exploit in the sequel.

\subsection{Inverse Ornstein--Uhlenbeck generator via Wiener
chaos}\label{s:chaos}

In this section we derive an explicit formula for the inverse
Ornstein--Uhlenbeck generator acting on polynomial functionals of the random
field. The key result is a standard consequence of Wiener chaos theory: the
action of $(-\mathcal{M}^{\delta})^{- 1}$ is expressed as a finite sum of
contraction terms obtained by pairing tensor slots with copies of the
covariance $\frac{1}{2} \mathcal{Q}_{\delta}$.

\begin{proposition}
  \label{p:chaos}Let $k \in \mathbb{N}$, let $G \in H^{\odot k}$ be a
  symmetric kernel, and define the homogeneous polynomial observable $F (n)
  \assign \langle n^{\otimes k}, G \rangle_{H^{\otimes k}}$, $n \in H$. Denote
  by $F^{\circ} \assign F -\mathbb{E}_{\nu^{\delta}} [F]$ the mean-free part.
  For $\ell = 0, 1, \ldots, [(k - 1) / 2]$, define the $\ell$-fold contraction
   $\tilde{\mathcal{C}}^{\delta}_{\ell} G \in H^{\odot (k - 2 \ell)}$ by inserting
  $\ell$ copies of the covariance $\frac{1}{2} \mathcal{Q}_{\delta}$,
  specifically, for $f \in H^{\otimes (k - 2 \ell)}$ we let
  \begin{equation}
    \langle \tilde{\mathcal{C}}_{\ell}^{\delta} G, f \rangle_{H^{\otimes (k -
    2 \ell)}} \assign \sum_{i_1, \ldots, i_{\ell} \in \mathbb{N}} \langle G,
    T_{\delta} e_{i_1} \otimes T_{\delta} e_{i_1} \otimes \cdots \otimes
    T_{\delta} e_{i_{\ell}} \otimes T_{\delta} e_{i_{\ell}} \otimes f
    \rangle_{H^{\otimes k}}, \label{eq:contr12}
  \end{equation}
  where $(e_i)_{i \in \mathbb{N}}$ is any orthonormal basis of $H$ and
  $T_{\delta} : H \rightarrow H$ is defined by $T_{\delta} g =
  \frac{1}{\sqrt{2}} \mathcal{Q}^{1 / 2}_{\delta} g$. Then
  \begin{equation}
    ((-\mathcal{M}^{\delta})^{- 1} F^{\circ}) (m_t^{\delta}) \cong \sum_{\ell
    = 0}^{[(k - 1) / 2]} \langle (m_t^{\delta})^{\otimes (k - 2 \ell)},
    \tilde{\mathcal{C}}_{\ell}^{\delta} G \rangle_{H^{\otimes (k - 2 \ell)}},
    \label{eq:final}
  \end{equation}
  where $\cong$ denotes equality up to combinatorial constants depending only
  on $k$ and $\ell$. In particular, controlling $(-\mathcal{M}^{\delta})^{- 1}
  F^{\circ}$ reduces to estimating $F$ together with finitely many explicit
  contraction terms of strictly lower polynomial degree, each obtained by
  replacing pairs of $n$-factors by independent copies $n_i$ whose
  expectations are taken outside the spatial pairing by Tonelli's theorem
  before estimating.
\end{proposition}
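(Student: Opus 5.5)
We follow the standard Wiener chaos route. Under $\nu^{\delta}=\mathcal{N}(0,\frac12\mathcal{Q}_\delta)$, the Ornstein--Uhlenbeck generator $\mathcal{M}^{\delta}$ in \eqref{eq:Mdelta} is the number operator: on the $j$-th Wiener chaos $\mathcal{H}_j$ (with respect to the Gaussian measure $\nu^\delta$) it acts as multiplication by $-j$. This follows either from the drift $-\langle n,D_n\cdummy\rangle$ and trace term, or from the Mehler formula $P^\delta_t I_j(g)=e^{-jt}I_j(g)$. Consequently $(-\mathcal{M}^\delta)^{-1}$ acts on the mean-free part of $L^2(\nu^\delta)$ as $\sum_{j\ge1} j^{-1}\Pi_j$, where $\Pi_j$ denotes the chaos projection. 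The task is therefore reduced to computing the chaos decomposition of the polynomial $F(n)=\langle n^{\otimes k},G\rangle$.

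The key algebraic step is the Hermite/Wick expansion of a monomial. We write $n^{\otimes k}$ in terms of Wick powers $:n^{\otimes (k-2\ell)}:$, which lie in $\mathcal{H}_{k-2\ell}$, via
\[ n^{\otimes k}=\sum_{\ell=0}^{[k/2]} c_{k,\ell}\,\mathrm{Sym}\bigl(:n^{\otimes(k-2\ell)}:\otimes \mathcal{C}^{\otimes \ell}\bigr),\qquad \mathcal{C}=\tfrac12\mathcal{Q}_\delta=\sum_i T_\delta e_i\otimes T_\delta e_i , \]
with $c_{k,\ell}=\binom{k}{2\ell}(2\ell-1)!!$. This identity can be checked on an orthonormal basis of the Cameron--Martin space using the one-dimensional Hermite identity $x^k=\sum_\ell c_{k,\ell}H_{k-2\ell}(x)$ together with multilinearity, or equivalently by differentiating the generating function $e^{\langle n,f\rangle}=e^{\frac12\langle \mathcal{C}f,f\rangle}\,{:}e^{\langle n,f\rangle}{:}$ $k$ times at $f=0$. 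Pairing with the symmetric kernel $G$ and using \eqref{eq:contr12} gives $\Pi_{k-2\ell}F=c_{k,\ell}\langle :n^{\otimes(k-2\ell)}:,\tilde{\mathcal{C}}^\delta_\ell G\rangle$. Only chaoses of the same parity as $k$ appear. The term $\ell=k/2$ (present when $k$ is even) is exactly $\mathbb{E}_{\nu^\delta}F$, and it is removed by taking $F^\circ$. This is why the sum in \eqref{eq:final} stops at $[(k-1)/2]$.

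We then apply $(-\mathcal{M}^\delta)^{-1}$, which multiplies each term by $(k-2\ell)^{-1}$. Finally we re-expand each Wick power $:n^{\otimes j}:$ back into ordinary monomials $n^{\otimes(j-2r)}$ contracted with further copies of $\mathcal{C}$, using the inverse Hermite relation $H_j(x)=\sum_r (-1)^r c_{j,r}x^{j-2r}$. Composing contractions gives $\tilde{\mathcal{C}}^\delta_r\tilde{\mathcal{C}}^\delta_\ell=\tilde{\mathcal{C}}^\delta_{r+\ell}$, so that after collecting terms every contribution has the form $\langle n^{\otimes(k-2\ell)},\tilde{\mathcal{C}}^\delta_\ell G\rangle$ with a constant depending only on $(k,\ell)$. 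This is precisely the meaning of $\cong$. Evaluating at $n=m^\delta_t$ is legitimate because $m_t^\delta\sim\nu^\delta$.

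The remaining points are technical and do not present real difficulty. The contraction sums in \eqref{eq:contr12} converge absolutely, since $\sum_i\|T_\delta e_i\|^2=\frac12\mathrm{Tr}\,\mathcal{Q}_\delta<\infty$ (the operator is trace class because $\mathcal{F}_{\mathbb{R}^d}K$ is Schwartz, cf.\ \eqref{eq:eigen}), and they are independent of the chosen basis. For the final "in particular" clause, we write each contraction as $\mathbb{E}$ over independent copies $n_i\sim\nu^\delta$, using $\mathbb{E}[n_i\otimes n_i]=\mathcal{C}$, so that $\langle n^{\otimes(k-2\ell)},\tilde{\mathcal{C}}_\ell^\delta G\rangle=\mathbb{E}_{n_1,\dots,n_\ell}\langle n^{\otimes(k-2\ell)}\otimes n_1^{\otimes2}\otimes\cdots\otimes n_\ell^{\otimes 2},G\rangle$. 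Tonelli's theorem then allows pulling the expectation outside the spatial pairing once absolute integrability is checked. The main obstacle is bookkeeping the combinatorial constants in the double Hermite expansion. Since the statement only claims equality up to constants depending on $(k,\ell)$, it suffices to track which contractions occur, not their exact coefficients.
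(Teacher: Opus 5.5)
Your argument is essentially the paper's proof. Both use the Wiener chaos decomposition with $\mathcal{M}^\delta$ acting as $-j$ on the $j$-th chaos, the Wick expansion of $\langle n^{\otimes k},G\rangle$ into contractions, division by the chaos order, and the inverse Wick re-expansion. The only difference is that you work directly in $L^2(\nu^\delta)$ with covariance $\frac12\mathcal{Q}_\delta$, whereas the paper computes for the identity-covariance process $\zeta$ and transfers the result through the conjugation $(\mathcal{M}^\delta\varphi)\circ T_\delta=\mathcal{L}(\varphi\circ T_\delta)$. Your route has the small advantage that $\nu^\delta$ is a genuine trace-class Gaussian measure on $H$, so the monomials are defined pointwise.

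One caveat you share with the paper: for even $k$ the inverse Wick step also produces a constant term, coming from $\ell+r=k/2$. For example, $k=2$ gives $(-\mathcal{M}^\delta)^{-1}F^\circ=\frac12 F^\circ$, which is not a multiple of $F$. So after undoing the Wick ordering the sum really runs to $[k/2]$. Your explanation that the cutoff at $[(k-1)/2]$ comes from removing the mean is correct only for the Wick-ordered form.
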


\begin{proof}
  We first develop the Wiener chaos machinery for a canonical
  Ornstein--Uhlenbeck process with identity covariance, then transform to the
  actual process $m^{\delta}$ via a change of measure. This approach keeps the
  chaos formulas standard while allowing estimates to be carried out in the
  natural $L^2$-duality.
  
  Step 1: Wiener chaos analysis for the canonical Ornstein--Uhlenbeck process.
  Consider a probability space $(\Omega, \mathcal{F}, \mathbf{P})$ with an
  auxiliary stationary Ornstein--Uhlenbeck process $\zeta$ on $H$, the
  solenoidal, mean-free subspace of $L^2 (\mathbb{T}^d ; \mathbb{R}^d)$,
  satisfying
  \[ d \zeta = - \zeta d t + \sqrt{2} d W, \]
  with invariant law $\mu \assign \mathcal{N} (0, \tmop{Id})$. For every fixed
  $t \geqslant 0$, the map $\eta (f) \assign \langle \zeta_t, f \rangle$
  defines an isonormal Gaussian process over $H$, that is, a centered Gaussian
  family of random variables such that $\mathbb{E} [\eta (f) \eta (g)] =
  \langle f, g \rangle$ for all $f, g \in H$.
  
  For $k \in \mathbb{N}$ and $f_1, ..., f_k \in H$, we denote by $: \eta (f_1)
  \cdots \eta (f_k) :$ the Wick (normal-ordered) product of the Gaussian
  variables $\eta (f_i)$, defined by subtracting all Gaussian contractions,
  equivalently characterized by the property that it is orthogonal in $L^2
  (\Omega)$ to all polynomials in $\eta$ of degree strictly less than $k$. We
  also include the case $k = 0$ by letting $: : = 1$.
  
  The Gaussian space generated by $\eta$ admits the Wiener chaos
  decomposition, that is, it holds
  \[ L^2 (\Omega, \sigma (\eta), \mathbf{P}) = \bigoplus_{k = 0}^{\infty}
     H^{: k :}, \]
  where $H^{: k :}$ denotes the $k$-th homogeneous Wiener chaos, defined for
  $k = 0$ as $H^{: 0 :} =\mathbb{R}$ and for $k \in \mathbb{N}$ as the closed
  linear span of Wick monomials of degree $k$, i.e.
  \[ H^{: k :} = \overline{\tmop{span}} \{ : \eta (f_1) \cdots \eta (f_k) :,
     f_1, \ldots, f_k \in H \} . \]
  In particular, any polynomial functional of $\eta$ decomposes into a finite
  sum of homogeneous chaoses. Moreover, $H^{: k :}$ is canonically isomorphic
  to the symmetric tensor power $H^{\odot k}$ (equivalently, to the space of
  $k$-fold multiple Wiener integrals), so that elements of $H^{: k :}$ may be
  represented by symmetric kernels. The associated Ornstein--Uhlenbeck
  generator $\mathcal{L}$ coincides with the Kolmogorov generator of the
  Markov process $\zeta$ and acts diagonally on this decomposition: on $H^{: k
  :}$ one has $\mathcal{L}= - k \tmop{Id}$ for every $k \in \mathbb{N}$.
  Consequently, $(-\mathcal{L})^{- 1}$ is well defined on the mean-free
  subspace $\bigoplus_{k \in \mathbb{N}} H^{: k :}$ and acts there by division
  by the chaos order, i.e. $(-\mathcal{L})^{- 1} F = \frac{1}{k} F$, $F \in
  H^{: k :}$, $k \in \mathbb{N}$. All these facts are standard; see, for
  instance {\cite{MR1474726}}, Chapter II or {\cite{MR2200233}}, Chapter 1.
  
  Let $k \in \mathbb{N}$ and let $G \in H^{\odot k}$ be a symmetric kernel. We
  define the corresponding homogeneous polynomial observable as $F (\eta)
  \assign \langle \eta^{\otimes k}, G \rangle_{H^{\otimes k}}$, where the
  pairing is defined by multilinear extension and density from simple tensors.
  By Wick's formula (see {\cite{MR1474726}}, Thm. III.3.15), $F (\eta)$ admits
  the chaos expansion
  \begin{equation}
    \langle \eta^{\otimes k}, G \rangle_{H^{\otimes k}} \cong \sum_{\ell =
    0}^{[k / 2]} \langle : \eta^{\otimes (k - 2 \ell)} :, \mathcal{C}_{\ell} G
    \rangle_{H^{\otimes (k - 2 \ell)}}, \label{eq:wick1}
  \end{equation}
  where $\mathcal{C}_{\ell} G \in H^{\odot (k - 2 \ell)}$ denotes the
  $\ell$-fold contraction of $G$, defined by
  \[ \langle \mathcal{C}_{\ell} G, f \rangle_{H^{\otimes (k - 2 \ell)}}
     \assign \sum_{i_1, \ldots, i_{\ell} \in \mathbb{N}} \langle G, e_{i_1}
     \otimes e_{i_1} \otimes \cdots \otimes e_{i_{\ell}} \otimes e_{i_{\ell}}
     \otimes f \rangle_{H^{\otimes k}}, \]
  for every $f \in H^{\otimes (k - 2 \ell)}$ and any orthonormal basis
  $(e_i)_{i \in \mathbb{N}}$ of $H$. Each contraction corresponds to inserting
  the covariance of the isonormal process, i.e. the identity on $H$, which in
  an orthonormal basis $(e_i)_{i \in \mathbb{N}}$ is represented by $\sum_{i
  \in \mathbb{N}} e_i \otimes e_i$. Note that since $G \in H^{\odot k}$ is
  symmetric, its $\ell$-fold contraction $\mathcal{C}_{\ell} G$ is again a
  symmetric tensor in $H^{\odot (k - 2 \ell)}$.
  
  Denote by $F^{\circ, \mu} \assign F -\mathbb{E}_{\mu} [F]$ the mean-free
  part. The zero-th chaos component (which occurs only when $k$ is even)
  corresponds to $\ell = k / 2$ and subtracting $\mathbb{E}_{\mu} [F]$ removes
  precisely the constant part. Using that $\mathcal{L}$ acts as $- r$ on $H^{:
  r :}$, we obtain
  \[ (-\mathcal{L})^{- 1} F^{\circ, \mu} \cong \sum_{\ell = 0}^{[(k - 1) / 2]}
     \frac{1}{k - 2 \ell} \langle : \eta^{\otimes (k - 2 \ell)} :,
     \mathcal{C}_{\ell} G \rangle_{H^{\otimes (k - 2 \ell)}} . \]
  Finally, to return to expressions suitable for pointwise estimates, we undo
  the Wick ordering. By the inverse Wick formula (see {\cite{MR1474726}}, Thm.
  III.3.4), each Wick monomial can be expanded as an alternating sum of
  ordinary monomials with further contractions
  \[ \langle : \eta^{\otimes r} :, G_r \rangle_{H^{\otimes r}} \cong \sum_{j =
     0}^{[r / 2]} (- 1)^j \langle \eta^{\otimes (r - 2 j)}, \mathcal{C}_j G_r
     \rangle_{H^{\otimes (r - 2 j)}} . \]
  Accordingly, $(-\mathcal{L})^{- 1} F^{\circ, \mu}$ is a finite linear
  combination of ordinary monomials in $\eta$ obtained from $F$ by iterated
  Gaussian contractions. Specifically,
  \begin{equation}
    (-\mathcal{L})^{- 1} F^{\circ, \mu} (\eta) \cong \sum_{\ell = 0}^{[(k - 1)
    / 2]} \langle \eta^{\otimes (k - 2 \ell)}, \mathcal{C}_{\ell} G
    \rangle_{H^{\otimes (k - 2 \ell)}} . \label{eq:formula}
  \end{equation}
  Step 2: Application to the Ornstein--Uhlenbeck process $m^{\delta}$. Our
  original stationary Ornstein--Uhlenbeck process $m^{\delta}$ with invariant
  law $\nu^{\delta} =\mathcal{N}(0, \frac{1}{2} \mathcal{Q}_{\delta})$ is
  obtained from $\zeta$ by a deterministic transformation
  \[ m^{\delta}_t = T_{\delta} \zeta_t \quad \tmop{with} \quad T_{\delta} : H
     \rightarrow H, \quad f \mapsto \frac{1}{\sqrt{2}} \mathcal{Q}^{1 /
     2}_{\delta} f. \]
  In particular, for all $f, g \in H$ and $s, t \geqslant 0$,
  \[ \mathbb{E} [\langle m^{\delta}_t, f \rangle \langle m^{\delta}_s, g
     \rangle] = \frac{1}{2} \mathbb{E} [\langle \zeta_t, \mathcal{Q}^{1 /
     2}_{\delta} f \rangle \langle \zeta_s, \mathcal{Q}^{1 / 2}_{\delta} g
     \rangle] = \frac{1}{2} e^{- | t - s |} \langle \mathcal{Q}_{\delta} f, g
     \rangle, \]
  hence $\nu^{\delta}$ is the pushforward of $\mu$ by $T_{\delta}$.
  
  Denote by $(P^{\zeta}_t)_{t \geqslant 0}$ and $(P^{\delta}_t)_{t \geqslant
  0}$ the Markov semigroups of $\zeta$ and $m^{\delta}$, respectively. Then
  $(P^{\delta}_t)_{t \geqslant 0}$ is the pushforward of $(P^{\zeta}_t)_{t
  \geqslant 0}$ in the sense that
  \[ P^{\delta}_t \varphi (T_{\delta} n) =\mathbb{E} [\varphi (m^{\delta}_t
     (T_{\delta} n))] =\mathbb{E} [(\varphi \circ T_{\delta}) (\zeta_t (n))] =
     P^{\zeta}_t (\varphi \circ T_{\delta}) (n), \quad n \in H, t \geqslant 0,
  \]
  for every bounded measurable $\varphi : H \rightarrow \mathbb{R}$.
  Consequently, their Kolmogorov generators $\mathcal{M}^{\delta}$ and
  $\mathcal{L}$ satisfy the conjugation identity
  \begin{equation}
    (\mathcal{M}^{\delta} \varphi) \circ T_{\delta} =\mathcal{L} (\varphi
    \circ T_{\delta}) . \label{eq:gen33}
  \end{equation}
  Indeed,
  \[ (\mathcal{M}^{\delta} \varphi) (T_{\delta} n) = \lim_{t \rightarrow 0}
     \frac{P^{\delta}_t \varphi (T_{\delta} n) - \varphi (T_{\delta} n)}{t} =
     \lim_{t \rightarrow 0} \frac{P^{\zeta}_t (\varphi \circ T_{\delta}) (n) -
     (\varphi \circ T_{\delta}) (n)}{t} =\mathcal{L} (\varphi \circ
     T_{\delta}) (n), \quad n \in H. \]
  On mean-free functions, the inverses are related by
  \begin{equation}
    ((-\mathcal{M}^{\delta})^{- 1} \varphi) \circ T_{\delta} =
    (-\mathcal{L})^{- 1} (\varphi \circ T_{\delta}), \label{eq:gen33a}
  \end{equation}
  which can be seen as follows: if $\psi = (-\mathcal{M}^{\delta})^{- 1}
  \varphi$ then $-\mathcal{M}^{\delta} \psi = \varphi$ hence $-
  (\mathcal{M}^{\delta} \psi) \circ T_{\delta} = \varphi \circ T_{\delta}$ and
  by \eqref{eq:gen33} $\varphi \circ T_{\delta} = -\mathcal{L} (\psi \circ
  T_{\delta})$ so $\psi \circ T_{\delta} = (-\mathcal{L})^{- 1} (\varphi \circ
  T_{\delta})$ and \eqref{eq:gen33a} follows.
  
  In order to see how $(-\mathcal{M}^{\delta})^{- 1}$ acts on polynomial
  observables, fix $k \in \mathbb{N}$ and $G \in H^{\odot k}$, and set $F (n)
  \assign \langle n^{\otimes k}, G \rangle_{H^{\otimes k}}$, $n \in H$. Fix $t
  \geqslant 0$ and identify $\zeta_t$ with the associated isonormal process
  $\eta (f) = \langle \zeta_t, f \rangle$. Since $T_{\delta}$ is self-adjoint,
  \[ (F \circ T_{\delta}) (\zeta_t) = \langle \eta^{\otimes k},
     T_{\delta}^{\otimes k} G \rangle_{H^{\otimes k}} . \]
  Let $F^{\circ} \assign F -\mathbb{E}_{\nu^{\delta}} [F]$ be the centered
  observable. Since $\nu^{\delta}$ is the pushforward of $\mu$ by
  $T_{\delta}$,
  \[ [F \circ T_{\delta}]^{\circ,\mu} = F \circ T_{\delta} -\mathbb{E}_{\mu} [F
     \circ T_{\delta}] = F \circ T_{\delta} -\mathbb{E}_{\nu^{\delta}} [F] =
     F^{\circ} \circ T_{\delta} . \]
  Hence, combining the conjugation identity \eqref{eq:gen33a} with the formula
  \eqref{eq:formula}, we obtain
  \[ ((-\mathcal{M}^{\delta})^{- 1} F^{\circ}) (m^{\delta}_t) =
     (-\mathcal{L})^{- 1} [\langle \eta^{\otimes k}, T_{\delta}^{\otimes k} G
     \rangle_{H^{\otimes k}}]^{\circ,\mu} (\zeta_t) \]
  \begin{equation}
    \cong \sum_{\ell = 0}^{[(k - 1) / 2]} \langle \eta^{\otimes (k - 2 \ell)},
    \mathcal{C}_{\ell} (T_{\delta}^{\otimes k} G) \rangle_{H^{\otimes (k - 2
    \ell)}} . \label{eq:25}
  \end{equation}
  We now rewrite the right hand side in terms of $m^{\delta}_t$. Writing
  contractions in an orthonormal basis $(e_i)_{i \in \mathbb{N}}$ of $H$, one
  finds that each contraction inserts the tensor $\sum_{i \in \mathbb{N}}
  T_{\delta} e_i \otimes T_{\delta} e_i$, which represents the covariance
  $\mathbb{E}[m_t^{\delta} \otimes m_t^{\delta}] = \tfrac{1}{2}
  \mathcal{Q}_{\delta}$ viewed as an element of $H^{\odot 2}$. Accordingly, we
  define $\tilde{\mathcal{C}}_{\ell}^{\delta} G \in H^{\odot (k - 2 \ell)}$ by
  $\mathcal{C}_{\ell} (T_{\delta}^{\otimes k} G) = T_{\delta}^{\otimes (k - 2
  \ell)} (\tilde{\mathcal{C}}_{\ell}^{\delta} G)$, i.e.
  $\tilde{\mathcal{C}}_{\ell}^{\delta} G$ is obtained from $G$ by inserting
  $\ell$ copies of the covariance $\tfrac{1}{2} \mathcal{Q}_{\delta}$, see
  \eqref{eq:contr12}. The definition does not depend on the choice of the
  orthonormal basis. As a consequence, \eqref{eq:final} follows and the proof
  is complete.
\end{proof}

\subsection{Application to correctors -- motivating example}\label{s:appl}

The first two correctors involve only linear and quadratic observables and can
be treated directly. The first genuinely nontrivial case occurs for the cubic
observable defining the third-order corrector $\varphi_{(0, 0, 0)}$, which we
analyze in detail to illustrate the role of chaos projections and covariance
contractions.

The first corrector $\varphi_{(0)}$ is given by $\varphi_{(0)} (u, n) =
(-\mathcal{M}^{\delta})^{- 1} \langle n \cdummy \nabla u, h \rangle$. Since
$\langle n \cdummy \nabla u, h \rangle$ belongs to the first homogeneous
Wiener chaos, the inverse generator acts trivially, yielding $\varphi_{(0)}
(u, n) = \langle n \cdummy \nabla u, h \rangle$, in agreement with
Section~\ref{s:2}.

The second corrector $\varphi_{(0, 0)}$ involves a quadratic observable and
reads
\[ \varphi_{(0, 0)} (u, n) = (-\mathcal{M}^{\delta})^{- 1} [\langle n \cdummy
   \nabla \mathbb{P} (n \cdummy \nabla u), h \rangle]^{\circ} . \]
Since this observable lies in the second homogeneous chaos, the inverse
generator produces the factor $1 / 2$, yielding $\varphi_{(0, 0)} (u, n) =
\frac{1}{2} [\langle n \cdummy \nabla \mathbb{P} (n \cdummy \nabla u), h
\rangle]^{\circ}$, again consistent with the expressions obtained earlier,
after subtracting the expectation.

The first genuinely new phenomenon appears for the third-order corrector,
where the inverse generator acts on a cubic polynomial and nontrivial chaos
projections and contraction terms arise. The third-order corrector is given by
\[ \varphi_{(0, 0, 0)} (u, n) = \frac{1}{2} (-\mathcal{M}^{\delta})^{- 1}
   (\langle n \cdummy \nabla \mathbb{P} (n \cdummy \nabla \mathbb{P} (n
   \cdummy \nabla u)), h \rangle - \langle S_{\delta} (\mathbb{P} (n \cdummy
   \nabla u)), h \rangle) . \]
Indeed, starting from the definition \eqref{eq:aa1} we compute
\[ \varphi_{(0, 0, 0)} (u, n) = (-\mathcal{M}^{\delta})^{- 1} [\varphi_{(0,
   0)} (\mathbb{P} (n \cdummy \nabla u), n)]^{\circ} = \frac{1}{2}
   (-\mathcal{M}^{\delta})^{- 1} [\langle n \cdummy \nabla \mathbb{P} (n
   \cdummy \nabla v), h \rangle]^{\circ} |_{v =\mathbb{P} (n \cdummy \nabla
   u)} \nobracket \]
\[ = \frac{1}{2} (-\mathcal{M}^{\delta})^{- 1} (\langle n \cdummy \nabla
   \mathbb{P} (n \cdummy \nabla \mathbb{P} (n \cdummy \nabla u)), h \rangle -
   \langle S_{\delta} (\mathbb{P} (n \cdummy \nabla u)), h \rangle) . \]
We now estimate the third-order corrector using the general bound
\eqref{eq:final} derived in the previous subsection. Set $F (n) \assign
\langle n \cdummy \nabla \mathbb{P} (n \cdummy \nabla \mathbb{P} (n \cdummy
\nabla u)), h \rangle$, $n \in H$. This is a homogeneous polynomial observable
of degree $3$ in $n$, hence it can be written as
\[ F (n) = \langle n^{\otimes 3}, G \rangle_{H^{\otimes 3}} \]
for a deterministic symmetric kernel $G \in H^{\odot 3}$ depending linearly on
$u$ and $h$. More explicitly,
\[ F (n) = \int_{(\mathbb{T}^d)^3} n^{k_2} (x) \nabla^{k_2}_x \mathbb{P}^{k_1
   j_1} (x - y) n^{j_2} (y) \nabla^{j_2}_y \mathbb{P}^{j_1 i_1} (y - z)
   n^{i_2} (z) \nabla^{i_2}_z u^{i_1} (z) h^{k_1} (x) d x d y d z, \]
where we sum over repeated indices, $\nabla_x^i$ denotes the $i$-th component
of the gradient in $x$ and $\mathbb{P}^{i j}$ is the $(i, j)$-component of the
Leray projector kernel. From this formula one reads off the a priori
non-symmetric kernel $\tilde{G}$ given componentwise by
\[ \tilde{G}^{k_2 j_2 i_2} (x, y, z) \assign \nabla^{k_2}_x \mathbb{P}^{k_1
   j_1} (x - y) \nabla^{j_2}_y \mathbb{P}^{j_1 i_1} (y - z) \nabla^{i_2}_z
   u^{i_1} (z) h^{k_1} (x). \]
Since $n^{\otimes 3}$ (and likewise its Wick-ordered version) is a symmetric
tensor, only the totally symmetric part of the kernel contributes to the
pairing. We therefore replace $\tilde{G}$ by its symmetrization $G$ without
changing $F (n)$. To this end, let $S_3$ denote the permutation group of $\{
1, 2, 3 \}$ and define
\[ G^{i_1 i_2 i_3} (x_1, x_2, x_3) \assign \frac{1}{3!} \sum_{\sigma \in S_3}
   \tilde{G}^{i_{\sigma (1)} i_{\sigma (2)} i_{\sigma (3)}} (x_{\sigma (1)},
   x_{\sigma (2)}, x_{\sigma (3)}), \]
i.e. we apply the same permutation simultaneously to the spatial variables and
to the tensor indices. Then $\langle n^{\otimes 3}, \tilde{G}
\rangle_{H^{\otimes 3}} = \langle n^{\otimes 3}, G \rangle_{H^{\otimes 3}}$
and applying \eqref{eq:final} with $k = 3$ yields
\begin{equation}
  | (-\mathcal{M}^{\delta})^{- 1} F^{\circ} (m_t^{\delta}) | \lesssim
  | \langle (m_t^{\delta})^{\otimes 3}, G \rangle_{H^{\otimes 3}} | + |
  \langle m_t^{\delta}, \tilde{\mathcal{C}}_1^{\delta} G \rangle |,
  \label{eq:contr45}
\end{equation}
where $\tilde{\mathcal{C}}_1^{\delta}$ denotes the single contraction obtained
by inserting one copy of the covariance $\frac{1}{2} \mathcal{Q}_{\delta}$ in
two tensor slots.

Note that in \eqref{eq:contr12} we effectively defined the contraction
$\tilde{\mathcal{C}}^{\delta}_1$ as contracting the first two slots of a cubic
kernel. The crucial point is that this operator is applied to the symmetric
kernel $G$. As a consequence, $\tilde{\mathcal{C}}_1^{\delta} G$ coincides
with the average of the three possible pairwise contractions of the original
kernel $\tilde{G}$. In particular, up to the natural relabeling of the
remaining variable,
\[ \tilde{\mathcal{C}}_1^{\delta} G = \frac{1}{3} \sum_{\{ i, j \} \subset \{
   1, 2, 3 \}} \tilde{\mathcal{C}}_1^{\delta} (\tilde{G} \text{ with slots }
    (i, j) \text{ contracted}) . \]

The first term in \eqref{eq:contr45} is estimated by H{\"o}lder inequality as
\[ |\langle (m_t^{\delta})^{\otimes 3}, G \rangle_{H^{\otimes 3}} | \lesssim |
   \langle \mathbb{P} (m^{\delta}_t \cdummy \nabla u), m^{\delta}_t \cdummy
   \nabla \mathbb{P} (m^{\delta}_t \cdummy \nabla h) \rangle | \]
\[ \lesssim \| \mathbb{P} (m^{\delta}_t \cdummy \nabla u) \|_{L^2} \|
   m^{\delta}_t \|_{L^{\infty}} \| \nabla \mathbb{P} (m^{\delta}_t \cdummy
   \nabla h) \|_{L^2} \]
\begin{equation}
  \lesssim \| m^{\delta}_t \|^2_{L^{\infty}} \| \nabla u \|_{L^2} (\| \nabla
  m^{\delta}_t \|_{L^{\infty}} \| h \|_{H^1} + \| m^{\delta}_t \|_{L^{\infty}}
  \| h \|_{H^2}) . \label{eq:2323}
\end{equation}
It remains to estimate the contraction term $\langle m_t^{\delta},
\tilde{\mathcal{C}}_1^{\delta} G \rangle$. By definition,
$\tilde{\mathcal{C}}_1^{\delta} G$ is obtained from the kernel $G$ by pairing
two of the three noise slots against the covariance $\frac{1}{2}
\mathcal{Q}_{\delta}$. Writing $\mathcal{Q}_{\delta}$ in physical variables as
convolution with the matrix-valued kernel $Q_{\delta}$, this yields the
explicit representation
\[ (\tilde{\mathcal{C}}^{\delta}_1 G)^i (z) = \frac{1}{2}
   \int_{(\mathbb{T}^d)^2} Q^{j k}_{\delta} (x - y) G^{j k i} (x, y, z) d x d
   y. \]
Here $i, j, k \in \{1, ..., d\}$ are vector-component indices; $Q_{\delta}^{j
k}$ is the $(j, k)$-entry of the matrix $Q_{\delta}$; and we sum over $j, k$.
In other words, it holds
\[ \langle m_t^{\delta}, \tilde{\mathcal{C}}_1^{\delta} G \rangle =
   \frac{1}{3} (\mathbb{E}_{\nu^{\delta}} [\langle \mathbb{P} (n \cdummy
   \nabla u), n \cdummy \nabla \mathbb{P} (m^{\delta}_t \cdummy \nabla h)
   \rangle] \nobracket \]
\begin{equation}
  \nobracket +\mathbb{E}_{\nu^{\delta}} [\langle \mathbb{P} (m^{\delta}_t
  \cdummy \nabla u), n \cdummy \nabla \mathbb{P} (n \cdummy \nabla h) \rangle]
  +\mathbb{E}_{\nu^{\delta}} [\langle \mathbb{P} (n \cdummy \nabla u),
  m_t^{\delta} \cdummy \nabla \mathbb{P} (n \cdummy \nabla h) \rangle]),
  \label{eq:I13}
\end{equation}
where the expectation is with respect to the variable $n$. Here, the key is to
estimate first the inner product inside the expectation using continuity of
the Leray projection on $L^p (\mathbb{T}^d)$, $p \in (1, \infty)$, and only in
the end to estimate the expectation. This way, the bound of the contracted
term is similar to \eqref{eq:2323}, but contains only one copy of
$m^{\delta}_t$. Specifically, for the first term we bound as
\[ | \mathbb{E}_{\nu^{\delta}} [\langle \mathbb{P} (n \cdummy \nabla u), n
   \cdummy \nabla \mathbb{P} (m^{\delta}_t \cdummy \nabla h) \rangle] |
   \lesssim \mathbb{E}_{\nu^{\delta}} [\| \mathbb{P} (n \cdummy \nabla u)
   \|_{L^2} \| n \|_{L^{\infty}} \| \nabla \mathbb{P} (m^{\delta}_t \cdummy
   \nabla h) \|_{L^2}] \]
\[ \lesssim \| \nabla u \|_{L^2} \mathbb{E}_{\nu^{\delta}} [\| n
   \|_{L^{\infty}}^2] (\| \nabla m^{\delta}_t \|_{L^{\infty}} \| h \|_{H^1} +
   \| m^{\delta}_t \|_{L^{\infty}} \| h \|_{H^2}) \]
\[ \lesssim \| \nabla u \|_{L^2} (\| \nabla m^{\delta}_t \|_{L^{\infty}} \| h
   \|_{H^1} + \| m^{\delta}_t \|_{L^{\infty}} \| h \|_{H^2}), \]
and similarly for the other terms
\[ | \mathbb{E}_{\nu^{\delta}} [\langle \mathbb{P} (m^{\delta}_t \cdummy
   \nabla u), n \cdummy \nabla \mathbb{P} (n \cdummy \nabla h) \rangle] | + |
   \mathbb{E}_{\nu^{\delta}} [\langle \mathbb{P} (n \cdummy \nabla u),
   m_t^{\delta} \cdummy \nabla \mathbb{P} (n \cdummy \nabla h) \rangle] | \]
\[ \lesssim \| m^{\delta}_t \|_{L^{\infty}} \| \nabla u \|_{L^2}
   (\mathbb{E}_{\nu^{\delta}} [\| n \|_{L^{\infty}} \| \nabla n
   \|_{L^{\infty}}] \| h \|_{H^1} +\mathbb{E}_{\nu^{\delta}} [\| n
   \|_{L^{\infty}}^2] \| h \|_{H^2}) \]
\[ \lesssim \| m^{\delta}_t \|_{L^{\infty}} \| \nabla u \|_{L^2} (\delta^{- 1}
   \| h \|_{H^1} + \| h \|_{H^2}) . \]
In particular, whenever the derivative hits $m^{\delta}$ or its covariance,
the resulting factor $\delta^{- 1}$ can be traded against reduced regularity
requirement on the test function $h$. This will be exploited systematically
for the higher order correctors.

\begin{remark}
  One might attempt to estimate the contracted kernels
  $\tilde{\mathcal{C}}_1^{\delta} G$ directly in physical variables, leading
  to integral operators involving products of the Leray kernel and
  $Q_{\delta}$ or its derivatives. While such operators can sometimes be
  treated using Calder{\'o}n--Zygmund theory, this approach fails for the last
  term in \eqref{eq:I13}: the uncontracted field $m^{\delta}_t$ appears
  between two singular operators, yielding a genuinely two-variable kernel
  outside the Calder{\'o}n--Zygmund framework. We therefore estimate inner
  products prior to taking expectations, using probabilistic bounds rather
  than kernel-level analysis; this extends uniformly to all higher-order
  correctors, with the relevant estimates carried out as needed below.
\end{remark}

\subsection{Structural properties of the corrector hierarchy}\label{s:gen}

As illustrated in Section~\ref{s:appl}, applying the inverse
Ornstein--Uhlenbeck generator to a polynomial observable yields a finite sum
of contraction terms, each obtained by pairing tensor slots with copies of the
covariance $\frac{1}{2} \mathcal{Q}_{\delta}$. Each contraction reduces the
polynomial degree by two and introduces no additional singularity beyond that
already present in the covariance itself.

By Proposition~\ref{p:chaos}, the explicit Wiener chaos analysis of
Section~\ref{s:appl} need not be repeated for each corrector: every
$\varphi_{\sigma}$ is a polynomial functional of $n$ of degree $M$, and
quantitative bounds follow by distributing derivatives via the Leibniz rule
together with continuity of the Leray projector and moment estimates for
$m^{\delta}$. The resulting estimates are recorded in the following lemma.

\begin{lemma}
  \label{l:38}For every $\sigma \in \Sigma \setminus \{ (0) \}$ it holds
  \[ | \varphi_{\sigma} (u, n) | \lesssim \sum_{j = 0, \ldots, M + 2 L - 2}
     \left( 1 + \sum_{k_1 + \cdots + k_M = j} \prod_{i = 1, \ldots, M} \| n
     \|_{W^{k_i, \infty}} \right) \| \nabla u \|_{L^2} \| \nabla h \|_{H^{M +
     2 L - 2 - j}} . \]
\end{lemma}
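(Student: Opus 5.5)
Induction on the length of $\sigma$ along the recursive definition of $\Sigma$, with a stronger structural hypothesis than the stated bound. Concretely, I would show that $\varphi_\sigma(u,n)$ is a finite linear combination of terms of the form
\[ \langle \nabla u , \mathcal{A}(n_1,\dots,n_M)\, h\rangle, \]
where $n_1,\dots,n_M$ are either copies of $n$ or independent copies with law $\nu^\delta$ integrated out (coming from contractions, as in Section~\ref{s:appl}), and $\mathcal{A}$ is a composition of multiplications by the $n_i$, Leray projections $\mathbb{P}$, and exactly $M+2L-2$ further derivatives. I expect this structural claim to be the heart of the argument; once it is in hand, the stated bound follows by distributing derivatives.

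\textbf{Base case and counting.} The base case $\sigma=(0,0)$ is the explicit formula
\[ \varphi_{(0,0)}=\tfrac12[\langle n\cdot\nabla\mathbb{P}(n\cdot\nabla u),h\rangle]^\circ . \]
Moving the outer $n\cdot\nabla$ onto $h$ by divergence-freeness leaves one derivative on $u$ and $0=M+2L-2$ extra derivatives. The mean $[\cdot]^\bullet$ is a full contraction and is bounded in the same way.

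\textbf{Inductive step.} Given $\varphi_\sigma(u,n)=\langle u,\Phi_\sigma(n)\rangle$, I form $\varphi_\sigma(v,n)$ in \eqref{eq:aa11}/\eqref{eq:aa22}.
\begin{itemize}
\item For $v=\mathbb{P}(n\cdot\nabla u)$, one more copy of $n$ and one more derivative enter. Compared with the $\nabla u$ in the old term, $\nabla v=\nabla\mathbb{P}(n\cdot\nabla u)$ contains one additional derivative (beyond the $\nabla u$), so $M$ rises by $1$ and the count of extra derivatives by $1$, consistent with $M+2L-2$.
\item For $v=\Delta u$, I write $\langle\nabla\Delta u,\cdot\rangle=-\langle\nabla u,\nabla\cdot\nabla(\cdot)\rangle$ by integration by parts, so two extra derivatives appear and $L$ rises by $1$.
\end{itemize}
Then I subtract the mean and apply $(-\mathcal{M}^\delta)^{-1}$ via Proposition~\ref{p:chaos}. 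Since the observable is a homogeneous polynomial of degree $M$ in $n$ (linear in $u,h$), the result is a finite sum of terms in which pairs of $n$'s are replaced by independent copies averaged under $\nu^\delta$. This preserves the structure, with the expectations taken outside the spatial pairing via Tonelli, exactly as in \eqref{eq:I13}.

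\textbf{Estimate.} For a structural term I keep $\nabla u$ in $L^2$ and move all $M+2L-2$ extra derivatives to the right of the pairing, using that $\mathbb{P}$ commutes with derivatives and is bounded on $L^2$. Leibniz then distributes them: $j$ derivatives fall on the $n_i$ (with $k_i$ on $n_i$, $\sum k_i=j$), each factor estimated in $W^{k_i,\infty}$ by H\"older, and the remaining $M+2L-2-j$ fall on $h$. Since one derivative of $h$ always appears (from the $n\cdot\nabla h$ structure after the transposition), this gives $\|\nabla h\|_{H^{M+2L-2-j}}$.

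The averaged copies contribute $\mathbb{E}_{\nu^\delta}[\prod\|n_i\|_{W^{k_i,\infty}}]$. These are deterministic constants, and since only fewer than $M$ random factors are then present in the bound, the product structure is dominated by the ``$1+\sum\prod$'' form. Alternatively, I would bound these constants by a multiple of $\delta^{-j}$, using \eqref{eq:7ddd} and Proposition~\ref{c:sup}, and absorb them accordingly.

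\textbf{Main obstacle.} Throughout, the Leray projections sit between multiplications, so the adjoint ordering must be handled carefully. This is why I always estimate the inner product before taking expectations, which avoids composing singular kernels. The main obstacle is the bookkeeping showing that exactly one derivative stays on $u$ in every term, including after the contraction terms and mean subtraction. The inductive structural hypothesis above is designed to track precisely that.
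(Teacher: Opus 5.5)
Your route is the paper's: Leibniz plus H\"older with $\mathbb{P}$ bounded on $L^2$, with the contraction terms of Proposition~\ref{p:chaos} estimated inside the expectation as in Section~\ref{s:appl}. The inductive structural hypothesis just makes the paper's one-line bookkeeping explicit.

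There is a small hole in the induction. $(0,1)$ and its descendants do not descend from $(0,0)$, so you need a second base case, and there the order of integrations by parts matters. First transpose $n\cdot\nabla$ onto $h$, $\langle n\cdot\nabla\Delta u,h\rangle=-\langle\Delta u,n\cdot\nabla h\rangle$, and only then return one derivative to $u$. Applying your $\Delta$-step directly to $\varphi_{(0)}$, written schematically as $\langle\nabla u,n\otimes h\rangle$, produces $(\Delta n)\otimes h$. That term has no derivative on $h$ and $j=2>M+2L-2$.

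The step that genuinely fails is absorbing the averaged copies into the ``$1$''. By the scaling behind \eqref{eq:7ddd},
$\mathbb{E}_{\nu^\delta}[\|\nabla^a n\|_{L^\infty}\|\nabla^b n\|_{L^\infty}]=\delta^{-a-b}\,\mathbb{E}_{\nu^1}[\|\nabla^a n\|_{L^\infty}\|\nabla^b n\|_{L^\infty}]$.
So these constants are not uniform in $\delta$ once a derivative lands on a contracted copy, and this does happen. In \eqref{eq:I13} (case $\sigma=(0,0,0)$) the free field $m^\delta_t$ plays the role of the lemma's $n$. Section~\ref{s:appl} bounds two of those contractions only by $\|m^\delta_t\|_{L^\infty}\|\nabla u\|_{L^2}(\delta^{-1}\|h\|_{H^1}+\|h\|_{H^2})$. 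But $\delta^{-1}\|n\|_{L^\infty}$ is not bounded by $C(1+3\|n\|_{L^\infty}^2\|n\|_{W^{1,\infty}})$ with $C$ independent of $\delta$: take a fixed $n$ varying on unit scale and a low-frequency $h$.

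Independently, a product of fewer than $M$ free factors is not pointwise dominated by $1$ plus products of exactly $M$ factors. The first contraction in \eqref{eq:I13} contributes $\|\nabla m^\delta_t\|_{L^\infty}\|\nabla u\|_{L^2}\|h\|_{H^1}$. Yet $\|n\|_{W^{1,\infty}}$ is not controlled by $1+\|n\|_{L^\infty}^2\|n\|_{W^{1,\infty}}$ when $\|n\|_{L^\infty}$ is small and $\|\nabla n\|_{L^\infty}$ is large.

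What the Leibniz--H\"older argument actually gives is the estimate with $1+\prod_i\|n\|_{W^{k_i,\infty}}$ replaced by $\prod_{i=1}^M(\|n\|_{W^{k_i,\infty}}+\delta^{-k_i})$. Equivalently, it gives the displayed form after taking moments under $\nu^\delta$, where $\|n\|_{W^{k,\infty}}\sim\delta^{-k}$. This $\delta$-weighted version is exactly what is used downstream; cf.\ ``at most $M+2L-2$ hit $m^{\varepsilon,\delta}$ or the covariance $\mathcal{Q}_\delta$'' in Lemma~\ref{l:26}.

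So drop your first justification, commit to your ``alternatively'' route, and state that $\delta$-weighted inequality explicitly. The literal ``$1+$'' form does not follow from this argument.
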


\begin{proof}
  We distribute derivatives via the Leibniz rule and apply H{\"o}lder's
  inequality, as in the explicit computation of Section~\ref{s:appl}.
\end{proof}

\subsection{Structure of the expectations}\label{s:exp}

Although the mean-free parts of the magenta terms in \eqref{eq:gen} are
cancelled by construction, certain expectations may persist. These
contributions are responsible for the macroscopic correction introduced later
in the fluctuation analysis. The~following consequence of Lemma~\ref{l:38}
identifies the leading order part of these expectations as a second order
divergence-form operator, which we call the effective operator at order
$\sigma$ and denote by $S_{\sigma}$. Note that $S_{(0)} = S_{\delta}$ is the
enhanced diffusion operator of Section~\ref{s:quali}, so the family
$(S_{\sigma})_{\sigma \in \Sigma}$ generalizes the enhanced diffusion to all
orders of the corrector hierarchy.

\begin{proposition}
  \label{c:c18}Let $\sigma \in \Sigma$. Then $[\varphi_{\sigma} (\Delta u,
  n)]^{\bullet} = 0$. If $M$ is even then $[\varphi_{\sigma} (\mathbb{P} (n
  \cdot \nabla u), n)]^{\bullet} = 0$. If $M$ is odd and $\sigma \neq  (0)
  $ then there exist operators $S_{\sigma} : H^1 \rightarrow H^{- 1}$ and
  $R_{\sigma} : H^1 \rightarrow H^{- M - 2 L}$ such that
  \[ [\varphi_{\sigma} (\mathbb{P} (n \cdot \nabla u), n)]^{\bullet} =
     S_{\sigma} u + R_{\sigma} u, \]
  and
  \[ | \langle S_{\sigma} u, h \rangle | \lesssim \delta^{- (M + 2 L - 1)} \|
     \nabla u \|_{L^2} \| \nabla h \|_{L^2}, \]
  \[ | \langle R_{\sigma} u, h \rangle | \lesssim \| \nabla u \|_{L^2} \sum_{j
     = 1, \ldots, M + 2 L - 1} \delta^{- (M + 2 L - 1 - j)} \| \nabla h
     \|_{H^j} . \]
\end{proposition}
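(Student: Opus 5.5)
The plan rests on two facts about the structure of the correctors: their parity in $n$, and the Leibniz/Hölder bound of Lemma~\ref{l:38}. By construction, $\varphi_\sigma(u,n)$ is a polynomial in $n$. Every contraction in Proposition~\ref{p:chaos} lowers the degree by two, and taking the mean-free part only removes a constant in $n$. Hence $\varphi_\sigma$ contains only monomials of degree $\equiv M \pmod 2$, and it is linear in $u$.

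\textbf{Step 1 (vanishing expectations).} The observable $\varphi_\sigma(\Delta u,n)$ has the same parity as $M$. If $M$ is odd, its expectation under the centered Gaussian $\nu^\delta$ vanishes by symmetry $n\mapsto -n$.

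If $M$ is even, I would argue differently. Because each corrector is built as $(-\mathcal M^\delta)^{-1}$ applied to a mean-free function, $\varphi_\sigma$ lies in the range of $(-\mathcal M^\delta)^{-1}$ on mean-free observables. Moreover, $u\mapsto \varphi_\sigma(u,n)$ is linear, so substituting the deterministic $\Delta u$ keeps the function mean-free. Thus $[\varphi_\sigma(\Delta u,n)]^\bullet=0$, and this in fact holds for every $\sigma$ regardless of parity.

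Next consider $\varphi_\sigma(\mathbb P(n\cdot\nabla u),n)$, which has degree parity $M+1$. When $M$ is even this is odd in $n$, so its expectation is zero.

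\textbf{Step 2 (decomposition for odd $M$).} For $M$ odd, $E(u,h):=[\varphi_\sigma(\mathbb P(n\cdot\nabla u),n)]^\bullet$ is a bilinear form in $(u,h)$. It is obtained by contracting all $M+1$ copies of $n$ against $\frac12\mathcal Q_\delta$. Following the recipe of Section~\ref{s:appl}, I would write it as an expectation over independent copies of $n$ and estimate the inner products before taking expectations.

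Counting derivatives: $\varphi_\sigma$ carries $M+2L$ derivatives in total. One derivative always falls on $u$, and the extra $\mathbb P(n\cdot\nabla\,\cdot)$ adds one more. Applying Lemma~\ref{l:38} with $u$ replaced by $\mathbb P(n\cdot\nabla u)$ gives terms of the form
\[\mathbb E\Big[\prod\|n\|_{W^{k_i,\infty}}\Big]\,\|\nabla u\|_{L^2}\,\|\nabla h\|_{H^{M+2L-1-j}},\]
after one integration by parts. In each such term, $j$ derivatives land on copies of $n$, each costing $\delta^{-1}$ by \eqref{eq:m13}, and the remaining $M+2L-1-j$ derivatives land on $h$.

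I would define $S_\sigma$ to collect the terms with $j=M+2L-2$, that is, with exactly $\nabla h$ in $L^2$. These are bounded by $\delta^{-(M+2L-2)}\|\nabla u\|_{L^2}\|\nabla h\|_{L^2}$, which is better than the stated bound. The statement's bound $\delta^{-(M+2L-1)}$ also allows including the term with $j=M+2L-1$, where all derivatives except the ones on $u$ and $h$ hit $n$; I would include it in $S_\sigma$ as well. Everything else goes into $R_\sigma$, with $\|\nabla h\|_{H^{j'}}$ for $j'\ge1$ and weight $\delta^{-(M+2L-1-j')}$, exactly as stated.

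Translation invariance of $\nu^\delta$ makes both $S_\sigma$ and $R_\sigma$ Fourier multipliers. Their mapping properties $H^1\to H^{-1}$ and $H^1\to H^{-M-2L}$ then follow from these bounds.

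\textbf{Main obstacle.} The delicate point is the bookkeeping of the Leibniz expansion. One must ensure the derivative on $u$ can always be kept at exactly one, so that only $\|\nabla u\|_{L^2}$ appears. This requires moving $\Delta$'s coming from digits $1$, and derivatives inside the Leray projections, onto $h$ or $n$ by integration by parts, using that $\mathbb P$ commutes with derivatives and is bounded on $L^p$ for $p\in(1,\infty)$. The dangerous configurations are those where an uncontracted copy of $n$ sits between two Leray projections. There I would again rely on estimating each pairing in $L^2\times L^2$ with $\|n\|_{W^{k,\infty}}$ factors before taking expectations, and then use $\mathbb E_{\nu^\delta}\|n\|_{W^{k,\infty}}^q\lesssim\delta^{-kq}$.
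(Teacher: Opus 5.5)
Your strategy is the paper's. Step 1 is correct: the inverse generator maps mean-free observables to mean-free ones, and parity in $n$ handles $\varphi_\sigma(\mathbb P(n\cdot\nabla u),n)$ for even $M$. Step 2 uses the same Leibniz and derivative-counting argument as Lemma~\ref{l:38}, with $S_\sigma$ meant to collect the terms carrying a single derivative on $h$. However, your bookkeeping in Step 2 is off by one, and as written it puts the wrong terms into $S_\sigma$.

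By your own count, each term has $M+2L+1$ derivatives: one on $u$, $j$ on the copies of $n$ (or on $Q_\delta$), and $M+2L-j$ on $h$, giving $\|\nabla h\|_{H^{M+2L-1-j}}$.
\begin{itemize}
\item With $j=M+2L-2$ you get $\|\nabla h\|_{H^1}$, not $\|\nabla h\|_{L^2}$.
\item There is no class of terms with a single derivative on $h$ costing only $\delta^{-(M+2L-2)}$. Once one derivative sits on $u$ and one on $h$, the remaining $M+2L-1$ must fall on the noise. So the leading part genuinely costs $\delta^{-(M+2L-1)}$, and your claim of a ``better than stated'' bound is false. Compare Step~3 of the proof of Proposition~\ref{p:37}: for $\sigma=(0,0,0)$, exactly $M+2L-1=2$ derivatives hit the noise.
\item Hence $S_\sigma$ must consist of the $j=M+2L-1$ terms only. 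The $j=M+2L-2$ terms belong to $R_\sigma$, as its $j'=1$ summand $\delta^{-(M+2L-2)}\|\nabla h\|_{H^1}$.
\end{itemize}

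This is not cosmetic. Such a term is only controlled by $\delta^{-(M+2L-2)}\|\nabla u\|_{L^2}\|\nabla h\|_{H^1}$. For $h$ at frequencies $|k|\gg\delta^{-1}$, this is not dominated by $\delta^{-(M+2L-1)}\|\nabla u\|_{L^2}\|\nabla h\|_{L^2}$. So with your allocation the symbol bound \eqref{eq:sk} fails, and $\varepsilon^{M+2L-1}S_\sigma$ is no longer a perturbation of $(1+\nu)\mathbb P\Delta$ uniformly in frequency, which Proposition~\ref{p:p20} requires. Once you split by ``exactly one derivative on $h$'' versus ``at least two'', your argument matches the paper's.
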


\begin{proof}
  By definition of the correctors \eqref{eq:aa1}, \eqref{eq:aa2} and since the
  inverse Ornstein--Uhlenbeck generator acts diagonally on the Wiener chaoses,
  particularly by \eqref{eq:final}, we directly obtain $[\varphi_{\sigma}
  (\Delta u, n)]^{\bullet} = 0$ and $[\varphi_{\sigma} (\mathbb{P} (n \cdot
  \nabla u), n)]^{\bullet} = 0$ for $M$ even.
  
  The decomposition of $[\varphi_{\sigma} (\mathbb{P} (n \cdot \nabla u),
  n)]^{\bullet}$ for $M$ odd and the estimates follow by the arguments from
  Lemma~\ref{l:38}. Specifically, $S_{\sigma}$ collects all the terms with the
  highest number of derivatives split among the copies of $n$, costing us
  negative powers of $\delta$, $R_{\sigma}$ collects the terms where at least
  one of those derivatives is applied to the test function $h$, which lowers
  the power of ${\delta^{- 1}} $ but requires more regularity from $h$.
\end{proof}

It will be seen in Section~\ref{s:setup} that the expectations treated in
Proposition~\ref{c:c18} appear in our expansion as
\[ \varepsilon^{M + 2 L - 1} [\varphi_{\sigma} (\mathbb{P} (n \cdot \nabla u),
   n)]^{\bullet}, \qquad \varepsilon^{M + 2 L} [\varphi_{\sigma} (\Delta u,
   n)]^{\bullet} . \]
Hence, Proposition~\ref{c:c18} identifies the leading order part $S_{\sigma}$
which does not vanish after the division by $\delta$, whereas the part
$R_{\sigma}$ can be divided by $\delta$ provided we estimate in a function
space of sufficiently low regularity. Our goal is to include the operators
$\varepsilon^{M + 2 L - 1} S_{\sigma}$ as perturbations of the generator of
the semigroup.

\section{Decomposition of the fluctuation dynamics}\label{s:setup}

Since the limit is unique and deterministic in $d = 2$, convergence in law
from Theorem~\ref{thm:1} implies that the entire sequence $u^{\varepsilon,
\delta}$ converges in probability on the original probability space to the
deterministic limit $u$. The fluctuation analysis may therefore be carried out
on the original probability space throughout. While a number of arguments in
the following sections hold for general $d$, the conclusion of
Theorem~\ref{thm:2} is restricted to $d = 2$, and we indicate explicitly where
this restriction is used.

\tmtextbf{Fluctuation variable. }We introduce the rescaled fluctuation
variable
\[ z^{\varepsilon, \delta} \assign \frac{u^{\varepsilon, \delta} -
   v^{\varepsilon, \delta} - u}{\delta^{d / 2}}, \]
where $u^{\varepsilon, \delta}$ solves \eqref{eq:u}, $u$ is the deterministic
limit from Theorem~\ref{thm:1} and $v^{\varepsilon, \delta}$ is a macroscopic
correction, specified below, accounting for terms that vanish more slowly than
$\delta^{d / 2}$ and therefore persist after the rescaling but do not
contribute to the limiting fluctuation. The limit fluctuation variable is
denoted by $z$.

\tmtextbf{Application of It{\^o}'s formula and decomposition. }We apply
It{\^o}'s formula to
\begin{equation}
  \varphi (u^{\varepsilon, \delta}_t) + \sum_{\sigma \in \Sigma}
  \varepsilon^{M + 2 L} \varphi_{\sigma} (u^{\varepsilon, \delta}_t,
  m^{\varepsilon, \delta}_t), \label{eq:sum}
\end{equation}
where the correctors $\varphi_{\sigma}$ are defined recursively in
Section~\ref{s:high}. Each corrector $\varphi_{\sigma}$ is linear in the test
function $h$ and may be identified with a distribution-valued functional of
$(u, n)$; in the sequel we systematically work with this identification,
writing $\varphi_{\sigma} (u, n)$ for the corresponding distribution rather
than the scalar $\varphi_{\sigma} (u, n ; h)$. In particular, $\varphi
(u^{\varepsilon, \delta})$ is identified with $u^{\varepsilon, \delta}$
itself.

Recall that in the color coding of the generator from \eqref{eq:gen}, the
correctors were designed in a way that the blue term associated to each
corrector $\varphi_{\sigma}$ cancels the mean-free part of one of the magenta
terms associated to its predecessor. To treat the persistent expectations, we
apply Proposition~\ref{c:c18} to write $[\varphi_{\sigma} (\mathbb{P} (n
\cdummy \nabla u^{\varepsilon, \delta}), n)]^{\bullet} = S_{\sigma}
u^{\varepsilon, \delta} + R_{\sigma} u^{\varepsilon, \delta}$. The operator
$S_{\sigma}$ becomes part of the generator of the semigroup whereas
$R_{\sigma}$ remains as error. Therefore, the generator
$\mathcal{L}^{\varepsilon, \delta}$ from \eqref{eq:gen} acting on the sum
\eqref{eq:sum} produces
\[ d \left[ u^{\varepsilon, \delta}_t + \sum_{\sigma \in \Sigma}
   \varepsilon^{M + 2 L} \varphi_{\sigma} (u^{\varepsilon, \delta},
   m^{\varepsilon, \delta}) \right] \]
\[ = \left( (1 + \nu) \mathbb{P} \Delta + \sum_{\{ \bar{\sigma} \in \Sigma ; M
   \tmop{odd}, (\bar{\sigma}, 0) \in \Sigma \} \setminus \{ (0) \}}
   \varepsilon^{M_{\bar{\sigma}} + 2 L_{\bar{\sigma}} - 1} S_{\bar{\sigma}}
   \right) \left[ u^{\varepsilon, \delta} + \sum_{\sigma \in \Sigma}
   \varepsilon^{M + 2 L} \varphi_{\sigma} (u^{\varepsilon, \delta},
   m^{\varepsilon, \delta}) \right] d t \]
\[ - \left( (1 + \nu) \mathbb{P} \Delta + \sum_{\{ \bar{\sigma} \in \Sigma ; M
   \tmop{odd}, (\bar{\sigma}, 0) \in \Sigma \} \setminus \{ (0) \}}
   \varepsilon^{M_{\bar{\sigma}} + 2 L_{\bar{\sigma}} - 1} S_{\bar{\sigma}}
   \right) \left[ \sum_{\sigma \in \Sigma} \varepsilon^{M + 2 L}
   \varphi_{\sigma} (u^{\varepsilon, \delta}, m^{\varepsilon, \delta}) \right]
   d t \]
\[ -\mathbb{P} (u^{\varepsilon, \delta} \cdummy \nabla u^{\varepsilon,
   \delta}) d t \]
\[ - \sum_{\sigma \in \Sigma} \varepsilon^{M + 2 L} \varphi_{\sigma}
   (\mathbb{P} (u^{\varepsilon, \delta} \cdummy \nabla u^{\varepsilon,
   \delta}), m^{\varepsilon, \delta}) d t \]
\[ +\mathbb{P} [n \cdummy \nabla \mathbb{P} (n \cdummy \nabla u^{\varepsilon,
   \delta})]^{\bullet} d t - \nu \mathbb{P} \Delta u^{\varepsilon, \delta} d t
\]
\[ + \sum_{\{ \sigma \in \Sigma ; M \tmop{odd}, (\sigma, 0) \in \Sigma \}
   \setminus \{ (0) \}} \varepsilon^{M + 2 L - 1} R_{\sigma} u^{\varepsilon,
   \delta} d t \]
\[ +\mathbb{P} [\mathcal{Q}^{1 / 2}_{\delta} d W \cdummy \nabla
   u^{\varepsilon, \delta}] \]
\[ + \sum_{\sigma \in \Sigma \setminus \{ (0) \}} \varepsilon^{M + 2 L - 1}
   \langle \mathcal{Q}^{1 / 2}_{\delta} d W, D_n \varphi_{\sigma}
   (u^{\varepsilon, \delta}, m^{\varepsilon, \delta}) \rangle \]
\[ + \sum_{\{ \sigma \in \Sigma ; (\sigma, 0) \notin \Sigma \}} \varepsilon^{M
   + 2 L - 1} \varphi_{\sigma} (\mathbb{P} (m^{\varepsilon, \delta} \cdummy
   \nabla u^{\varepsilon, \delta}), m^{\varepsilon, \delta}) d t + \sum_{\{
   \sigma \in \Sigma ; (\sigma, 1) \notin \Sigma \}} \varepsilon^{M + 2 L}
   \varphi_{\sigma} (\Delta u^{\varepsilon, \delta}, m^{\varepsilon, \delta})
   d t \]
\[ \backassign J_1 + \cdots + J_9 \]
where we recognize, in the order of appearance, the linear diffusion $J_1$,
the (perturbed) Laplacian remainder of the correctors $J_2$, the approximate
convective term $J_3$, the convective term associated to correctors $J_4$, the
enhanced diffusion correction $J_5$, the remaining expectations of the
cancelled magenta terms $J_6$, the stochastic integral associated to the first
corrector $J_7$, the stochastic integrals associated to higher correctors
$J_8$ and the magenta terms of the last generation $J_9$. We also wrote $M$
and $L$ for $M_{\sigma}$ and $L_{\sigma}$ respectively, whereas for the
parameters associated to $\bar{\sigma}$ we used $M_{\bar{\sigma}}$ and
$L_{\bar{\sigma}}$.

Note that $J_2$ arises from adding and subtracting the perturbed Laplacian of
the correctors, incorporating the correctors into the linear diffusion
operator $J_1$ in preparation for the mild formulation described below.
Similarly, the second term in $J_5$ arises from adding and subtracting the
enhanced viscosity part $\nu \mathbb{P} \Delta u^{\varepsilon, \delta}$.

\tmtextbf{Splitting into low and high Fourier modes. }We introduce a spatial
frequency cut-off parameter $R > 1$ and decompose $\tmop{Id} = P_{\leqslant R}
+ P_{> R}$ into low and high Fourier modes. This localization provides a
quantitative trade-off: derivatives acting on $m^{\delta}$ or
$\mathcal{Q}_{\delta}$ produce factors of $\delta^{- 1}$, whereas derivatives
acting on the test function generate powers of $R$. Balancing these
contributions, together with the $L^{\infty}$-integrability of the noise
established in Section~\ref{s:OU}, allows the truncated corrector hierarchy to
be controlled uniformly across all orders up to $N$. As we verify in
Section~\ref{s:removal}, the choice $R = \delta^{- 1}$ is sufficient
throughout.

The high-frequency component $P_{> R} (z^{\varepsilon, \delta} - z)$ is
controlled directly by the uniform a priori bounds available for
$u^{\varepsilon, \delta}$, $v^{\varepsilon, \delta}$, $u$ and $z$. This yields
a factor $\delta^{- d / 2}$, which must be compensated by a suitable negative
power of $R$ obtained from the gain of regularity in the high-frequency
regime. The low-frequency component $P_{\leqslant R} (z^{\varepsilon, \delta}
- z)$ is analyzed using It{\^o}'s formula applied to the truncated corrector
hierarchy described above.

\tmtextbf{Remaining expectations.} It was discussed in Section~\ref{s:exp}
that the expectations $\varepsilon^{M + 2 L - 1} [\varphi_{\sigma} (\mathbb{P}
(n \cdummy \nabla u^{\varepsilon, \delta}), n)]^{\bullet}$ can be rewritten as
the sum of $\varepsilon^{M + 2 L - 1} S_{\sigma} u^{\varepsilon, \delta}$ and
$\varepsilon^{M + 2 L - 1} R_{\sigma} u^{\varepsilon, \delta}$. Here the
second term belongs to $J_6$ and is treated as a lower order remainder in
Section~\ref{s:exp2}. This approach is not possible for the leading order part
$\varepsilon^{M + 2 L - 1} S_{\sigma} u^{\varepsilon, \delta}$ which does not
vanish after the division by $\delta$. Therefore, the idea is to introduce
counterterms into the definition of the macroscopic correction
$v^{\varepsilon, \delta}$. However, we do not want to put directly
$\varepsilon^{M + 2 L - 1} S_{\sigma} u^{\varepsilon, \delta}$ into
$v^{\varepsilon, \delta}$ as this would make $v^{\varepsilon, \delta}$ random.
Our goal is to include $\varepsilon^{M + 2 L - 1} S_{\sigma} u$ for the limit
solution $u$ into the definition of $v^{\varepsilon, \delta}$. In order to
achieve this, we include the operator $\varepsilon^{M + 2 L - 1} S_{\sigma}$
as perturbation of the Stokes operator $(1 + \nu) \mathbb{P} \Delta$ into the
semigroup.

\tmtextbf{Mild formulation. }In order to absorb the convective term, we
include an additional exponential factor $e^{- \lambda t}$ into the semigroup.
In particular, the low-frequency component $P_{\leqslant R} e^{- \lambda
\cdummy} (z^{\varepsilon, \delta} - z)$ is estimated using the mild
formulation via the semigroup $(S_t)_{t \geqslant0}$ generated by
\begin{equation}
  - \lambda \tmop{Id} + (1 + \nu) \mathbb{P} \Delta + \sum_{\{ \sigma \in
  \Sigma ; M \tmop{odd}, (\sigma, 0) \in \Sigma \} \setminus \{ (0) \}}
  \varepsilon^{M + 2 L - 1} S_{\sigma} . \label{eq:A}
\end{equation}
The construction of the semigroup together with the necessary smoothing
estimates is provided in Section~\ref{s:semigroup}.

\tmtextbf{Criticality of the convective term. }The approximate convective term
$J_3$ shall be coupled with the convective term of the limit equation for $u$.
Additionally, we require counterterms in the equation for $v^{\varepsilon,
\delta}$ as well as in the equation for $z$. Specifically, writing
\[ \frac{u^{\varepsilon, \delta} \cdummy \nabla u^{\varepsilon, \delta} - u
   \cdummy \nabla u}{\delta^{d / 2}} = \frac{u^{\varepsilon, \delta} -
   u}{\delta^{d / 2}} \cdummy \nabla u + u^{\varepsilon, \delta} \cdummy
   \nabla \frac{u^{\varepsilon, \delta} - u}{\delta^{d / 2}} \]
and including the equations for $\delta^{- d / 2} v^{\varepsilon, \delta}$ and
$z$, namely, subtracting the terms
\[ \frac{v^{\varepsilon, \delta} \cdummy \nabla u + u \cdummy \nabla
   v^{\varepsilon, \delta} + v^{\varepsilon, \delta} \cdummy \nabla
   v^{\varepsilon, \delta}}{\delta^{d / 2}} \quad \infixand \quad z \cdummy
   \nabla u + u \cdummy \nabla z, \]
this expression becomes
\begin{equation}
  \left( \frac{u^{\varepsilon, \delta} - v^{\varepsilon, \delta} -
  u}{\delta^{d / 2}} - z \right) \cdummy \nabla u + u^{\varepsilon, \delta}
  \cdummy \nabla \left( \frac{u^{\varepsilon, \delta} - v^{\varepsilon,
  \delta} - u}{\delta^{d / 2}} - z \right) + \left( \frac{u^{\varepsilon,
  \delta} - v^{\varepsilon, \delta} - u}{\delta^{d / 2}} - z \right) \cdummy
  \nabla v^{\varepsilon, \delta} \label{eq:plo1}
\end{equation}
\begin{equation}
  + (u^{\varepsilon, \delta} - u) \cdummy \nabla z + z \cdummy \nabla
  v^{\varepsilon, \delta} . \label{eq:42}
\end{equation}
The terms in \eqref{eq:plo1} are the most delicate ones, as they contain the
quantity $z^{\varepsilon, \delta} - z$ we aim to control. These contributions
are critical for two independent -- and structurally unavoidable -- reasons.
First, $z^{\varepsilon, \delta} - z$ can only be estimated in a negative Besov
space, reflecting the negative regularity of the fluctuation limit $z$, which
is intrinsic and dictated by the stochastic integral in the limiting equation.
Second, the velocity fields $u$, $u^{\varepsilon, \delta}$ and
$v^{\varepsilon, \delta}$ appearing in these terms are controlled solely by
the energy estimate, and no additional spatial regularity or time
integrability is available. As a consequence, the associated paraproduct
interactions occur at the borderline level dictated by these bounds, and the
resulting time convolution kernels are only barely integrable. The smoothing
provided by the semigroup is therefore exactly critical: it suffices to close
the estimate, but it yields no additional gain in time. Handling this
borderline structure requires a dedicated endpoint convective estimate, which
we establish in Section~\ref{s:convective} as a self-contained result.

As already noted in the mild formulation above, the exponential factor $e^{-
\lambda t}$ is incorporated into the semigroup generator, so the endpoint
estimate of Section~\ref{s:convective} is applied to $e^{- \lambda \cdot}
(z^{\varepsilon, \delta} - z)$. The resulting contribution of the first three
terms is then bounded by $\| e^{- \lambda \cdot} (z^{\varepsilon,
\delta} - z) \|_{L^2 H^{- \beta}}$ times a factor that vanishes: by the
convergence $u^{\varepsilon, \delta} \rightarrow u$ in probability,
approximation of $u$ by a smooth Galerkin truncation $\bar{u}$, and the
exponential decay of the semigroup applied to $\bar{u}$ via
Proposition~\ref{p:22} with $\lambda$ chosen sufficiently large. This allows
all three terms to be absorbed into the left-hand side; see
Section~\ref{s:concl}. The last two terms are lower order and vanish as
$\varepsilon, \delta \rightarrow 0$.

\tmtextbf{Limit stochastic integral. }The only stochastic integral which
survives the limit after division by $\delta^{d / 2}$ is the one associated to
the first corrector, i.e. $J_7$. In Section~\ref{s:stoch} we identify its
limit as $\chi d W \cdummy \nabla u$, which is the stochastic integral
appearing in the equation of $z$. Here $\chi = (\mathcal{F}_{\mathbb{R}^2} K)
(0)$ denotes the noise intensity. Since the corresponding stochastic
convolution in the mild formulation is well defined only in negative Sobolev
spaces, this term is precisely what restricts the conclusion of
Theorem~\ref{thm:2} to $L^2 (0, T ; H^{- \beta})$ for any $\beta > 0$.

\tmtextbf{Limit fluctuation equation.} The fluctuation limit $z$ solves the
stochastic linearized Navier--Stokes equations
\[ d z + [z \cdummy \nabla u + u \cdummy \nabla z+ \nabla p_z ] d t = (1 +
   \nu) \Delta z d t + \chi d W \cdummy \nabla u, \quad \tmop{div} z = 0,
   \quad z (0) = 0, \]
with a cylindrical Wiener process $W$ on $H$, enhanced viscosity $\nu$ and
$\chi = (\mathcal{F}_{\mathbb{R}^2} K) (0)$, $u$ being the limit solution to
\eqref{eq:ulim} and with the associated pressure $p_z$. Well-posedness for
this equation is presented in Section~\ref{s:z}.

\tmtextbf{Additional errors.} Including $\sum_{\{ \sigma \in \Sigma ; M
\tmop{odd}, (\sigma, 0) \in \Sigma \} \setminus \{ (0) \}} \varepsilon^{M + 2
L - 1} S_{\sigma}$ into the generator of the semigroup requires the additional
terms
\[ \sum_{\{ \sigma \in \Sigma ; M \tmop{odd}, (\sigma, 0) \in \Sigma \}
   \setminus \{ (0) \}} \varepsilon^{M + 2 L - 1} S_{\sigma} u, \qquad
   \sum_{\{ \sigma \in \Sigma ; M \tmop{odd}, (\sigma, 0) \in \Sigma \}
   \setminus \{ (0) \}} \varepsilon^{M + 2 L - 1} S_{\sigma} z. \]
The first one does not vanish after the division by $\delta^{d / 2}$ and must
therefore be included in $v^{\varepsilon, \delta}$. The second does not have
to be divided by $\delta^{d / 2}$ and therefore vanishes as we show in
Section~\ref{s:addz}.

\tmtextbf{Macroscopic corrections $v^{\varepsilon, \delta}$.} Motivated by the
preceding decomposition, we define $v^{\varepsilon, \delta}$ as the solution
to
\[ \partial_t v^{\varepsilon, \delta} + v^{\varepsilon, \delta} \cdummy \nabla
   u + u \cdummy \nabla v^{\varepsilon, \delta} + v^{\varepsilon, \delta}
   \cdummy \nabla v^{\varepsilon, \delta} + \nabla p_{v^{\varepsilon, \delta}}
\]
\[ = \left( (1 + \nu) \Delta + \sum_{\{ \sigma \in \Sigma ; M \tmop{odd},
   (\sigma, 0) \in \Sigma \} \setminus \{ (0) \}} \varepsilon^{M + 2 L - 1}
   S_{\sigma} \right) v^{\varepsilon, \delta} + \sum_{\{ \sigma \in \Sigma ; M
   \tmop{odd}, (\sigma, 0) \in \Sigma \} \setminus \{ (0) \}} \varepsilon^{M +
   2 L - 1} S_{\sigma} u, \]
\[ \tmop{div} v^{\varepsilon, \delta} = 0, \quad v^{\varepsilon, \delta} (0) =
   0, \]
where $p_{v^{\varepsilon, \delta}}$ denotes the associated pressure. We
provide a well-posedness result for this equation in Section~\ref{s:v}.

\tmtextbf{Enhanced diffusion error.} The term $J_5$ accounts for the
discrepancy between the plain viscosity in the equation for $u^{\varepsilon,
\delta}$ and the enhanced viscosity $1 + \nu$ in the semigroup, and is treated
quantitatively in Section~\ref{s:quanti}.

\tmtextbf{Lower order remainder terms. }The terms $J_2$, $J_4$, $J_6$, $J_8$,
and $J_9$, together with the contributions of the corrector sum $\sum_{\sigma
\in \Sigma} \varepsilon^{M + 2 L} \varphi_{\sigma} (u^{\varepsilon, \delta},
m^{\varepsilon, \delta})$ evaluated at times $t$ and $0$ arising from the mild
formulation, vanish as $\varepsilon, \delta \rightarrow 0$ after division by
$\delta^{d / 2}$. Their treatment is carried out in Section~\ref{s:error},
relying on the polynomial structure of the corrector hierarchy, derivative
counting, frequency localization and maximal regularity.

\tmcolor{black}{\tmtextbf{Summary.} We observed that the dynamics governing the
difference $z^{\varepsilon, \delta} - z$ reads
\[ d \left[ z^{\varepsilon, \delta} - z + \delta^{- d / 2} \sum_{\sigma \in
   \Sigma} \varepsilon^{M + 2 L} \varphi_{\sigma} (u^{\varepsilon, \delta},
   m^{\varepsilon, \delta}) \right] \]
\[ = \left( (1 + \nu) \mathbb{P} \Delta + \sum_{\{ \bar{\sigma} \in \Sigma ; M
   \tmop{odd}, (\bar{\sigma}, 0) \in \Sigma \} \setminus \{ (0) \}}
   \varepsilon^{M_{\bar{\sigma}} + 2 L_{\bar{\sigma}} - 1} S_{\bar{\sigma}}
   \right) \left[ z^{\varepsilon, \delta} - z + \delta^{- d / 2} \sum_{\sigma
   \in \Sigma} \varepsilon^{M + 2 L} \varphi_{\sigma} (u^{\varepsilon,
   \delta}, m^{\varepsilon, \delta}) \right] \]
\[ +\mathbb{P} [(z^{\varepsilon, \delta} - z) \cdummy \nabla u +
   u^{\varepsilon, \delta} \cdummy \nabla (z^{\varepsilon, \delta} - z) +
   (z^{\varepsilon, \delta} - z) \cdummy \nabla v^{\varepsilon, \delta}] \]
\[ +\mathbb{P} [(u^{\varepsilon, \delta} - u) \cdummy \nabla z + z \cdummy
   \nabla v^{\varepsilon, \delta}] \]
\[ +\mathbb{P} [\delta^{- d / 2} \mathcal{Q}^{1 / 2}_{\delta} d W \cdummy
   \nabla u^{\varepsilon, \delta} - \chi d W \cdummy \nabla u] \]
\[ + \delta^{- d / 2} (J_2 + J_4 + J_5 + J_6 + J_8 + J_9) \]
\[ + \sum_{\{ \bar{\sigma} \in \Sigma ; M \tmop{odd}, (\bar{\sigma}, 0) \in
   \Sigma \} \setminus \{ (0) \}} \varepsilon^{M_{\bar{\sigma}} + 2
   L_{\bar{\sigma}} - 1} S_{\bar{\sigma}} z. \]
Hence, in the mild formulation using the semigroup generated by \eqref{eq:A},
since $z^{\varepsilon, \delta}_0 - z_0 = 0$,
\[ e^{- \lambda t} (z^{\varepsilon, \delta}_t - z_t) = e^{- \lambda t} \left[
   z_t^{\varepsilon, \delta} - z_t + \delta^{- d / 2} \sum_{\sigma \in \Sigma}
   \varepsilon^{M + 2 L} \varphi_{\sigma} (u_t^{\varepsilon, \delta},
   m_t^{\varepsilon, \delta}) \right] \]
\[ - e^{- \lambda t} \delta^{- d / 2} \sum_{\sigma \in \Sigma} \varepsilon^{M
   + 2 L} \varphi_{\sigma} (u_t^{\varepsilon, \delta}, m_t^{\varepsilon,
   \delta}) \]
\[ = \delta^{- d / 2} \sum_{\sigma \in \Sigma} \varepsilon^{M + 2 L} S_t
   \varphi_{\sigma} (u_0^{\varepsilon, \delta}, m_0^{\varepsilon, \delta}) \]
\[ + \int_0^t S_{t - s} \mathbb{P} [e^{- \lambda s} (z_s^{\varepsilon, \delta}
   - z_s) \cdummy \nabla u_s + u_s^{\varepsilon, \delta} \cdummy \nabla e^{-
   \lambda s} (z_s^{\varepsilon, \delta} - z_s) + e^{- \lambda s}
   (z_s^{\varepsilon, \delta} - z_s) \cdummy \nabla v_s^{\varepsilon, \delta}]
   d s \]
\[ + \int_0^t S_{t - s} \mathbb{P} [e^{- \lambda s} (u_s^{\varepsilon, \delta}
   - u_s) \cdummy \nabla z_s + e^{- \lambda s} z_s \cdummy \nabla
   v_s^{\varepsilon, \delta}] d s \]
\[ + \int_0^t S_{t - s} \mathbb{P} [\delta^{- d / 2} \mathcal{Q}^{1 /
   2}_{\delta} d W_s \cdummy \nabla e^{- \lambda s} u_s^{\varepsilon, \delta}
   - \chi d W_s \cdummy \nabla e^{- \lambda s} u_s] \]
\[ + \delta^{- d / 2} \int_0^t S_{t - s} e^{- \lambda s} (J_2 + J_4 + J_5 +
   J_6 + J_8 + J_9) \]
\[ + \sum_{\{ \bar{\sigma} \in \Sigma ; M \tmop{odd}, (\bar{\sigma}, 0) \in
   \Sigma \} \setminus \{ (0) \}} \varepsilon^{M_{\bar{\sigma}} + 2
   L_{\bar{\sigma}} - 1} \int_0^t S_{t - s} e^{- \lambda s} S_{\bar{\sigma}}
   z_s d s \]
\[ - e^{- \lambda t} \delta^{- d / 2} \sum_{\sigma \in \Sigma} \varepsilon^{M
   + 2 L} \varphi_{\sigma} (u_t^{\varepsilon, \delta}, m_t^{\varepsilon,
   \delta}), \]
where the integrals involving the $J_i$ are understood against the
corresponding measures $d s$ or $d W_s$.

After applying the low modes projection $P_{\leqslant R}$, which commutes with
the semigroup (see Section~\ref{s:perturb}), the right hand side is estimated
as follows. Section~\ref{s:stoch} treats the stochastic integrals and
Section~\ref{s:quanti} the enhanced diffusion correction $J_5$.
Section~\ref{s:error} handles the other lower order terms: the remaining
$J_i$, the boundary terms of the corrector sums, and the error term involving
$S_{\bar{\sigma}} z$. Section~\ref{s:concl} then absorbs the convective
integrals into the left hand side by means of the endpoint estimate of
Section~\ref{s:convective}, and concludes with the final choice $R = \delta^{-
1}$.}

\section{Construction of the semigroup}\label{s:semigroup}

\subsection{Fourier multiplier structure}\label{s:transl}

We show that the effective operators $S_{\sigma}$ are Fourier multipliers on
$H$, which in particular implies that they commute with the spectral
projections $P_{\leqslant R}$, $P_{> R}$, and the Stokes operator $\Delta
\mathbb{P}$. The key step is establishing translation invariance, where we
denote $(\tau_y f) (x) = f (x - y)$, $y \in \mathbb{T}^d$.

\begin{proposition}
  \label{p:37}For every $\sigma \in \Sigma$ with $M$ odd and $(\sigma, 0) \in
  \Sigma$, the operator $S_{\sigma}$ is translation invariant, i.e.
  $S_{\sigma} \tau_y = \tau_y S_{\sigma}$ for every $y \in \mathbb{T}^d$.
  Consequently, $S_{\sigma}$ is a Fourier multiplier on $H$ with symbol
  $s_{\sigma} : \mathbb{Z}_0^d \rightarrow \mathbb{R}^{(d - 1) \times (d -
  1)}$, where $s_{\sigma} (k)$ represents the action of $S_{\sigma}$ on
  $k^{\perp}$ in the basis $(\sigma_{k, \alpha})_{\alpha = 1, \ldots, d - 1}$,
  and commutes with the spectral projections $P_{\leqslant R}$ and $P_{> R}$,
  and the Stokes operator $\Delta \mathbb{P}$.
\end{proposition}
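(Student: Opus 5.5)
We will derive translation invariance from the spatial stationarity of the invariant measure $\nu^{\delta}$ together with the translation equivariance of every deterministic operation that enters the corrector hierarchy. Write $\varphi_\sigma(u,n;h)$ for the scalar corrector paired with the test function $h$.

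\textbf{Step 1: equivariance of the correctors.} We prove by induction on $|\sigma|$ that
\[ \varphi_\sigma(\tau_y u,\tau_y n;\tau_y h)=\varphi_\sigma(u,n;h) \]
for all $y\in\mathbb{T}^d$. For $\varphi_{(0)}(u,n;h)=\langle n\cdot\nabla u,h\rangle$ this follows from the change of variables $x\mapsto x+y$ in the integral. For the induction step we need three facts:
\begin{itemize}
\item $\mathbb{P}$, $\nabla$ and $\Delta$ commute with $\tau_y$, so $\mathbb{P}(\tau_y n\cdot\nabla\tau_y u)=\tau_y\mathbb{P}(n\cdot\nabla u)$ and $\Delta\tau_y u=\tau_y\Delta u$;
\item the expectation $[\cdot]^\bullet$ is taken in $n$ under $\nu^\delta$, which is invariant under $\tau_y$; hence if an observable $F$ satisfies $F(\tau_y u,\tau_y n;\tau_y h)=F(u,n;h)$, then $[F]^\bullet(\tau_y u;\tau_y h)=[F]^\bullet(u;h)$, and the same holds for $[F]^\circ$;
\item $(-\mathcal{M}^\delta)^{-1}$ preserves this equivariance.
\end{itemize}
For the third fact we use the representation $\int_0^\infty P^\delta_t\,dt$. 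Since the Ornstein--Uhlenbeck dynamics \eqref{eq:m} commute with $\tau_y$ (because $\mathcal{Q}^{1/2}_\delta$ is a convolution and the cylindrical noise is translation invariant in law), we have $m^\delta_t(\tau_y n)\overset{d}{=}\tau_y m^\delta_t(n)$, and therefore $P^\delta_t F(\tau_y u,\tau_y n;\tau_y h)=P^\delta_t F(u,n;h)$. Alternatively, the contraction formula \eqref{eq:final} of Proposition~\ref{p:chaos} can be used directly, since $Q_\delta$ depends only on $x-y$. Applying these facts to the recursions \eqref{eq:aa11} and \eqref{eq:aa22} closes the induction.

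\textbf{Step 2: transfer to $S_\sigma$.} By Step 1 and the invariance of $\nu^\delta$, the map
\[ (u,h)\mapsto [\varphi_\sigma(\mathbb{P}(n\cdot\nabla u),n;h)]^\bullet \]
is invariant under simultaneous translation of $u$ and $h$. It remains to check that this invariance survives the splitting into $S_\sigma$ and $R_\sigma$ from Proposition~\ref{c:c18}. This is the only delicate point, because the splitting was described through the Leibniz distribution of derivatives. The splitting is defined term by term: after expanding by Leibniz, each term is an expectation of a multilinear integral of copies of $n$ and its derivatives, Leray kernels, and $\nabla u$, $h$ or their derivatives. $S_\sigma$ collects exactly those terms in which all free derivatives fall on the copies of $n$. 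Every such term is itself invariant under simultaneous translation, by the same change of variables and the invariance of $\nu^\delta$. Hence $\langle S_\sigma\tau_y u,\tau_y h\rangle=\langle S_\sigma u,h\rangle$, and using $\tau_y^*=\tau_{-y}$ this gives $S_\sigma\tau_y=\tau_y S_\sigma$.

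\textbf{Step 3: Fourier multiplier.} Apply the operator to $\sigma_{k,\alpha}$. By Proposition~\ref{c:c18}, $S_\sigma$ is bounded from $H^1$ to $H^{-1}$, so $S_\sigma\sigma_{k,\alpha}$ is a distribution. Translation invariance gives
\[ \tau_y S_\sigma\sigma_{k,\alpha}=S_\sigma\tau_y\sigma_{k,\alpha}=e^{-ik\cdot y}S_\sigma\sigma_{k,\alpha}. \]
A distribution $f$ with $\tau_y f=e^{-ik\cdot y}f$ for all $y$ has Fourier coefficients supported on $\{k\}$. Since $S_\sigma$ maps into $H$, it takes values in $k^\perp$, so $S_\sigma\sigma_{k,\alpha}=\sum_\beta s_\sigma(k)_{\beta\alpha}\sigma_{k,\beta}$. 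Real-valuedness of the symbol follows from the real structure of the problem, by pairing with $\sigma_{-k}$ and using the symmetry $n\mapsto -n$ of $\nu^\delta$; if needed one can simply work with complex entries. Commutation with $P_{\le R}$, $P_{>R}$ and $\Delta\mathbb{P}$ follows immediately, since all of these act diagonally in $k$: the first two as indicators and the last as $-|k|^2$.
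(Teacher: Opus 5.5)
Your proposal is correct and follows essentially the paper's route: translation invariance of each Leibniz-expanded contraction term of $S_\sigma$ comes from the stationarity of $\nu^{\delta}$ (and of the independent copies) together with the commutation of $\nabla$ and $\mathbb{P}$ with $\tau_y$, followed by the eigenfunction argument on $\sigma_{k,\alpha}$; your inductive Step~1 and your explicit handling of the non-canonical $S_\sigma/R_\sigma$ splitting make systematic what the paper only illustrates on one representative term of $S_{(0,0,0)}$. One small correction: realness of $s_\sigma(k)$ comes from combining the real structure with invariance of $\nu^{\delta}$ under the point reflection $n \mapsto -n(-\cdot)$ (valid since $q_\delta$ is even), not from the sign flip $n \mapsto -n$, which gives no relation between $s_\sigma(-k)$ and $s_\sigma(k)$.
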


\begin{proof}
  Step 1: Fourier multiplier structure. Assuming translation invariance, we
  show that $S_{\sigma}$ is a Fourier multiplier on $H$. Since $\tau_y
  \sigma_{k, \alpha} = e^{- i k \cdot y} \sigma_{k, \alpha}$, translation
  invariance implies that $S_{\sigma} \sigma_{k, \alpha}$ is an eigenfunction
  of every $\tau_y$ with eigenvalue $e^{- i k \cdot y}$. Comparing Fourier
  coefficients forces $S_{\sigma} \sigma_{k, \alpha}$ to lie in $\tmop{span}
  \{ \sigma_{k, \beta}, \beta = 1, \ldots, d - 1 \}$, so
  
  \[ S_{\sigma} \sigma_{k, \alpha} = \sum_{\beta = 1}^{d - 1} s_{\sigma}
     (k)^{\alpha \beta} \sigma_{k, \beta} \]
  for some matrix $s_{\sigma} (k) \in \mathbb{R}^{(d - 1) \times (d - 1)}$. By
  linearity and completeness this defines $S_{\sigma}$ as a Fourier multiplier
  with symbol $s_{\sigma} (k)$. It then commutes with every Fourier multiplier
  on $H$ whose symbol is a scalar multiple of the identity at each frequency,
  in particular $P_{\leqslant R}$, $P_{> R}$ and $\Delta \mathbb{P}$.
  
  Step 2: Translation invariance. Translation invariance follows from three
  ingredients: stationarity of $n$, which allows replacing $n$ by $\tau_y n$
  under the expectation; translation invariance of $\nabla$ and $\mathbb{P}$,
  which commute with $\tau_y$ as Fourier multipliers; and the fact that
  $\tau_y$ commutes with expectation due to linearity. Since each term in
  $S_{\sigma}$ is a finite composition of these operations, the argument is
  the same for all terms. We illustrate it on one representative term from
  $S_{(0, 0, 0)}$ below.
  
  Step 3: Case study. Recall that $S_{(0, 0, 0)}$ contains the leading order
  terms in $[\varphi_{(0, 0, 0)} (\mathbb{P} (n \cdummy \nabla u),
  n)]^{\bullet}$, that is, the terms where after the application of Leibniz
  rule, one derivative hits $u$, one derivative hits the test function and the
  remaining $M + 2 L - 1 = 2$ derivatives hit one of the copies of the
  Ornstein--Uhlenbeck process. Recall from Section~\ref{s:appl} that, up to
  some constants,
  \[ \varphi_{(0, 0, 0)} (u, n) \cong \langle n^{\otimes 3}, G
     \rangle_{H^{\otimes 3}} + \langle n, \tilde{\mathcal{C}}_1^{\delta} G
     \rangle, \]
  where $G$ is defined through
  \[ F (n) = \langle n \cdummy \nabla \mathbb{P} (n \cdummy \nabla \mathbb{P}
     (n \cdummy \nabla u)), h \rangle = \langle n^{\otimes 3}, G
     \rangle_{H^{\otimes 3}} \]
  and the contraction terms are defined as
  \[ \langle n, \tilde{\mathcal{C}}_1^{\delta} G \rangle \cong
     \mathbb{E}_{n_1} [\langle n_1 \cdummy \nabla \mathbb{P} (n_1 \cdummy
     \nabla \mathbb{P} (n \cdummy \nabla u)), h \rangle] +\mathbb{E}_{n_1}
     [\langle n_1 \cdummy \nabla \mathbb{P} (n \cdummy \nabla \mathbb{P} (n_1
     \cdummy \nabla u)), h \rangle] \]
  \[ +\mathbb{E}_{n_1} [\langle n \cdummy \nabla \mathbb{P} (n_1 \cdummy
     \nabla \mathbb{P} (n_1 \cdummy \nabla u)), h \rangle] . \]
  Accordingly,
  \[ [\varphi_{(0, 0, 0)} (\mathbb{P} (n \cdummy \nabla u), n)]^{\bullet}
     \cong [\langle n \cdummy \nabla \mathbb{P} (n \cdummy \nabla \mathbb{P}
     (n \cdummy \nabla \mathbb{P} (n \cdummy \nabla u))), h \rangle]^{\bullet}
  \]
  \[ + [\mathbb{E}_{n_1} [\langle n_1 \cdummy \nabla \mathbb{P} (n_1 \cdummy
     \nabla \mathbb{P} (n \cdummy \nabla \mathbb{P} (n \cdummy \nabla u))), h
     \rangle]]^{\bullet} + [\mathbb{E}_{n_1} [\langle n_1 \cdummy \nabla
     \mathbb{P} (n \cdummy \nabla \mathbb{P} (n_1 \cdummy \nabla \mathbb{P} (n
     \cdummy \nabla u))), h \rangle]]^{\bullet} \]
  \[ + [\mathbb{E}_{n_1} [\langle n \cdummy \nabla \mathbb{P} (n_1 \cdummy
     \nabla \mathbb{P} (n_1 \cdummy \nabla \mathbb{P} (n \cdummy \nabla u))),
     h \rangle]]^{\bullet}, \]
  where the outside expectation $[\cdummy]^{\bullet}$ is with respect to $n$.
  Integrating by parts and applying the Leibniz rule, we collect the terms
  where only one derivative is applied to $h$ and one to $u$. All four terms
  have the same structure -- a finite composition of $\nabla$, $\mathbb{P}$
  and expectations over stationary fields -- and differ only in which copies
  of $n$ are contracted against $n_1$. The translation invariance argument is
  therefore identical for each.
  
  Let us illustrate the calculation on one representative term, for instance,
  the third term on the right hand side rewrites as
  \[ [\mathbb{E}_{n_1} [\langle n_1 \cdummy \nabla \mathbb{P} (n \cdummy
     \nabla \mathbb{P} (n_1 \cdummy \nabla \mathbb{P} (n \cdummy \nabla u))),
     h \rangle]]^{\bullet} = - [\mathbb{E}_{n_1} [\langle \mathbb{P} (n
     \cdummy \nabla u), n_1 \cdummy \nabla \mathbb{P} (n \cdummy \nabla
     \mathbb{P} (n_1 \cdummy \nabla h)) \rangle]]^{\bullet} \]
  and one of the leading order terms is
  \[ - [\mathbb{E}_{n_1} [\langle \mathbb{P} (n \cdummy \nabla u), n^{i_1}_1
     \mathbb{P} ((\partial^{i_1} n^{i_2}) \mathbb{P} ((\partial^{i_2} n_1)
     \cdummy \nabla h)) \rangle]]^{\bullet} \]
  \[ = [\mathbb{E}_{n_1} [\langle \tmop{div} ((\partial^{i_2} n_1) \mathbb{P}
     ((\partial^{i_1} n^{i_2}) \mathbb{P} (n_1^{i_1} \mathbb{P} (n \cdummy
     \nabla u)))), h \rangle]]^{\bullet} . \]
  In order to show its translation invariance, we include $\tau_y$, use the
  commutativity of $\nabla$ and $\tau_y$, translation invariance of the law of
  $n$, commutativity of $\mathbb{P}$ and $\tau_y$, translation invariance of
  the law of $n_1$, and the commutation of $\tau_y$ with expectations as
  follows
  \[ [\mathbb{E}_{n_1} [\tmop{div} ((\partial^{i_2} n_1) \mathbb{P}
     ((\partial^{i_1} n^{i_2}) \mathbb{P} (n_1^{i_1} \mathbb{P} (n \cdummy
     \nabla \tau_y u))))]]^{\bullet} \]
  \[ = [\mathbb{E}_{n_1} [\tmop{div} ((\partial^{i_2} n_1) \mathbb{P}
     ((\partial^{i_1} n^{i_2}) \mathbb{P} (n_1^{i_1} \mathbb{P} (n \cdummy
     \tau_y \nabla u))))]]^{\bullet} \]
  \[ = [\mathbb{E}_{n_1} [\tmop{div} ((\partial^{i_2} n_1) \mathbb{P}
     ((\partial^{i_1} \tau_y n^{i_2}) \mathbb{P} (n_1^{i_1} \mathbb{P} (\tau_y
     n \cdummy \tau_y \nabla u))))]]^{\bullet} \]
  \[ = [\mathbb{E}_{n_1} [\tmop{div} ((\partial^{i_2} n_1) \mathbb{P}
     ((\partial^{i_1} \tau_y n^{i_2}) \mathbb{P} (n_1^{i_1} \tau_y \mathbb{P}
     (n \cdummy \nabla u))))]]^{\bullet} \]
  \[ = [\mathbb{E}_{n_1} [\tmop{div} ((\tau_y \partial^{i_2} n_1) \mathbb{P}
     (\tau_y (\partial^{i_1} n^{i_2}) \tau_y \mathbb{P} (n_1^{i_1} \mathbb{P}
     (n \cdummy \nabla u))))]]^{\bullet} \]
  \[ = [\mathbb{E}_{n_1} [\tmop{div} ((\tau_y \partial^{i_2} n_1) \tau_y
     \mathbb{P} ((\partial^{i_1} n^{i_2}) \mathbb{P} (n_1^{i_1} \mathbb{P} (n
     \cdummy \nabla u))))]]^{\bullet} \]
  \[ = \tau_y [\mathbb{E}_{n_1} [\tmop{div} ((\partial^{i_2} n_1) \mathbb{P}
     ((\partial^{i_1} n^{i_2}) \mathbb{P} (n_1^{i_1} \mathbb{P} (n \cdummy
     \nabla u))))]]^{\bullet} . \]
\end{proof}

\subsection{Semigroup construction and smoothing estimates}\label{s:perturb}

\begin{proposition}
  \label{p:p20}\tmcolor{black}{If $\varepsilon \delta^{- 1}$ is sufficiently
  small, }the operator $A$ defined in \eqref{eq:A} is a generator of a
  $C_0$-semigroup $S_t$, $t \geqslant 0$, on $H^{\alpha}$ for every $\alpha
  \in \mathbb{R}$. For all $\alpha \in \mathbb{R}$ and $\theta > 0$,
  \[ \| S_t u \|_{H^{\alpha + \theta}} \lesssim e^{- \lambda t} t^{- \theta /
     2} \| u \|_{H^{\alpha}} . \]
  Moreover, $S_t$ commutes with the spectral projections $P_{\leqslant R}$ and
  $P_{> R}$ for all $t \geqslant 0$, and for all $\alpha \in \mathbb{R}$,
  \[ \left\| {\int_0} ^{\cdummy} S_{\cdummy - s} f_s d s \right\|_{L^2
     H^{\alpha + 2}} \lesssim \| f \|_{L^2 {H^{\alpha}}  }. \]
\end{proposition}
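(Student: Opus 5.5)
Since $A$ is built from Fourier multipliers (Proposition~\ref{p:37} for the $S_\sigma$, and $\mathbb{P}\Delta$, $\mathrm{Id}$ trivially), I would construct $S_t$ explicitly on each frequency block. For $k\in\mathbb{Z}^d_0$, let $a(k)\in\mathbb{R}^{(d-1)\times(d-1)}$ be the matrix of $A$ on $\operatorname{span}\{\sigma_{k,\alpha}\}_\alpha$. It has the form
\[
a(k)=-\lambda\,\mathrm{Id}-(1+\nu)|k|^2\,\mathrm{Id}+\sum_{\sigma}\varepsilon^{M+2L-1}s_\sigma(k).
\]
I then set $S_t\sigma_{k,\alpha}:=\sum_\beta (e^{t a(k)})^{\alpha\beta}\sigma_{k,\beta}$ and extend by linearity. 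This is diagonal across frequencies, so commutation with $P_{\leqslant R}$ and $P_{>R}$ is immediate, as is the semigroup property and strong continuity on each $H^\alpha$. The continuity follows by dominated convergence over $k$, once the bound below is uniform.

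\textbf{Key symbol estimate.} I claim $|s_\sigma(k)|\lesssim \delta^{-(M+2L-1)}|k|^2$. To see this, test the bound on $S_\sigma$ from Proposition~\ref{c:c18} with $u=\sigma_{k,\alpha}$ and $h=\sigma_{k,\beta}$: the right-hand side is $\delta^{-(M+2L-1)}\|\nabla\sigma_{k,\alpha}\|_{L^2}\|\nabla\sigma_{k,\beta}\|_{L^2}=\delta^{-(M+2L-1)}|k|^2$. Consequently
\[
\Big|\sum_\sigma \varepsilon^{M+2L-1}s_\sigma(k)\Big|\lesssim \sum_\sigma (\varepsilon\delta^{-1})^{M+2L-1}|k|^2 .
\]
The set $\Sigma$ is finite and each relevant $\sigma$ has $M+2L-1\geqslant 1$ (since $\sigma\neq(0)$ and $M$ is odd with $M\geqslant 3$, or $L\geqslant1$). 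Hence the sum is at most $\tfrac12(1+\nu)|k|^2$ once $\varepsilon\delta^{-1}$ is small. This is exactly where the smallness hypothesis enters.

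\textbf{Energy estimate at symbol level.} Write $a(k)=-\lambda-(1+\nu)|k|^2+b(k)$ with $|b(k)|\leqslant\tfrac12(1+\nu)|k|^2$. For $y(t)=e^{ta(k)}y_0$ we have
\[
\tfrac{d}{dt}|y|^2=2\,\mathrm{Re}\langle a(k)y,y\rangle\leqslant -2\big(\lambda+\tfrac12(1+\nu)|k|^2\big)|y|^2 .
\]
This uses only the size of $b(k)$, so no symmetry of the symbol is needed. Therefore $\|e^{ta(k)}\|\leqslant e^{-\lambda t}e^{-c|k|^2t}$ with $c=\tfrac12(1+\nu)$.

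\textbf{Smoothing bounds.} The $H^\alpha\to H^{\alpha+\theta}$ bound follows from $\sup_k (1+|k|^2)^{\theta/2}e^{-c|k|^2t}\lesssim t^{-\theta/2}$ for $t\leqslant T$. Mean-freeness gives $|k|\geqslant1$, so $(1+|k|^2)\lesssim|k|^2$ and the bound holds for all $t$, with the weight $e^{-\lambda t}$ carried along.

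\textbf{Maximal regularity.} Frequency by frequency, Young's inequality in time gives
\[
\Big\|\int_0^\cdot e^{(\cdot-s)a(k)}\hat f_s(k)\,ds\Big\|_{L^2_t}\leqslant \|e^{-c|k|^2\cdot}\|_{L^1_t}\,\|\hat f(k)\|_{L^2_t}\leqslant (c|k|^2)^{-1}\|\hat f(k)\|_{L^2_t}.
\]
Multiplying by $(1+|k|^2)^{(\alpha+2)/2}$, summing over $k$ and applying Plancherel yields the stated $L^2H^{\alpha+2}$ bound.

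\textbf{Main obstacle.} The only delicate point is the uniform symbol bound on the $S_\sigma$. Everything rests on reading off $|s_\sigma(k)|\lesssim\delta^{-(M+2L-1)}|k|^2$ from Proposition~\ref{c:c18}, together with checking that every $\sigma$ in the sum has $M+2L-1\geqslant1$, so that powers of $\varepsilon\delta^{-1}$ give the needed smallness. Once this holds, all remaining steps are elementary.
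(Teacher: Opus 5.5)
Your proposal is correct and follows the paper's proof essentially step by step. Both arguments read off the symbol bound $|s_\sigma(k)|\lesssim\delta^{-(M+2L-1)}|k|^2$ from Proposition~\ref{c:c18} by testing with $u=\sigma_{k,\alpha}$ and $h=\sigma_{k,\beta}$. Both then run a Gr\"onwall energy argument at each frequency to get $|e^{ta(k)}|\leqslant e^{-\lambda t}e^{-c(1+\nu)|k|^2t}$, define $S_t$ frequency by frequency through the matrix exponential, and use Young's inequality in time for maximal regularity. Your explicit checks that every $\sigma$ in the sum has $M+2L-1\geqslant 1$, that complex Fourier coefficients require $\mathrm{Re}$, and that $|k|\geqslant 1$ allows replacing $1+|k|^2$ by $|k|^2$ are small clarifications of points the paper leaves implicit.
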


\begin{proof}
  Step 1: Symbol bound. By Proposition~\ref{p:37}, each $S_{\sigma}$ is a
  Fourier multiplier on $H$ with matrix-valued symbol $s_{\sigma} (k) \in
  \mathbb{R}^{(d - 1) \times (d - 1)}$. Consequently, $A$ is a Fourier
  multiplier on $H$ with symbol
  \[ a (k) \assign - \lambda \tmop{Id} - (1 + \nu) | k |^2 \mathbb{P} (k) +
     \sum_{\{ \sigma \in \Sigma ; M \tmop{odd}, (\sigma, 0) \in \Sigma \}
     \setminus \{ (0) \}} \varepsilon^{M + 2 L - 1} s_{\sigma} (k), \]
  where $\mathbb{P} (k) = \tmop{Id}_{\mathbb{R}^d} - \frac{k \otimes k}{| k
  |^2}$ is the symbol of Leray projection. By Proposition~\ref{c:c18} applied
  to $u = \sigma_{k, \alpha}$ and $h = \sigma_{k, \beta}$ for which $\| \nabla
  u \|_{L^2} = \| \nabla h \|_{L^2} = | k |$, it holds for any $k \in
  \mathbb{Z}^d_0$
  \begin{equation}
    | s_{\sigma} (k)^{\alpha \beta} | = | \langle S_{\sigma} u, h \rangle |
    \lesssim \delta^{- (M + 2 L - 1)} | k |^2, \label{eq:sk}
  \end{equation}
  for all $\alpha, \beta \in \{ 1, \ldots, d - 1 \}$.
  
  Step 2: Exponential bound. For any $v \in k^{\perp}$, set $w (t) \assign
  e^{t a (k)} v$. Testing the ODE $\dot{w} = a (k) w$ against $w$,
  \[ \frac{d}{d t} | w |^2 = 2 \langle a (k) w, w \rangle \leqslant 2 (-
     \lambda - c (1 + \nu) | k |^2) | w |^2, \]
  where the bound on $\langle a (k) w, w \rangle$ follows from Step 1 and
  subcriticality $\varepsilon = o (\delta^{1 + \iota})$, which ensures the
  perturbation terms are absorbed \tmcolor{black}{provided $\varepsilon
  \delta^{- 1}$ is sufficiently small}. Gr{\"o}nwall's inequality and taking
  the supremum over all $v \in k^{\perp}$ with $| v | = 1$ yields
  \begin{equation}
    | e^{t a (k)} | \leqslant e^{- \lambda t} e^{- c (1 + \nu) | k |^2 t} .
    \label{eq:matrix}
  \end{equation}
  Step 3: Semigroup. Since $A$ is a Fourier multiplier, we define $S_t$
  frequency by frequency via the matrix exponential,
  \[ \mathcal{F} [S_t u] (k) \assign e^{t a (k)} \hat{u} (k) . \]
  That this defines a $C_0$-semigroup with generator $A$ on every $H^{\alpha}$
  follows from the fact that $t \mapsto e^{t a (k)}$ is the unique solution to
  the matrix ODE $\dot{X} (t) = a (k) X (t)$, $X (0) = \tmop{Id}$: the
  semigroup law $S_t S_s = S_{t + s}$ holds frequency by frequency from the
  matrix identity $e^{t a (k)} e^{s a (k)} = e^{(t + s) a (k)}$, and strong
  continuity $S_t u \rightarrow u$ in $H^{\alpha}$ as $t \rightarrow 0$
  holds by dominated convergence, since \eqref{eq:matrix} implies $| e^{t a
  (k)} | \leqslant 1$ for $t \geqslant 0$ uniformly in $k$. Commutativity with
  $P_{\leqslant R}$ and $P_{> R}$ is immediate since these are scalar Fourier
  multipliers.
  
  Step 4: Smoothing. By \eqref{eq:matrix} and the elementary bound $e^{- c (1
  + \nu) | k |^2 t} \lesssim_{\theta} (c (1 + \nu) | k |^2 t)^{- \theta / 2}$,
  which follows from $s^{\theta / 2} e^{- s} \lesssim_{\theta} 1$ for all $s >
  0$ and $\theta > 0$,
  \[ \| S_t u \|^2_{H^{\alpha + \theta}} = \sum_{k \in \mathbb{Z}_0^d} | k
     |^{2 \alpha + 2 \theta} | e^{t a (k)} \hat{u} (k) |^2 \lesssim e^{- 2
     \lambda t} t^{- \theta} \sum_{k \in \mathbb{Z}^d_0} | k |^{2 \alpha} |
     \hat{u} (k) |^2 \lesssim e^{- 2 \lambda t} t^{- \theta} \| u
     \|^2_{H^{\alpha}} . \]
  Step 5: Maximal regularity. From the matrix exponential bound
  \eqref{eq:matrix} we obtain
  \[ \int_0^{\infty} | e^{t a (k)} | d t \lesssim \int_0^{\infty} e^{- \lambda
     t} e^{- c (1 + \nu) | k |^2 t} d t = \frac{1}{\lambda + c (1 + \nu) | k
     |^2} \lesssim | k |^{- 2} \]
  uniformly in $k$. By Young's convolution inequality applied at each
  frequency $k$,
  \[ \int_0^T \left| \int_0^t e^{(t - s) a (k)} \hat{f}_s (k) d s \right|^2 d
     t \leqslant \left( \int_0^{\infty} | e^{t a (k)} | d t \right)^2 \int_0^T
     | \hat{f}_s (k) |^2 d s \lesssim | k |^{- 4} \int_0^T | \hat{f}_s (k) |^2
     d s. \]
  Multiplying by $| k |^{2 \alpha + 4}$ and summing over $k \in
  \mathbb{Z}^d_0$,
  \[ \left\| {\int_0} ^{\cdummy} S_{\cdummy - s} f_s d s \right\|_{L^2
     H^{\alpha + 2}}^2 = \int_0^T \sum_{k \in \mathbb{Z}^d_0} | k |^{2 \alpha
     + 4} \left| \int_0^t e^{(t - s) a (k)} \hat{f}_s (k) d s \right|^2 d t \]
  \[ \lesssim \int_0^T \sum_{k \in \mathbb{Z}^d_0} | k |^{2 \alpha} |
     \hat{f}_s (k) |^2 d s = \| f \|_{L^2 H^{\alpha}}^2 . \]
\end{proof}

\begin{lemma}
  \label{l:l21}For all $\alpha \in \mathbb{R}$ and any $L_2 (H,
  H^{\alpha})$-valued progressively measurable process $F$,
  \[ \mathbb{E} \left\| \int_0^{\cdummy} S_{\cdummy - s} F_s d W_s
     \right\|_{L^2 H^{\alpha + 1}}^2 \lesssim \mathbb{E} \| F \|^2_{L^2 L_2
     (H, H^{\alpha})} . \]
\end{lemma}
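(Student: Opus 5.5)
The plan is to reduce Lemma~\ref{l:l21} to a frequency-by-frequency computation, just as in Step~5 of the proof of Proposition~\ref{p:p20}. The tools are It\^o's isometry for Hilbert-space valued stochastic integrals and the matrix exponential bound \eqref{eq:matrix}.

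Fix $t \in [0,T]$ and write $Z_t := \int_0^t S_{t-s} F_s \, dW_s$. Since $S_{t-s}$ is a deterministic operator for each fixed $t$, the integrand $s \mapsto S_{t-s}F_s$, $s\in[0,t]$, is progressively measurable with values in $L_2(H,H^{\alpha+1})$. It\^o's isometry therefore gives
\[ \mathbb{E}\|Z_t\|_{H^{\alpha+1}}^2 = \mathbb{E}\int_0^t \|S_{t-s}F_s\|^2_{L_2(H,H^{\alpha+1})}\, ds . \]
We do not need a martingale in $t$, only this pointwise identity.

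Next, expand the Hilbert--Schmidt norm in an orthonormal basis $(e_i)$ of $H$. This gives $\|S_{t-s}F_s\|^2_{L_2(H,H^{\alpha+1})} = \sum_i \sum_{k\in\mathbb{Z}^d_0} |k|^{2\alpha+2} |e^{(t-s)a(k)} \widehat{F_s e_i}(k)|^2$, where we use the Fourier multiplier structure of $S_t$ from Step~3 of Proposition~\ref{p:p20}. By \eqref{eq:matrix} this is bounded by
\[ \sum_i\sum_k |k|^{2\alpha+2} e^{-2c(1+\nu)|k|^2(t-s)} |\widehat{F_se_i}(k)|^2 , \]
where the factor $e^{-2\lambda(t-s)} \le 1$ has been dropped.

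Now integrate in $t\in[0,T]$ and swap the order of integration by Tonelli. For fixed $s$ and $k$,
\[ \int_s^T |k|^{2}e^{-2c(1+\nu)|k|^2(t-s)}\,dt \le \frac{1}{2c(1+\nu)} \]
uniformly in $k\neq 0$, so exactly one power $|k|^2$ is absorbed by the time integral. Consequently
\[ \mathbb{E}\int_0^T\|Z_t\|^2_{H^{\alpha+1}}dt \lesssim \mathbb{E}\int_0^T \sum_i \sum_k |k|^{2\alpha}|\widehat{F_se_i}(k)|^2\,ds = \mathbb{E}\|F\|^2_{L^2L_2(H,H^{\alpha})}, \]
which is the claim.

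The only point requiring care is that the gain is one derivative rather than two, unlike in the deterministic maximal regularity estimate. The reason is that It\^o's isometry squares the kernel before integrating in time: we use $\int_0^\infty |e^{ta(k)}|^2dt\lesssim |k|^{-2}$ in place of the $L^1$ bound $\int_0^\infty|e^{ta(k)}|dt\lesssim|k|^{-2}$. Measurability of $(t,\omega)\mapsto Z_t$ can be handled by taking a measurable version, or by arguing on each Fourier mode separately, where $Z_t$ is a finite-dimensional stochastic convolution. The zero mode is absent because $H$ is mean-free.
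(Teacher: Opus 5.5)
Your argument is correct and is essentially the paper's proof: the paper first passes to individual Fourier modes (via stochastic Fubini) and applies It\^o's isometry mode by mode. It then uses $\int_0^\infty |e^{ta(k)}|^2\,dt \lesssim |k|^{-2}$ through Young's convolution inequality, which is exactly your Tonelli step gaining one power of $|k|^2$.

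The one point to tidy is that for a fixed $t$ the integral $\int_0^t S_{t-s}F_s\,dW_s$ need not exist in $H^{\alpha+1}$; indeed $\mathbb{E}\|Z_t\|_{H^{\alpha+1}}^2$ can be infinite for individual $t$. So your isometry identity should be read mode by mode as an equality in $[0,\infty]$, as your last paragraph suggests, with finiteness obtained only after integrating in $t$.
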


\begin{proof}
  By stochastic Fubini's theorem
  \[ \mathcal{F} \left[ \int_0^{\cdummy} S_{\cdummy - s} F_s d W_s \right] (k)
     = \sum_{\ell, \beta} \int_0^{\cdummy} \mathcal{F} [S_{\cdummy - s} F_s
     \sigma_{\ell, \beta}] (k) d W^{\ell, \beta}_{s} = \int_0^{\cdummy}
     e^{(\cdummy - s) a (k)} \hat{F}_s (k) d W_s, \]
  where $\hat{F}_s (k)$ denotes the operator $H \rightarrow \mathbb{R}^{d -
  1}$ defined by $\hat{F}_s (k) \sigma_{\ell, \beta} =\mathcal{F} [F_s
  \sigma_{\ell, \beta}] (k)$. Hence by It{\^o}'s isometry and the inequality
  $\| B C \|_{L_2 (H, \mathbb{R}^{d - 1})} \leqslant \| B \|_{\mathcal{L}
  (\mathbb{R}^{d - 1}, \mathbb{R}^{d - 1})} \| C \|_{L_2 (H, \mathbb{R}^{d -
  1})}$,
  \[ \mathbb{E} \left[ \int_0^T \left| \int_0^t e^{(t - s) a (k)} \hat{F}_s
     (k) d W_s \right|^2 d t \right] =\mathbb{E} \left[ \int_0^T \int_0^t \|
     e^{(t - s) a (k)} \hat{F}_s (k) \|_{L_2 (H, \mathbb{R}^{d - 1})}^2 d s d
     t \right] \]
  \[ \leqslant \mathbb{E} \left[ \int_0^T \int_0^t | e^{(t - s) a (k)} |^2 \|
     \hat{F}_s (k) \|_{L_2 (H, \mathbb{R}^{d - 1})}^2 d s d t \right] . \]
  By Young's inequality for convolutions and the matrix exponential bound
  \eqref{eq:matrix},
  \[ \lesssim \left( \int_0^{\infty} | e^{t a (k)} |^2 d t \right) \mathbb{E}
     \left[ \int_0^T \| \hat{F}_s (k) \|_{L_2 (H, \mathbb{R}^{d - 1})}^2 d s
     \right] \lesssim | k |^{- 2} \mathbb{E} \left[ \int_0^T \| \hat{F}_s (k)
     \|_{L_2 (H, \mathbb{R}^{d - 1})}^2 d s \right], \]
  where we used
  \[ \int_0^{\infty} | e^{t a (k)} |^2 d t \lesssim \int_0^{\infty} e^{- 2
     \lambda  t} e^{- 2 c (1 + \nu) | k |^2 t} d t \lesssim | k |^{-
     2} . \]
  Multiplying by $| k |^{2 \alpha + 2}$ and summing over $k \in
  \mathbb{Z}^d_0$ gives the result by the Parseval identity for
  Hilbert--Schmidt operators.
\end{proof}

\section{Endpoint convective estimate}\label{s:convective}

The first result is designed to treat the terms in \eqref{eq:plo1},
\eqref{eq:42}, where the available bounds for $u$, $u^{\varepsilon, \delta}$
and $v^{\varepsilon, \delta}$ are limited by the energy space $L^{\infty} (0,
T ; L^2) \cap L^2 (0, T ; H^1)$.

\begin{proposition}
  \label{p:2}Let $\beta \in (0, 1]$, $p \in (1, \infty)$. Assume that $f \in
  L^p (0, T ; H^{- \beta})$ and $g \in L^{\infty} (0, T ; L^2) \cap L^2 (0, T
  ; H^1)$ are both divergence free. Then
  \begin{equation}
    \left\| \int_0^{\cdummy} S_{\cdummy - s} (f_s \cdummy \nabla g_s) d s
    \right\|_{L^p H^{- \beta}} + \left\| \int_0^{\cdummy} S_{\cdummy - s} (g_s
    \cdummy \nabla f_s) d s \right\|_{L^p H^{- \beta}} \lesssim \| f \|_{L^p
    H^{- \beta}} \| g \|_{L^{2 / \beta} H^{\beta} \cap L^4 L^4} .
    \label{eq:mmm}
  \end{equation}
\end{proposition}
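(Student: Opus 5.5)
Since both $f$ and $g$ are divergence free, I will write the two products in divergence form, $f\cdot\nabla g=\operatorname{div}(f\otimes g)$ and $g\cdot\nabla f=\operatorname{div}(g\otimes f)$, so that both terms have the structure $\operatorname{div}$ of a bilinear product of a distribution of regularity $-\beta$ with a function controlled by the energy space. The plan is to use the smoothing estimate of Proposition~\ref{p:p20} to absorb the divergence together with a loss of regularity, and then to estimate the product $f\otimes g$ in a Sobolev space of sufficiently negative order. Concretely, for a fixed $\gamma>0$ (to be chosen as $\gamma=\beta+d/2\cdot$small, or rather dictated by the paraproduct below) I will bound
\[ \Big\|\int_0^t S_{t-s}\operatorname{div}(f_s\otimes g_s)\,ds\Big\|_{H^{-\beta}}\lesssim \int_0^t (t-s)^{-\theta/2}\|f_s\otimes g_s\|_{H^{1-\beta-\theta}}\,ds, \]
with $\theta\in(0,2)$, and afterwards apply Hardy--Littlewood--Sobolev in time to map $L^p$ into $L^p$ after combining with H\"older.

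The product estimate is the core. Using Bony's decomposition $fg=T_fg+T_gf+R(f,g)$, I will estimate: $T_gf$ in $H^{-\beta-\kappa}$ (or $B^{-\beta}_{2,2}$ with a loss handled by Besov embedding) by $\|g\|_{L^\infty}$-type norms, which are not available, so instead I use $\|g\|_{L^4}$ and $f\in H^{-\beta}\subset B^{-\beta-d/4}_{4,2}$, giving $T_gf\in B^{-\beta-d/4}_{2,2}$; $T_fg$ and $R(f,g)$ by $\|f\|_{H^{-\beta}}\|g\|_{H^\beta}$ into $B^{-d/2}_{1,\infty}$ for the paraproduct and $B^{0}_{1,1}$ for the resonant term (which needs regularity sum $\beta-\beta=0$, the borderline; this is where the endpoint enters and I will use the $H^{\beta}$ bound with $\beta>0$ plus Besov with summability index $1$ via Cauchy--Schwarz over dyadic blocks), then embed $B^{0}_{1,1}\subset H^{-d/2}$ in $d=2$, i.e. $H^{-1}$. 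In $d=2$ this gives $\|f\otimes g\|_{H^{-1-\kappa'}}\lesssim \|f\|_{H^{-\beta}}(\|g\|_{H^\beta}+\|g\|_{L^4})$ up to arbitrarily small losses, where the loss has to be exactly balanced: the target is $\theta$ with $1-\beta-\theta=-1$, i.e. $\theta=2-\beta$, giving kernel $(t-s)^{-1+\beta/2}$.

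Then the time estimate: $\int_0^t (t-s)^{-1+\beta/2}\|f_s\|_{H^{-\beta}}\|g_s\|_{H^\beta}ds$. Hardy--Littlewood--Sobolev maps $L^r\to L^p$ with $1/r=1/p+\beta/2$, and H\"older gives $\|fg\|_{L^r}\le\|f\|_{L^p}\|g\|_{L^{2/\beta}H^\beta}$, exactly matching the exponent $2/\beta$ in \eqref{eq:mmm}; similarly for the $L^4L^4$ part with the appropriate exponent (using interpolation $L^4L^4$ pieces only where $L^4$ in time suffices). The exponential factor $e^{-\lambda(t-s)}$ is simply dropped here. The main obstacle is that the spatial product sits exactly at the endpoint (no $\kappa$ loss allowed since HLS in time is also sharp): I will try to avoid Besov losses by working directly in Fourier on the torus, estimating the trilinear form $\langle f\otimes g,\nabla\phi\rangle$ against $\phi\in H^{\beta}$-dual test functions via $\|\nabla\phi\|$ dyadically and the $L^4$ bound for $g$ in the low–high interaction, and accept using the Lorentz-space refinement of HLS (O'Neil, as announced for Appendix~\ref{s:a4}) if a log divergence appears at the endpoint.
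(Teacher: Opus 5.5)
Your time bookkeeping matches the paper: a kernel $(t-s)^{-(2-\beta)/2}$ paired with $\|g_s\|_{H^\beta}$, a kernel $(t-s)^{-3/4}$ paired with $\|g_s\|_{L^4}$, then Hardy--Littlewood--Sobolev/O'Neil in time. The spatial step, however, fails.

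\textbf{The gap.} You decouple the product from the semigroup, so you need $\|f\otimes g\|_{H^{-1}}\lesssim\|f\|_{H^{-\beta}}(\|g\|_{H^\beta}+\|g\|_{L^4})$ on $\mathbb{T}^2$ with no loss. This inequality is false.

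Your argument for the resonant term does not give $B^0_{1,1}$. Cauchy--Schwarz yields $\sum_j\|\Delta_jf\|_{L^2}\|\Delta_jg\|_{L^2}\leqslant\|f\|_{H^{-\beta}}\|g\|_{H^\beta}$, which only controls $\sup_k\|\Delta_kR(f,g)\|_{L^1}$, i.e. $R(f,g)\in B^0_{1,\infty}$. A high--high pair at frequency $2^j$ feeds every output block $k\lesssim j$, so summing in $k$ only gives $\sum_j j\,\|\Delta_jf\|_{L^2}\|\Delta_jg\|_{L^2}$. Moreover, $B^0_{1,\infty}$ embeds into $B^{-1}_{2,\infty}$ but not into $H^{-1}$ (dually, $H^1(\mathbb{T}^2)\not\subset L^\infty$).

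A concrete counterexample: let $f,g$ be in-phase divergence-free wave packets of frequency $N$ supported in a ball $B_r$. Let $\phi$ be a logarithmic profile with $\|\phi\|_{H^1}\lesssim 1$ and $\phi=\sqrt{\log(1/r)}$ on $B_r$. Then $\int (f\cdot g)\,\phi\,dx\gtrsim\sqrt{\log(1/r)}\,\|f\|_{H^{-\beta}}(\|g\|_{H^\beta}+\|g\|_{L^4})$ as soon as $N^\beta\geqslant r^{-1/2}$.

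As you note yourself, an $H^{-1-\kappa'}$ loss is not affordable. O'Neil's inequality only handles the fact that the \emph{time} kernel lies in weak rather than strong $L^{2/(2-\beta)}$; it cannot absorb a \emph{spatial} logarithm.

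\textbf{The missing idea.} The semigroup must act before the borderline product is measured. Your tentative remedy via the trilinear form $\langle f\otimes g,\nabla\phi\rangle$ becomes the paper's argument only if $\phi=S^*_{t-s}h$.

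The paper writes $|\langle S_{t-s}(f\cdot\nabla g),h\rangle|=|\langle g,f\cdot\nabla S^*_{t-s}h\rangle|\lesssim\|f\|_{H^{-\beta}}\|(\nabla S^*_{t-s}h)g\|_{H^\beta}$ and splits the last product by paraproducts. Your high--high interaction becomes essentially $(\nabla S^*_{t-s}h)\prec g$, bounded by $\|g\|_{H^\beta}\|\nabla S^*_{t-s}h\|_{L^\infty}$. Agmon's inequality combined with smoothing at two levels gives $\|\nabla S^*_{t-s}h\|_{L^\infty}\lesssim\|\nabla S^*_{t-s}h\|_{L^2}^{1/2}\|\nabla S^*_{t-s}h\|_{H^2}^{1/2}\lesssim(t-s)^{-(2-\beta)/2}\|h\|_{H^\beta}$. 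This is the sharp power with no logarithm, because $S^*_{t-s}h$ is effectively band-limited at frequency $(t-s)^{-1/2}$. The other two pieces use $\|g\|_{L^4}$ and $\|\nabla S^*_{t-s}h\|_{B^\beta_{4,2}}\lesssim(t-s)^{-3/4}\|h\|_{H^\beta}$.

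Equivalently, within your own framework you could keep the high--high part in $B^{-1}_{2,\infty}$ and apply the semigroup blockwise. From $|e^{ta(k)}|\leqslant e^{-c|k|^2t}$ one gets $\|S_{t-s}\operatorname{div}F\|_{H^{-\beta}}^2\lesssim\|F\|_{B^{-1}_{2,\infty}}^2\sum_j 2^{j(4-2\beta)}e^{-c4^j(t-s)}\lesssim(t-s)^{-(2-\beta)}\|F\|_{B^{-1}_{2,\infty}}^2$. The missing $\ell^2$-summability is thus paid for by the semigroup, since $2-\beta>0$.

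Finally, your piece $T_gf\in H^{-\beta-1/2}$ is not in $H^{-1}$ when $\beta>1/2$. It must stay on the $(t-s)^{-3/4}$, $L^4L^4$ track rather than be merged into a single $H^{-1-\kappa'}$ bound.
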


\begin{proof}
  First, we observe that since $g \in L^{\infty} (0, T ; L^2) \cap L^2 (0, T ;
  H^1)$, by interpolation $g \in L^{2 / \beta} (0, T ; H^{\beta})$ and by
  Ladyzhenskaya's inequality in $d = 2$ $g \in L^4 (0, T ; L^4)$, so the right
  hand side of the estimate is finite.
  
We estimate
  \[ \| S_{t - s} (f_s \cdummy \nabla g_s) \|_{H^{- \beta}} + \| S_{t - s}
     (g_s \cdummy \nabla f_s) \|_{H^{- \beta}} \]
  \[ \lesssim \sup_{h \in H^{\beta}, \| h \|_{H^{\beta}} \leqslant 1} \langle
     S_{t - s} (f_s \cdummy \nabla g_s), h \rangle + \sup_{h \in H^{\beta}, \|
     h \|_{H^{\beta}} \leqslant 1} \langle S_{t - s} (g_s \cdummy \nabla f_s),
     h \rangle \]
  \[ \leqslant \sup_{h \in H^{\beta}, \| h \|_{H^{\beta}} \leqslant 1} \langle
     g_s, f_s \cdummy \nabla S^{\ast}_{t - s} h \rangle + \sup_{h \in
     H^{\beta}, \| h \|_{H^{\beta}} \leqslant 1} \langle f_s, g_s \cdummy
     \nabla S^{\ast}_{t - s} h \rangle \]
  \begin{equation}
    \lesssim \| f \|_{H^{- \beta}} \sup_{h \in H^{\beta}, \| h \|_{H^{\beta}}
    \leqslant 1} \| (\nabla S^{\ast}_{t - s} h) g_s \|_{H^{\beta}} .
    \label{eq:3211}
  \end{equation}
  Here $S_{t - s}^{\ast}$ denotes the adjoint semigroup generated by the
  adjoint symbol $a (k)^{\ast}$. Since  the Gr{\"o}nwall argument of Step 2
  in Proposition~\ref{p:p20} applies verbatim to $a (k)^{\ast}$, it satisfies
  the same smoothing estimates.
  
  The objective is to estimate the right-hand side of \eqref{eq:3211} in such
  a way that the resulting time convolution still belongs to $L^p (0, T)$.
  This is a genuinely critical regime: any additional gain in regularity from
  the semigroup leads to a time kernel with a non-integrable singularity, so
  the smoothing must be exploited in a scale-invariant fashion via Lorentz
  spaces and O'Neil's convolution inequality, Lemma~\ref{l:lor}.
  
  To estimate the norm in the supremum in \eqref{eq:3211}, we use the
  paraproduct decomposition (see, e.g., {\cite[Chapter 2]{MR2768550}})
  \[ (\nabla S^{\ast}_{t - s} h) g_s = (\nabla S^{\ast}_{t - s} h) \prec g_s +
     (\nabla S^{\ast}_{t - s} h) \succ g_s + (\nabla S^{\ast}_{t - s} h) \circ
     g_s . \]
  Among the various ways of estimating the paraproduct terms and the resonant
  product, it is crucial to control the resulting singular behavior in $(t -
  s)^{- 1}$. In particular, we use the following paraproduct estimates (which
  can be found in Lemma 2.2 in {\cite{MR4581461}} and in Theorem 3.17 in
  {\cite{MR3693966}})
  \begin{equation}
    \| (\nabla S^{\ast}_{t - s} h) \prec g_s \|_{H^{\beta}} \lesssim \| g_s
    \|_{H^{\beta}} \| \nabla S^{\ast}_{t - s} h \|_{L^{\infty}}, \label{eq:p1}
  \end{equation}
  \begin{equation}
    \| (\nabla S^{\ast}_{t - s} h) \succ g_s \|_{H^{\beta}} \lesssim \| g_s
    \|_{L^4} \| \nabla S^{\ast}_{t - s} h \|_{B^{\beta}_{4, 2}}, \label{eq:p2}
  \end{equation}
  \begin{equation}
    \| (\nabla S^{\ast}_{t - s} h) \circ g_s \|_{H^{\beta}} \lesssim \| g_s
    \|_{B^0_{4, \infty}} \| \nabla S^{\ast}_{t - s} h \|_{B^{\beta}_{4, 2}} .
    \label{eq:p3}
  \end{equation}
  We make use of the smoothing of the semigroup from Proposition~\ref{p:p20}.
  Specifically, combined with Agmon's inequality in 2d and the Besov
  embedding, we obtain
  
  $\| \nabla S^{\ast}_{t - s} h \|_{L^{\infty}} \lesssim \| \nabla S^{\ast}_{t
  - s} h \|_{L^2}^{1 / 2} \| \nabla S^{\ast}_{t - s} h \|_{H^2}^{1 / 2}
  \lesssim (t - s)^{- (1 - \beta) / 4 - (3 - \beta) / 4} \| h \|_{H^{\beta}}
  \lesssim (t - s)^{- (2 - \beta) / 2} \| h \|_{H^{\beta}},$
  
  and
  \[ \| \nabla S^{\ast}_{t - s} h \|_{B^{\beta}_{4, 2}} \lesssim \| \nabla
     S^{\ast}_{t - s} h \|_{H^{\beta + 1 / 2}} \lesssim (t - s)^{- (1 / 2 + 1)
     / 2} \| h \|_{H^{\beta}} \lesssim (t - s)^{- 3 / 4} \| h \|_{H^{\beta}} .
  \]
  Next, we observe that $L^4 \subset B^0_{4, \infty}$, since each
  Littlewood--Paley block $\Delta_j$ is a Fourier multiplier with a smooth
  compactly supported symbol, hence bounded on $L^4$ with an operator norm
  independent of $j$. Specifically, by Young inequality for convolutions
  \[ \| f \|_{B^0_{4, \infty}} = \sup_{j \geqslant - 1} \| \Delta_j f \|_{L^4}
     \leqslant \sup_{j \geqslant - 1} \| (\mathcal{F}^{- 1} \Delta_j) \ast f
     \|_{L^4} \leqslant \| f \|_{L^4} . \]
  As a consequence, the right hand side of \eqref{eq:p3} is controlled by the
  right hand side of \eqref{eq:p2}.
  
  Putting these bounds back in \eqref{eq:3211}, we obtain
  \[ \left\| \int_0^{\cdummy} S_{\cdummy - s} (f_s \cdummy \nabla g_s + g_s
     \cdummy \nabla f_s) d s \right\|_{L^p H^{- \beta}} \lesssim \left\|
     \int_0^{\cdummy} (\cdummy - s)^{- (2 - \beta) / 2} \| f_s \|_{H^{-
     \beta}} \| g_s \|_{H^{\beta}} d s \right\|_{L^p} \]
  \[ + \left\| \int_0^{\cdummy} (\cdummy - s)^{- 3 / 4} \| f_s \|_{H^{-
     \beta}} \| g_s \|_{L^4} d s \right\|_{L^p} . \]
  The key observation is that $g \in L^{2 / \beta} H^{\beta}$, and that the
  time singularity in the first term is of order $(t - s)^{- 1 / a}$, where $a
  = 2 / (2 - \beta)$ is the conjugate exponent to $2 / \beta$. Similarly, for
  the second term we have $g \in L^4 (0, T ; L^4)$ and the corresponding time
  kernel behaves like $(t - s)^{- 1 / b}$, with $b = 4 / 3$ being the
  conjugate exponent to $4$.
  
  In both cases, the kernels fail to belong to $L^a (0, T)$ and $L^b (0, T)$,
  respectively, but they do lie in the weak Lebesgue spaces $L^{a, \infty} (0,
  T)$ and $L^{b, \infty} (0, T)$. We are therefore precisely in the endpoint
  regime where Lemma~\ref{l:lor} applies, which allows us to conclude the
  estimate \eqref{eq:mmm}.
\end{proof}

The following result, sufficient for our purposes, illustrates that stronger
time integrability of $g$ yields a gain of a negative power of $\lambda$,
giving smallness for large $\lambda$.

\begin{proposition}
  \label{p:22}Let $\beta \in (0, 1]$, $p \in (1, \infty)$ and $\lambda > 0$.
  Assume that $f \in L^p (0, T ; H^{- \beta})$ and $g \in L^{\infty} (0, T ;
  H^{\beta}) \cap L^{\infty} (0, T ; L^4)$ are both divergence free. Then
  there exists $\vartheta > 0$ so that
  \[ \left\| \int_0^{\cdummy} S_{\cdummy - s} (f_s \cdummy \nabla g_s) d s
     \right\|_{L^p H^{- \beta}} + \left\| \int_0^{\cdummy} S_{\cdummy - s}
     (g_s \cdummy \nabla f_s) d s \right\|_{L^p H^{- \beta}} \lesssim_T
     \lambda^{- \vartheta} \| f \|_{L^p H^{- \beta}} \| g \|_{L^{\infty}
     H^{\beta} \cap L^{\infty} L^4} . \]
\end{proposition}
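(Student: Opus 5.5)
We follow the proof of Proposition~\ref{p:2} verbatim up to the pointwise-in-time bound, and only change the final time-integration step. As in \eqref{eq:3211}, we first dualize against $h\in H^{\beta}$ with $\|h\|_{H^\beta}\le 1$ and move the semigroup onto the test function via the adjoint semigroup $S^\ast_{t-s}$. This reduces matters to bounding $\|(\nabla S^\ast_{t-s}h)\,g_s\|_{H^\beta}$. We use the same paraproduct decomposition and the estimates \eqref{eq:p1}--\eqref{eq:p3}, now with $\|g_s\|_{H^\beta}$ and $\|g_s\|_{L^4}$ bounded uniformly in $s$ by $\|g\|_{L^\infty H^\beta\cap L^\infty L^4}$. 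The key difference is that we retain the factor $e^{-\lambda(t-s)}$ from the smoothing estimate of Proposition~\ref{p:p20}, which also holds for $S^\ast$ by the same Gr\"onwall argument on $a(k)^\ast$. This gives
\[
\|\nabla S^\ast_{t-s}h\|_{L^\infty}\lesssim e^{-\lambda(t-s)}(t-s)^{-(2-\beta)/2}\|h\|_{H^\beta},\qquad \|\nabla S^\ast_{t-s}h\|_{B^\beta_{4,2}}\lesssim e^{-\lambda(t-s)}(t-s)^{-3/4}\|h\|_{H^\beta}.
\]
Here the first bound is obtained by interpolating the two smoothing estimates, each carrying $e^{-\lambda (t-s)}$, through Agmon's inequality.

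Combining these bounds, both convolution terms are dominated by
\[
\|g\|_{L^\infty H^\beta\cap L^\infty L^4}\left\|\int_0^{\cdot}k_\lambda(\cdot-s)\|f_s\|_{H^{-\beta}}\,ds\right\|_{L^p(0,T)},\qquad k_\lambda(r):=e^{-\lambda r}\bigl(r^{-(2-\beta)/2}+r^{-3/4}\bigr).
\]
Since $g$ is now bounded in time, no endpoint Lorentz argument is needed. Young's convolution inequality gives $\|k_\lambda*\phi\|_{L^p}\le\|k_\lambda\|_{L^1(0,T)}\|\phi\|_{L^p}$, so it remains to bound $\|k_\lambda\|_{L^1}$. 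For $\gamma<1$, the substitution $r\mapsto r/\lambda$ gives $\int_0^\infty e^{-\lambda r}r^{-\gamma}\,dr=\Gamma(1-\gamma)\lambda^{\gamma-1}$. With $\gamma=(2-\beta)/2$ this is $\lesssim\lambda^{-\beta/2}$, and with $\gamma=3/4$ it is $\lesssim\lambda^{-1/4}$. Hence the claim holds with $\vartheta=\min(\beta/2,1/4)>0$; the implied constant depends on $T$ only through $p$-integration and not on $\lambda$.

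The main obstacle is the case $\beta=1$ in the first kernel. There the singularity is $(t-s)^{-1/2}$, which is still integrable, so this is harmless. The more delicate point is that the Agmon interpolation used in Proposition~\ref{p:2} requires the semigroup to gain $\beta+1$ and $3-\beta$ derivatives from $H^\beta$. This needs smoothing of arbitrary order $\theta>0$ and not just $\theta\le 2$, which is indeed provided by Proposition~\ref{p:p20}. In each application we must make sure the factor $e^{-\lambda(t-s)}$ is kept rather than discarded: the geometric mean of two bounds each carrying $e^{-\lambda (t-s)}$ still carries $e^{-\lambda(t-s)}$, so this goes through. If a cleaner route is preferred, one can use only a fraction of the exponential decay in each estimate, which yields a slightly smaller but still positive $\vartheta$.
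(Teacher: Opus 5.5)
Your proposal is correct and is essentially the paper's own proof: you keep the factor $e^{-\lambda(t-s)}$ from Proposition~\ref{p:p20} in the two kernel bounds of Proposition~\ref{p:2}, use the $L^\infty$-in-time control of $g$ to pull it out, and close with Young's inequality and $\int_0^\infty e^{-\lambda r}r^{-\gamma}\,dr\lesssim\lambda^{\gamma-1}$, which gives the explicit $\vartheta=\min(\beta/2,1/4)$. Two cosmetic points: in the Agmon step the $L^2$ factor requires a gain of $1-\beta$ derivatives, not $1+\beta$. Also, $\lambda^{-\beta/2}+\lambda^{-1/4}\lesssim\lambda^{-\vartheta}$ holds only for $\lambda\geq 1$; for $\lambda<1$ you should instead use $\|k_\lambda\|_{L^1(0,T)}\lesssim_T 1\leq\lambda^{-\vartheta}$, which is exactly where the $T$-dependence in $\lesssim_T$ enters.
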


\begin{proof}
  We proceed exactly as in the proof of Proposition~\ref{p:2} making use of
  the exponential decay of the semigroup from Proposition~\ref{p:p20} to
  obtain
  \[ \left\| \int_0^{\cdummy} S_{\cdummy - s} (f_s \cdummy \nabla g_s) d s
     \right\|_{L^p H^{- \beta}} + \left\| \int_0^{\cdummy} S_{\cdummy - s}
     (g_s \cdummy \nabla f_s) d s \right\|_{L^p H^{- \beta}} \]
  \[ \lesssim \left\| \int_0^{\cdummy} e^{- \lambda (\cdummy - s)} (\cdummy -
     s)^{- (2 - \beta) / 2} \| f_s \|_{H^{- \beta}} \| g_s \|_{H^{\beta}} d s
     \right\|_{L_t^p} \]
  \[ + \left\| \int_0^{\cdummy} e^{- \lambda (\cdummy - s)} (\cdummy - s)^{- 3
     / 4} \| f_s \|_{H^{- \beta}} \| g_s \|_{L^4} d s \right\|_{L_t^p} . \]
  Different from Proposition~\ref{p:2}, $g$ now belongs to $L^{\infty} (0, T ;
  H^{\beta}) \cap L^{\infty} (0, T ; L^4)$ so this is further bounded by
  \[ \lesssim \| g \|_{L^{\infty} H^{\beta}} \left\| \int_0^{\cdummy} e^{-
     \lambda (\cdummy - s)} (\cdummy - s)^{- (2 - \beta) / 2} \| f_s \|_{H^{-
     \beta}} d s \right\|_{L^p} \]
  \[ + \| g \|_{L^{\infty} L^4} \left\| \int_0^{\cdummy} e^{- \lambda
     (\cdummy - s)} (\cdummy - s)^{- 3 / 4} \| f_s \|_{H^{- \beta}} d s
     \right\|_{L^p} \]
  and classical Young's convolution inequality yields the claim since $e^{-
  \lambda r} r^{- (2 - \beta) / 2}$ and $e^{- \lambda r} r^{- 3 / 4}$ belong
  to $L^1 (0, \infty)$ with $L^1$-norms bounded by $\lambda^{- \vartheta}$ for
  some $\vartheta > 0$ depending only on $\beta$.
\end{proof}

\section{Limiting and auxiliary equations}\label{s:10}

\subsection{Limit stochastic integral}\label{s:stoch}

The stochastic integral associated to the first corrector gives rise, after
rescaling by $\delta^{- d / 2}$, to the only stochastic term surviving in the
limit equation for the fluctuation $z$.

\begin{proposition}
  \label{p:stoch}For every $\kappa > d / 2 - 1$, the following convergence
  holds in $L^2 (\Omega ; L^2 (0, T ; H^{- \kappa}))$:
  \[ \delta^{- d / 2} \int_0^t S_{t - s} (\mathcal{Q}^{1 / 2}_{\delta} d W_s
     \cdummy \nabla u_s^{\varepsilon, \delta}) \rightarrow \chi \int_0^t S_{t
     - s} (d W_s \cdummy \nabla u_s), \]
  where $\chi = (\mathcal{F}_{\mathbb{R}^d} K) (0)$.
\end{proposition}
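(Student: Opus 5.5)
We split the difference into two pieces and treat each by Itô's isometry in the Fourier basis, together with the Hilbert--Schmidt stochastic convolution bound of Lemma~\ref{l:l21}:
\[
\delta^{-d/2}\int_0^t S_{t-s}(\mathcal{Q}^{1/2}_\delta dW_s\cdot\nabla u^{\varepsilon,\delta}_s)-\chi\int_0^t S_{t-s}(dW_s\cdot\nabla u_s)
= I_1+I_2 ,
\]
with
\[
I_1:=\int_0^t S_{t-s}\big((\delta^{-d/2}\mathcal{Q}^{1/2}_\delta-\chi)dW_s\cdot\nabla u_s\big),\qquad
I_2:=\delta^{-d/2}\int_0^t S_{t-s}\big(\mathcal{Q}^{1/2}_\delta dW_s\cdot\nabla (u^{\varepsilon,\delta}_s-u_s)\big).
\]
By \eqref{eq:eigen}, $\delta^{-d/2}\mathcal{Q}^{1/2}_\delta$ acts diagonally on $\sigma_{k,\alpha}$ with eigenvalue $(\mathcal{F}_{\mathbb{R}^d}K)(\delta k)$. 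This eigenvalue is bounded uniformly in $\delta$ and converges to $\chi$ for each fixed $k$. The operators $F_s:\xi\mapsto \mathbb{P}(T\xi\cdot\nabla g_s)$ that appear inside the stochastic convolution therefore only involve a uniformly bounded Fourier multiplier $T$.

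The key step is a Hilbert--Schmidt estimate. For a multiplier $T$ with $|T\sigma_{k,\alpha}|\le c_k|\sigma_{k,\alpha}|$, we compute
\[
\|F_s\|_{L_2(H,H^{-\kappa-1})}^2=\sum_{k,\alpha}c_k^2\|\mathbb{P}(\sigma_{k,\alpha}\cdot\nabla g_s)\|_{H^{-\kappa-1}}^2 .
\]
Since $\operatorname{div}\sigma_{k,\alpha}=0$, we write $\sigma_{k,\alpha}\cdot\nabla g=\operatorname{div}(\sigma_{k,\alpha}\otimes g)$. Expanding $g$ in Fourier modes, the sum over $k$ becomes $\sum_{k,\ell}c_k^2|\hat g(\ell)|^2\langle k+\ell\rangle^{-2\kappa}$. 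For $2\kappa>d-2$ we also have $\|\mathbb P(\sigma_{k,\alpha}\cdot\nabla g)\|_{H^{-\kappa-1}}\lesssim \|\sigma_{k,\alpha}\otimes g\|_{H^{-\kappa}}$. Using $\sup_\ell\sum_k\langle k+\ell\rangle^{-2\kappa}c_k^2$ is not finite in general, so instead we bound
\[
\sum_{k}c_k^2\|\sigma_{k,\alpha}\otimes g\|_{H^{-\kappa}}^2\lesssim \|c\|_\infty^2\sum_{k,\ell}\langle k+\ell\rangle^{-2\kappa}\,\ldots
\]
This is the point requiring care. The better route is to keep one derivative on $g$: $\|\sigma_{k,\alpha}\cdot\nabla g\|_{H^{-\kappa}}^2$ summed over $k$ equals $\sum_{m}\langle m\rangle^{-2\kappa}\sum_k|\widehat{\nabla g}(m-k)|^2$ restricted to the relevant projections. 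This is finite and bounded by $\|\nabla g\|_{L^2}^2\sum_m\langle m\rangle^{-2\kappa}$, which needs $2\kappa>d$.

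This leads to the following regularity count. Lemma~\ref{l:l21} with $\alpha=-\kappa-1$ gains one derivative, so $\mathbb{E}\|I_j\|_{L^2H^{-\kappa}}^2\lesssim \mathbb{E}\int_0^T\sum_k c_k^2\|\mathbb{P}(\sigma_{k}\cdot\nabla g_s)\|^2_{H^{-\kappa-1}}ds$. In $H^{-\kappa-1}$ the condition becomes $2(\kappa+1)>d$, i.e. $\kappa>d/2-1$, matching the statement. This gives
\[
\mathbb{E}\|I_j\|^2_{L^2H^{-\kappa}}\lesssim \mathbb{E}\int_0^T \sum_{m,\ell}\langle m\rangle^{-2\kappa-2}\,c_{m-\ell}^2\,|\widehat{\nabla g_s}(\ell)|^2\,ds .
\]

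For $I_2$, we take $c\equiv\sup|\mathcal{F}K|$ and $g=u^{\varepsilon,\delta}-u$. The bound is then $\lesssim\mathbb{E}\int_0^T\|\nabla(u^{\varepsilon,\delta}_s-u_s)\|_{L^2}^2ds$. This tends to zero because $u^{\varepsilon,\delta}\to u$ in $L^2(0,T;H^1)$ in probability by Corollary~\ref{c:14}, with uniform integrability supplied by the deterministic energy bound \eqref{eq:energy}. Hence convergence in probability upgrades to $L^1(\Omega)$ by dominated convergence.

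For $I_1$, we take $c_k=|(\mathcal F_{\mathbb R^d}K)(\delta k)-\chi|$ and $g=u$ deterministic. The weight $\langle m\rangle^{-2\kappa-2}c^2_{m-\ell}|\widehat{\nabla u}(\ell)|^2$ is dominated by $\|c\|^2_\infty\langle m\rangle^{-2\kappa-2}|\widehat{\nabla u}(\ell)|^2$, which is summable and integrable on $[0,T]$. It converges to zero pointwise since $c_k\to0$ for each fixed $k$. Dominated convergence then gives $I_1\to0$.

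The main obstacle is justifying the Hilbert--Schmidt computation cleanly, in particular handling the Leray projection and the $a_{k,\alpha}$ polarization vectors. Both are bounded factors of size at most one, so I expect this step to reduce to the scalar sum above.
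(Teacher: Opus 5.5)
Your proposal is correct and follows essentially the same route as the paper. Both split the error into the $u^{\varepsilon,\delta}-u$ piece and the multiplier-difference piece, apply Lemma~\ref{l:l21} at regularity $-\kappa-1$, and use the Parseval computation in the basis $\sigma_{k,\alpha}$, which is summable exactly when $\kappa>d/2-1$; they then conclude with Corollary~\ref{c:14} plus the deterministic energy bound for the first piece and dominated convergence for the second. The exploratory detour through $\sigma_{k,\alpha}\otimes g$ (which would only close for $\kappa>d/2$) is unnecessary and can be deleted.
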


\begin{proof}
  By maximal regularity for stochastic convolutions Lemma~\ref{l:l21}
  \[ \mathbb{E} \left[ \left\| \delta^{- d / 2} \int_0^{\cdummy} S_{\cdummy -
     s} (\mathcal{Q}^{1 / 2}_{\delta} d W_s \cdummy \nabla u_s^{\varepsilon,
     \delta}) - \chi \int_0^{\cdummy} S_{\cdummy - s} (d W_s \cdummy \nabla
     u_s) \right\|_{L^2 H^{- \kappa}}^2 \right] \]
  \[ \lesssim \mathbb{E} [\| \delta^{- d / 2} \mathcal{Q}^{1 / 2}_{\delta}
     (\cdummy) \cdummy \nabla u^{\varepsilon, \delta} - \chi (\cdummy) \cdummy
     \nabla u \|_{L^2 L_2 (H ; H^{- 1 - \kappa})}^2] . \]
  We split the right hand side as
  \begin{equation}
    \lesssim \mathbb{E} [\| \delta^{- d / 2} \mathcal{Q}^{1 / 2}_{\delta}
    (\cdummy) \cdummy \nabla (u^{\varepsilon, \delta} - u) \|_{L^2 L_2 (H ;
    H^{- 1 - \kappa})}^2] \label{eq:500}
  \end{equation}
  \begin{equation}
    +\mathbb{E} [\| (\delta^{- d / 2} \mathcal{Q}^{1 / 2}_{\delta} (\cdummy) -
    \chi (\cdummy)) \cdummy \nabla u \|_{L^2 L_2 (H ; H^{- 1 - \kappa})}^2] .
    \label{eq:511}
  \end{equation}
  For \eqref{eq:500}, we apply \eqref{eq:eigen} and boundedness of
  $\mathcal{F}_{\mathbb{R}^d} K$ to obtain
  \[ \| \delta^{- d / 2} \mathcal{Q}^{1 / 2}_{\delta} (\cdummy) \cdummy \nabla
     (u^{\varepsilon, \delta}_{s} - u_{s}) \|_{L_2 (H ; H^{- 1 - \kappa})}^2 =
     \sum_{k, \alpha} | (\mathcal{F}_{\mathbb{R}^d} K) (\delta k) |^2 \|
     \sigma_{k, \alpha} \cdummy \nabla (u_s^{\varepsilon, \delta} - u_{s})
     \|_{H^{- 1 - \kappa}}^2 \]
  \[ \lesssim \sum_{k, \alpha} \| \sigma_{k, \alpha} \cdummy \nabla
     (u_s^{\varepsilon, \delta} - u_{s}) \|_{H^{- 1 - \kappa}}^2 . \]
  Using that $(\sigma_{\ell, \beta})_{\ell \in \mathbb{Z}^d_0, \beta \in \{ 1,
  \ldots, d - 1 \}}$ is an orthonormal basis of $H$ and summing in $k, \alpha$
  first via Parseval's identity and using $1 + \kappa > d / 2$, this is
  further estimated as
  \[ \sum_{k, \alpha} \| \sigma_{k, \alpha} \cdummy \nabla (u_s^{\varepsilon,
     \delta} - u_s) \|_{H^{- 1 - \kappa}}^2 = \sum_{k, \alpha, \ell, \beta} (1
     + | \ell |)^{- 2 (1 + \kappa)} | \langle \sigma_{k, \alpha} \cdummy
     \nabla (u_s^{\varepsilon, \delta} - u_s), \sigma_{\ell, \beta} \rangle
     |^2 \]
  \[ = \sum_{\ell, \beta} (1 + | \ell |)^{- 2 (1 + \kappa)} \| (\nabla
     (u_s^{\varepsilon, \delta} - u_s))^T \sigma_{\ell, \beta} \|_{L^2}^2
     \lesssim \| \nabla (u_s^{\varepsilon, \delta} - u_s) \|_{L^2}^2
     \sum_{\ell} (1 + | \ell |)^{- 2 (1 + \kappa)} \]
  \[ \lesssim \| \nabla (u_s^{\varepsilon, \delta} - u_s) \|_{L^2}^2 . \]
  Integrating in time and taking expectation, this upper bound vanishes as
  $\varepsilon, \delta \rightarrow 0$ by Corollary~\ref{c:14}, the energy
  inequality \eqref{eq:energy} and Vitali's convergence theorem.
  
  An analogous argument gives
  \[ \| (\delta^{- d / 2} \mathcal{Q}^{1 / 2}_{\delta} (\cdummy) - \chi
     (\cdummy)) \cdummy \nabla u_s \|_{L_2 (H ; H^{- 1 - \kappa})}^2 =
     \sum_{k, \alpha} | (\mathcal{F}_{\mathbb{R}^d} K) (\delta k) -
     (\mathcal{F}_{\mathbb{R}^d} K) (0) |^2 \| \sigma_{k, \alpha} \cdummy
     \nabla u_s \|_{H^{- 1 - \kappa}}^2 \]
  \[ = \sum_{k, \alpha, \ell, \beta} | (\mathcal{F}_{\mathbb{R}^d} K) (\delta
     k) - (\mathcal{F}_{\mathbb{R}^d} K) (0) |^2 (1 + | \ell |)^{- 2 (1 +
     \kappa)} | \langle \sigma_{k, \alpha} \cdummy \nabla u_s, \sigma_{\ell,
     \beta} \rangle |^2 . \]
  Since $\mathcal{F}_{\mathbb{R}^d} K$ is a Schwartz function, it is bounded
  and continuous at $k = 0$, so $(\mathcal{F}_{\mathbb{R}^d} K) (\delta k)
  \rightarrow (\mathcal{F}_{\mathbb{R}^d} K) (0)$ for every $k$. The
  dominating function is summable since
  \[ \mathbb{E} \left[ \int_0^T \sum_{k, \alpha, \ell, \beta} |
     (\mathcal{F}_{\mathbb{R}^d} K) (\delta k) - (\mathcal{F}_{\mathbb{R}^d}
     K) (0) |^2 (1 + | \ell |)^{- 2 (1 + \kappa)} | \langle \sigma_{k, \alpha}
     \cdummy \nabla u_s, \sigma_{\ell, \beta} \rangle |^2 d s \right] \]
  \[ \lesssim \mathbb{E} \left[ \int_0^T \sum_{k, \alpha, \ell, \beta} (1 + |
     \ell |)^{- 2 (1 + \kappa)} | \langle \sigma_{k, \alpha} \cdummy \nabla
     u_s, \sigma_{\ell, \beta} \rangle |^2 d s \right] \]
  \[ =\mathbb{E} \left[ \int_0^T \sum_{\ell, \beta} (1 + | \ell |)^{- 2 (1 +
     \kappa)} \| (\nabla u_s)^T \sigma_{\ell, \beta} \|_{L^2}^2 d s \right]
     \lesssim \mathbb{E} [\| \nabla u \|_{L^2 L^2}^2] \]
  by Parseval's identity and $1 + \kappa > d / 2$. Dominated convergence
  therefore yields convergence of \eqref{eq:511} to zero as $\delta
  \rightarrow 0$.
\end{proof}

\subsection{Fluctuation limit $z$}\label{s:z}

Recall that the fluctuation limit shall solve the stochastic linearized
Navier--Stokes equations
\begin{equation}
  d z + [z \cdummy \nabla u + u \cdummy \nabla z+\nabla p_{z}] d t = (1 + \nu) \Delta z d t
  + \chi d W \cdummy \nabla u, \quad \tmop{div} z = 0, \quad z (0) = 0,
  \label{eq:z}
\end{equation}
with a space-time white noise $W$ on $H$, enhanced viscosity $\nu =
\frac{1}{16} \| K \|_{L^2}^2$ and $\chi = (\mathcal{F}_{\mathbb{R}^2} K) (0)$
and with $u$ being the limit solution to \eqref{eq:ulim}.

\begin{lemma}
  \label{l:z}For every $\kappa > 0$, \eqref{eq:z} admits a unique solution $z
  \in L^2 (\Omega ; L^2 (0, T ; H^{- \kappa}))$.
\end{lemma}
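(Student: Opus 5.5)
We would prove Lemma~\ref{l:z} by a fixed-point argument in the mild formulation. The semigroup is the one of Proposition~\ref{p:p20}, taken with only the enhanced Stokes operator: no $S_\sigma$ perturbations, but with the damping $-\lambda$. Writing $\tilde z_t \assign e^{-\lambda t} z_t$, equation \eqref{eq:z} becomes
\[ \tilde z_t = -\int_0^t S_{t-s}\mathbb{P}\big(\tilde z_s\cdummy\nabla u_s + u_s\cdummy\nabla \tilde z_s\big)\,ds + \chi\int_0^t S_{t-s}\mathbb{P}\big(dW_s\cdummy\nabla (e^{-\lambda s}u_s)\big) \backassign \Gamma(\tilde z)_t + Z_t . \]
Since $e^{-\lambda t}$ is bounded above and below on $[0,T]$, existence and uniqueness for $\tilde z$ in $L^2(\Omega;L^2(0,T;H^{-\kappa}))$ is equivalent to the same statement for $z$. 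It suffices to treat $\kappa\in(0,1]$, since larger $\kappa$ only weakens the space (uniqueness in the larger space then follows from the bound below applied with $\beta=1$ after checking the solution actually lies in $H^{-\kappa}$ for small $\kappa$ — or we simply state the result for $\kappa\le 1$ and embed).

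\textbf{Step 1: the stochastic convolution.} Apply Lemma~\ref{l:l21} with $\alpha=-1-\kappa$. As in the proof of Proposition~\ref{p:stoch}, Parseval's identity in $d=2$ gives
\[ \|(\cdummy)\cdummy\nabla u_s\|_{L_2(H;H^{-1-\kappa})}^2\lesssim \|\nabla u_s\|_{L^2}^2\sum_\ell(1+|\ell|)^{-2(1+\kappa)} , \]
and the sum is finite exactly because $\kappa>0=d/2-1$. Combined with the energy bound $u\in L^2(0,T;H^1)$, this yields $Z\in L^2(\Omega;L^2(0,T;H^{-\kappa}))$.

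\textbf{Step 2: the drift as a contraction.} Take $\beta=\kappa$ and $p=2$. The two terms of $\Gamma$ are exactly of the form covered by Proposition~\ref{p:2}, with $f=\tilde z$ and $g=u$. However, the constant there is not small, so we split $u=\bar u+(u-\bar u)$, where $\bar u$ is a Galerkin truncation of $u$. The truncation is chosen so that $\|u-\bar u\|_{L^{2/\beta}H^\beta\cap L^4L^4}$ is small; this is possible by interpolation and Ladyzhenskaya's inequality, since $u\in L^\infty L^2\cap L^2H^1$ is deterministic and fixed. The part with $u-\bar u$ is then small by Proposition~\ref{p:2}. The part with $\bar u$ lies in $L^\infty H^\beta\cap L^\infty L^4$, so by Proposition~\ref{p:22} it is bounded by $\lambda^{-\vartheta}$ times a constant depending on $\bar u$. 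Choosing first $\bar u$ and then $\lambda$ large makes $\Gamma$ a linear contraction on $L^2(0,T;H^{-\kappa})$, pathwise and deterministically, with operator norm at most $1/2$. Hence $\tilde z=(\mathrm{Id}-\Gamma)^{-1}Z$ by a Neumann series. This $\tilde z$ is the unique mild solution, it is progressively measurable as a continuous linear image of $Z$, and $\|\tilde z\|\le 2\|Z\|$ gives the $L^2(\Omega)$ bound. Gaussianity also follows, since $\tilde z$ is a linear functional of $W$.

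\textbf{Step 3: weak versus mild solutions.} Uniqueness of weak solutions in this class reduces to uniqueness of mild solutions by the standard equivalence, testing against eigenfunctions of the Fourier multiplier generator.

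The main obstacle is Step 2. The convective term is critical and Proposition~\ref{p:2} alone gives no smallness. The splitting via $\bar u$ and the exponential weight must be arranged so that constants do not depend on $\lambda$ in a bad way. In particular, we will check that the implicit constant in Proposition~\ref{p:2} is uniform in $\lambda\ge0$; it is, since the symbol bound \eqref{eq:matrix} only improves as $\lambda$ grows.
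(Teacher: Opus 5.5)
Your proposal is correct and follows the paper's proof essentially step for step. The shared steps are: the exponential weight $e^{-\lambda t}$; a bound on the stochastic convolution in $L^2 L^2 H^{-\kappa}$, uniform in $\lambda$, exactly as in Proposition~\ref{p:stoch}; and a contraction for the drift, obtained by splitting $u=\bar u+(u-\bar u)$ with a Galerkin truncation $\bar u$, using Proposition~\ref{p:2} for the small remainder and Proposition~\ref{p:22} with $\lambda$ large for $\bar u$.

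Your additional remarks are small points the paper leaves implicit. One is restricting to $\kappa\le 1$, where Proposition~\ref{p:2} applies. The other is adaptedness of the Neumann-series solution; note that this follows from the causality of the deterministic Volterra operator, not merely from its continuity and linearity.
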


\begin{proof}
  We aim to prove the existence of a unique solution $z$ by mild formulation.
  Due to the criticality of the convective estimate in Proposition~\ref{p:2},
  this would be directly possible under an additional smallness assumption on
  $u_0 \in L^2$. To overcome this issue, we approximate $u$ by its suitable
  Galerkin approximant $\bar{u}$ and we include an additional exponential
  factor $e^{- \lambda t}$ for $\lambda > 0$ chosen sufficiently large. The
  equation for $\tilde{z}_t \assign e^{- \lambda t} z_t$ reads as
  \[ d \tilde{z} + [\tilde{z} \cdummy \nabla u + u \cdummy \nabla \tilde{z}+ \nabla p_{\tilde{z}} ] d
     t = (1 + \nu) \Delta \tilde{z} d t - \lambda \tilde{z} d t + e^{- \lambda
     t} \chi d W \cdummy \nabla u, \quad \tmop{div} \tilde{z} = 0, \quad
     \tilde{z} (0) = 0. \]
  Similarly to Section~\ref{s:stoch} the stochastic convolution
  \[ y_t \assign \chi \int_0^t e^{- \lambda (t - s)} e^{(1 + \nu) \Delta
     \mathbb{P} (t - s)} e^{- \lambda s} d W_s \cdummy \nabla u_s \]
  belongs to $L^2 L^2 H^{- \kappa}$ for $\kappa > 0$ in $d = 2$ uniformly in
  $\lambda > 0$. Thus, we seek a fixed point of the map
  \[ (\Gamma \tilde{z})_t \assign - \int_0^t e^{- \lambda (t - s)} e^{(1 + \nu) \Delta
     \mathbb{P} (t - s)}
     [\tilde{z}_s \cdummy \nabla u_s + u_s \cdummy \nabla \tilde{z}_s] d s +
     y_t \]
  in $L^2 L^2 H^{- \kappa}$ for $\kappa > 0$. For the deterministic part, we
  add and subtract $\bar{u}$ and apply Proposition~\ref{p:2} and
  Proposition~\ref{p:22} which continue to hold exactly the same way for the
  semigroup $e^{- \lambda t} e^{(1 + \nu) \Delta \mathbb{P}t}$, $t \geqslant
  0$, to obtain
  \[ \left\| \int_0^t e^{- \lambda (t - s)} e^{(1 + \nu) \Delta \mathbb{P} (t
     - s)} [\tilde{z}_s \cdummy \nabla u_s + u_s \cdummy \nabla \tilde{z}_s] d
     s \right\|_{L^2 L^2 H^{- \kappa}} \]
  \[ \lesssim \| \tilde z \|_{L^2 L^2 H^{- \kappa}} (\| u - \bar{u} \|_{L^{2 /
     \kappa} H^{\kappa} \cap L^4 L^4} + \lambda^{- \vartheta} \| \bar{u}
     \|_{L^{\infty} H^{\kappa} \cap L^{\infty} L^4}) . \]
  Therefore, choosing $\bar{u}$ and then $\lambda > 0$ to satisfy
  \[ \| u - \bar{u} \|_{L^{2 / \kappa} H^{\kappa} \cap L^4 L^4} + \lambda^{-
     \vartheta} \| \bar{u} \|_{L^{\infty} H^{\kappa} \cap L^{\infty} L^4} \ll
     1, \]
  we deduce that the map $\Gamma$ maps bounded sets of $L^2 L^2 H^{- \kappa}$
  to itself and is contractive. Hence there exists a unique fixed point.
\end{proof}

\subsection{Macroscopic correction $v^{\varepsilon, \delta}$}\label{s:v}

Recall that $v^{\varepsilon, \delta}$ is defined as a solution to
\[ \partial_t v^{\varepsilon, \delta} + v^{\varepsilon, \delta} \cdummy \nabla
   u + u \cdummy \nabla v^{\varepsilon, \delta} + v^{\varepsilon, \delta}
   \cdummy \nabla v^{\varepsilon, \delta} + \nabla p_{v^{\varepsilon, \delta}}
\]
\[ = (1 + \nu) \Delta v^{\varepsilon, \delta} + \sum_{\{ \sigma \in \Sigma ; M
   \tmop{odd}, (\sigma, 0) \in \Sigma \} \setminus \{ (0) \}} \varepsilon^{M +
   2 L - 1} S_{\sigma} v^{\varepsilon, \delta} \]
\begin{equation}
  + \sum_{\{ \sigma \in \Sigma ; M \tmop{odd}, (\sigma, 0) \in \Sigma \}
  \setminus \{ (0) \}} \varepsilon^{M + 2 L - 1} S_{\sigma} u, \label{eq:Sv}
\end{equation}
\[ \tmop{div} v^{\varepsilon, \delta} = 0, \quad v^{\varepsilon, \delta} (0) =
   0, \]
where $p_{v^{\varepsilon, \delta}}$ denotes the associated pressure.

\begin{lemma}
  \label{l:v}\tmcolor{black}{If $\varepsilon \delta^{- 1}$ is sufficiently
  small}, there exists a unique solution $v^{\varepsilon, \delta} \in
  L^{\infty} (0, T ; H) \cap L^2 (0, T ; H^1)$ satisfying
  \[ \frac{1}{2} \| v^{\varepsilon, \delta}_t \|_{L^2}^2 + \left( \frac{1}{2}
     + \nu \right) \int_0^t \| \nabla v^{\varepsilon, \delta}_s \|_{L^2}^2 d s
     \lesssim_{T, u_0} (\varepsilon \delta^{- 1})^4 . \]
\end{lemma}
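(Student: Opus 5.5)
We will prove Lemma~\ref{l:v} by a Galerkin scheme together with an energy estimate. The system \eqref{eq:Sv} is a 2D Navier--Stokes-type system for $v^{\varepsilon,\delta}$, with three kinds of extra terms: a linear transport by $u$, the perturbation operator $\mathcal{S}^{\varepsilon,\delta} := \sum \varepsilon^{M+2L-1} S_\sigma$ (sum over the index set appearing in \eqref{eq:Sv}), and the forcing $\mathcal{S}^{\varepsilon,\delta} u$.

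\textbf{Size of the perturbation.} By Proposition~\ref{c:c18}, each $S_\sigma$ satisfies $|\langle S_\sigma w,h\rangle|\lesssim \delta^{-(M+2L-1)}\|\nabla w\|_{L^2}\|\nabla h\|_{L^2}$. Hence
\[ |\langle \mathcal{S}^{\varepsilon,\delta} w,h\rangle|\lesssim \eta\,\|\nabla w\|_{L^2}\|\nabla h\|_{L^2}, \qquad \eta:=\sum(\varepsilon\delta^{-1})^{M+2L-1}. \]
The sum runs over indices with $M$ odd and $\sigma\neq(0)$, so $M+2L-1\geq 2$. Moreover, $\sigma=(0,0)$ has $M$ even and is excluded, and the smallest admissible candidates, such as $(0,1)$ with $M=1$, $L=1$, give exponent $2$. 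Thus $\eta\lesssim(\varepsilon\delta^{-1})^2$ when $\varepsilon\delta^{-1}\leq 1$.

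\textbf{Galerkin scheme.} We project \eqref{eq:Sv} onto finitely many Fourier modes. Since $S_\sigma$ is a Fourier multiplier (Proposition~\ref{p:37}), it commutes with the projection, so we obtain an ODE system with local solutions. We then test with $v_n$. The cancellations $\langle u\cdot\nabla v_n,v_n\rangle=0$ and $\langle v_n\cdot\nabla v_n,v_n\rangle=0$ hold by the divergence-free condition. For $\varepsilon\delta^{-1}$ small, the perturbation term is absorbed into the dissipation:
\[ |\langle\mathcal{S}^{\varepsilon,\delta}v_n,v_n\rangle|\le C\eta\|\nabla v_n\|^2_{L^2}\le \tfrac14\|\nabla v_n\|^2_{L^2}. \]
The forcing term is bounded by Young's inequality:
\[ |\langle \mathcal{S}^{\varepsilon,\delta}u,v_n\rangle|\le \tfrac14\|\nabla v_n\|_{L^2}^2 + C\eta^2\|\nabla u\|_{L^2}^2. \]
The remaining term is $\langle v_n\cdot\nabla u,v_n\rangle$, which is the only one requiring care. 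By H\"older, Ladyzhenskaya ($d=2$) and Young,
\[ |\langle v_n\cdot\nabla u,v_n\rangle|\le\|v_n\|_{L^4}^2\|\nabla u\|_{L^2}\lesssim\|v_n\|_{L^2}\|\nabla v_n\|_{L^2}\|\nabla u\|_{L^2}\le \tfrac14\|\nabla v_n\|^2_{L^2}+C\|\nabla u\|_{L^2}^2\|v_n\|_{L^2}^2. \]
Gr\"onwall's inequality with the weight $\exp(C\int_0^T\|\nabla u\|_{L^2}^2)\le\exp(C\|u_0\|_{L^2}^2)$, together with $\int_0^T\|\nabla u\|_{L^2}^2\lesssim\|u_0\|^2_{L^2}$ from the energy equality for \eqref{eq:ulim}, then gives the claimed bound uniformly in $n$:
\[ \sup_t\|v_n\|_{L^2}^2+\int_0^t\|\nabla v_n\|^2_{L^2}\lesssim_{T,u_0}\eta^2\lesssim(\varepsilon\delta^{-1})^4. \]
The constants in front of $\|\nabla v_n\|_{L^2}^2$ need some bookkeeping to match the stated $\frac12+\nu$: we keep the full $(1+\nu)$ dissipation from testing and spend only the $\frac12$ that is needed for absorption.

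\textbf{Passage to the limit.} The uniform bounds give global existence of the Galerkin solutions. As for 2D Navier--Stokes, we bound $\partial_t v_n$ uniformly in $L^2(0,T;H^{-1})$, using again Ladyzhenskaya's inequality for the quadratic term and the bound $\mathcal{S}^{\varepsilon,\delta}:H^1\to H^{-1}$. Aubin--Lions then yields strong $L^2L^2$ compactness, and we pass to the limit in the equation. The energy bound passes to the limit by lower semicontinuity.

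\textbf{Uniqueness.} We take two solutions and test the equation for their difference $w$ with $w$. The same absorption argument applies: the term $\langle w\cdot\nabla(u+v),w\rangle$ is handled exactly as above, using $v\in L^2H^1$. Gr\"onwall's inequality then gives $w=0$. The justification that $w$ is an admissible test function is standard in 2D, since $\partial_t w\in L^2H^{-1}$.

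The main obstacle is the absorption step: it requires the uniform multiplier bound on $\mathcal{S}^{\varepsilon,\delta}$ with the small prefactor $\eta$, and confirming that the minimal exponent $M+2L-1$ over the index set is $2$, which produces the rate $(\varepsilon\delta^{-1})^4$.
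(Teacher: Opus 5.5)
Your proposal is correct and follows the paper's proof essentially step by step. Both use a Galerkin scheme and test against $v$. The term $\langle v\cdot\nabla u,v\rangle$ is handled with Ladyzhenskaya and Gr\"onwall, the $\varepsilon^{M+2L-1}S_\sigma$ perturbation is absorbed for small $\varepsilon\delta^{-1}$, and the forcing is bounded by Young's inequality with leading exponent $M+2L-1=2$, giving $(\varepsilon\delta^{-1})^4$. Uniqueness uses the same difference-testing argument. You also sketch the Aubin--Lions passage to the limit that the paper omits as standard, and you justify explicitly why the minimal exponent is $2$.
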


\begin{proof}
  We proceed via a Galerkin approximation and derive uniform energy estimates;
  the passage to the limit is standard and omitted.
  
  Testing the equation for $v^{\varepsilon, \delta}$ against $v^{\varepsilon,
  \delta}$ and using $\tmop{div} v^{\varepsilon, \delta} = 0$ and
  Proposition~\ref{c:c18}, we obtain
  \[ \frac{1}{2} \frac{d}{d t} \| v^{\varepsilon, \delta} \|_{L^2}^2 + (1 +
     \nu) \| \nabla v^{\varepsilon, \delta} \|_{L^2}^2 = - \langle
     v^{\varepsilon, \delta} \cdummy \nabla u, v^{\varepsilon, \delta} \rangle
     + \sum_{\{ \sigma \in \Sigma ; M \tmop{odd}, (\sigma, 0) \in \Sigma \}
     \setminus \{ (0) \}} \varepsilon^{M + 2 L - 1} \langle S_{\sigma}
     v^{\varepsilon, \delta}, v^{\varepsilon, \delta} \rangle \]
  \[ + \sum_{\{ \sigma \in \Sigma ; M \tmop{odd}, (\sigma, 0) \in \Sigma \}
     \setminus \{ (0) \}} \varepsilon^{M + 2 L - 1} \langle S_{\sigma} u,
     v^{\varepsilon, \delta} \rangle \]
  \[ \lesssim \| \nabla u \|_{L^2} \| v^{\varepsilon, \delta} \|_{L^4}^2 +
     \sum_{\{ \sigma \in \Sigma ; M \tmop{odd}, (\sigma, 0) \in \Sigma \}
     \setminus \{ (0) \}} \varepsilon^{M + 2 L - 1} \delta^{- (M + 2 L - 1)}
     \| \nabla v^{\varepsilon, \delta} \|^2_{L^2} \]
  \[ + \sum_{\{ \sigma \in \Sigma ; M \tmop{odd}, (\sigma, 0) \in \Sigma \}
     \setminus \{ (0) \}} \varepsilon^{M + 2 L - 1} \delta^{- (M + 2 L - 1)}
     \| \nabla u \|_{L^2} \| \nabla v^{\varepsilon, \delta} \|_{L^2} . \]
  The first term on the right hand side is bounded by Ladyzhenskaya inequality
  in $d = 2$
  \[ \| \nabla u \|_{L^2} \| v^{\varepsilon, \delta} \|_{L^4}^2 \lesssim \|
     \nabla u \|_{L^2} \| v^{\varepsilon, \delta} \|_{L^2} \| \nabla
     v^{\varepsilon, \delta} \|_{L^2} \leqslant \frac{1}{4} \| \nabla
     v^{\varepsilon, \delta} \|_{L^2}^2 + C \| \nabla u \|^2_{L^2} \|
     v^{\varepsilon, \delta} \|_{L^2}^2 . \]
  The first part is absorbed into the left hand side and the second part
  enters Gr{\"o}nwall's inequality. The second term is absorbed into the left
  hand side \tmcolor{black}{for $\varepsilon \delta^{- 1}$ sufficiently small}
  by subcriticality. The third term is bounded by Young's inequality
  \[ \leqslant \frac{1}{4} \| \nabla v^{\varepsilon, \delta} \|_{L^2}^2 + C
     \left( \sum_{\{ \sigma \in \Sigma ; M \tmop{odd}, (\sigma, 0) \in \Sigma
     \} \setminus \{ (0) \}} \varepsilon^{M + 2 L - 1} \delta^{- (M + 2 L -
     1)} \right)^2 \| \nabla u \|^2_{L^2} . \]
  The first part is absorbed to the left hand side. For the second part, the
  leading order terms in the sum are $\varepsilon^2 S_{(0, 1)} u$ and
  $\varepsilon^2 S_{(0, 0, 0)} u$ both controlled by $(\varepsilon \delta^{-
  1})^2$, which gives the bound in the statement after integrating in time and
  applying Gr{\"o}nwall's inequality with the energy estimate for $u$.
  
  For uniqueness, let $v_1$, $v_2$ be two solutions. After a preliminary
  mollification, we test the equation for the difference $v_1 - v_2$ by
  itself. The only term requiring attention is the difference of the
  nonlinearities:
  \[ \langle v_1 \cdummy \nabla v_1 - v_2 \cdummy \nabla v_2, v_1 - v_2
     \rangle = \langle (v_1 - v_2) \cdummy \nabla v_1, v_1 - v_2 \rangle +
     \langle v_2 \cdummy \nabla (v_1 - v_2), v_1 - v_2 \rangle \]
  \[ = \langle (v_1 - v_2) \cdummy \nabla v_1, v_1 - v_2 \rangle \leqslant \|
     \nabla v_1 \|_{L^2} \| v_1 - v_2 \|_{L^4}^2, \]
  where we used $\langle v_2 \cdummy \nabla (v_1 - v_2), v_1 - v_2 \rangle =
  0$ by the divergence-free constraint. The same argument as for the first
  term above then closes the estimate, yielding $v_1 = v_2$.
\end{proof}

\section{Quantitative enhanced diffusion}\label{s:quanti}

The operator $S_{\delta}$ is given explicitly by
\[ S_{\delta} v =\mathbb{P} [n \cdummy \nabla \mathbb{P} (n \cdummy \nabla
   v)]^{\bullet} = \frac{\delta^d}{2} \sum_{k \in \mathbb{Z}_0^d, \alpha = 1,
   \dots,d-1} | (\mathcal{F}_{\mathbb{R}^d} K) (\delta k) |^2 \mathbb{P} (\sigma_{k,
   \alpha} \cdummy \nabla \mathbb{P} (\sigma_{- k, \alpha} \cdummy \nabla v))
   . \]
For the fluctuation analysis, the remainder $J_5$ in the mild formulation
involves $S_{\delta} - \nu \mathbb{P} \Delta$ after an additional division by
$\delta^{d / 2}$. A qualitative convergence statement is therefore
insufficient: one needs a quantitative rate that is fast enough to compensate
this additional factor.

We establish quantitative improvements of the two approximation steps in
Section~\ref{s:quali}, Lemma~\ref{lem:3} and Lemma~\ref{lem:5}, giving
explicit rates. Throughout, we work in dimension $d = 2$, where the
two-dimensional lattice summability plays a crucial role, see
Remark~\ref{r:21}.

\subsection{Improved Lemma~\ref{lem:3}}\label{s:12.1}

The key object in the analysis of $s_{\delta} (\ell) - \tilde{s}_{\delta}
(\ell)$ is the rank-one projection
\[ \Pi (p) \assign \frac{p \otimes p}{| p |^2} \qquad p \in \mathbb{R}^2
   \setminus \{ 0 \}, \]
in terms of which the Leray projection is given by $\tmop{Id}_{\mathbb{R}^2} -
\Pi (p)$. The difference $s_{\delta} (\ell) - \tilde{s}_{\delta} (\ell)$
involves sums of $\Pi (k - \ell) - \Pi (k)$ over the lattice, weighted by $|
(\mathcal{F}_{\mathbb{R}^2} K) (\delta k) |^2$. The main improvement over the
qualitative argument in the proof of Lemma~\ref{lem:3} is a second-order
Taylor expansion of $\Pi (k - \ell)$ around $k$: since $\Pi$ is even, its
derivative $D \Pi$ is odd, and the first-order term cancels in the symmetric
lattice sum. The effective remainder is therefore of order $| \ell |^2 / | k
|^2$, which is summable in two dimensions up to a logarithmic correction. The
precise statement is given in Lemma~\ref{lem:8}, applied in
Corollary~\ref{cor:9}.

\begin{remark}
  \label{r:21}The restriction to $d = 2$ is not merely technical. In dimension
  $d = 3$, the analogous lattice sum $\sum_{1 \leqslant | k | \leqslant
  \delta^{- 1}} | k |^{- 2}$ diverges like $\delta^{- 1}$ rather than
  logarithmically, leading to a remainder that does not vanish after the
  fluctuation rescaling by $\delta^{d / 2}$. This reflects a genuine
  obstruction already at the level of the linear Stokes equation,
  independently of the nonlinearity of the Navier--Stokes system.
\end{remark}

\begin{corollary}
  \label{cor:9}For every $\ell \in \mathbb{Z}^2$ it holds
  \[ | s_{\delta} (\ell) - \tilde{s}_{\delta} (\ell) | \lesssim | \ell |^4
     \delta^2 (1 + \log \delta^{- 1}) . \]
\end{corollary}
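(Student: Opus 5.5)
Since $\sigma_{-k,\alpha}$ comes with the same weight as $\sigma_{k,\alpha}$ and $a_{k,\alpha}=a_{-k,\alpha}$, I first symmetrize the sum in Lemma~\ref{l:7} under $k\mapsto -k$. Performing the $\alpha$-sum via \eqref{eq:22} then gives
\[ s_\delta(\ell)-\tilde s_\delta(\ell) = \frac{(2\pi)^{-2}}{2}\Big(\mathrm{Id}-\Pi(\ell)\Big)|\ell|^2\,\delta^2\sum_{k\in\mathbb{Z}^2_0,\,k\neq\ell} |(\mathcal{F}_{\mathbb{R}^2}K)(\delta k)|^2\sin^2(\angle_{k,\ell})\big(\Pi(\ell-k)-\Pi(k)\big), \]
up to boundary terms. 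These boundary terms come from excluding $k=\ell$ in $s_\delta$ but not in $\tilde s_\delta$. Each is a single lattice term bounded by $\delta^2|\ell|^2$, and for $\ell\neq0$ this is at most $\delta^2|\ell|^4$. The task is therefore to bound the lattice sum by $|\ell|^2(1+\log\delta^{-1})$. I split the sum into the region $|k|\le 2|\ell|$ and the region $|k|>2|\ell|$.

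For $|k|\le 2|\ell|$, I use $|\Pi|\le1$ and the boundedness of $\mathcal{F}_{\mathbb{R}^2}K$. The number of lattice points in this region is $\lesssim|\ell|^2$, so the contribution is $\lesssim|\ell|^2$ (compared with the target after including the prefactor, this gives $\delta^2|\ell|^4$). For $|k|>2|\ell|$, I Taylor expand $\Pi(k-\ell)$ around $k$ to second order:
\[ \Pi(k-\ell)-\Pi(k) = -D\Pi(k)[\ell]+r(k,\ell), \qquad |r(k,\ell)|\lesssim |\ell|^2/|k|^2, \]
since $D^2\Pi$ is homogeneous of degree $-2$ and the segment from $k$ to $k-\ell$ stays at distance $\ge|k|/2$ from the origin. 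The remainder contributes at most $|\ell|^2\sum_{2|\ell|<|k|}|(\mathcal{F}K)(\delta k)|^2|k|^{-2}$. Using the Schwartz decay of $\mathcal{F}K$, this sum is bounded by $\sum_{1\le|k|\le\delta^{-1}}|k|^{-2}+\delta^2\sum_{|k|>\delta^{-1}}|(\mathcal{F}K)(\delta k)|^2\lesssim 1+\log\delta^{-1}$. This is the two-dimensional logarithm mentioned in Remark~\ref{r:21}.

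The main point is the first-order term. $D\Pi$ is odd ($\Pi$ is even), while the weight $|(\mathcal{F}K)(\delta k)|^2$ and the factor $\sin^2(\angle_{k,\ell})$ are both even in $k$. The region $\{|k|>2|\ell|\}$ is also symmetric under $k\mapsto-k$. Hence the sum of $D\Pi(k)[\ell]$ against the weight cancels exactly over this region. The delicate step is to make sure the region of summation is genuinely symmetric: the exclusion $k\neq\ell$ and the singular point $k=0$ both have to be placed inside the inner region $|k|\le2|\ell|$. This is why I cut at $2|\ell|$ and not at a fixed radius.

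Collecting terms, the lattice sum is $\lesssim|\ell|^2(1+\log\delta^{-1})$. Multiplying by the prefactor $|\ell|^2\delta^2$ gives $|s_\delta(\ell)-\tilde s_\delta(\ell)|\lesssim|\ell|^4\delta^2(1+\log\delta^{-1})$. The case $\ell=0$ is trivial because both symbols vanish.
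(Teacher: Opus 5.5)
Your proof is correct and is essentially the paper's argument. The paper applies Lemma~\ref{lem:8} with weight $w_\delta(k,\ell)=|(\mathcal{F}_{\mathbb{R}^2}K)(\delta k)|^2\sin^2(\angle_{k,\ell})$, and that lemma's proof is exactly your inlined argument: the split at $|k|=2|\ell|$, the second-order Taylor expansion of $\Pi$, the cancellation of the odd first-order term over the symmetric outer region, and the two-dimensional logarithmic lattice sum. Incidentally, your $k=\ell$ boundary term is identically zero because $\sin^2(\angle_{\ell,\ell})=0$, so it needs no separate bound.
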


\begin{proof}
  We apply Lemma~\ref{lem:8} to $w_{\delta} (k, \ell) = |
  (\mathcal{F}_{\mathbb{R}^2} K) (\delta k) |^2 \hspace{0.27em} \sin^2
  (\angle_{k, \ell})$, which is even in $k$ and dominated by a Schwartz
  function because $\mathcal{F}_{\mathbb{R}^2} K$ is Schwartz and $\sin^2
  \leqslant 1$. Since the symbols $s_{\delta} (\ell)$ and $\tilde{s}_{\delta}
  (\ell)$ both vanish at $\ell = 0$ and carry an explicit factor of $| \ell
  |^2$, the difference $s_{\delta} (\ell) - \tilde{s}_{\delta} (\ell)$ is
  expressed in terms of $\Sigma_{\delta} (\ell)$ with an additional factor of
  $| \ell |^2$, yielding the result.
\end{proof}

\subsection{Improved Lemma~\ref{lem:5}}

Recall that $\tilde{s}_{\delta} (\ell)$ and its limit $\tilde{s} (\ell)$ in $d
= 2$ are given by
\[ \tilde{s}_{\delta} (\ell) = - \frac{(2 \pi)^{- 2}}{2} \left( \tmop{Id} -
   \frac{\ell \otimes \ell}{| \ell |^2} \right) | \ell |^2 J_{\delta} (\ell),
\]
with
\[ J_{\delta} (\ell) \assign \delta^2 \sum_{k \in \mathbb{Z}^2_0} w (\delta k)
   \sin^2 (\angle_{k, \ell}) \left( \tmop{Id} - \frac{k \otimes k}{| k |^2}
   \right), \qquad w \assign | \mathcal{F}_{\mathbb{R}^2} K |^2, \]
and
\[ \tilde{s} (\ell) = - \frac{(2 \pi)^{- 2}}{2} \left( \tmop{Id} - \frac{\ell
   \otimes \ell}{| \ell |^2} \right) | \ell |^2 J (\ell), \]
with
\[ J (\ell) \assign \int_{\mathbb{R}^2} w (k) \sin^2 (\angle_{k, \ell}) \left(
   \tmop{Id} - \frac{k \otimes k}{| k |^2} \right) d k = 2 (2
   \pi)^2 \nu, \]
so that $\tilde{s}$ is the symbol of $\nu \mathbb{P} \Delta$.
The difference $\tilde{s}_{\delta} (\ell) - \tilde{s} (\ell)$ reduces to
estimating the Riemann sum approximation error $J_{\delta} (\ell) - J (\ell)$.
The key technical ingredient is a quadrature estimate for even functions
Lemma~\ref{l:13}, which exploits the evenness of the integrand to cancel the
first-order error term.

\begin{lemma}
  \label{lem:24}For every $\ell \in \mathbb{Z}_0^2$ it holds
  \[ | J_{\delta} (\ell) - J (\ell) | \lesssim \delta^2 (1 + \log \delta^{-
     1}) . \]
\end{lemma}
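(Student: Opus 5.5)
We estimate the Riemann sum error by viewing $J_\delta(\ell)$ as a lattice quadrature of the function
\[ F_\ell(p) \assign w(p)\sin^2(\angle_{p,\ell})\Big(\mathrm{Id}-\frac{p\otimes p}{|p|^2}\Big), \qquad p\in\mathbb{R}^2\setminus\{0\}, \]
with $J_\delta(\ell)=\delta^2\sum_{k\in\mathbb{Z}^2_0}F_\ell(\delta k)$ and $J(\ell)=\int_{\mathbb{R}^2}F_\ell(p)\,dp$. Since $\sin^2(\angle_{p,\ell})$ and $\mathrm{Id}-\Pi(p)$ depend only on $p/|p|$, the function $F_\ell$ is $0$-homogeneous times the Schwartz radial weight $w$. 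It is even in $p$, smooth away from the origin, and bounded with derivatives satisfying $|\nabla^j F_\ell(p)|\lesssim |p|^{-j}\langle p\rangle^{-n}$ for every $n$, uniformly in the direction of $\ell$. The constants depend only on $K$, because the angular factors are fixed smooth functions on the circle that are rotated with $\ell$.

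We apply the even-function quadrature estimate of Lemma~\ref{l:13}. The plan is to decompose $\mathbb{R}^2$ into cells $\delta(k+[-\frac12,\frac12]^2)$ and compare $\delta^2F_\ell(\delta k)$ with the integral of $F_\ell$ over the cell.

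\textbf{Cells away from the origin ($|k|\ge 2$).} A second-order Taylor expansion of $F_\ell$ around $\delta k$ shows the first-order term integrates to zero over the symmetric cell. The cell error is then bounded by $\delta^2\cdot\delta^2\sup_{\text{cell}}|\nabla^2F_\ell|\lesssim \delta^4|\delta k|^{-2}\langle\delta k\rangle^{-n}=\delta^2|k|^{-2}\langle\delta k\rangle^{-n}$. Summing over $2\le|k|$ and using the Schwartz decay beyond $|k|\sim\delta^{-1}$ gives $\delta^2\sum_{2\le|k|\lesssim\delta^{-1}}|k|^{-2}\lesssim\delta^2(1+\log\delta^{-1})$. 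This is where $d=2$ enters, as in Remark~\ref{r:21}. If Lemma~\ref{l:13} already packages this cellwise estimate, we invoke it directly for $F_\ell$ after checking its hypotheses: evenness and the derivative bounds $|\nabla^jF_\ell(p)|\lesssim|p|^{-j}\langle p\rangle^{-n}$ for $j\le2$.

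\textbf{Cells near the origin ($|k|\le1$).} The $k=0$ term is absent from $J_\delta$, while $J$ integrates over the central cell. The finitely many cells with $|k|\le1$ contribute at most $\delta^2\sup|F_\ell|\lesssim\delta^2$ to the difference, since $F_\ell$ is bounded even though it is discontinuous at $0$. Evenness is not needed there.

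The main obstacle is the singularity of the angular factor at $p=0$. A first-order Taylor bound alone would give an error of order $\delta\sum_{|k|\le\delta^{-1}}|k|^{-1}\sim 1$, so there would be no decay. The cancellation of the linear term from evenness is essential, and we must verify carefully that the cellwise second-order remainder is controlled by $\sup_{\text{cell}}|\nabla^2F_\ell|$, which is comparable to $|\delta k|^{-2}$ uniformly for $|k|\ge2$ because cells with $|k|\ge2$ stay a distance of order $\delta|k|$ from the origin. Uniformity in $\ell$ follows from rotation invariance of $w$, so only the direction of $\ell$ matters. Combining the two regimes gives $|J_\delta(\ell)-J(\ell)|\lesssim\delta^2(1+\log\delta^{-1})$.
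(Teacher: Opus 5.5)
Your proposal is correct, but your cellwise argument takes a different route from the paper.

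\textbf{Your route.} You use cells $\delta(k+[-\tfrac12,\tfrac12)^2)$ centered at the lattice points, which is a midpoint rule. The linear Taylor term therefore vanishes on each cell by the symmetry of the cell. Each cell with $k\neq0$ is then bounded by $\delta^4\sup_{\mathrm{cell}}|D^2F_\ell|\lesssim\delta^2|k|^{-2}\langle\delta k\rangle^{-n}$, using pointwise homogeneous derivative bounds. The cells near the origin are bounded using only $|F_\ell|\lesssim1$. The logarithm comes from the lattice sum $\sum_{1\le|k|\le\delta^{-1}}|k|^{-2}$.

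\textbf{The paper's route.} The paper multiplies by a smooth cut-off $\chi_\delta$ at scale $\delta$, so that $f_\delta=\chi_\delta F_\ell$ is globally $C^2$ with $D^2f_\delta\in L^1$, and then applies Lemma~\ref{l:13}. That lemma uses corner cells $\delta(k+[0,1)^2)$. There the linear term survives on each cell and cancels only globally, by pairing $k$ with $-k$; this is where evenness enters. The error is $\delta^2\|D^2f_\delta\|_{L^1}$, and the logarithm comes from $\int_{\delta\le|k|\le1}|k|^{-2}\,dk$. The cut-off derivatives add only $O(1)$ to $\|D^2f_\delta\|_{L^1}$, and the small-$k$ pieces $B_\delta,C_\delta$ are $O(\delta^2)$.

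\textbf{Comparison.} Your version is more elementary and needs neither the cut-off nor evenness. It does require derivative bounds that are comparable across each cell. The paper's version isolates a reusable quadrature lemma that only asks for an $L^1$ Hessian.

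\textbf{Corrections to your write-up.}
The fallback of invoking Lemma~\ref{l:13} directly for $F_\ell$ does not work. That lemma requires $f\in C^2(\mathbb{R}^2)$ with $D^2f\in L^1(\mathbb{R}^2)$. But $F_\ell$ is discontinuous at $0$, and $|D^2F_\ell|\sim|p|^{-2}$ is not integrable there in two dimensions. This is precisely why the paper introduces $\chi_\delta$, so you should rely on your own cellwise computation.

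In your scheme the cancellation comes from the symmetry of the cell, not from evenness of $F_\ell$. Your closing remark that evenness is essential belongs to the corner-cell scheme, not yours.

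The cells with $k=(\pm1,\pm1)$ fall between your two regimes $|k|\le1$ and $|k|\ge2$. This is harmless, because every cell with $k\neq0$ stays at distance $\gtrsim\delta|k|$ from the origin.

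Finally, the first-order heuristic gives $\delta^2\sum_{|k|\le\delta^{-1}}|k|^{-1}\sim\delta$, not $\sim1$. That is still insufficient for Lemma~\ref{l:11.3}, but the magnitude you state is wrong.
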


\begin{proof}
  First, we observe that
  \[ J_{\delta} (\ell) = \delta^2 \sum_{k \in \mathbb{Z}^2_0} w (\delta k)
     \sin^2 (\angle_{\delta k, \ell}) \left( \tmop{Id} - \frac{\delta k
     \otimes \delta k}{| \delta k |^2} \right) \backassign \delta^2 \sum_{k
     \in \mathbb{Z}^2_0} F_{\ell} (\delta k), \]
  and
  \[ J (\ell) = \int_{\mathbb{R}^2} F_{\ell} (k) d k. \]
  Thus, $J_{\delta} (\ell) - J (\ell)$ is a Riemann sum approximation error.
  The main issue is that $\left( \tmop{Id} - \frac{\xi \otimes \xi}{| \xi |^2}
  \right)$ is not continuous at $\xi = 0$ so the integrand fails to be
  continuous at $\xi = 0$ unless $(\mathcal{F}_{\mathbb{R}^2} K) (0) = 0$,
  which we do not want to assume. To overcome this, we truncate as follows.
  
  Step 1: Smooth cut-off at scale $\delta$. Let $\chi \in C^{\infty}
  (\mathbb{R}^2)$ be radial, even, with $\chi (k) = 0$ for $| k | \leqslant
  1$, $\chi (k) = 1$ for $| k | \geqslant 2$. Define $\chi_{\delta} (k)
  \assign \chi (\delta^{- 1} k)$, $f_{\delta} (k) \assign \chi_{\delta} (k)
  F_{\ell} (k)$. Then $f_{\delta} \in C^{\infty} (\mathbb{R}^2)$, $f_{\delta}$
  is even, $D^2 f_{\delta} \in L^1 (\mathbb{R}^2)$.
  
  Step 2: Decompose the error. Write
  \[ J_{\delta} (\ell) - J (\ell) = \left( \delta^2 \sum_{k \in
     \mathbb{Z}^2_0} f_{\delta} (\delta k) - \int_{\mathbb{R}^2} f_{\delta}
     (k) d k \right) \]
  \[ + \delta^2 \sum_{k \in \mathbb{Z}^2_0} (1 - \chi_{\delta} (\delta k))
     F_{\ell} (\delta k) - \int_{\mathbb{R}^2} (1 - \chi_{\delta} (k))
     F_{\ell} (k) d k = : A_{\delta} (\ell) + B_{\delta} (\ell) + C_{\delta}
     (\ell), \]
  where $A_{\delta}$ is the Riemann sum error away from $0$, $B_{\delta}
  (\ell)$ is the small-$k$ sum and $C_{\delta} (\ell)$ is the small-$k$
  integral.
  
  Step 3: Small-$k$ expressions are $O (\delta^2)$. It follows immediately
  that $| F_{\ell} (k) | \leqslant \| w \|_{L^{\infty}} \lesssim 1$ hence
  \[ | C_{\delta} (\ell) | \lesssim \int_{| k | \leqslant 2 \delta} d k
     \lesssim \delta^2, \]
  and similarly
  \[ | B_{\delta} (\ell) | \lesssim \delta^2 \sum_{k \in \mathbb{Z}^2_0, 0 < |
     k | \leqslant 2} 1 \lesssim \delta^2 . \]
  Step 4: Riemann sum away from $0$ with a rate using the corner-evenness
  quadrature Lemma~\ref{l:13}. Since $f_{\delta}$ is smooth, even and $D^2
  f_{\delta} \in L^1 (\mathbb{R}^2)$, Lemma~\ref{l:13} implies
  \[ \left| \delta^2 \sum_{k \in \mathbb{Z}^2} f_{\delta} (\delta k) -
     \int_{\mathbb{R}^2} f_{\delta} (x) d x \right| \lesssim \delta^2 \| D^2
     f_{\delta} \|_{L^1 (\mathbb{R}^2)} . \]
  Thus, it remains to estimate $\| D^2 f_{\delta} \|_{L^1 (\mathbb{R}^2)}$.
  
  Step 5: Estimate of $\| D^2 f_{\delta} \|_{L^1 (\mathbb{R}^2)}$. By Leibniz'
  rule
  \[ D^2 f_{\delta} = \chi_{\delta} D^2 F_{\ell} + 2 (D \chi_{\delta}) \otimes
     (D F_{\ell}) + (D^2 \chi_{\delta}) F_{\ell} . \]
  We estimate each term. First, we recall that
  \[ \sin^2 (\angle_{k, \ell}) = 1 - \frac{(k \cdummy \ell)^2}{| k |^2 | \ell
     |^2} \backassign 1 - A_{\ell} (k) \]
  hence
  \[ F_{\ell} (k) = w (k) \left( 1 - \frac{(k \cdummy \ell)^2}{| k |^2 | \ell
     |^2} \right) \left( \tmop{Id} - \frac{k \otimes k}{| k |^2} \right), \]
  and $A_{\ell} (k)$ is homogeneous of degree $0$ in $k$, smooth away from $k
  = 0$. Hence by the same homogeneity argument as in Step 1 of
  Lemma~\ref{lem:8}, $| D A_{\ell} (k) | \lesssim | k |^{- 1}$, $| D^2
  A_{\ell} (k) | \lesssim | k |^{- 2}$, with constants independent of $\ell$,
  since $\ell$ is fixed and only appears via the unit vector $\ell / | \ell
  |$. Similarly, the Leray projector symbol $\tmop{Id} - \frac{k \otimes k}{|
  k |^2}$ satisfies the same scaling $| D\mathbb{P} (k) | \lesssim | k |^{-
  1}$, $| D^2 \mathbb{P} (k) | \lesssim | k |^{- 2}$. This leads to
  \[ | D^2 F_{\ell} (k) | \lesssim | D^2 w (k) | + | D w
     (k) | | k |^{- 1} + | w (k) | | k |^{- 2} \]
  valid for $| k | > \delta$. Moreover, $D \chi_{\delta}$ and $D^2
  \chi_{\delta}$ are supported in the annulus $\{ \delta \leqslant | k |
  \leqslant 2 \delta \}$ and $| D \chi_{\delta} | \lesssim \delta^{- 1}$, $|
  D^2 \chi_{\delta} | \lesssim \delta^{- 2}$.
  
  Putting everything together, we end up with
  \[ | D^2 f_{\delta} | \lesssim (| D^2 w (k) | + | D w (k)
     | | k |^{- 1} + | w (k) | | k |^{- 2}) 1_{| k | \geqslant \delta} \]
  \[ + \delta^{- 1} (| D w (k) | + | w (k) | | k |^{- 1}) 1_{\delta \leqslant
     | k | \leqslant 2 \delta} + \delta^{- 2} | w (k) | 1_{\delta \leqslant |
     k | \leqslant 2 \delta} . \]
  Here, the terms on the second line are bounded by $\lesssim \delta^{- 2}
  1_{\delta \leqslant |k| \leqslant 2 \delta}$ hence their integral over
  $\mathbb{R}^2$ is bounded by 1 uniformly in $\delta$. For the very first
  term, we have
  \[ \int_{| k | \geqslant \delta} | D^2 w (k) | d k
     \lesssim 1. \]
  For the remaining terms, we decompose
  \[ \int_{| k | \geqslant \delta} (| D w (k) | | k |^{- 1} + | w (k) | | k
     |^{- 2}) d k = \left( \int_{1 \geqslant | k | \geqslant \delta} + \int_{|
     k | \geqslant 1} \right) (| D w (k) | | k |^{- 1} + | w (k) | | k |^{-
     2}) d k. \]
  At infinity, i.e., for $| k | \geqslant 1$, we use that $w = |
  \mathcal{F}_{\mathbb{R}^2} K |^2$ is Schwartz, hence
  \[ \int_{| k | \geqslant 1} (| D w (k) | | k |^{- 1} + | w (k) | | k |^{-
     2}) d k \lesssim 1 \]
  uniformly in $\delta$. Near zero, i.e. $\delta \leqslant | k | \leqslant 1$,
  we obtain the logarithmic blow up:
  \[ \int_{1 \geqslant | k | \geqslant \delta} (| D w (k) | | k |^{- 1} + | w
     (k) | | k |^{- 2}) d k \lesssim \int_{1 \geqslant | k | \geqslant \delta}
     (| k |^{- 1} + | k |^{- 2}) d k \]
  and changing to polar coordinates gives
  \[ \lesssim \int_{\delta}^1 (r^{- 1} + r^{- 2}) r d r \lesssim 1 + \log
     \delta^{- 1} . \]
  Accordingly,
  \[ \| D^2 f_{\delta} \|_{L^1 (\mathbb{R}^2)} \lesssim 1 + \log \delta^{- 1}
  \]
  and the proof is complete.
\end{proof}

As an immediate consequence, we obtain the following quantitative improvement
of Lemma~\ref{lem:5} in $d = 2$.

\begin{corollary}
  \label{cor:12}For every $\ell \in \mathbb{Z}^2$ it holds
  \[ | \tilde{s}_{\delta} (\ell) - \tilde{s} (\ell) | \lesssim | \ell |^2
     \delta^2 (1 + \log \delta^{- 1}) . \]
\end{corollary}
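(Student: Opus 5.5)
The plan is to read Corollary~\ref{cor:12} off Lemma~\ref{lem:24} by comparing the two explicit formulas for $\tilde{s}_{\delta}(\ell)$ and $\tilde{s}(\ell)$. For $\ell = 0$ both symbols vanish because of the explicit factor $|\ell|^2$ (as in the proof of Lemma~\ref{lem:3}), so the bound holds trivially. For $\ell \in \mathbb{Z}^2_0$, subtracting the two representations gives
\[ \tilde{s}_{\delta}(\ell) - \tilde{s}(\ell) = - \frac{(2\pi)^{-2}}{2} \left( \mathrm{Id} - \frac{\ell \otimes \ell}{|\ell|^2} \right) |\ell|^2 \bigl( J_{\delta}(\ell) - J(\ell) \bigr). \]
The Leray symbol $\mathrm{Id} - \ell \otimes \ell / |\ell|^2$ is an orthogonal projection, so its operator norm is at most $1$. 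We therefore get
\[ |\tilde{s}_{\delta}(\ell) - \tilde{s}(\ell)| \leqslant \tfrac{1}{2}(2\pi)^{-2} |\ell|^2 \, |J_{\delta}(\ell) - J(\ell)|. \]
Lemma~\ref{lem:24} then bounds the last factor by $\delta^2(1 + \log \delta^{-1})$, which yields the claim.

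The one point to check is that the implicit constant in Lemma~\ref{lem:24} does not depend on $\ell$. Otherwise the factor $|\ell|^2$ would not be the full $\ell$-dependence. This is confirmed by the proof of that lemma. The integrand $F_{\ell}$ depends on $\ell$ only through the unit vector $\ell/|\ell|$, since $A_{\ell}$ is homogeneous of degree zero in $\ell$. Moreover, the bounds $|DA_{\ell}(k)| \lesssim |k|^{-1}$ and $|D^2 A_{\ell}(k)| \lesssim |k|^{-2}$ are uniform over unit directions. Hence the estimate $\|D^2 f_{\delta}\|_{L^1} \lesssim 1 + \log\delta^{-1}$ and the small-$k$ bounds in Steps~3--5 all hold uniformly in $\ell$.

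There is no genuine obstacle beyond this uniformity, which I take from the proof of Lemma~\ref{lem:24}. Combined with Corollary~\ref{cor:9}, the result will give the quantitative estimate $|s_{\delta}(\ell) + \nu(\mathrm{Id} - \ell\otimes\ell/|\ell|^2)|\ell|^2| \lesssim (|\ell|^2 + |\ell|^4)\,\delta^2(1+\log\delta^{-1})$ needed for $J_5$.
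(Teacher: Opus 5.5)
Your proof is correct and is the paper's argument: factor out $-\frac{(2\pi)^{-2}}{2}\left(\mathrm{Id}-\frac{\ell\otimes\ell}{|\ell|^2}\right)|\ell|^2$ and apply Lemma~\ref{lem:24}. Your additional checks are correct and only make explicit what the paper leaves implicit: the trivial case $\ell=0$, the bound by one on the norm of the Leray projector, and the uniformity in $\ell$ of the constant in Lemma~\ref{lem:24} (which holds because $F_\ell$ depends only on $\ell/|\ell|$).
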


\begin{proof}
  Since $\tilde{s}_{\delta} (\ell) - \tilde{s} (\ell) = - \frac{(2 \pi)^{-
  2}}{2} \left( \tmop{Id} - \frac{\ell \otimes \ell}{| \ell |^2} \right) |
  \ell |^2 (J_{\delta} (\ell) - J (\ell))$, the result follows directly from
  Lemma~\ref{lem:24}.
\end{proof}

\subsection{Enhanced diffusion correction}\label{s:enh}

We now combine the quantitative rates on both $s_{\delta} (\ell) -
\tilde{s}_{\delta} (\ell)$ from Corollary~\ref{cor:9} and $\tilde{s}_{\delta}
(\ell) - \tilde{s} (\ell)$ from Corollary~\ref{cor:12} to bound the enhanced
diffusion remainder $J_5$ in the mild formulation.

\begin{lemma}
  \label{l:11.3}Let $\beta > 0$. Then
  \begin{equation}
    \delta^{- 1} \left\| \int_0^{\cdummy} S_{\cdummy - s} P_{\leqslant R}
    [S_{\delta} u_s^{\varepsilon, \delta} - \nu \Delta {u}^{\varepsilon,
    \delta}_s] d s \right\|_{L^2 L^2 H^{- \beta}} \lesssim \delta (1 + \log
    \delta^{- 1}) (R^{1 - \beta} \vee 1) . \label{eq:newcond}
  \end{equation}
\end{lemma}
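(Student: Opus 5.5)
The plan is to work entirely on the Fourier side, since every operator in \eqref{eq:newcond} is a Fourier multiplier. By Proposition~\ref{p:37}, $S_t$ is given frequency by frequency by $e^{t a(k)}$. $P_{\leqslant R}$ is scalar, and $S_\delta - \nu\mathbb{P}\Delta$ has symbol $s_\delta(\ell) - \tilde s(\ell)$, using $\tilde s$ as the symbol of $\nu\mathbb{P}\Delta$. Here $\nu\Delta u^{\varepsilon,\delta}$ is read as $\nu\mathbb{P}\Delta u^{\varepsilon,\delta}$, since $u^{\varepsilon,\delta}$ is divergence free. First I would split
\[ s_\delta(\ell)-\tilde s(\ell) = \big(s_\delta(\ell)-\tilde s_\delta(\ell)\big) + \big(\tilde s_\delta(\ell)-\tilde s(\ell)\big) \]
and invoke Corollary~\ref{cor:9} and Corollary~\ref{cor:12}. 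Restricted to $|\ell|\leqslant R$, this gives
\[ |s_\delta(\ell)-\tilde s(\ell)| \lesssim \delta^2(1+\log\delta^{-1})\,|\ell|^2(1+|\ell|^2). \]

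Next I would use the maximal regularity estimate of Proposition~\ref{p:p20}, Step 5, frequency by frequency. Its $|k|^{-2}$ gain absorbs two derivatives, so
\[ \Big\| \int_0^\cdot S_{\cdot-s} P_{\leqslant R} f_s\,ds\Big\|_{L^2 H^{-\beta}}^2 \lesssim \int_0^T \sum_{0<|\ell|\leqslant R} |\ell|^{-2\beta-4}\,|\hat f_s(\ell)|^2\,ds, \]
with $\hat f_s(\ell) = (s_\delta(\ell)-\tilde s(\ell))\hat u^{\varepsilon,\delta}_s(\ell)$. Inserting the symbol bound yields
\[ \delta^{4}(1+\log\delta^{-1})^2 \int_0^T\sum_{0<|\ell|\leqslant R} |\ell|^{-2\beta}(1+|\ell|^2)^2\,|\hat u^{\varepsilon,\delta}_s(\ell)|^2\,ds. \]

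The weight $|\ell|^{-2\beta}(1+|\ell|^2)^2$ then has to be traded against the energy bound. Since $|\ell|\geqslant1$, I would write
\[ |\ell|^{-2\beta}(1+|\ell|^2)^2 \lesssim |\ell|^2\cdot |\ell|^{2-2\beta} \leqslant |\ell|^2\,(R^{2-2\beta}\vee 1) \]
on $|\ell|\leqslant R$. The sum is then bounded by $(R^{1-\beta}\vee1)^2\|\nabla u^{\varepsilon,\delta}_s\|_{L^2}^2$, and the time integral is controlled pathwise by \eqref{eq:energy}. Taking expectations (the $L^2_\omega$ norm) and square roots gives
\[ \delta^2(1+\log\delta^{-1})(R^{1-\beta}\vee1). \]
Multiplying by $\delta^{-1}$ gives exactly \eqref{eq:newcond}.

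I expect the main bookkeeping issue to be the quartic growth $|\ell|^4$ in Corollary~\ref{cor:9}. It is the reason the frequency cut-off is needed, and it fixes the factor $R^{1-\beta}$: only one of the extra two powers is paid by $\nabla u^{\varepsilon,\delta}$, while maximal regularity absorbs two. One must also check that the matrix-valued symbol composes correctly with $e^{t a(k)}$ on $k^\perp$. This works because the norm bound \eqref{eq:matrix} is used in Step 5 and no commutation of the two symbols is needed.
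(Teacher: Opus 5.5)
Your proposal is correct and is essentially the paper's proof: you split $s_\delta-\tilde s$ via Corollaries~\ref{cor:9} and~\ref{cor:12}, apply the frequency-wise maximal regularity bound of Proposition~\ref{p:p20}, and bound $|\ell|^{4-2\beta}\leqslant |\ell|^2(R^{2-2\beta}\vee 1)$ on $|\ell|\leqslant R$ before closing with the energy inequality. The only cosmetic difference is that you merge the two symbol estimates before applying maximal regularity, whereas the paper treats $S_\delta-\tilde S_\delta$ and $\tilde S_\delta-\nu\mathbb{P}\Delta$ separately, so that the second piece comes with no factor of $R$.
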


\begin{proof}
  We decompose $S_{\delta} - \nu \mathbb{P} \Delta = (S_{\delta} -
  \tilde{S}_{\delta}) + (\tilde{S}_{\delta} - \nu \mathbb{P} \Delta)$ and
  estimate each term separately. Specifically, by maximal regularity
  \[ \left\| \int_0^{\cdummy} S_{\cdummy - s} P_{\leqslant R} [S_{\delta}
     u_s^{\varepsilon, \delta} - \tilde{S}_{\delta} u^{\varepsilon, \delta}_s]
     d s \right\|_{L^2 L^2 H^{- \beta}} \lesssim \| P_{\leqslant R}
     [S_{\delta} u^{\varepsilon, \delta} - \tilde{S}_{\delta} u^{\varepsilon,
     \delta}] \|_{L^2 L^2 H^{- 2 - \beta}}, \]
  where by Corollary~\ref{cor:9}
  \[ \| P_{\leqslant R} [S_{\delta} u^{\varepsilon, \delta} -
     \tilde{S}_{\delta} u^{\varepsilon, \delta}] \|^2_{H^{- 2 - \beta}} =
     \sum_{1 \leqslant | \ell | \leqslant R} | \ell |^{- 4 - 2 \beta} |
     (s_{\delta} (\ell) - \tilde{s}_{\delta} (\ell))
     \mathcal{F}u^{\varepsilon, \delta} (\ell) |^2 \]
  \[ \lesssim \delta^4 (1 + \log \delta^{- 1})^2 \sum_{1 \leqslant | \ell |
     \leqslant R} | \ell |^{4 - 2 \beta} | \mathcal{F}u^{\varepsilon, \delta}
     (\ell) |^2 \lesssim \delta^4 (1 + \log \delta^{- 1})^2 (R^{2 - 2 \beta}
     \vee 1) \| u^{\varepsilon, \delta} \|_{H^1}^2 . \]
  Dividing by $\delta$ we obtain by energy inequality \eqref{eq:energy}
  \[ \delta^{- 1} \left\| \int_0^{\cdummy} S_{\cdummy - s} P_{\leqslant R}
     [S_{\delta} u_s^{\varepsilon, \delta} - \tilde{S}_{\delta}
     u^{\varepsilon, \delta}_s] d s \right\|_{L^2 L^2 H^{- \beta}} \lesssim
     \delta (1 + \log \delta^{- 1}) R^{1 - \beta} \| u_0 \|_{L^2} \]
  and \eqref{eq:newcond} is satisfied.
  
  The same reasoning based on Corollary~\ref{cor:12} yields
  \[ \left\| \int_0^{\cdummy} S_{\cdummy - s} P_{\leqslant R}
     [\tilde{S}_{\delta} u_s^{\varepsilon, \delta} - \nu \Delta
     u^{\varepsilon, \delta}_s] d s \right\|_{L^2 L^2 H^{- \beta}} \lesssim \|
     P_{\leqslant R} [\tilde{S}_{\delta} u_s^{\varepsilon, \delta} - \nu
     \Delta u^{\varepsilon, \delta}_s] \|_{L^2 L^2 H^{- 2 - \beta}}, \]
  where
  \[ \| P_{\leqslant R} [\tilde{S}_{\delta} u_s^{\varepsilon, \delta} - \nu
     \Delta u^{\varepsilon, \delta}_s] \|^2_{H^{- 2 - \beta}} \lesssim
     \delta^4 (1 + \log \delta^{- 1})^2 \sum_{1 \leqslant | \ell | \leqslant
     R} | \ell |^{- 2 \beta} | \mathcal{F}u^{\varepsilon, \delta} (\ell) |^2
  \]
  \[ \lesssim \delta^4 (1 + \log \delta^{- 1})^2 \| u^{\varepsilon, \delta}
     \|^2_{H^1}, \]
  hence
  \[ \delta^{- 1} \left\| \int_0^{\cdummy} S_{\cdummy - s} P_{\leqslant R}
     [\tilde{S}_{\delta} u_s^{\varepsilon, \delta} - \nu \Delta
     u^{\varepsilon, \delta}_s] d s \right\|_{L^2 L^2 H^{- \beta}} \lesssim
     \delta (1 + \log \delta^{- 1}) \]
  which completes the proof.
\end{proof}

\section{Lower order error terms}\label{s:error}

\subsection{Laplacian remainders}\label{s:laplace}

\begin{lemma}
  \label{l:26}Let $\beta > 0$. For every $\sigma \in \Sigma$, the Laplacian
  remainder term satisfies
  \[ \delta^{- 1} \varepsilon^{M + 2 L} \left\| \int_0^{\cdummy} S_{\cdummy -
     s} \left( \mathbb{P} \Delta + \sum_{\{ \bar{\sigma} \in \Sigma ; M
     \tmop{odd}, (\bar{\sigma}, 0) \in \Sigma \} \setminus \{ (0) \}}
     \varepsilon^{M_{\bar{\sigma}} + 2 L_{\bar{\sigma}} - 1} S_{\bar{\sigma}}
     \right) P_{\leqslant R} \varphi_{\sigma} (u^{\varepsilon, \delta}_s,
     m^{\varepsilon, \delta}_s) d s \right\|_{L^2 L^2 H^{- \beta}} \]
  \begin{equation}
    \lesssim \varepsilon^{M + 2 L} (\log \varepsilon^{- 1})^{M / 2} (\delta^{-
    M - 2 L + 1} + \delta^{- 1} (R^{M + 2 L - 1 - \beta} \vee 1)) .
    \label{cond:11.1}
  \end{equation}
\end{lemma}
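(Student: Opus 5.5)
The plan is to remove the semigroup by maximal regularity, Proposition~\ref{p:p20}, which gives
\[ \Big\| \int_0^{\cdummy} S_{\cdummy - s} f_s \, d s \Big\|_{L^2 H^{-\beta}} \lesssim \| f \|_{L^2 H^{-2-\beta}} . \]
I take $f = A_0 P_{\leqslant R} \varphi_\sigma(u^{\varepsilon,\delta}, m^{\varepsilon,\delta})$, where $A_0 := \mathbb{P}\Delta + \sum \varepsilon^{M_{\bar\sigma}+2L_{\bar\sigma}-1} S_{\bar\sigma}$. By \eqref{eq:sk} and subcriticality, $\varepsilon\delta^{-1}\lesssim 1$, the symbol of $A_0$ is bounded by $|k|^2$. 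So $A_0$ maps $H^{-\beta}\to H^{-2-\beta}$ boundedly, uniformly in $\varepsilon,\delta$. This reduces the claim to bounding
\[ \delta^{-1}\varepsilon^{M+2L}\, \big\| P_{\leqslant R}\varphi_\sigma(u^{\varepsilon,\delta}, m^{\varepsilon,\delta}) \big\|_{L^2 L^2 H^{-\beta}} , \]
where the outer $L^2$ is in $\omega$. In other words, the Laplacian and the $S_{\bar\sigma}$ are absorbed exactly by the two orders of parabolic smoothing.

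Next I estimate the distribution $\varphi_\sigma(u,n)$ by duality. The key quantity is $\sup_h |\varphi_\sigma(u,n;P_{\leqslant R}h)|$ over $\|h\|_{H^\beta}\leqslant 1$. For $\sigma=(0)$ this is $|\langle n\cdot\nabla u, P_{\leqslant R}h\rangle| = |\langle u, n\cdot\nabla P_{\leqslant R}h\rangle|$, which is bounded by $\|u\|_{L^2}\,\|n\|_{W^{1,\infty}}\,(R^{1-\beta}\vee 1)$, handled like the general case. For $\sigma\neq(0)$ I use Lemma~\ref{l:38}, with the one derivative on $u$ kept as $\|\nabla u\|_{L^2}$. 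The terms with $j=M+2L-2$ put all derivatives on copies of $n$ and cost $\delta^{-(M+2L-2)}\|\nabla P_{\leqslant R}h\|_{L^2}$. The terms with small $j$ move derivatives onto $h$, where Bernstein gives $\|\nabla P_{\leqslant R} h\|_{H^{i}}\lesssim R^{i+1-\beta}\vee 1$.

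To fit the two displayed endpoints I interpolate in $j$. The mixed terms satisfy $\delta^{-j}R^{M+2L-1-j-\beta}\leqslant \delta^{-(M+2L-1)}+\delta^{-1}(R^{M+2L-1-\beta}\vee 1)$ after using $\delta^{-1}\geqslant 1$; the bound for $j=M+2L-2$ is also absorbed into $\delta^{-(M+2L-2)}\cdot\delta^{-1}$. After the prefactor $\delta^{-1}$ this gives exactly the bracket in \eqref{cond:11.1}. If the exponent bookkeeping is off by one, I would shift one derivative from $u$ onto $h$, using $\|u\|_{L^2}$ instead of $\|\nabla u\|_{L^2}$.

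The main obstacle is the time integrability. The bound contains $\prod_{i=1}^M\|m^{\varepsilon,\delta}_s\|_{W^{k_i,\infty}}$ times $\|\nabla u^{\varepsilon,\delta}_s\|_{L^2}$, and $\nabla u$ is only in $L^2_t$. I therefore take $\sup_{t\in[0,T]}$ of the noise factors and use the energy inequality \eqref{eq:energy} pathwise:
\[ \Big\| \sup_t \prod_i \|m^{\varepsilon,\delta}_t\|_{W^{k_i,\infty}} \, \|\nabla u^{\varepsilon,\delta}\|_{L^2_t L^2} \Big\|_{L^2_\omega} \lesssim \|u_0\|_{L^2}\prod_i \Big(\mathbb{E}\sup_t\|m^{\varepsilon,\delta}_t\|^{2M}_{W^{k_i,\infty}}\Big)^{1/2M} . \]
The right-hand side follows from H\"older and \eqref{eq:supm}, giving $\delta^{-\sum k_i}(\log\varepsilon^{-1})^{M/2}$. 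Since the energy bound is deterministic, no correlation between $u^{\varepsilon,\delta}$ and $m^{\varepsilon,\delta}$ has to be controlled, and the factor $(\log\varepsilon^{-1})^{M/2}$ in \eqref{cond:11.1} appears naturally.
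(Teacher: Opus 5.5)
Your route is the paper's. You use maximal regularity (Proposition~\ref{p:p20}) together with the symbol bound \eqref{eq:sk} to strip off the semigroup and the operator $\mathbb{P}\Delta+\sum\varepsilon^{M_{\bar\sigma}+2L_{\bar\sigma}-1}S_{\bar\sigma}$. You then bound $\|P_{\leqslant R}\varphi_\sigma\|_{H^{-\beta}}$ by duality, the derivative count of Lemma~\ref{l:38}, Bernstein on $P_{\leqslant R}h$, and \eqref{eq:supm} against the pathwise energy inequality; you are more explicit than the paper on this last point.

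\textbf{The gap: the final bookkeeping loses a power of $\delta$.} Your inequality $\delta^{-j}R^{M+2L-1-j-\beta}\leqslant\delta^{-(M+2L-1)}+\delta^{-1}(R^{M+2L-1-\beta}\vee 1)$ is true. But its right-hand side is already the bracket of \eqref{cond:11.1}, so after the prefactor $\delta^{-1}$ you get $\delta^{-1}$ times the claimed bound. Before the prefactor you need $\delta^{-(M+2L-2)}+(R^{M+2L-1-\beta}\vee 1)$, which is the paper's intermediate bound. It follows from your count without spending $\delta^{-1}\geqslant 1$, provided $R\geqslant\delta^{-1}$:
\begin{itemize}
\item if $M+2L-1-j>\beta$, then $\delta^{-j}R^{M+2L-1-j-\beta}=(\delta^{-1}/R)^{j}R^{M+2L-1-\beta}\leqslant R^{M+2L-1-\beta}$;
\item otherwise the term is at most $\delta^{-j}\leqslant\delta^{-(M+2L-2)}$.
\end{itemize}
For $1<R<\delta^{-1}$, $\beta<1$ and $M+2L\geqslant 3$, the endpoint term $\delta^{-(M+2L-2)}\|\nabla P_{\leqslant R}h\|_{L^2}\sim\delta^{-(M+2L-2)}R^{1-\beta}$ is not dominated by the bracket. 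The paper's one-line count glosses over this too; it is harmless only because Section~\ref{s:removal} takes $R=\delta^{-1}$. The lost factor matters downstream: for $\sigma=(0,0)$ and $R=\delta^{-1}$ your version gives $\varepsilon^2\delta^{-3+\beta}\log\varepsilon^{-1}$ rather than $\varepsilon^2\delta^{-2+\beta}\log\varepsilon^{-1}$. The former need not vanish when $\iota$ is small.

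\textbf{A separate slip at $\sigma=(0)$.} Lemma~\ref{l:38} does not apply there, and you should keep the derivative on $u$: $|\langle n\cdot\nabla u,P_{\leqslant R}h\rangle|\leqslant\|n\|_{L^\infty}\|\nabla u\|_{L^2}\|h\|_{L^2}$. This gives $\delta^{-1}\varepsilon(\log\varepsilon^{-1})^{1/2}$, which fits the bracket $1+\delta^{-1}$, since here $R^{M+2L-1-\beta}\vee 1=1$. Integrating by parts onto $h$, as you propose, produces a factor $R^{1-\beta}$, which the statement does not allow. For the same reason, your fallback of shifting a derivative from $u$ onto $h$ goes the wrong way. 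It costs a power of $R$ and gains nothing in $\delta$.
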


\begin{proof}
  For every $\sigma \in \Sigma$, we first apply maximal regularity to bound
  \begin{equation}
    \varepsilon^{M + 2 L} \left\| \int_0^{\cdummy} S_{\cdummy - s} \mathbb{P}
    \Delta P_{\leqslant R} \varphi_{\sigma} (u^{\varepsilon, \delta}_s,
    m^{\varepsilon, \delta}_s) d s \right\|_{L^2 L^2 H^{- \beta}} \lesssim
    \varepsilon^{M + 2 L} \| P_{\leqslant R} \varphi_{\sigma} (u^{\varepsilon,
    \delta}, m^{\varepsilon, \delta}) \|_{L^2 L^2 H^{- \beta}} .
    \label{eq:51aa}
  \end{equation}
  Next, we write the $H^{- \beta}$-norm in duality form against test functions
  in $H^{\beta}$ and observe that $\varphi_{\sigma}$ contains $M + 2 L$
  derivatives and at most $M + 2 L - 1$ go to the test function and at most $M
  + 2 L - 2$ hit $m^{\varepsilon, \delta}$ or the covariance
  $\mathcal{Q}_{\delta}$. Since
  \[ \| P_{\leqslant R} h \|_{H^{M + 2 L - 1}} \lesssim R^{M + 2 L - 1 -
     \beta} \| h \|_{H^{\beta}}, \]
  we obtain by Proposition~\ref{c:sup}
  \[ \varepsilon^{M + 2 L} \left\| \int_0^{\cdummy} S_{\cdummy - s} \mathbb{P}
     \Delta P_{\leqslant R} \varphi_{\sigma} (u^{\varepsilon, \delta}_s,
     m^{\varepsilon, \delta}_s) d s \right\|_{L^2 L^2 H^{- \beta}} \]
  \[ \lesssim \| u_0 \|_{L^2} \varepsilon^{M + 2 L} (\log \varepsilon^{-
     1})^{M / 2} (\delta^{- M - 2 L + 2} + R^{M + 2 L - 1 - \beta} \vee 1), \]
  which after the division by $\delta$ leads to \eqref{cond:11.1}.
  
  The part with $\varepsilon^{M_{\bar{\sigma}} + 2 L_{\bar{\sigma}} - 1}
  S_{\bar{\sigma}}$ is estimated similarly since due to \eqref{eq:sk}
  \[ \varepsilon^{M_{\bar{\sigma}} + 2 L_{\bar{\sigma}} - 1} \|
     S_{\bar{\sigma}} v \|_{{H^{- 2 - \beta}} } \lesssim
     \varepsilon^{M_{\bar{\sigma}} + 2 L_{\bar{\sigma}} - 1} \delta^{-
     M_{\bar{\sigma}} - 2 L_{\bar{\sigma}} + 1} \| v \|_{H^{- \beta}} \lesssim
     \| v \|_{H^{- \beta}} \]
  and consequently by maximal regularity
  \[ \varepsilon^{M + 2 L} \left\| \int_0^{\cdummy} S_{\cdummy - s}
     (\varepsilon^{M_{\bar{\sigma}} + 2 L_{\bar{\sigma}} - 1}
     S_{\bar{\sigma}}) P_{\leqslant R} \varphi_{\sigma} (u^{\varepsilon,
     \delta}_s, m^{\varepsilon, \delta}_s) d s \right\|_{L^2 L^2 H^{- \beta}}
  \]
  \[ \lesssim \varepsilon^{M + 2 L} \| (\varepsilon^{M_{\bar{\sigma}} + 2
     L_{\bar{\sigma}} - 1} S_{\bar{\sigma}}) P_{\leqslant R} \varphi_{\sigma}
     (u^{\varepsilon, \delta}, m^{\varepsilon, \delta}) \|_{L^2 L^2 H^{- 2 -
     \beta}} \lesssim \varepsilon^{M + 2 L} \| P_{\leqslant R}
     \varphi_{\sigma} (u^{\varepsilon, \delta}, m^{\varepsilon, \delta})
     \|_{L^2 L^2 H^{- \beta}} . \]
\end{proof}

\subsection{Boundary terms}\label{s:LHS}

\begin{lemma}
  \label{l:27}Let $\beta > 0$. For every $\sigma \in \Sigma$, the boundary
  terms satisfy
  \[ \delta^{- 1} \varepsilon^{M + 2 L} \| P_{\leqslant R} \varphi_{\sigma}
     (u^{\varepsilon, \delta}, m^{\varepsilon, \delta}) \|_{L^2 L^2 H^{-
     \beta}} \]
  \[ \lesssim \varepsilon^{M + 2 L} (\log \varepsilon^{- 1})^{M / 2}
     (\delta^{- M - 2 L + 1} + \delta^{- 1} (R^{M + 2 L - 1 - \beta} \vee 1)),
  \]
  and
  \begin{equation}
    \delta^{- 1} \varepsilon^{M + 2 L} \| S_t P_{\leqslant R} \varphi_{\sigma}
    (u_0, m_0^{\varepsilon, \delta}) \|_{L^2 L^2 H^{- \beta}} \lesssim
    \varepsilon^{M + 2 L} (\delta^{- M - 2 L} + \delta^{- 1} (R^{M + 2 L - 1 -
    \beta} \vee 1)) . \label{cond:11.2}
  \end{equation}
\end{lemma}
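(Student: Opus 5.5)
The first bound is essentially the argument already used for the right-hand side of \eqref{eq:51aa} in Lemma~\ref{l:26}; we repeat it without the maximal regularity step. We write the $H^{-\beta}$ norm in duality,
\[ \| P_{\leqslant R} \varphi_{\sigma} (u^{\varepsilon,\delta}_t, m^{\varepsilon,\delta}_t) \|_{H^{-\beta}} = \sup_{\|h\|_{H^\beta}\leqslant 1} \varphi_\sigma(u^{\varepsilon,\delta}_t, m^{\varepsilon,\delta}_t; P_{\leqslant R} h), \]
and apply Lemma~\ref{l:38} with test function $P_{\leqslant R}h$ (for $\sigma=(0)$ we use directly $|\langle n\cdot\nabla u, h\rangle|\leqslant \|n\|_{L^\infty}\|\nabla u\|_{L^2}\|h\|_{L^2}$). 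For each $j\in\{0,\dots,M+2L-2\}$, the product of $W^{k_i,\infty}$ norms with $\sum k_i=j$ is controlled in $L^q(\Omega; L^\infty_t)$ by $\delta^{-j}(\log\varepsilon^{-1})^{M/2}$, using \eqref{eq:supm} and H\"older in $\omega$. The test-function factor satisfies
\[ \|\nabla P_{\leqslant R}h\|_{H^{M+2L-2-j}}\lesssim (R^{M+2L-1-j-\beta}\vee 1)\|h\|_{H^\beta}. \]
The product $\delta^{-j}(R^{M+2L-1-j-\beta}\vee1)$ is convex in $j$ on the log scale, so it is dominated by its endpoint values. The value at $j=0$ is $R^{M+2L-1-\beta}\vee1$. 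The value at $j=M+2L-2$ is $\delta^{-(M+2L-2)}(R^{1-\beta}\vee 1)\lesssim \delta^{-(M+2L-2)}+R^{M+2L-1-\beta}$ by Young's inequality. We then take the $L^2$ norm in time and the $L^2(\Omega)$ norm. We place the $\sup_t$ of the noise in $L^\infty_t$ and $\|\nabla u^{\varepsilon,\delta}\|_{L^2}$ in $L^2_t$, and we split the expectation by Cauchy--Schwarz. This is legitimate because the energy inequality \eqref{eq:energy} gives a pathwise bound $\|u_0\|_{L^2}$, so no moment coupling arises. Dividing by $\delta$ yields the first estimate.

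For \eqref{cond:11.2} we proceed as follows.
\begin{enumerate}
\item Since $S_t$ commutes with $P_{\leqslant R}$ and is a contraction on $H^{-\beta}$ by Proposition~\ref{p:p20}, it suffices to bound $\|P_{\leqslant R}\varphi_\sigma(u_0,m_0^{\varepsilon,\delta})\|_{H^{-\beta}}$ in $L^2(\Omega)$, uniformly in $t$.
\item The difficulty is that $u_0$ is only in $L^2$, so the factor $\|\nabla u\|_{L^2}$ in Lemma~\ref{l:38} is unavailable. We instead integrate by parts the derivative that falls on $u$. It then lands either on the test function, which costs a factor $R$ and is harmless since it only raises $R^{M+2L-2-\beta}$ to $R^{M+2L-1-\beta}$, or on a copy of $m$, which costs one extra $\delta^{-1}$.
\item This yields the bound
\[ \lesssim \|u_0\|_{L^2}\sum_{j=0}^{M+2L-1}\delta^{-j}(R^{M+2L-1-j-\beta}\vee1). \]
\item At a fixed time we use the moment bound \eqref{eq:m13} rather than \eqref{eq:supm}, so no logarithm appears.
\item The same convexity argument as above bounds the sum by $\delta^{-(M+2L-1)}+R^{M+2L-1-\beta}\vee1$. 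Dividing by $\delta$ gives \eqref{cond:11.2}.
\end{enumerate}

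The step I expect to need the most care is the derivative bookkeeping inside the contracted terms from Proposition~\ref{p:chaos}. Moving the derivative off $u_0$ must not place two derivatives on a single contracted pair in a way that forces kernel estimates. I would follow the strategy of Section~\ref{s:appl}: estimate inner products before taking the expectation over the contracted copies $n_i$, so that each derivative either hits one factor $n$ or $n_i$ (costing $\delta^{-1}$) or the test function (costing $R$). The Leray projectors are handled by their $L^p$ boundedness.
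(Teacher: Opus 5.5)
Your first estimate is fine and follows the paper's route: it is the bound behind \eqref{eq:51aa} without the maximal regularity step. The gap is in \eqref{cond:11.2}.

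You use only the contraction of $S_t$ on $H^{-\beta}$, uniformly in $t$. You therefore test against $P_{\leqslant R}h$ with $\|h\|_{H^{\beta}}\leqslant 1$, and your derivative count is then off by one. In Lemma~\ref{l:38} the term $j=0$ already puts $M+2L-1$ derivatives on the test function, at cost $R^{M+2L-1-\beta}$, as you computed yourself for the first estimate. Moving the remaining derivative off $u_0$ raises this to $R^{M+2L-\beta}$; it does not raise $R^{M+2L-2-\beta}$ to $R^{M+2L-1-\beta}$. What your route actually gives is
\[ \sum_{j=0}^{M+2L-1}\delta^{-j}\bigl(R^{M+2L-j-\beta}\vee 1\bigr)\lesssim R^{M+2L-\beta}+\delta^{-(M+2L-1)}R^{1-\beta}\qquad(\beta<1). \]
This exceeds the claimed bound by up to a factor $R$.

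The loss is already visible for $\sigma=(0)$. Since $\tmop{div} m_0^{\varepsilon,\delta}=0$, we have $\langle m_0^{\varepsilon,\delta}\cdot\nabla u_0,P_{\leqslant R}h\rangle=-\langle u_0,m_0^{\varepsilon,\delta}\cdot\nabla P_{\leqslant R}h\rangle$. The derivative is forced onto $h$, and the available bound is $\|m_0^{\varepsilon,\delta}\|_{L^\infty}\|u_0\|_{L^2}R^{1-\beta}$, whereas your formula gives $O(1)$. With $R=\delta^{-1}$ this term contributes $\varepsilon\delta^{-2+\beta}$ after division by $\delta$. That need not vanish under $\varepsilon=o(\delta^{1+\iota})$ when $\beta+\iota<1$.

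The missing ingredient is the $L^2$-in-time smoothing of the semigroup, which the paper uses. By \eqref{eq:matrix}, or by testing $\dot w=Aw$ against $w$ in $H^{-1-\beta}$, one has $\|S_{\cdot}v\|_{L^2(0,T;H^{-\beta})}\lesssim\|v\|_{H^{-1-\beta}}$. This gains exactly the derivative that can no longer sit on $u_0$. No bound uniform in $t$ can do this, since there is no smoothing at $t=0$; discarding the $L^2_t$ structure is what costs you.

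With test functions in $H^{1+\beta}$, your bookkeeping becomes correct. Putting $j\leqslant M+2L-1$ derivatives on $m_0^{\varepsilon,\delta}$ or the covariances and $M+2L-j$ on $h$ costs $\delta^{-j}(R^{M+2L-1-j-\beta}\vee1)$. Your log-convexity argument together with \eqref{eq:m13} then yields $\delta^{-(M+2L-1)}+R^{M+2L-1-\beta}\vee1$, which is \eqref{cond:11.2} after division by $\delta$.

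A minor point on the first estimate: your Young step $\delta^{-(M+2L-2)}(R^{1-\beta}\vee1)\lesssim\delta^{-(M+2L-2)}+R^{M+2L-1-\beta}$ is not uniform in $R$; it fails for $R\ll\delta^{-1}$ and $\beta<1$. It does hold for $R\geqslant\delta^{-1}$, which covers the choice $R=\delta^{-1}$. This is the same implicit restriction under which the corresponding bound in the proof of Lemma~\ref{l:26} holds.
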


\begin{proof}
  The first estimate follows directly from the bound of \eqref{eq:51aa}
  established in the proof of Lemma~\ref{l:26}, after division by $\delta$.
  
  The second boundary term in the mild formulation is
  \[ \varepsilon^{M + 2 L} \| S_t P_{\leqslant R} \varphi_{\sigma} (u_0,
     m_0^{\varepsilon, \delta}) \|_{L^2 L^2 H^{- \beta}} . \]
  Since we assume only $u_0 \in L^2$, all $M + 2 L$ derivatives hit the test function or
  $M + 2 L - 1$ derivatives hit $m^{\varepsilon, \delta}$ or its covariance.
  Since $S_t v$ solves $\dot{w} = A w$, $w (0) = v$, testing against $w$ and
  integrating yields $\| S_{ \cdummy} v \|_{L^2 H^{- \beta}} \lesssim
  \| v \|_{H^{- 1 - \beta}}$. Applying this with $v = P_{\leqslant R}
  \varphi_{\sigma} (u_0, m_0^{\varepsilon, \delta})$ gives
  \[ \varepsilon^{M + 2 L} \| S_t P_{\leqslant R} \varphi_{\sigma} (u_0,
     m_0^{\varepsilon, \delta}) \|_{L^2 L^2 H^{- \beta}} \lesssim
     \varepsilon^{M + 2 L} \| P_{\leqslant R} \varphi_{\sigma} (u_0,
     m_0^{\varepsilon, \delta}) \|_{H^{- 1 - \beta}} \]
  \[ \lesssim \varepsilon^{M + 2 L} (\delta^{- M - 2 L + 1} + R^{M + 2 L - 1 -
     \beta} \vee 1) \]
  and dividing by $\delta$ yields the stated estimate.
\end{proof}

\subsection{Convective remainders}\label{s:11.3}

\begin{lemma}
  \label{l:28}Let $\beta > 0$. For every $\sigma \in \Sigma$ and $\kappa \in
  (0, \beta/2)$, the convective remainder satisfies
  \[ \delta^{- 1} \varepsilon^{M + 2 L} \left\| \int_0^{\cdummy} S_{\cdummy -
     s} P_{\leqslant R} \varphi_{\sigma} (\mathbb{P} (u_s^{\varepsilon,
     \delta} \cdummy \nabla u_s^{\varepsilon, \delta}), m_s^{\varepsilon,
     \delta}) d s \right\|_{L^2 L^2 H^{- \beta}} \]
  \begin{equation}
    \lesssim \varepsilon^{M + 2 L} (\log \varepsilon^{- 1})^{M / 2} (\delta^{-
    M - 2 L} + \delta^{- 1} (R^{M + 2 L - 1 - \beta + 2\kappa} \vee 1)) .
    \label{cond:11.3}
  \end{equation}
\end{lemma}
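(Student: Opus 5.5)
Following the template of Lemma~\ref{l:26}, we first apply maximal regularity (Proposition~\ref{p:p20}) to move two derivatives onto the semigroup:
\[ \Big\| \int_0^{\cdummy} S_{\cdummy - s} P_{\leqslant R} \varphi_{\sigma} (\mathbb{P} (u_s^{\varepsilon, \delta} \cdummy \nabla u_s^{\varepsilon, \delta}), m_s^{\varepsilon, \delta}) d s \Big\|_{L^2 H^{- \beta}} \lesssim \| P_{\leqslant R} \varphi_{\sigma} (\mathbb{P} (u^{\varepsilon, \delta} \cdummy \nabla u^{\varepsilon, \delta}), m^{\varepsilon, \delta}) \|_{L^2 H^{- 2 - \beta}} . \]
We then write the $H^{-2-\beta}$ norm by duality against $h \in H^{2+\beta}$ with $\|h\|_{H^{2+\beta}} \leqslant 1$. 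The point is that $\varphi_\sigma(\cdot, n)$ is linear in its first argument, with $M+2L$ derivatives distributed by the Leibniz rule as in Lemma~\ref{l:38}. In Lemma~\ref{l:38} one derivative was reserved for $u$. Here, however, the argument is $w := \mathbb{P}(u^{\varepsilon,\delta}\cdot\nabla u^{\varepsilon,\delta}) = \mathbb{P}\,\mathrm{div}(u^{\varepsilon,\delta}\otimes u^{\varepsilon,\delta})$, which lies only in a negative-regularity space. We therefore integrate by parts so that all $M+2L$ derivatives, plus the divergence, fall either on copies of $n$ (each costing $\delta^{-1}$) or on the test function $P_{\leqslant R} h$.

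For the argument $w$ we use the bound $u^{\varepsilon,\delta} \otimes u^{\varepsilon,\delta} \in L^2(0,T; L^{1+})$ together with the $L^{\infty}$-bounds on $m$. To be more careful: in $d=2$, interpolation of the energy bounds gives $u^{\varepsilon,\delta} \in L^4 L^4$, hence $u^{\varepsilon,\delta}\otimes u^{\varepsilon,\delta} \in L^2 L^2$. This is the same structure used for the convective error in Section~\ref{s:err}, but now in $L^2$ in time, so that $\sup_t$ norms of $m^{\varepsilon,\delta}$ are needed. This is where the factor $(\log\varepsilon^{-1})^{M/2}$ from \eqref{eq:supm} enters. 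The Leray projection is bounded on $L^p$, $p\in(1,\infty)$, so we may remove it at the price of an $\varepsilon$-loss of integrability. This loss is compensated by spending a small amount $2\kappa$ of extra regularity on the test function through Sobolev embedding ($H^{2\kappa}\subset L^{p}$ with $p$ large but finite). This explains the exponent $R^{M+2L-1-\beta+2\kappa}$. Controlling $\|P_{\leqslant R}h\|_{W^{j,\infty-}}$ via $\|P_{\leqslant R}h\|_{H^{j+2\kappa}}$ and Bernstein, we get $R^{(j+2\kappa-2-\beta)_+}\vee 1$.

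We then count exponents case by case. Since $\varphi_\sigma$ contains $M+2L$ derivatives plus the divergence from the nonlinearity, we have $M+2L+1$ derivatives in total to distribute. Two of them are absorbed by the maximal regularity gain ($h\in H^{2+\beta}$).

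\textbf{Case 1: all derivatives on $m$.} Following the proof of Proposition~\ref{c:c18}, at most $M+2L$ can land on copies of $n$ or on the covariance, giving $\delta^{-(M+2L)}$; dividing by $\delta$ and keeping only the worst term, this contributes the $\delta^{-M-2L}$ term after using $h\in H^{2+\beta}$ for the remaining one.

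\textbf{Case 2: some derivatives on $h$.} If $j\geqslant 1$ derivatives land on $h$, the cost is $\delta^{-(M+2L+1-j)}$ times $R^{j-1-\beta+2\kappa}\vee1$. The intermediate terms are dominated by the two endpoints: either all on $m$, or $M+2L$ on $h$, which gives $\delta^{-1}R^{M+2L-1-\beta+2\kappa}$.

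Taking expectation, Hölder in $\omega$ separates $\sup_t\|\nabla^{k}m\|_{L^\infty}$ from the energy quantities. The energy quantities are pathwise bounded by $\|u_0\|_{L^2}^2$ via \eqref{eq:energy}.

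The main obstacle is the correct derivative budget: we must ensure no derivative needs to hit $u^{\varepsilon,\delta}\otimes u^{\varepsilon,\delta}$, since only $L^2L^2$ is available there. If a derivative is forced onto it via a Leray kernel, the fallback is to put one derivative on a single factor $u^{\varepsilon,\delta}$, using $u\cdot\nabla u$ with $u\in L^4L^4$ and $\nabla u\in L^2L^2$, and take the test side in $L^{4}$. This again costs only $H^{2\kappa}$-type embedding on $h$ in $d=2$, and the same final bound results.
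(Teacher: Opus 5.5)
\textbf{There is a genuine gap in the derivative count, masked by an arithmetic slip.} You insist that no derivative fall on $u^{\varepsilon,\delta}\otimes u^{\varepsilon,\delta}$, so that the forcing is $L^2$ in time and $L^2$-maximal regularity applies. Then all $M+2L+1$ derivatives must go onto copies of $n$ or onto $h$, and by your own count up to $M+2L$ of them land on $n$. That term costs $\delta^{-(M+2L)}$ \emph{before} the division by $\delta$, hence $\delta^{-(M+2L+1)}$ after it. This is one power worse than the claimed $\delta^{-M-2L}$.

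In Case~1 you assert that dividing by $\delta$ leaves $\delta^{-M-2L}$, which is the slip. In Case~2 you likewise present pre-division costs such as $\delta^{-1}R^{\cdots}$ as final.

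The loss is not cosmetic. For $\sigma=(0)$ your route leads to $\langle u^{\varepsilon,\delta}\otimes u^{\varepsilon,\delta},\nabla\mathbb{P}(n\cdot\nabla h)\rangle$. Its piece $(\nabla n)\cdot\nabla h$ cannot be avoided and gives $\varepsilon\delta^{-2}\sqrt{\log\varepsilon^{-1}}$ after the division. This does not vanish under $\varepsilon=o(\delta^{1+\iota})$ with $\iota<1$ (take $\varepsilon=\delta^{3/2}$). The lemma, as used in Section~\ref{s:removal}, needs $\varepsilon\delta^{-1}\sqrt{\log\varepsilon^{-1}}$ here.

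\textbf{Your fallback does not repair this.} With $u\in L^4L^4$ and $\nabla u\in L^2L^2$, the product $u^{\varepsilon,\delta}\cdot\nabla u^{\varepsilon,\delta}$ is only $L^{4/3}$ in time. So the $L^2$-maximal-regularity step you started from no longer applies. Moreover, in $d=2$ the embedding $H^s\subset L^4$ requires $s\geqslant 1/2$, not $s=2\kappa$.

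\textbf{The missing idea is the one in the paper.}
\begin{itemize}
\item Keep exactly one derivative on $u^{\varepsilon,\delta}$, using $\|\mathbb{P}(u^{\varepsilon,\delta}\cdot\nabla u^{\varepsilon,\delta})\|_{L^{1+}}\lesssim\|u^{\varepsilon,\delta}\|_{L^{2+}}\|\nabla u^{\varepsilon,\delta}\|_{L^2}$ against the test side in $W^{M+2L,\infty}$. Then at most $M+2L-1$ derivatives hit $m^{\varepsilon,\delta}$.
\item This forcing lies only in $L^{2-}$ in time; the $L^{2+}_x$ norm of $u^{\varepsilon,\delta}$ is in high $L^p_t$ by interpolation of the energy bounds. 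So instead of maximal regularity, use the pointwise smoothing bound $\|S_{t-s}f\|_{H^{-\beta}}\lesssim(t-s)^{-1+\kappa/2}\|f\|_{H^{-2-\beta+\kappa}}$ from Proposition~\ref{p:p20}.
\item Close with Young's inequality in time, with the kernel in $L^{1+}$ and the forcing in $L^{2-}_t$.
\end{itemize}
This is also the actual origin of the $2\kappa$ in the statement. One $\kappa$ is sacrificed in the smoothing to make the time kernel integrable. The other comes from the two-dimensional embedding $\|P_{\leqslant R}h\|_{W^{M+2L,\infty}}\lesssim\|P_{\leqslant R}h\|_{H^{M+2L+1+\kappa}}\lesssim R^{M+2L-1-\beta+2\kappa}\|h\|_{H^{2+\beta-\kappa}}$.
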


\begin{proof}
  We first consider the case $\sigma = (0)$, which illustrates the nonlinear
  structure of the argument, and subsequently estimate the general case
  $\sigma \in \Sigma$.
  
  The convective term associated to the first corrector $\varphi_{(0)}$ reads
  as
  \[ - \varepsilon \varphi_{(0)} (\mathbb{P} (u^{\varepsilon, \delta} \cdummy
     \nabla u^{\varepsilon, \delta}), m^{\varepsilon, \delta}) = - \varepsilon
     \langle m^{\varepsilon, \delta} \cdummy \nabla \mathbb{P}
     (u^{\varepsilon, \delta} \cdummy \nabla u^{\varepsilon, \delta}),
     \mathbb{P}h \rangle . \]
After the cut-off we estimate using the smoothing of
the semigroup with $\kappa \in (0, \beta)$ as
\[ \varepsilon \left\| \int_0^{\cdummy} S_{\cdummy - s} P_{\leqslant R}
   \varphi_{(0)} (\mathbb{P} (u_s^{\varepsilon, \delta} \cdummy \nabla
   u_s^{\varepsilon, \delta}), m_s^{\varepsilon, \delta}) d s \right\|_{L^2
   H^{- \beta}} \]
\[ \leqslant \varepsilon \left\| \int_0^{\cdummy} \| S_{\cdummy - s}
   P_{\leqslant R} \varphi_{(0)} (\mathbb{P} (u_s^{\varepsilon, \delta}
   \cdummy \nabla u_s^{\varepsilon, \delta}), m_s^{\varepsilon, \delta})
   \|_{H^{- \beta}} d s \right\|_{L^2_t} \]
\[ \lesssim \varepsilon \left\| \int_0^{\cdummy} (\cdummy - s)^{- 1 + \kappa /
   2} \| P_{\leqslant R} \varphi_{(0)} (\mathbb{P} (u_s^{\varepsilon, \delta}
   \cdummy \nabla u_s^{\varepsilon, \delta}), m_s^{\varepsilon, \delta})
   \|_{H^{- 2 - \beta + \kappa}} d s \right\|_{L^2_t} . \]
Next, we write
\[ \| P_{\leqslant R} \varphi_{(0)} (\mathbb{P} (u_s^{\varepsilon, \delta}
   \cdummy \nabla u_s^{\varepsilon, \delta}), m_s^{\varepsilon, \delta})
   \|_{H^{- 2 - \beta + \kappa}} \]
\[ = \sup_{h \in H^{2 + \beta - \kappa}, \| h \|_{H^{2 + \beta - \kappa}}
   \leqslant 1} | \langle m_s^{\varepsilon, \delta} \cdummy \nabla (\mathbb{P}
   (u_s^{\varepsilon, \delta} \cdummy \nabla u_s^{\varepsilon, \delta})),
   P_{\leqslant R} \mathbb{P}h \rangle | \]
and since in two dimensions $H^{2 + \beta - \kappa} \subset W^{1, \infty}$ for
$\kappa < \beta$, this is further bounded by
\[ \lesssim \| m^{\varepsilon, \delta}_s \|_{L^{\infty}} \| \mathbb{P}
   (u_s^{\varepsilon, \delta} \cdummy \nabla u_s^{\varepsilon, \delta})
   \|_{L^{1 +}} \lesssim \| m^{\varepsilon, \delta}_s \|_{L^{\infty}} \|
   u_s^{\varepsilon, \delta} \|_{L^{2 +}} \| \nabla u_s^{\varepsilon, \delta}
   \|_{L^2} . \]
Consequently,
\[ \varepsilon \left\| \int_0^{\cdummy} S_{\cdummy - s} P_{\leqslant R}
   \varphi_{(0)} (\mathbb{P} (u_s^{\varepsilon, \delta} \cdummy \nabla
   u_s^{\varepsilon, \delta}), m_s^{\varepsilon, \delta}) d s \right\|_{L^2
   H^{- \beta}} \]
\[ \lesssim \varepsilon \left\| \int_0^{\cdummy} (\cdummy - s)^{- 1 + \kappa /
   2} \| m^{\varepsilon, \delta}_s \|_{L^{\infty}} \| u_s^{\varepsilon,
   \delta} \|_{L^{2 +}} \| \nabla u_s^{\varepsilon, \delta} \|_{L^2} d s
   \right\|_{L^2_t} \]
\[ \lesssim \varepsilon \| m^{\varepsilon, \delta} \|_{L^{\infty} L^{\infty
   }} \left\| \int_0^{\cdummy} (\cdummy -s)^{- 1 + \kappa / 2} \|
   u_s^{\varepsilon, \delta} \|_{L^{2 +}} \| \nabla u_s^{\varepsilon, \delta}
   \|_{L^2} d s \right\|_{L^2_t}, \]
and Young's convolution inequality yields
\[ \lesssim \varepsilon \| m^{\varepsilon, \delta} \|_{L^{\infty} L^{\infty
  }} \| t \mapsto t ^{- 1 + \kappa / 2} \|_{L_t^{1 +}} \| \|
   u_t^{\varepsilon, \delta} \|_{L^{2 +}} \| \nabla u_t^{\varepsilon, \delta}
   \|_{L^2} \|_{L^{2 -}_t}, \]
where given $\kappa < \beta$, we first choose the exponent $1 +$, which
determines the exponent $2 -$ and that further forces the time integrability
required from $\| u_s^{\varepsilon, \delta} \|_{L^{2 +}}$. By interpolation
from the energy bound of $u^{\varepsilon, \delta}$ in $L^{\infty} L^2 \cap L^2
H^1$, we choose the space integrability $2 +$  so that the above is
bounded by
\[ \lesssim \varepsilon \| m^{\varepsilon, \delta} \|_{L^{\infty} L^{\infty}} (\| u^{\varepsilon, \delta} \|_{L^{\infty} L^2} + \| u^{\varepsilon,
   \delta} \|_{L^2 H^1}) \| \nabla u^{\varepsilon, \delta} \|_{L^2 L^2} . \]
This finally leads to
\[ \varepsilon  \left\| \int_0^{\cdummy} S_{\cdummy - s} P_{\leqslant R}
   \varphi_{(0)} (\mathbb{P} (u_s^{\varepsilon, \delta} \cdummy \nabla
   u_s^{\varepsilon, \delta}), m_s^{\varepsilon, \delta}) d s \right\|_{L^2
   L^2 H^{- \beta}} \lesssim \varepsilon \sqrt{\log \varepsilon^{- 1}} . \]
Dividing by $\delta$, we are led to the bound
  $\varepsilon \sqrt{\log \varepsilon^{- 1}} \delta^{- 1}$, which is vanishing
  under our assumption $\varepsilon = o (\delta^{1 + \iota})$.
  
  For a general $\sigma \in \Sigma$, we estimate with $\kappa \in (0, \beta /
2)$ as
\[ \varepsilon^{M + 2 L} \left\| \int_0^{\cdummy} S_{\cdummy - s} P_{\leqslant
   R} \varphi_{\sigma} (\mathbb{P} (u_s^{\varepsilon, \delta} \cdummy \nabla
   u_s^{\varepsilon, \delta}), m_s^{\varepsilon, \delta}) d s \right\|_{L^2
   H^{- \beta}} \]
\begin{equation}
  \lesssim \varepsilon^{M + 2 L} \left\| \int_0^{\cdummy} (\cdummy - s)^{- 1 +
  \kappa / 2} \| P_{\leqslant R} \varphi_{\sigma} (\mathbb{P}
  (u_s^{\varepsilon, \delta} \cdummy \nabla u_s^{\varepsilon, \delta}),
  m_s^{\varepsilon, \delta}) \|_{H^{- 2 - \beta + \kappa}} d s
  \right\|_{L^2_t} . \label{eq:42dd}
\end{equation}
This expression contains $M + 2 L + 1$ derivatives, out of which at most $M +
2 L$ go to the test function and at most $M + 2 L - 1$ derivatives go to
$m^{\varepsilon, \delta}$ . If $M + 2 L + 1 + \kappa > 2 + \beta - \kappa$, we
apply the Sobolev embedding and the smoothing of $h$ as
\[ \| P_{\leqslant R} \mathbb{P}h \|_{W^{M + 2 L, \infty}} \lesssim \|
   P_{\leqslant R} \mathbb{P}h \|_{H^{M + 2 L + 1 + \kappa}} \lesssim R^{M + 2
   L + 1 + \kappa - 2 - \beta + \kappa} \| h \|_{H^{2 + \beta - \kappa}} \]
\[ = R^{M + 2 L - 1 - \beta + 2 \kappa} \| h \|_{H^{2 + \beta - \kappa}} \]
and we estimate \eqref{eq:42dd} as
\[ \lesssim \varepsilon^{M + 2 L} (\log \varepsilon^{- 1})^{M / 2} (\delta^{-
   M - 2 L + 1} + R^{M + 2 L - 1 - \beta + 2 \kappa} \vee 1) \left\|
   \int_0^{\cdummy} (\cdummy - s)^{- 1 + \kappa / 2} \| u_s^{\varepsilon,
   \delta} \|_{L^{2 +}} \| \nabla u_s^{\varepsilon, \delta} \|_{L^2} d s
   \right\|_{L^2_t} . \]
With the same reasoning as in the case $\sigma = (0)$ above, we finally bound
this by
\[ \lesssim \varepsilon^{M + 2 L} (\log \varepsilon^{- 1})^{M / 2} (\delta^{-
   M - 2 L + 1} + R^{M + 2 L - 1 - \beta + 2 \kappa} \vee 1) . \]
Thus, after division by $\delta$ the claim follows.
\end{proof}

\subsection{Remaining expectations}\label{s:exp2}

We are concerned with the terms
\[ \sum_{\{ \sigma \in \Sigma, M \tmop{odd}, (\sigma, 0) \in \Sigma \}
   \setminus \{ (0) \}} \varepsilon^{M + 2 L - 1} R_{\sigma} u^{\varepsilon,
   \delta} \]
where $R_{\sigma}$ was defined in Proposition~\ref{c:c18}.

\begin{lemma}
  \label{l:31R}Let $\beta > 0$. For every $\sigma \in \Sigma \setminus \{ (0)
  \}$ with $M$ odd and $(\sigma, 0) \in \Sigma$, the remaining expectation
  satisfies
  \[ \delta^{- 1} \varepsilon^{M + 2 L - 1} \left\| \int_0^{\cdummy}
     S_{\cdummy - s} P_{\leqslant R} R_{\sigma} u_s^{\varepsilon, \delta} d s
     \right\|_{L^2 L^2 H^{- \beta}} \]
  \begin{equation}
    \lesssim \varepsilon^{M + 2 L - 1} (\delta^{- M - 2 L + 1} + \delta^{- 1}
    (R^{M + 2 L - 2 - \beta} \vee 1)) . \label{cond:12.4R}
  \end{equation}
\end{lemma}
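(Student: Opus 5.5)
The plan is to follow the template of Lemma~\ref{l:26}, using maximal regularity in place of the more delicate time-singular convolution needed for the convective remainder. By Proposition~\ref{p:p20}, and since $P_{\leqslant R}$ commutes with $S_t$,
\[ \left\| \int_0^{\cdummy} S_{\cdummy - s} P_{\leqslant R} R_{\sigma} u^{\varepsilon, \delta}_s d s \right\|_{L^2 H^{- \beta}} \lesssim \| P_{\leqslant R} R_{\sigma} u^{\varepsilon, \delta} \|_{L^2 H^{- 2 - \beta}} . \]
The problem therefore reduces to a pointwise-in-time bound on $\| P_{\leqslant R} R_{\sigma} u^{\varepsilon, \delta}_s \|_{H^{-2-\beta}}$. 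We express this norm in duality against $h \in H^{2+\beta}$ with $\|h\|_{H^{2+\beta}} \leqslant 1$, noting that $P_{\leqslant R}$ is self-adjoint so it can be moved onto $h$.

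Next we apply the bound on $R_\sigma$ from Proposition~\ref{c:c18} with test function $P_{\leqslant R} h$:
\[ | \langle R_{\sigma} u, P_{\leqslant R} h \rangle | \lesssim \| \nabla u \|_{L^2} \sum_{j = 1}^{M + 2 L - 1} \delta^{- (M + 2 L - 1 - j)} \| \nabla P_{\leqslant R} h \|_{H^j} . \]
For each $j$ we use the frequency truncation in the form $\| P_{\leqslant R} h \|_{H^{j+1}} \lesssim (R^{j - 1 - \beta} \vee 1) \| h \|_{H^{2+\beta}}$. This bound is trivial, with constant $1$, whenever $j+1 \leqslant 2+\beta$.

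The resulting $j$-th term is $\delta^{-(M+2L-1-j)} (R^{j-1-\beta}\vee 1)$. Two regimes matter:
\begin{itemize}
\item \emph{Small $j$.} The $\delta$-power is at worst $\delta^{-(M+2L-2)}$. After multiplying by $\delta^{-1}$ this is covered by $\delta^{-M-2L+1}$.
\item \emph{Large $j$.} Here the $R$-power dominates. At $j = M+2L-1$ we obtain $R^{M+2L-2-\beta}$ with no $\delta$-loss.
\end{itemize}
For intermediate $j$ we use the interpolation inequality $\delta^{-a} R^{b} \leqslant \delta^{-a-b} + R^{a+b}$, valid for $a,b \geqslant 0$. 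This lets every mixed term be absorbed into the two endpoints $\delta^{-(M+2L-2)}$ and $R^{M+2L-2-\beta}\vee 1$.

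Squaring, integrating in time and using the energy inequality \eqref{eq:energy} to bound $\| \nabla u^{\varepsilon,\delta}\|_{L^2L^2} \lesssim \|u_0\|_{L^2}$ gives the estimate pathwise. Taking the $L^2(\Omega)$ norm is then harmless. Unlike Lemma~\ref{l:26}, no logarithmic factor appears, because $R_\sigma$ is deterministic: the noise has already been averaged out in $[\cdot]^\bullet$. Multiplying by $\delta^{-1}\varepsilon^{M+2L-1}$ yields \eqref{cond:12.4R}.

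The main point requiring care is the bookkeeping in the intermediate-$j$ terms. I would check that the $R$-exponent is indeed $j-1-\beta$, coming from $\|h\|_{H^{j+1}}$ against $H^{2+\beta}$, and that the mixed products stay below $\max(\delta^{-(M+2L-2)}, R^{M+2L-2-\beta}\vee1)$.
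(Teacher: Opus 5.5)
Your proposal is correct and follows the paper's proof essentially step for step. The shared chain is maximal regularity from Proposition~\ref{p:p20}, duality against $P_{\leqslant R}h$, the $R_\sigma$ bound of Proposition~\ref{c:c18}, the truncation estimate $\|P_{\leqslant R}h\|_{H^{j+1}}\lesssim (R^{j-1-\beta}\vee 1)\|h\|_{H^{2+\beta}}$, and the energy inequality. Your inequality $\delta^{-a}R^{b}\leqslant \delta^{-a-b}+R^{a+b}$, applied with $a+b=M+2L-2-\beta$, only makes explicit a step the paper leaves implicit: that the mixed terms in the $j$-sum are dominated by the two endpoint contributions.
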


\begin{proof}
  Applying $R_{\sigma} u$ to $P_{\leqslant R} h$ we obtain by maximal
  regularity and Proposition~\ref{c:c18}
  \[ \varepsilon^{M + 2 L - 1} \left\| \int_0^{\cdummy} S_{\cdummy - s}
     P_{\leqslant R} R_{\sigma} u_s^{\varepsilon, \delta} d s \right\|_{L^2
     L^2 H^{- \beta}} \lesssim \varepsilon^{M + 2 L - 1} \| P_{\leqslant R}
     R_{\sigma} u_s^{\varepsilon, \delta} \|_{L^2 L^2 H^{- 2 - \beta}} \]
  \[ \lesssim \varepsilon^{M + 2 L - 1} \| \nabla u^{\varepsilon, \delta}
     \|_{L^2 L^2} \sum_{j = 1, \ldots, M + 2 L - 1} \delta^{- (M + 2 L - 1 -
     j)} (R^{j + 1 - 2 - \beta} \vee 1) \]
  \[ \lesssim \varepsilon^{M + 2 L - 1} (\delta^{- M - 2 L + 2} + R^{M + 2 L -
     2 - \beta} \vee 1) \]
  and \eqref{cond:12.4R} follows after the division by $\delta$.
\end{proof}

\subsection{Stochastic integrals}\label{s:11.4}

\begin{lemma}
  \label{l:29}Let $\beta > 0$. For every $\sigma \in \Sigma \setminus \{ (0)
  \}$ the stochastic integral satisfies
  \[ \delta^{- 1} \varepsilon^{M + 2 L - 1} \left\| \int_0^{\cdummy}
     S_{\cdummy - s} P_{\leqslant R} \langle \mathcal{Q}^{1 / 2}_{\delta} d
     W_s, D_n \varphi_{\sigma} (u^{\varepsilon, \delta}_s, m^{\varepsilon,
     \delta}_s) \rangle \right\|_{L^2 L^2 H^{- \beta}} \]
  \begin{equation}
    \lesssim \varepsilon^{M + 2 L - 1} (\log \varepsilon^{- 1})^{(M - 1) / 2}
    (\delta^{- M - 2 L + 1} + \delta^{- 1} (R^{M + 2 L - 2 - \beta} \vee 1)) .
    \label{cond:11.4}
  \end{equation}
\end{lemma}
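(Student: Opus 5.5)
The plan follows the pattern of Lemma~\ref{l:26} and Lemma~\ref{l:31R}. The deterministic maximal regularity step is replaced by the stochastic one, Lemma~\ref{l:l21}, which gains one derivative instead of two. With $F_s \assign P_{\leqslant R} \langle \mathcal{Q}^{1/2}_{\delta} (\cdummy), D_n \varphi_{\sigma} (u^{\varepsilon,\delta}_s, m^{\varepsilon,\delta}_s) \rangle$ viewed as an operator $H \to H^{-1-\beta}$, Lemma~\ref{l:l21} gives
\[ \mathbb{E} \Big\| \int_0^{\cdummy} S_{\cdummy - s} F_s \, d W_s \Big\|_{L^2 H^{-\beta}}^2 \lesssim \mathbb{E} \| F \|^2_{L^2 L_2 (H, H^{-1-\beta})} . \]
The whole task is therefore to bound the Hilbert--Schmidt norm
\[ \sum_{k,\alpha} \big\| P_{\leqslant R} D_n \varphi_{\sigma} (u, m) [\mathcal{Q}^{1/2}_{\delta} \sigma_{k,\alpha}] \big\|^2_{H^{-1-\beta}} . \]
By \eqref{eq:eigen} this equals $\delta^d \sum_{k,\alpha} |(\mathcal{F}_{\mathbb{R}^d} K)(\delta k)|^2 \, \| P_{\leqslant R} D_n \varphi_\sigma (u,m)[\sigma_{k,\alpha}] \|^2_{H^{-1-\beta}}$.

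Next I use the polynomial structure of $\varphi_\sigma$. By Proposition~\ref{p:chaos} and Lemma~\ref{l:38}, $\varphi_\sigma$ is a polynomial in $n$ of degree at most $M$ (plus contraction terms of lower degree). The derivative $D_n \varphi_\sigma$ in direction $\mathcal{Q}^{1/2}_\delta \sigma_{k,\alpha}$ replaces one copy of $n$ by $\mathcal{Q}^{1/2}_\delta \sigma_{k,\alpha}$. This leaves at most $M-1$ copies of $m^{\varepsilon,\delta}$, which produces the factor $(\log \varepsilon^{-1})^{(M-1)/2}$ via \eqref{eq:supm}.

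For the summation over $(k,\alpha)$, I test against $h$ and move all operators onto $h$. This should reduce the pairing to $\langle \sigma_{k,\alpha}, G_k \rangle$, where $G$ is built from $\nabla u$, the remaining copies of $m$ and derivatives of $h$. The weight is then either handled by Parseval, as in Section~\ref{s:noise1} and Proposition~\ref{p:stoch}, or the whole family $\mathcal{Q}^{1/2}_\delta \sigma_{k,\alpha}$ is summed as a contraction against $\frac12 \mathcal{Q}_\delta$. Either way the sum is bounded by $\delta^{d}\sup|\mathcal{F}K|^2$ times a Parseval sum. Dividing by $\delta^{d/2}$ in $L^2$ scale therefore leaves only an $O(1)$ factor from the noise, with derivatives falling on $\mathcal{Q}^{1/2}_\delta \sigma_{k,\alpha}$ contributing $\delta^{-1}$ each, just as derivatives on $m$ do. The $\delta^{-d}$ versus $\delta^{d}$ bookkeeping in this Parseval/contraction step is the delicate point. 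I would treat it exactly as in Section~\ref{s:noise1}: bring everything into the form $\langle \sigma_{k,\alpha}, (\text{function of } u,m,h)\rangle$ and sum in $k$ first, so that the relevant norm is an $L^2$ norm of a product containing $\nabla u$.

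Then I count derivatives. $\varphi_\sigma$ carries $M+2L$ derivatives, one of which is tied to $u$. After pairing with $h \in H^{1+\beta}$, at most $M+2L-1$ derivatives remain, distributed between $h$ and the noise factors. As in the proof of Lemma~\ref{l:26}, either all of them go to the random fields, costing $\delta^{-(M+2L-2)}$ (the extra one to reach $M+2L-1$ being absorbed by $\|h\|_{H^{1+\beta}} \ge \|h\|_{H^1}$), or at least one goes to $h$. In the latter case I use $\|P_{\leqslant R} h\|_{H^{j}} \lesssim (R^{j-1-\beta}\vee 1)\|h\|_{H^{1+\beta}}$, which gives the term $R^{M+2L-2-\beta}\vee 1$. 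The energy inequality \eqref{eq:energy} controls $\|\nabla u^{\varepsilon,\delta}\|_{L^2L^2}$, and \eqref{eq:supm} together with Hölder in $\Omega$ handles the remaining $M-1$ noise factors. Including the prefactor $\varepsilon^{M+2L-1}$ and dividing by $\delta$ yields \eqref{cond:11.4}.

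The main obstacle is the contraction terms coming from $(-\mathcal{M}^\delta)^{-1}$, where the differentiated slot may sit between two Leray projections. Here I follow the remark after \eqref{eq:I13}: estimate inner products before taking the inner expectations over the independent copies $n_1$. This way each bound involves only separate $L^p$ estimates of single copies, and no composition of singular kernels appears.
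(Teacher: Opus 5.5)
Your overall architecture matches the paper's: stochastic maximal regularity (Lemma~\ref{l:l21}) into $L_2(H,H^{-1-\beta})$, the diagonalization \eqref{eq:eigen}, one copy of $n$ replaced by the noise direction (whence $(\log\varepsilon^{-1})^{(M-1)/2}$ via \eqref{eq:supm}), and the derivative count with $\|P_{\leqslant R}h\|_{H^{M+2L-1}}\lesssim R^{M+2L-2-\beta}\|h\|_{H^{1+\beta}}$.

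The gap is in the step you yourself flag as delicate, the summation over $(k,\alpha)$. You want to write the pairing as $\langle\sigma_{k,\alpha},G\rangle$ and sum in $k$ first by Parseval, as in Section~\ref{s:noise1}. But the quantity to be bounded is
\[ \delta^d\sum_{k,\alpha}|(\mathcal{F}_{\mathbb{R}^d}K)(\delta k)|^2\sup_{\|h\|_{H^{1+\beta}}\leqslant 1}\bigl|\langle D_n\varphi_\sigma(u,m)[\sigma_{k,\alpha}],P_{\leqslant R}h\rangle\bigr|^2 , \]
with the supremum inside the sum. So the optimal $h$, and hence your $G$, depends on $(k,\alpha)$; in Section~\ref{s:noise1}, by contrast, $h$ was a single fixed test function.

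The only legitimate version is that of Proposition~\ref{p:stoch}: expand the $H^{-1-\beta}$ norm in the basis $\sigma_{\ell,\beta'}$ and interchange the $k$- and $\ell$-sums. That changes the estimate in two ways.
First, $j$ derivatives on the test side now give $\bigl(\sum_{|\ell|\leqslant R}|\ell|^{2j}(1+|\ell|^2)^{-1-\beta}\bigr)^{1/2}\sim R^{j-\beta}$ in $d=2$ (for $j>\beta$), instead of $R^{j-1-\beta}$.
Second, since $\nabla u$ is only in $L^2$, the $\ell$-dependent factor has to be bounded in $L^\infty$. For $\sigma=(0,0)$ this factor is $\mathbb{P}(m\cdot\nabla\sigma_{\ell,\beta'})$, and $\mathbb{P}$ is not bounded on $L^\infty$, so you pay a little extra regularity.
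The outcome is a bound of a different shape that does not yield \eqref{cond:11.4} for general $R$. Your bookkeeping is also inconsistent: you first cancel the prefactor $\delta^{-1}$ against the Parseval gain $\delta^{d/2}$, and then divide by $\delta$ once more at the end.

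No gain from the $k$-sum is needed, and the paper does not look for one. It bounds each summand uniformly in $k$ by duality and H\"older, using only $\|\sigma_{k,\alpha}\|_{L^\infty}\lesssim 1$ (and a factor $|k|^i$ if $i$ derivatives fall on it). Here $\nabla u$ sits in $L^2$, the other $M-1$ copies of $m$ in $W^{j,\infty}$, and all Leray projections are measured in $L^2$ on the test-function side. For instance,
\[ |\langle\sigma_{k,\alpha}\cdot\nabla\mathbb{P}(m\cdot\nabla u),P_{\leqslant R}h\rangle|\leqslant\|\mathbb{P}(m\cdot\nabla u)\|_{L^2}\|\sigma_{k,\alpha}\|_{L^\infty}\|\nabla h\|_{L^2}. \]
The lattice sum is then the bounded Riemann sum $\delta^d\sum_{k,\alpha}|(\mathcal{F}_{\mathbb{R}^d}K)(\delta k)|^2|k|^{2i}\lesssim\delta^{-2i}$. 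The factor $\delta^d$ from \eqref{eq:eigen} is thus spent on making the lattice sum bounded, not on beating the rescaling, which is exactly why \eqref{cond:11.4} keeps the factor $\delta^{-1}$. If you replace your Parseval step by this, the rest of your argument goes through.
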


\begin{proof}
  We begin with the second corrector, which already illustrates the structure
  of the stochastic estimate. Since $\varepsilon^2 \varphi_{(0, 0)} (u, n) =
  \frac{\varepsilon^2}{2} [n \cdummy \nabla \mathbb{P} (n \cdummy \nabla
  u)]^{\circ}$, the relevant part is only the leading order term
  $\frac{\varepsilon^2}{2} n \cdummy \nabla \mathbb{P} (n \cdummy \nabla u)$
  and the associated stochastic integrals read as
  \[ \frac{\varepsilon}{2} \mathbb{P} (\mathcal{Q}^{1 / 2}_{\delta} d W
     \cdummy \nabla \mathbb{P} (m^{\varepsilon, \delta} \cdummy \nabla
     u^{\varepsilon, \delta}) + m^{\varepsilon, \delta} \cdummy \nabla
     \mathbb{P} (\mathcal{Q}^{1 / 2}_{\delta} d W \cdummy \nabla
     u^{\varepsilon, \delta})) . \]
  After the cut-off we estimate by stochastic maximal regularity
  Lemma~\ref{l:l21}
  \[ \mathbb{E} \left[ \int_0^T \left\| \frac{\varepsilon}{2} \int_0^t S_{t -
     s} P_{\leqslant R} \mathbb{P} (\mathcal{Q}^{1 / 2}_{\delta} d W_s \cdummy
     \nabla \mathbb{P} (m^{\varepsilon, \delta}_s \cdummy \nabla
     u^{\varepsilon, \delta}_s)) \right\|_{H^{- \beta}}^2 d t \right] \]
  \[ \lesssim \varepsilon^2 \mathbb{E} \left[ \int_0^T \| P_{\leqslant R}
     (\mathcal{Q}^{1 / 2}_{\delta} (\cdummy) \cdummy \nabla \mathbb{P}
     (m^{\varepsilon, \delta}_s \cdummy \nabla u^{\varepsilon, \delta}_s))
     \|_{L_2 (H, H^{- 1 - \beta})}^2 d s \right], \]
  where
  \[ \| P_{\leqslant R} (\mathcal{Q}^{1 / 2}_{\delta} (\cdummy) \cdummy \nabla
     \mathbb{P} (m^{\varepsilon, \delta}_s \cdummy \nabla u^{\varepsilon,
     \delta}_s)) \|_{L_2 (H, H^{- 1 - \beta})}^2 = \sum_{k, \alpha} \|
     P_{\leqslant R} ((\mathcal{Q}^{1 / 2}_{\delta} \sigma_{k, \alpha})
     \cdummy \nabla \mathbb{P} (m^{\varepsilon, \delta}_s \cdummy \nabla
     u^{\varepsilon, \delta}_s)) \|_{H^{- 1 - \beta}}^2 \]
  \[ \lesssim \delta^d \sum_{k, \alpha} | (\mathcal{F}_{\mathbb{R}^d} K)
     (\delta k) |^2 \| P_{\leqslant R} (\sigma_{k, \alpha} \cdummy \nabla
     \mathbb{P} (m^{\varepsilon, \delta}_s \cdummy \nabla u^{\varepsilon,
     \delta}_s)) \|_{H^{- 1 - \beta}}^2 \]
  \[ = \delta^d \sum_{k, \alpha} | (\mathcal{F}_{\mathbb{R}^d} K) (\delta k)
     |^2 \sup_{h \in H^{1 + \beta}, \| h \|_{H^{1 + \beta}} \leqslant 1} |
     \langle \sigma_{k, \alpha} \cdummy \nabla \mathbb{P} (m^{\varepsilon,
     \delta}_s \cdummy \nabla u^{\varepsilon, \delta}_s), P_{\leqslant R} h
     \rangle |^2 . \]
  Here the projector $P_{\leqslant R}$ is not needed and we estimate directly
  \[ \lesssim \delta^d \sum_{k, \alpha} | (\mathcal{F}_{\mathbb{R}^d} K)
     (\delta k) |^2 \| \mathbb{P} (m^{\varepsilon, \delta}_s \cdummy \nabla
     u^{\varepsilon, \delta}_s) \|_{L^2}^2 \]
  \[ \lesssim \| m^{\varepsilon, \delta}_s \|_{L^{\infty}}^2 \| \nabla
     u^{\varepsilon, \delta}_s \|^2_{L^2} \delta^d \sum_{k, \alpha} |
     (\mathcal{F}_{\mathbb{R}^d} K) (\delta k) |^2 \lesssim \| m^{\varepsilon,
     \delta}_s \|_{L^{\infty}}^2 \| \nabla u^{\varepsilon, \delta}_s
     \|^2_{L^2}, \]
  because $\delta^d \sum_{k, \alpha} | (\mathcal{F}_{\mathbb{R}^d} K) (\delta
  k) |^2$ is a Riemann sum approximation of the integral $\int_{\mathbb{R}^d}
  | (\mathcal{F}_{\mathbb{R}^d} K) (x) |^2 d x$.
  
  Accordingly, by Proposition~\ref{c:sup}
  \[ \mathbb{E} \left[ \int_0^T \left\| \frac{\varepsilon}{2} \int_0^t S_{t -
     s} P_{\leqslant R} (\mathcal{Q}^{1 / 2}_{\delta} d W_s \cdummy \nabla
     (m^{\varepsilon, \delta}_s \cdummy \nabla u^{\varepsilon, \delta}_s))
     \right\|_{H^{- \beta}}^2 d t \right] \lesssim \varepsilon^2 \log
     \varepsilon^{- 1} . \]
  Taking square root and dividing by $\delta$, yields the bound $\varepsilon
  \sqrt{\log \varepsilon^{- 1}} \delta^{- 1}$.
  
  Consider a general $\sigma \in \Sigma$ and the associated corrector term
  $\varepsilon^{M + 2 L} \varphi_{\sigma}$. The stochastic integral lowers the
  power of $\varepsilon$ by one and replaces one of $n$ by $\mathcal{Q}^{1 /
  2}_{\delta} d W$. It contains $M + 2 L$ derivatives, out of which at most $M
  + 2 L - 1$ hit the test function, or $M + 2 L - 2$ derivatives go to
  $m^{\varepsilon, \delta}$ or $\mathcal{Q}^{1 / 2}_{\delta}$. If $M + 2 L - 1
  > 1 + \beta$, we apply the projector as
  \[ \| P_{\leqslant R} h \|_{H^{M + 2 L - 1}} \lesssim R^{M + 2 L - 1 - 1 -
     \beta} \| h \|_{H^{1 + \beta}} = R^{M + 2 L - 2 - \beta} \| h \|_{H^{1 +
     \beta}} . \]
  Therefore, we obtain by maximal regularity
  \[ (\varepsilon^{M + 2 L - 1})^2 \mathbb{E} \left[ \int_0^T \left\| \int_0^t
     S_{t - s} P_{\leqslant R} \langle \mathcal{Q}^{1 / 2}_{\delta} d W_s, D_n
     \varphi_{\sigma} (u^{\varepsilon, \delta}_s, m^{\varepsilon, \delta}_s)
     \rangle \right\|_{H^{- \beta}}^2 d t \right] \]
  \[ \lesssim (\varepsilon^{M + 2 L - 1})^2 \mathbb{E} \left[ \int_0^T \|
     P_{\leqslant R} \langle \mathcal{Q}^{1 / 2}_{\delta} (\cdummy), D_n
     \varphi_{\sigma} (u^{\varepsilon, \delta}_s, m^{\varepsilon, \delta}_s)
     \rangle \|^2_{L_2 (H, H^{- 1 - \beta})} d s \right] \]
  \[ \lesssim (\varepsilon^{M + 2 L - 1})^2 (\log \varepsilon^{- 1})^{M - 1}
     (\delta^{- M - 2 L + 2} + R^{M + 2 L - 2 - \beta} \vee 1)^2 . \]
  Taking square root and dividing by $\delta$ yields \eqref{cond:11.4}.
\end{proof}

\subsection{Last magenta terms}\label{s:magenta}

\begin{lemma}
  \label{l:30}Let $\beta > 0$. If $\sigma \in \Sigma$ so that $(\sigma, 0)
  \notin \Sigma$ then
  \[ \delta^{- 1} \varepsilon^{M + 2 L - 1} \left\| \int_0^{\cdummy}
     S_{\cdummy - s} P_{\leqslant R} \varphi_{\sigma} (\mathbb{P}
     (m_s^{\varepsilon, \delta} \cdummy \nabla u_s^{\varepsilon, \delta}),
     m_s^{\varepsilon, \delta}) d s \right\|_{L^2 L^2 H^{- \beta}} \]
  \begin{equation}
    \lesssim \varepsilon^{M + 2 L - 1} (\log \varepsilon^{- 1})^{(M + 1) / 2}
    (\delta^{- M - 2 L} + \delta^{- 1} (R^{M + 2 L - 2 - \beta} \vee 1)) .
    \label{cond:11.5}
  \end{equation}
  If $\sigma \in \Sigma$ so that $(\sigma, 1) \notin \Sigma$ then
  \[ \delta^{- 1} \varepsilon^{M + 2 L} \left\| \int_0^{\cdummy} S_{\cdummy -
     s} P_{\leqslant R} \varphi_{\sigma} (\Delta u_s^{\varepsilon, \delta},
     m^{\varepsilon, \delta}_s) d s \right\|_{L^2 L^2 H^{- \beta}} \]
  \begin{equation}
    \lesssim \varepsilon^{M + 2 L} (\log \varepsilon^{- 1})^{M / 2} (\delta^{-
    M - 2 L - 1} + \delta^{- 1} (R^{M + 2 L - 1 - \beta} \vee 1)) .
    \label{cond:11.5b}
  \end{equation}
\end{lemma}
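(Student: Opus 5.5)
The plan follows the template of Lemma~\ref{l:26} and Lemma~\ref{l:28}: maximal regularity from Proposition~\ref{p:p20}, then a duality bound on the corrector, using the derivative counting of Lemma~\ref{l:38} and the moment bounds of Proposition~\ref{c:sup}.

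For the first estimate we use maximal regularity to reduce
\[ \varepsilon^{M+2L-1}\Big\|\int_0^{\cdummy} S_{\cdummy-s}P_{\leqslant R}\varphi_\sigma(\mathbb{P}(m^{\varepsilon,\delta}_s\cdot\nabla u^{\varepsilon,\delta}_s),m^{\varepsilon,\delta}_s)\,ds\Big\|_{L^2H^{-\beta}} \]
to $\varepsilon^{M+2L-1}\|P_{\leqslant R}\varphi_\sigma(\mathbb{P}(m^{\varepsilon,\delta}\cdot\nabla u^{\varepsilon,\delta}),m^{\varepsilon,\delta})\|_{L^2H^{-2-\beta}}$. We then write this norm in duality against $h\in H^{2+\beta}$. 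The observable $\varphi_\sigma(\mathbb{P}(n\cdot\nabla u),n)$ is exactly the expression treated in Proposition~\ref{c:c18}, now without taking expectation. It is a polynomial of degree $M+1$ in $n$ carrying $M+2L+1$ derivatives, exactly one of which is tied to $u^{\varepsilon,\delta}$ through $\|\nabla u^{\varepsilon,\delta}\|_{L^2}$.

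Distributing the remaining $M+2L$ derivatives by the Leibniz rule as in Lemma~\ref{l:38}, each derivative either hits a copy of $m^{\varepsilon,\delta}$ (or a contracted covariance, costing $\delta^{-1}$) or is moved onto $h$. In the extreme case, when at most $M+2L-1$ derivatives hit the noise and one goes to $h$, the contribution is $\delta^{-(M+2L-1)}\|\nabla h\|_{L^2}\lesssim \delta^{-M-2L+1}\|h\|_{H^{2+\beta}}$. In the other extreme all $M+2L$ derivatives go to $h$, and we use $\|P_{\leqslant R}h\|_{H^{M+2L}}\lesssim (R^{M+2L-2-\beta}\vee 1)\|h\|_{H^{2+\beta}}$. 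Intermediate splittings are dominated by these two endpoints after interpolating powers of $\delta^{-1}$ and $R$.

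The $M+1$ copies of $m^{\varepsilon,\delta}$ are bounded in $L^\infty_tL^\infty_x$ via \eqref{eq:supm}, or through contraction expectations via \eqref{eq:m13}, which produces the factor $(\log\varepsilon^{-1})^{(M+1)/2}$. The contractions are handled as in Section~\ref{s:appl}, estimating the inner product before taking expectations. Integrating $\|\nabla u^{\varepsilon,\delta}\|_{L^2}$ in $L^2_t$ with the energy inequality \eqref{eq:energy} and dividing by $\delta$ then gives \eqref{cond:11.5}. The weaker power $\delta^{-M-2L}$ stated there leaves slack for the case where no derivative can be moved to $h$ without losing one more $\delta^{-1}$.

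The second estimate is analogous, but the argument of $\varphi_\sigma$ is now $\Delta u^{\varepsilon,\delta}$ instead of $\mathbb{P}(m\cdot\nabla u)$. The observable has degree $M$ in $n$, giving $(\log\varepsilon^{-1})^{M/2}$, and carries $M+2L+2$ derivatives. After isolating one on $u^{\varepsilon,\delta}$, we integrate the second Laplacian derivative by parts off $u^{\varepsilon,\delta}$. It lands either on the noise, costing an extra $\delta^{-1}$ and explaining $\delta^{-M-2L-1}$, or on $h$, giving $\|P_{\leqslant R}h\|_{H^{M+2L+1}}\lesssim R^{M+2L-1-\beta}\|h\|_{H^{2+\beta}}$.

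The main obstacle is bookkeeping rather than any new idea: we must check that every Leibniz term respects the caps on derivatives (at most $M+2L-1$, respectively $M+2L$, on the noise), including terms where a derivative passes through $\mathbb{P}$. The moment bounds must be taken on products of several copies of $m^{\varepsilon,\delta}$ in $L^\infty$ via Hölder and hypercontractivity. In contraction terms, only the uncontracted copy gets the $\sup_t$ logarithm.
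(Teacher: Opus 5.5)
Your proposal is correct and follows the paper's own argument: maximal regularity to pass to $L^2H^{-2-\beta}$, then duality against $P_{\leqslant R}h$ with the same derivative count as the paper. That count is at most $M+2L-1$ (resp.\ $M+2L$) derivatives on the noise and $M+2L$ (resp.\ $M+2L+1$) on the test function, combined with the supremum bounds of Proposition~\ref{c:sup} for the logarithms, the energy inequality, and division by $\delta$. One small slip: your remark about ``slack'' is mistaken, since the factor $\delta^{-M-2L}$ in \eqref{cond:11.5} is exactly the prefactor $\delta^{-1}$ times $\delta^{-(M+2L-1)}$, but this does not affect the argument.
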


\begin{proof}
  If $\sigma \in \Sigma$ so that $(\sigma, 0) \notin \Sigma$ then the term
  \[ \varepsilon^{M + 2 L - 1} P_{\leqslant R} \varphi_{\sigma} (\mathbb{P}
     (m^{\varepsilon, \delta} \cdummy \nabla u^{\varepsilon, \delta}),
     m^{\varepsilon, \delta}) \]
  contains $M + 2 L + 1$ derivatives, $M + 2 L$ hitting the test function and
  $M + 2 L - 1$ hitting $m^{\varepsilon, \delta}$ or its covariance. Thus, by
  maximal regularity
  \[ \varepsilon^{M + 2 L - 1} \left\| \int_0^{\cdummy} S_{\cdummy - s}
     P_{\leqslant R} \varphi_{\sigma} (\mathbb{P} (m_s^{\varepsilon, \delta}
     \cdummy \nabla u_s^{\varepsilon, \delta}), m_s^{\varepsilon, \delta}) d s
     \right\|_{L^2 L^2 H^{- \beta}} \]
  \[ \lesssim \varepsilon^{M + 2 L - 1} \| P_{\leqslant R} \varphi_{\sigma}
     (\mathbb{P} (m^{\varepsilon, \delta} \cdummy \nabla u^{\varepsilon,
     \delta}), m^{\varepsilon, \delta}) \|_{L^2 L^2 H^{- 2 - \beta}} \]
  \[ \lesssim \varepsilon^{M + 2 L - 1} (\log \varepsilon^{- 1})^{(M + 1) / 2}
     (\delta^{- M - 2 L + 1} + R^{M + 2 L - 2 - \beta} \vee 1) \]
  and by division by $\delta$, \eqref{cond:11.5} follows.
  
  If $\sigma \in \Sigma$ so that $(\sigma, 1) \notin \Sigma$ then the same
  arguments ($M + 2 L + 2$ derivatives, $M + 2 L + 1$ to the test function, $M
  + 2 L$ to $m^{\varepsilon, \delta}$ or its covariance) lead to
  \[ \varepsilon^{M + 2 L} \left\| \int_0^{\cdummy} S_{\cdummy - s}
     P_{\leqslant R} \varphi_{\sigma} (\Delta u_s^{\varepsilon, \delta},
     m^{\varepsilon, \delta}_s) d s \right\|_{L^2 L^2 H^{- \beta}} \lesssim
     \varepsilon^{M + 2 L} \| P_{\leqslant R} \varphi_{\sigma} (\Delta
     u^{\varepsilon, \delta}, m^{\varepsilon, \delta}) \|_{L^2 L^2 H^{- 2 -
     \beta}} \]
  \[ \lesssim \varepsilon^{M + 2 L} (\log \varepsilon^{- 1})^{M / 2}
     (\delta^{- M - 2 L} + R^{M + 2 L + 1 - 2 - \beta} \vee 1), \]
  hence \eqref{cond:11.5b} follows after division by $\delta$.
\end{proof}

\subsection{Additional $z$-error}\label{s:addz}

\begin{lemma}
  \label{l:zerror}Let $\beta > 0$. For every $\sigma \in \Sigma \setminus \{
  (0) \}$ with $M$ odd and $(\sigma, 0) \in \Sigma$
  \[ \varepsilon^{M + 2 L - 1} \left\| \int_0^{\cdummy} S_{\cdummy - s} P_{\leqslant
     R} S_{\sigma} z_s d s \right\|_{L^2 L^2 H^{- \beta}} \lesssim
     \varepsilon^{M + 2 L - 1} \delta^{- M - 2 L + 1} . \]
\end{lemma}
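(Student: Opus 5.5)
The argument mirrors the second half of the proof of Lemma~\ref{l:26}: deterministic maximal regularity from Proposition~\ref{p:p20}, then the symbol bound \eqref{eq:sk} for $S_\sigma$, and finally the a priori bound on $z$ from Lemma~\ref{l:z}. No division by $\delta$ is needed here, since this term appears in the mild formulation without the factor $\delta^{-d/2}$. So the whole size of the error comes from the prefactor $\varepsilon^{M+2L-1}$ combined with the symbol growth $\delta^{-(M+2L-1)}|k|^2$.

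\textbf{Step 1: maximal regularity.} Proposition~\ref{p:p20} gives, pathwise,
\[ \Big\| \int_0^{\cdot} S_{\cdot - s} P_{\leqslant R} S_\sigma z_s \, ds \Big\|_{L^2 H^{-\beta}} \lesssim \| P_{\leqslant R} S_\sigma z \|_{L^2 H^{-2-\beta}} \leqslant \| S_\sigma z \|_{L^2 H^{-2-\beta}} . \]
The second inequality holds because $P_{\leqslant R}$ is a contraction on every $H^s$, being a scalar Fourier multiplier that commutes with everything. The cut-off therefore plays no role, and no power of $R$ appears.

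\textbf{Step 2: symbol bound.} By Proposition~\ref{p:37}, $S_\sigma$ is a Fourier multiplier on $H$ with matrix symbol $s_\sigma(k)$. By \eqref{eq:sk}, which comes from Proposition~\ref{c:c18} tested on the basis $\sigma_{k,\alpha}$, we have $|s_\sigma(k)| \lesssim \delta^{-(M+2L-1)} |k|^2$. Summing over frequencies then gives
\[ \| S_\sigma z_s \|_{H^{-2-\beta}}^2 = \sum_{k} (1+|k|^2)^{-2-\beta} |s_\sigma(k) \hat z_s(k)|^2 \lesssim \delta^{-2(M+2L-1)} \| z_s \|_{H^{-\beta}}^2 . \]
This is exactly the estimate already recorded in the proof of Lemma~\ref{l:26}.

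\textbf{Step 3: integrability of $z$.} Combining the two steps and taking the $L^2(\Omega)$ norm yields
\[ \varepsilon^{M+2L-1} \Big\| \int_0^{\cdot} S_{\cdot-s} P_{\leqslant R} S_\sigma z_s \, ds \Big\|_{L^2L^2H^{-\beta}} \lesssim \varepsilon^{M+2L-1} \delta^{-M-2L+1} \| z \|_{L^2 L^2 H^{-\beta}} . \]
Lemma~\ref{l:z}, applied with $\kappa = \beta$, shows that $\| z \|_{L^2 L^2 H^{-\beta}}$ is finite. It is independent of $\varepsilon$ and $\delta$, since $z$ is the fixed limit fluctuation, and so it is absorbed into the implicit constant.

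\textbf{Main point to check.} The only delicate issue is that the implicit constants in Proposition~\ref{p:p20} must be uniform in $\varepsilon, \delta$. This holds once $\varepsilon\delta^{-1}$ is small, because the Gr\"onwall bound \eqref{eq:matrix} on the symbol $a(k)$ does not depend on these parameters. One should also note that the estimate for $z$ is stated in $L^2(\Omega; L^2 H^{-\kappa})$, which is precisely the norm required here.
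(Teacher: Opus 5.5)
Your proposal is correct and follows the paper's proof. The paper also combines maximal regularity from Proposition~\ref{p:p20}, the symbol bound \eqref{eq:sk} in the form $\| S_\sigma v \|_{H^{\alpha}} \lesssim \delta^{-(M+2L-1)} \| v \|_{H^{\alpha+2}}$, and the finiteness of $\| z \|_{L^2 L^2 H^{-\beta}}$ from Lemma~\ref{l:z}, absorbing the last quantity into the implicit constant.
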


\begin{proof}
  By \eqref{eq:sk} we have for all $\alpha \in \mathbb{R}$
  \[ \| S_{\sigma} v \|_{H^{\alpha}} \lesssim \delta^{- (M + 2 L - 1)} \| v
     \|_{H^{\alpha + 2}}, \]
  hence by maximal regularity
  \[ \varepsilon^{M + 2 L - 1} \left\| \int_0^{\cdummy} S_{\cdummy - s} P_{\leqslant
     R} S_{\sigma} z_s d s \right\|_{L^2 L^2 H^{- \beta}} \]
  \[ \lesssim \varepsilon^{M + 2 L - 1} \| S_{\sigma} z \|_{L^2 L^2 H^{- 2 -
     \beta}} \lesssim \varepsilon^{M + 2 L - 1} \delta^{- (M + 2 L - 1)} \| z
     \|_{L^2 L^2 H^{- \beta}} . \]
\end{proof}

\section{Final absorption argument and removal of cut-off}\label{s:concl}

Let $\beta \in (0, 1]$. Including the cut-off leads us to
\begin{equation}
  \| e^{- \lambda \cdummy} (z^{\varepsilon, \delta} - z) \|_{L^2 H^{- \beta}}
  \leqslant \| P_{> R} (z^{\varepsilon, \delta} - z) \|_{L^2 H^{- \beta}} + \|
  e^{- \lambda \cdummy} P_{\leqslant R} (z^{\varepsilon, \delta} - z) \|_{L^2
  H^{- \beta}} . \label{eq:bigest}
\end{equation}
For $\kappa \in (0, \beta)$, the high-frequency part is estimated by the
energy estimate \eqref{eq:energy}, the bound for $v^{\varepsilon, \delta}$
from Lemma~\ref{l:v}, and the bound for $z$ from Lemma~\ref{l:z} as
\[ \| P_{> R} (z^{\varepsilon, \delta} - z) \|_{L^2 H^{- \beta}} = \left\|
   P_{> R} \left( \frac{u^{\varepsilon, \delta} - v^{\varepsilon, \delta} -
   u}{\delta} - z \right) \right\|_{L^2 H^{- \beta}} \]
\[ \lesssim \left\| P_{> R} \frac{u^{\varepsilon, \delta} - v^{\varepsilon,
   \delta} - u}{\delta} \right\|_{L^2 H^{- \beta}} + \| P_{> R} z \|_{L^2 H^{-
   \beta}} \]
\[ \lesssim \delta^{- 1} R^{- (1 + \beta)} (\| u^{\varepsilon, \delta} \|_{L^2
   H^1} + \| v^{\varepsilon, \delta} \|_{L^2 H^1} + \| u \|_{L^2 H^1}) + R^{-
   (\beta - \kappa)} \| z \|_{L^2 H^{- \kappa}} \]
\[ \lesssim_{\omega} \delta^{- 1} R^{- (1 + \beta)} + R^{- (\beta - \kappa)} .
\]
Here the implicit constant in the last line depends on $\omega$ through the
estimate of $z$. Therefore, under the compatibility condition
\begin{equation}
  \delta^{- 1} R^{- (1 + \beta)} \rightarrow 0, \label{cond:large}
\end{equation}
the high-frequency part vanishes a.s.

Among the terms arising from the low-frequency part, it remains only to treat
the convective term stemming from $J_3$, which requires a separate absorption
argument. As discussed in Section~\ref{s:setup}, after subtracting the
equations for $v^{\varepsilon, \delta}$ and for $z$ we are led to
\eqref{eq:plo1}, \eqref{eq:42}, which read as
\begin{equation}
  e^{- \lambda \cdummy} (z^{\varepsilon, \delta} - z) \cdummy \nabla u +
  u^{\varepsilon, \delta} \cdummy \nabla e^{- \lambda \cdummy}
  (z^{\varepsilon, \delta} - z) + e^{- \lambda \cdummy} (z^{\varepsilon,
  \delta} - z) \cdummy \nabla v^{\varepsilon, \delta} \label{eq:plo}
\end{equation}
\begin{equation}
  + e^{- \lambda \cdummy} (u^{\varepsilon, \delta} - u) \cdummy \nabla z +
  e^{- \lambda \cdummy} z \cdummy \nabla v^{\varepsilon, \delta} .
  \label{eq:plo67}
\end{equation}
For the terms in \eqref{eq:plo}, we need to achieve smallness to be able to
absorb $e^{- \lambda \cdummy} (z^{\varepsilon, \delta} - z)$ into the left
hand side of \eqref{eq:bigest}. Recall that $u$, $u^{\varepsilon, \delta}$ as
well as $v^{\varepsilon, \delta}$ only belong to the energy space $L^{\infty}
(0, T ; H) \cap L^2 (0, T ; H^1)$ and so direct application of
Proposition~\ref{p:2} does not yield any smallness in time. However, by
Corollary~\ref{c:14}, $u^{\varepsilon, \delta} \rightarrow u$ in $L^2 (0, T ;
H^1)$ in probability and consequently by interpolation
\[ \| u^{\varepsilon, \delta} - u \|_{L^{2 / \beta} H^{\beta} \cap L^4 L^4}
   \ll 1 \]
provided $\varepsilon, \delta$ are small enough depending on $\omega$.
Additionally, the limit $u$ is close to its suitably chosen Galerkin
truncation $\bar{u}$ so that
\[ \| u - \bar{u} \|_{L^{2 / \beta} H^{\beta} \cap L^4 L^4} \ll 1. \]
Finally, the Galerkin truncation $\bar{u}$ belongs e.g. to $L^{\infty} (0, T ;
H^{\beta}) \cap L^{\infty} (0, T ; L^4)$ as required by
Proposition~\ref{p:22}, yielding smallness for large $\lambda$. To summarize,
\eqref{eq:plo} is bounded by Proposition~\ref{p:2}, Proposition~\ref{p:22} and
Lemma~\ref{l:v} by
\[ \left\| \int_0^{\cdummy} S_{\cdummy - s} P_{\leqslant R} [e^{- \lambda s}
   (z_s^{\varepsilon, \delta} - z_s) \cdummy \nabla u_s + u_s^{\varepsilon,
   \delta} \cdummy \nabla e^{- \lambda s} (z_s^{\varepsilon, \delta} - z_s) +
   e^{- \lambda s} (z_s^{\varepsilon, \delta} - z_s) \cdummy \nabla
   v^{\varepsilon, \delta}_s] d s \right\|_{L^2 H^{- \beta}} \]
\[ \lesssim \| e^{- \lambda \cdummy} (z^{\varepsilon, \delta} - z) \|_{L^2
   H^{- \beta}} (\| u^{\varepsilon, \delta} - u \|_{L^{2 / \beta} H^{\beta}
   \cap L^4 L^4} + \| u - \bar{u} \|_{L^{2 / \beta} H^{\beta} \cap L^4 L^4}
   \nobracket \]
\[ \nobracket + \lambda^{- \vartheta} \| \bar{u} \|_{L^{\infty} H^{\beta} \cap
   L^{\infty} L^4} + (\varepsilon \delta^{- 1})^2) . \]
First, we choose $\bar{u}$ so that the second term in the bracket above is
small enough. Second we choose $\lambda$ large enough so that the third term
in the bracket is small enough. Finally, we argue via the convergence in
probability: as $\varepsilon, \delta \rightarrow 0$, the probability that $\|
u^{\varepsilon, \delta} - u \|_{L^{2 / \beta} H^{\beta} \cap L^4 L^4}$ is
large vanishes. So with increasing probability also the first term can be
absorbed into the left hand side.

The expression \eqref{eq:plo67} is controlled by Proposition~\ref{p:2} and
Lemma~\ref{l:v} as
\[ \left\| \int_0^{\cdummy} S_{\cdummy - s} P_{\leqslant R} [e^{- \lambda s}
   (u_s^{\varepsilon, \delta} - u_s) \cdummy \nabla z_s + e^{- \lambda s}
   z_s \cdummy \nabla v_s^{\varepsilon, \delta}] d s \right\|_{L^2 H^{-
   \beta}} \]
\[ \lesssim \| z \|_{L^2 H^{- \beta}} (\| u^{\varepsilon, \delta} - u \|_{L^{2
   / \beta} H^{\beta} \cap L^4 L^4} + (\varepsilon \delta^{- 1})^2) . \]
The right hand side vanishes in probability as $\varepsilon, \delta
\rightarrow 0$: writing for any $a, \Lambda > 0$
\[ \mathbf{P} (\| z \|_{L^2 H^{- \beta}} \| u^{\varepsilon, \delta} - u
   \|_{L^{2 / \beta} H^{\beta} \cap L^4 L^4} > a) \leqslant \mathbf{P} (\|
   u^{\varepsilon, \delta} - u \|_{L^{2 / \beta} H^{\beta} \cap L^4 L^4} > a /
   \Lambda) + \mathbf{P} (\| z \|_{L^2 H^{- \beta}} > \Lambda), \]
the first term vanishes by the convergence in probability of $u^{\varepsilon,
\delta}$ and the second one by tightness of $z$ in $L^2 H^{- \beta}$.

\subsection{Choice of parameters and conclusion}\label{s:removal}

It now only remains to find $R = R (\varepsilon, \delta)$ such that the
compatibility condition \eqref{cond:large} holds and all bounds
\eqref{eq:newcond}, \eqref{cond:11.1}, \eqref{cond:11.2}, \eqref{cond:11.3},
\eqref{cond:12.4R}, \eqref{cond:11.4}, \eqref{cond:11.5}, \eqref{cond:11.5b}
vanish as $\varepsilon, \delta \rightarrow 0$.

Recall that our standing assumption is $\varepsilon = o (\delta^{1 + \iota})$
for some $\iota > 0$ and that the order of expansion $N$ was determined in
\eqref{eq:N} to guarantee $(\varepsilon \delta^{- 1})^N \delta^{- 1}
\rightarrow 0$ and to absorb a logarithmic factor. We take $R \assign
\delta^{- 1}$, which is the natural choice balancing spatial frequency
truncation against the homogenization scale. The logarithmic factors in
\eqref{cond:11.3}, \eqref{cond:11.4} are readily absorbed, since for every $p
> 0$ it holds $\varepsilon (\log \varepsilon^{- 1})^p = o (\delta)$ as a
consequence of $\varepsilon = o (\delta^{1 + \iota})$, $\iota > 0$. Indeed,
for every $\kappa \in (0, 1)$ we have $\varepsilon (\log \varepsilon^{- 1})^p
= \varepsilon^{1 - \kappa} \varepsilon^{\kappa} (\log \varepsilon^{- 1})^p = o
\left( {\delta^{(1 + \iota) (1 - \kappa)}}  \right)$ which implies
$\varepsilon (\log \varepsilon^{- 1})^p = o (\delta)$ if $\kappa$ is
sufficiently small. In the last two conditions \eqref{cond:11.5},
\eqref{cond:11.5b} we are precisely at the threshold and we make use of the
strict inequality in the definition of $N$ in \eqref{eq:N}. It implies for
every $p > 0$ that $\varepsilon (\log \varepsilon^{- 1})^p = o (\delta^{1 + d
/ (2 N)})$ and accordingly
\[ (\varepsilon \delta^{- 1} \log \varepsilon^{- 1})^N \delta^{- 1} =
   (\varepsilon \delta^{- 1 - d / (2 N)} \log \varepsilon^{- 1})^N \delta^{d /
   2} \delta^{- d / 2} \rightarrow 0. \]
Letting $R = \delta^{- 1}$ and choosing $\bar{u}$ and $\lambda > 0$ as
described above, all the error terms vanish and the convective term can be
absorbed into the left hand side. This establishes for every $\beta \in (0,
1]$ that
\[ \| z^{\varepsilon, \delta} - z \|_{L^2 H^{- \beta}} \rightarrow 0 \]
in probability as $\varepsilon, \delta \rightarrow 0$, completing the proof of
Theorem~\ref{thm:2} upon removing the exponential weight $e^{- \lambda t}$.

\begin{remark}
  \label{r:34}The restriction to $d = 2$ in Theorem~\ref{thm:2} reflects three
  independent obstructions  in $d = 3$. First, uniqueness of energy-level
  solutions to \eqref{eq:ulim} is not known in $d = 3$, making the background flow
  $u$ non-canonical. Second, the stochastic convolution is rougher in $d = 3$,
  forcing $z$ into a lower regularity class, and the convective estimate of
  Proposition~\ref{p:2} has no analogue there; these two obstructions are
  independent and cumulative. Third, the lattice sum $\sum_{| k| \leqslant
  \delta^{- 1}} | k |^{- 2} \sim \log  \delta^{- 1}$ in $d = 2$ is
  used critically in the Leray symbol analysis; in $d = 3$ the analogous sum
  grows as $\delta^{- 1}$, producing an uncontrollable divergence after the
  fluctuation rescaling.
\end{remark}

\appendix\section{Technical tools}\label{s:a}

\subsection{Gaussian process estimates}\label{s:a1}

The following estimates for suprema of stationary Gaussian processes are used
in Section~\ref{s:OU}.

\begin{lemma}
  \label{l:disc}Let $(Y_k)_{k = 1, \ldots, N}$ be random variables such that for some $\sigma>0$,
$\mathbf{P}(Y_{k}>\mathbb{E}[Y_{k}]+a)\leqslant e^{-a^{2}/2\sigma^{2}},
$
for all $a>0$ and $k\in\{1,\dots,N\}$.
  Then
  \[ \mathbb{E} [\max_{k = 1, \ldots, N} Y_k] \lesssim \max_{k = 1, \ldots, N} \mathbb{E}[Y_k]+\sigma \sqrt{1+\log N} .
  \]
\end{lemma}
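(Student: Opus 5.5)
The plan is the standard union-bound-plus-tail-integration argument. Set $\mu \assign \max_{k} \mathbb{E}[Y_k]$ and write $Y_k - \mu \leqslant Y_k - \mathbb{E}[Y_k]$. Then
\[ \mathbb{E}[\max_k Y_k] \leqslant \mu + \mathbb{E}\Big[\max_k (Y_k - \mathbb{E}[Y_k])^+\Big], \]
so it suffices to bound the expectation of the nonnegative variable $Z \assign \max_k (Y_k - \mathbb{E}[Y_k])^+$ by $\sigma\sqrt{1+\log N}$ up to a universal constant.

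For this I use the layer-cake formula $\mathbb{E}[Z] = \int_0^\infty \mathbf{P}(Z > a)\,da$ and the union bound $\mathbf{P}(Z > a) \leqslant \min\{1, N e^{-a^2/2\sigma^2}\}$, which follows directly from the hypothesis applied to each $k$. Splitting at the threshold $a_0 \assign \sigma\sqrt{2\log N}$ (or $a_0 = \sigma\sqrt{2(1+\log N)}$ to handle $N=1$ uniformly), the contribution of $[0,a_0]$ is at most $a_0$. On $[a_0,\infty)$, use $N e^{-a^2/2\sigma^2} \leqslant e^{-(a^2-a_0^2)/2\sigma^2}$ and $a^2 - a_0^2 \geqslant (a-a_0)^2$, which reduces the tail to a Gaussian integral bounded by $\sigma\sqrt{\pi/2}$. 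Altogether $\mathbb{E}[Z] \leqslant a_0 + \sigma\sqrt{\pi/2} \lesssim \sigma\sqrt{1+\log N}$.

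There is no real obstacle; the only points requiring care are that the hypothesis controls only the upper tail (which is why the positive part is taken, and why no lower-tail information is needed) and that the $N=1$ case should not produce $\log 1 = 0$ issues, which the $1+\log N$ threshold absorbs. Note also that the means $\mathbb{E}[Y_k]$ may differ, which is why one passes through $\mu$ at the start instead of centering at a common value.
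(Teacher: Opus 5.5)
Your proof is correct and follows the paper's argument essentially verbatim. You center each $Y_k$ at its own mean and bound the maximum by $\max_k \mathbb{E}[Y_k]$ plus the (positive part of the) maximum of the centered variables, then control the latter via the layer-cake formula, the union bound $\min\{1, N e^{-a^2/2\sigma^2}\}$, and a split at $a_0 \sim \sigma\sqrt{\log N}$; your explicit estimate $a^2 - a_0^2 \geqslant (a-a_0)^2$ simply makes precise the paper's remark that the tail contribution is a Gaussian integral of size $\lesssim \sigma$.
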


\begin{proof}
Write $\bar{Y}_{k}:=Y_{k}-\mathbb{E}[Y_{k}]$, so that $\max_{k = 1, \ldots, N}Y_{k}\leqslant \max_{k = 1, \ldots, N}\mathbb{E}[Y_{k}]+M_{N}$ where
 $M_N \assign \max_{k = 1, \ldots, N} \bar{Y}_k$. The union bound gives
  $ \mathbf{P} (M_N > a) 
    \leqslant N e^{- a^2/2 \sigma^2}
     $.
 Then
  \[ \mathbb{E} [M^+_N] = \int_0^{\infty} \mathbf{P} (M_N > a) d a \leqslant
     \int_0^{\infty} \min \left\{ 1, N e^{- a^2/2 \sigma^2} \right\} d a. \]
  Split at $a_0 = \sqrt{2 \sigma^2 \log N}$. For $0 \leqslant a \leqslant a_0$
  the integrand is at most  $1$ so the contribution to the integral is at most $a_0$. For $a >
  a_0$ the integrand is $N e^{- {a^2}/{2 \sigma^2}}  $, so
  the contribution is $\lesssim \sigma$ because it is a Gaussian tail
  integral. Altogether
  \[ \mathbb{E} [M_N] \leqslant \mathbb{E} [M^+_N] \lesssim
  \sigma\sqrt{1+ \log N} .\]
\end{proof}

\begin{proposition}
  \label{p:cont}Let $T\geqslant 1$ and let $X_{t, x}$, $(t, x) \in [0, T] \times \mathbb{T}^d$, be a
  centered Gaussian field with continuous trajectories such that  $\mathbb{E} [X^2_{t, x}] \leqslant \sigma^2$ for all
  $(t, x)$ and  for some $\kappa\geqslant1$,
  \begin{equation}\label{eq:block}
  \mathbb{E}[\sup_{[j,j+1]\times\mathbb{T}^{d}}|X_{t,x}|]\leqslant \kappa \sigma
  \end{equation}
   for every $j=0,\dots,[T]$.
  Then for all $q \in [1, \infty)$,
  \begin{equation}
    (\mathbb{E} [\sup_{(t, x) \in [0, T] \times \mathbb{T}^d} | X_{t, x}
    |^q])^{1 / q} \lesssim_q \kappa\sigma \sqrt{1+\log T} . \label{eq:subG}
  \end{equation}
\end{proposition}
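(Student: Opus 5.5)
The plan is to reduce to one block at a time, using the standard Gaussian concentration (Borell--TIS) for each block, and then take a maximum over the $[T]+1$ blocks via Lemma~\ref{l:disc}. Set $Y_j \assign \sup_{(t,x)\in[j,j+1]\times\mathbb{T}^d}|X_{t,x}|$ for $j=0,\dots,[T]$. Since the field has continuous trajectories on the compact set $[j,j+1]\times\mathbb{T}^d$, $Y_j$ is the supremum of a separable centered Gaussian family $\{\pm X_{t,x}\}$. By assumption \eqref{eq:block}, $\mathbb{E}[Y_j]\leqslant\kappa\sigma<\infty$. The Borell--TIS inequality then gives, for every $a>0$,
\[ \mathbf{P}(Y_j>\mathbb{E}[Y_j]+a)\leqslant e^{-a^2/2\sigma^2}, \]
where $\sigma^2\geqslant\sup\mathbb{E}[X_{t,x}^2]$ is the maximal pointwise variance.

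Next I would apply Lemma~\ref{l:disc} with $N=[T]+1\leqslant 2T$ to the variables $Y_0,\dots,Y_{[T]}$. Since $\sup_{[0,T]\times\mathbb{T}^d}|X|=\max_j Y_j$, this yields
\[ \mathbb{E}\Big[\sup_{[0,T]\times\mathbb{T}^d}|X|\Big]\lesssim\kappa\sigma+\sigma\sqrt{1+\log T}\lesssim\kappa\sigma\sqrt{1+\log T}, \]
using $\kappa\geqslant1$ and $T\geqslant1$. This settles the case $q=1$.

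To pass to general $q$, I would apply Borell--TIS once more, now to the global supremum $Z\assign\sup_{[0,T]\times\mathbb{T}^d}|X|$. It is again the supremum of a centered Gaussian family with variance at most $\sigma^2$ and with finite mean by the previous step, so $\mathbf{P}(Z>\mathbb{E}Z+a)\leqslant e^{-a^2/2\sigma^2}$. Integrating this tail gives $(\mathbb{E}[(Z-\mathbb{E}Z)_+^q])^{1/q}\lesssim_q\sigma$. Hence
\[ (\mathbb{E}Z^q)^{1/q}\leqslant\mathbb{E}Z+C_q\sigma\lesssim_q\kappa\sigma\sqrt{1+\log T}, \]
which is \eqref{eq:subG}.

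The only delicate point is justifying Borell--TIS, which needs almost-sure finiteness of the supremum and the reduction of $|X|$ to a supremum over the symmetrized family $\{X,-X\}$. Both follow from continuity of trajectories and compactness of each block. Everything else is routine tail integration.
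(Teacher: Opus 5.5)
Your proposal is correct and follows the paper's proof essentially step by step: Borell--TIS on each unit time block, Lemma~\ref{l:disc} with $N=[T]+1$ to bound the mean of the maximum, then Borell--TIS on the global supremum plus tail integration for the $q$-th moments. The only cosmetic difference is that the paper writes the blocks as $([j,j+1]\cap[0,T])\times\mathbb{T}^d$, which you should do too when $T\notin\mathbb{N}$, since the field is only defined on $[0,T]$.
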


\begin{proof}
Writing $Z_j \assign \sup_{(t, x) \in ([j, j +
  1] \cap [0, T]) \times \mathbb{T}^d} | X_{t, x} |$ for $j = 0, \ldots, [T]$,
  where $[T]$ denotes the biggest integer smaller or equal to $T$, the blocks
  cover $[0, T]$ and therefore
  \[ S \assign \sup_{(t, x) \in [0, T] \times \mathbb{T}^d} | X_{t, x} | =
     \max_{j = 0, \ldots, [T]} Z_j . \]
Each $Z_j$ is the supremum of
  the centered Gaussian family
  \[ \{ \eta X_{t, x} ; \, \eta \in \{ \pm 1 \}, \, (t, x) \in ([j, j
     + 1] \cap [0, T]) \times \mathbb{T}^d \}, \]
  whose pointwise variances are bounded by $\sigma^2$ by assumption, and
  $\mathbb{E} [Z_j] \leqslant \kappa \sigma < \infty$ by \eqref{eq:block}.
  Hence, the Borell--TIS inequality, Theorem 2.1.1 in {\cite{MR2319516}},
  yields
  \[ \mathbf{P} (Z_j > \mathbb{E} [Z_j] + a) \leqslant e^{- a^2 / 2
     \sigma^2}, \qquad a > 0. \]
The random variables $(Z_j)_{j = 0,
  \ldots, [T]}$ satisfy the assumptions of Lemma~\ref{l:disc} with $N = [T] +
  1$, hence by \eqref{eq:block}
  \[ \mathbb{E} [S] =\mathbb{E} [\max_{j = 0, \ldots, [T]} Z_j] \lesssim
     \max_{j = 0, \ldots, [T]} \mathbb{E} [Z_j] + \sigma \sqrt{1 + \log ([T]
     + 1)} \lesssim \kappa \sigma \sqrt{1 + \log T} . \]

The supremum $S$ is itself the supremum of the
  centered Gaussian family $\{ \eta X_{t, x} ; \, \eta \in \{ \pm 1
  \}, \, (t, x) \in [0, T] \times \mathbb{T}^d \}$, with pointwise variances
  bounded by $\sigma^2$ and $\mathbb{E} [S] < \infty$ as shown above.
 The Borell--TIS inequality yields, for every $a > 0$,
  \[ \mathbf{P} (S > \mathbb{E} [S] + a) \leqslant e^{- a^2 / 2 \sigma^2},
  \]
  and therefore
  \[ (\mathbb{E} [(S -\mathbb{E} [S])_+^q])^{1 / q} = \left( \int_0^{\infty}
     q a^{q - 1} \mathbf{P} (S > \mathbb{E} [S] + a) d a \right)^{1 / q}
     \lesssim_q \sigma . \]
  Consequently, since $S \leqslant \mathbb{E} [S] + (S -\mathbb{E} [S])_+$,
  \[ (\mathbb{E} [S^q])^{1 / q} \leqslant \mathbb{E} [S] + (\mathbb{E} [(S
     -\mathbb{E} [S])_+^q])^{1 / q} \lesssim_q \kappa \sigma \sqrt{1 + \log
     T}, \]
  which is \eqref{eq:subG}.
\end{proof}

\subsection{Riemann sum approximations}\label{s:a2}

The following approximation results are used in Section~\ref{s:quali} and
Section~\ref{s:quanti}.

\begin{lemma}
  \label{l:riemann}Let $f : \mathbb{R}^d \setminus \{ 0 \} \rightarrow
  \mathbb{R}$ be continuous, bounded near zero, and satisfy $| f (k) |
  \lesssim | k |^{- d - \kappa}$ for $| k | \geqslant 1$ and some $\kappa >
  0$. Then
  \[ \delta^d \sum_{k \in \mathbb{Z}^d_0} f (\delta k) \rightarrow
     \int_{\mathbb{R}^d} f (k) d k \qquad \tmop{as} \qquad \delta \rightarrow
     0. \]
\end{lemma}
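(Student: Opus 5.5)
The plan is to approximate $f$ by a truncated continuous function with compact support away from the origin, and to control the two singular regions (near the origin and near infinity) uniformly in $\delta$.

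Fix $0<r<1<\rho$ and split
\[
\delta^d\sum_{k\in\mathbb{Z}^d_0} f(\delta k)=\delta^d\sum_{0<|\delta k|< r} f(\delta k)+\delta^d\sum_{r\leqslant|\delta k|\leqslant \rho} f(\delta k)+\delta^d\sum_{|\delta k|>\rho} f(\delta k),
\]
and split $\int_{\mathbb{R}^d} f$ in the same way.

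For the middle piece, $f$ is uniformly continuous on the compact annulus $\{r\leqslant|k|\leqslant\rho\}$. The lattice cubes $\delta k+[0,\delta)^d$ with $r\leqslant|\delta k|\leqslant\rho$ cover the annulus up to a boundary layer of volume $O(\delta)$ (with constants depending on $r,\rho$). Hence the standard Riemann sum argument gives
\[
\delta^d\sum_{r\leqslant|\delta k|\leqslant\rho} f(\delta k)\to\int_{r\leqslant|k|\leqslant\rho}f\,dk
\]
for fixed $r,\rho$. One compares $f(\delta k)$ with $f$ on its cell via the modulus of continuity, and bounds the boundary-layer cells using $\sup_{\text{annulus}}|f|$.

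For the inner piece, boundedness of $f$ near zero, say $|f|\leqslant C_0$ on $\{0<|k|\leqslant 1\}$, gives the bound
\[
C_0\,\delta^d\,\#\{k:0<|k|<r/\delta\}\lesssim C_0 r^d,
\]
uniformly in $\delta\leqslant r$. The integral over $\{|k|<r\}$ is bounded by the same quantity.

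For the outer piece, the decay $|f(k)|\lesssim|k|^{-d-\kappa}$ gives
\[
\delta^d\sum_{|\delta k|>\rho}|\delta k|^{-d-\kappa}\lesssim\int_{|x|>\rho/2}|x|^{-d-\kappa}\,dx\lesssim\rho^{-\kappa}.
\]
This follows by comparing each lattice term with the integral over its cell, where $|x|\geqslant|\delta k|/2$ once $\delta\sqrt d\leqslant\rho/2$. The tail of the integral over $\{|k|>\rho\}$ is handled the same way.

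To conclude, given $\eta>0$, choose $r$ small and $\rho$ large so that the inner and outer contributions are each at most $\eta$ uniformly in small $\delta$. Then let $\delta\to0$ in the middle piece, which yields $\limsup_{\delta\to 0}\bigl|\delta^d\sum_{k\in\mathbb{Z}^d_0} f(\delta k)-\int_{\mathbb{R}^d} f\bigr|\lesssim\eta$. Since $\eta$ is arbitrary, the claim follows. The only mildly delicate step is the uniform lattice-to-integral comparison in the tail; the rest is routine. The same argument applies verbatim to matrix-valued $f$, componentwise.
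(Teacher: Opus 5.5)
Your proposal is correct and follows essentially the same route as the paper. Both split into three pieces, then fix $r$ and $\rho$ before sending $\delta\to0$:
the punctured ball $\{|\delta k|<r\}$, bounded by $r^d$;
the compact annulus, handled by a classical Riemann sum argument;
the tail $\{|\delta k|>\rho\}$, bounded by $\rho^{-\kappa}$ through a cell-by-cell comparison with the integral. (One small precision in the tail: the termwise comparison uses the upper bound $|x|\leqslant 2|\delta k|$ on each cell, while $|x|\geqslant|\delta k|/2$ only places the cells inside $\{|x|>\rho/2\}$; both hold once $\delta\sqrt d\leqslant\rho/2$.)
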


\begin{proof}
  We split into three contributions by choosing $\Lambda > 1$ large and $\rho
  \in (0, 1)$ small. Write
  \[ \left| \delta^d \sum_{k \in \mathbb{Z}^d_0} f (\delta k) -
     \int_{\mathbb{R}^d} f (k) d k \right| \leqslant \left| \delta^d \sum_{k
     \in \mathbb{Z}^d_0, \rho \leqslant | \delta k | \leqslant \Lambda} f
     (\delta k) - \int_{\rho \leqslant | k | \leqslant \Lambda} f (k) d k
     \right| \]
  \[ + \delta^d \sum_{k \in \mathbb{Z}^d_0, | \delta k | < \rho} | f (\delta
     k) | + \int_{| k | < \rho} | f (k) | d k \]
  \[ + \delta^d \sum_{k \in \mathbb{Z}^d_0, | \delta k | > \Lambda} | f
     (\delta k) | + \int_{| k | > \Lambda} | f (k) | d k \backassign
     A_{\delta, \rho, \Lambda} + B_{\delta, \rho} + C_{\delta, \Lambda} . \]
  For $A_{\delta, \rho, \Lambda}$: the function $f$ is continuous on the
  compact annulus $\{ \rho \leqslant | k | \leqslant \Lambda \}$, so the sum
  is a classical Riemann sum for the integral over this region, and
  $A_{\delta, \rho, \Lambda} \rightarrow 0$ as $\delta \rightarrow 0$ for
  fixed $\rho$, $\Lambda$.
  
  For $B_{\delta, \rho}$: by boundedness of $f$ near zero, both terms are
  $\lesssim \rho^d$, since the volume of $\{ | k | < \rho \}$ is $\lesssim
  \rho^d$ and $\delta^d$ times the number of lattice points with $| \delta k |
  < \rho$ is likewise $\lesssim \rho^d$.
  
  For $C_{\delta, \Lambda}$: by the decay assumption on $f$ and switching to
  polar coordinates we obtain
  \[ \int_{| k | > \Lambda} | f (k) | d k \lesssim \int_{| k | > \Lambda} | k
     |^{- d - \kappa} d k \lesssim \int_{\Lambda}^{\infty} r^{- d - \kappa}
     r^{d - 1} d r \lesssim \int_{\Lambda}^{\infty} r^{- 1 - \kappa} d r
     \lesssim \Lambda^{- \kappa}, \]
  and
  \[ \delta^d \sum_{k \in \mathbb{Z}^d_0, | \delta k | > \Lambda} | f (\delta
     k) | \lesssim \delta^{-\kappa} \sum_{k \in \mathbb{Z}^d_0, | \delta k | >
     \Lambda} | k |^{- d - \kappa} \lesssim \delta^{-\kappa} \sum_{k \in
     \mathbb{Z}^d_0, | \delta k | > \Lambda} \int_{k + [- 1 / 2, 1 / 2)^d} | x
     |^{- d - \kappa} d x \]
  \[ \lesssim \delta^{-\kappa} \int_{| k | > \delta^{- 1} \Lambda / 2} | k |^{- d -
     \kappa} d k \lesssim \delta^{-\kappa} \int_{\delta^{- 1} \Lambda / 2}^{\infty}
     r^{- 1 - \kappa} d r 
     \lesssim \Lambda^{- \kappa} . \]
  Choosing first $\Lambda$ large enough and $\rho$ small enough, and then
  $\delta$ small enough, gives the result.
\end{proof}

\begin{lemma}
  \label{l:13}Let $f \in C^2 (\mathbb{R}^2)$ with $D^2 f \in L^1
  (\mathbb{R}^2)$ and assume $f$ is even, i.e., $f (x) = f (- x)$ for all $x
  \in \mathbb{R}^2$. Then for every $\delta \in (0, 1]$
  \[ \left| \delta^2 \sum_{k \in \mathbb{Z}^2} f (\delta k) -
     \int_{\mathbb{R}^2} f (x) d x \right| \lesssim \delta^2 \| D^2 f \|_{L^1
     (\mathbb{R}^2)} . \]
\end{lemma}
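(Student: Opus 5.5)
Since $f$ is even, the first-order Taylor contributions cancel, and the error of the lattice sum is controlled by $D^2 f$ alone. The plan is a cell-wise comparison. For $k \in \mathbb{Z}^2$ let $Q_k \assign \delta k + \delta[-1/2,1/2)^2$, so that $\int_{\mathbb{R}^2} f = \sum_k \int_{Q_k} f(x)\,dx$. On each cell we expand $f$ around the centre $\delta k$ using Taylor's formula with integral remainder:
\[ f(x) = f(\delta k) + Df(\delta k)\cdot(x-\delta k) + \int_0^1 (1-s)\, D^2 f(\delta k + s(x-\delta k))[x-\delta k,x-\delta k]\, ds . \]
The cell is symmetric about its centre, so the linear term integrates to zero on each $Q_k$. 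We obtain
\[ \delta^2 f(\delta k) - \int_{Q_k} f = -\int_{Q_k}\int_0^1 (1-s) D^2 f(\delta k+s(x-\delta k))[x-\delta k,x-\delta k]\,ds\,dx . \]
Evenness is used to justify summing over all cells and rearranging: the pairing $k\leftrightarrow -k$ makes the sum symmetric. The essential cancellation, however, already comes from the symmetry of the cell. Some care is needed with convergence of $\sum_k f(\delta k)$, discussed at the end.

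The main step bounds each cell remainder by $\delta^2 \int_{Q_k}|D^2 f|$. A pointwise bound $|x-\delta k|^2\le\delta^2/2$ alone gives $\delta^2\int_{Q_k}\int_0^1 |D^2 f(\delta k+s(x-\delta k))|\,ds\,dx$, and we handle this by changing variables $y=\delta k+s(x-\delta k)$. For fixed $s$ this maps $Q_k$ into the shrunken cell $\delta k + s\delta[-1/2,1/2)^2\subset Q_k$ with Jacobian $s^{-2}$. This produces a factor $\int_0^1 (1-s)s^{-2}\,ds$, which diverges at $s=0$. This is the main obstacle, since the naive remainder concentrates at the cell centre.

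To fix it, keep the weight $|x-\delta k|^2 = s^{-2}|y-\delta k|^2$. The integrand becomes $(1-s)s^{-4}|y-\delta k|^2|D^2f(y)|\,1_{|y-\delta k|_\infty\le s\delta/2}$. Integrating in $s$ first, over $s\ge 2|y-\delta k|_\infty/\delta$, gives
\[ \int_{2|y-\delta k|_\infty/\delta}^1 s^{-4}\,ds \lesssim \frac{\delta^3}{|y-\delta k|^3} , \]
so the $y$-integrand is bounded by $\delta^3|y-\delta k|^{-1}|D^2f(y)|$. This is still singular at the centre. A cleaner route is therefore to avoid centring at a point altogether and instead compare via the averaged (midpoint-type) identity obtained by integrating Taylor's formula over pairs of points of the cell. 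Equivalently, write $\delta^2 f(\delta k)-\int_{Q_k} f$ using the Green-type kernel representation
\[ \delta^2 f(\delta k)-\int_{Q_k}f = \int_{Q_k} G_k(y)\,:\,D^2f(y)\,dy , \]
where $G_k$ is built from the difference between the point mass at $\delta k$ and the uniform measure on $Q_k$. This kernel has a logarithmic singularity at $\delta k$ of size $\delta^0\log$, so it is not bounded pointwise. One then smooths: first, compare $f(\delta k)$ with its average over a disc of radius $\delta/4$ about $\delta k$, whose kernel has the same problem.

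The robust fix is to use the $C^2$ regularity together with the $L^1$ bound via the one-dimensional structure. Write the two-dimensional midpoint rule as an iterated one-dimensional midpoint rule in each coordinate. In one dimension, the midpoint error on an interval of length $\delta$ is $\int K_\delta(t)f''(t)\,dt$ with a bounded kernel, $|K_\delta|\lesssim\delta^2/\delta=\delta$ per unit length, giving $\delta^2\int|f''|$ after summation. Applying this first in $x_1$ (for fixed lattice $x_2$) and then in $x_2$ (for the $x_1$-integrated function) yields error terms of the form $\delta\cdot\delta\sum_{k_2}\int|\partial_1^2 f(x_1,\delta k_2)|\,dx_1$. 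These are lattice sums of line integrals of $\partial_1^2 f$, and converting them to area integrals costs a further $\delta\int|\partial_2\partial_1^2 f|$, a third derivative we do not control. Here evenness enters in a more substantive way. I would try to exploit it together with a symmetric averaging in $x_2$ over $[\delta k_2-\delta/2,\delta k_2+\delta/2]$, and I expect this cross-term to be the step requiring the most care. Finally, convergence of the full sum and the interchange of sum and integral follow from $D^2f\in L^1$ and evenness, by truncating to large boxes and using symmetric exhaustion.
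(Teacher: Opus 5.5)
Your proposal does not reach a proof. Every route is abandoned, and the last one (iterated one-dimensional midpoint rules) stops at a cross term. You can only convert that cross term into an area integral at the cost of $\partial_2\partial_1^2 f$, which is not controlled.

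Your diagnosis of the obstacle is correct, and it applies equally to the paper's own Step~2. There it is asserted that $\sum_k\int_{[0,\delta)^2}|D^2f(\delta k+\theta y)|\,dy\le\|D^2f\|_{L^1}$. But the substitution $z=\delta k+\theta y$ has Jacobian $\theta^{-2}$, so the honest bound carries $\theta^{-2}$, and $\int_0^1(1-\theta)\theta^{-2}\,d\theta=\infty$. Keeping the weight $|y|^2$ only improves the kernel to $\delta^3|z-\delta k|^{-1}$, still unbounded compared with the required $\delta^2$, exactly as you computed. Indeed, as $\theta\to0$ the left side tends to $\delta^2\sum_k|D^2f(\delta k)|$, which no $L^1$ norm controls. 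So Taylor expansion at a single point, with or without evenness, cannot close the argument; the paper's statement is true, but its proof has this hole.

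The missing ingredient is a cell-wise bound with a bounded kernel. For centred cells $Q_k=\delta k+\delta[-\tfrac12,\tfrac12)^2$ you want
\[ \Big|\delta^2 f(\delta k)-\int_{Q_k}f\Big|\lesssim\delta^2\int_{Q_k}|D^2f| . \]
Set $g(x)=f(\delta k+\delta x)$ on $Q=[-\tfrac12,\tfrac12]^2$; then $\int_Q|D^2g|=\int_{Q_k}|D^2f|$. So it suffices to show $|\int_Q g-g(0)|\lesssim\int_Q|D^2g|$. This is Bramble--Hilbert: the functional kills affine maps and is continuous on $W^{2,1}(Q)\subset C(\bar{Q})$ in two dimensions.

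Your iterated approach gives the same bound explicitly once you stop applying the $x_1$-rule on the lattice lines $x_2=\delta k_2$, which is what produces line integrals. Instead, apply each one-dimensional rule averaged over the other variable across the cell, and correct by the mixed second difference. Use the midpoint Peano kernel $K(t)=\tfrac12(\tfrac12-|t|)^2$, for which $\int_{-1/2}^{1/2}h-h(0)=\int_{-1/2}^{1/2}Kh''$. Then
\[ \int_Q g-g(0)=\int_Q K(x_1)\,\partial_1^2g\,dx+\int_Q K(x_2)\,\partial_2^2g\,dx-\int_Q\Big(\int_0^{x_1}\!\!\int_0^{x_2}\partial_1\partial_2 g(s,t)\,dt\,ds\Big)dx . \]
This holds because the first two terms equal $\int_Q[g(x)-g(0,x_2)]\,dx$ and $\int_Q[g(x)-g(x_1,0)]\,dx$. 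The last term is the integral of $g(x)-g(x_1,0)-g(0,x_2)+g(0)$. All three are area integrals against kernels bounded by $1$, so the cross term costs only $\partial_1\partial_2g$. Summing over $k$ proves the lemma, and evenness is never used.

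Finally, your closing convergence claim is false as stated: $f\equiv1$ meets all hypotheses. One must also assume, e.g., $f\in L^1$, which holds in the application in Lemma~\ref{lem:24}. Absolute convergence of the lattice sum then follows from the cell-wise bound.
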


\begin{proof}
  Partition $\mathbb{R}^2$ into cells $Q_k \assign \delta (k + [0, 1)^2)$, $k
  \in \mathbb{Z}^2$, so that $\mathbb{R}^2 = \cup_{k \in \mathbb{Z}^2} Q_k$
  and $| Q_k | = \delta^2$. Then
  \[ \delta^2 \sum_{k \in \mathbb{Z}^2} f (\delta k) - \int_{\mathbb{R}^2} f
     (x) d x = \sum_{k \in \mathbb{Z}^2} \int_{Q_k} (f (\delta k) - f (x)) d
     x. \]
  Fix $k \in \mathbb{Z}^2$ and write $x = \delta k + y$ with $y \in [0,
  \delta)^2$. Define $g (\theta) \assign f (\delta k + \theta y)$, $\theta \in
  [0, 1]$. Taylor's formula with integral remainder gives $g (1) = g (0) + g'
  (0) + \int_0^1 g'' (\theta) (1 - \theta) d \theta$, hence
  \[ f (\delta k) - f (\delta k + y) = - D f (\delta k) y - \int_0^1 D^2 f
     (\delta k + \theta y) (y, y) (1 - \theta) d \theta . \]
  Integrating over $y \in [0, \delta)^2$ yields
  \begin{equation}
    \int_{Q_k} (f (\delta k) - f (x)) d x = - D f (\delta k) \int_{[0,
    \delta)^2} y d y - \int_{[0, \delta)^2} \int_0^1 D^2 f (\delta k + \theta
    y) (y, y) (1 - \theta) d \theta d y. \label{eq:51}
  \end{equation}
  Step 1: The linear term cancels in the sum over $k$. Note that
  \[ \int_{[0, \delta)^2} y d y = \left( \frac{\delta^3}{2},
     \frac{\delta^3}{2} \right), \]
  and that since $f$ is even, $D f$ is odd, i.e., $D f (- x) = - D f (x)$.
  Therefore, the linear contribution to the global error vanishes
  \[ - \sum_{k \in \mathbb{Z}^2} D f (\delta k) \left( \frac{\delta^3}{2},
     \frac{\delta^3}{2} \right) = 0 \]
  by pairing each $k$ with $- k$.
  
  Step 2: Estimate of the quadratic remainder. We now bound the second term in
  \eqref{eq:51}. Since $| y |^2 \leqslant 2 \delta^2$,
  \[ | D^2 f (\delta k + \theta y) (y, y) | \lesssim \delta^2 | D^2 f (\delta
     k + \theta y) | . \]
  Hence
  \[ \left| \delta^2 \sum_{k \in \mathbb{Z}^2} f (\delta k) -
     \int_{\mathbb{R}^2} f (x) d x \right| \leqslant \delta^2 \sum_{k \in
     \mathbb{Z}^2} \int_{[0, \delta)^2} \int_0^1 | D^2 f (\delta k + \theta y)
     | (1 - \theta) d \theta d y. \]
  Now, observe the following covering fact: for each $\theta \in [0, 1]$, the
  sets
  \[ \delta k + \theta [0, \delta)^2 \subset \delta k + [0, \delta)^2 = Q_k \]
  are subsets of disjoint cells $Q_k$. Therefore
  \[ \sum_{k \in \mathbb{Z}^2} \int_{[0, \delta)^2} | D^2 f (\delta k + \theta
     y) | d y \leqslant \sum_{k \in \mathbb{Z}^2} \int_{Q_k} | D^2 f (z) | d z
     = \| D^2 f \|_{L^1 (\mathbb{R}^2)} . \]
  Thus, the claim follows.
\end{proof}

\subsection{Lattice sum estimates}\label{s:a3}

The following lattice sum estimate is used in Section~\ref{s:12.1}.

\begin{lemma}
  \label{lem:8}Let $\ell \in \mathbb{Z}^2 \setminus \{0\}$. For each $\delta
  \in (0, 1]$, let $w_{\delta} (\cdot, \ell) : \mathbb{Z}^2 \setminus \{0\}
  \to [0, \infty)$ be a weight satisfying:
  
  1. Evenness in $k$: $w_{\delta} (- k, \ell) = w_{\delta} (k, \ell)$ for all
  $k \neq 0$.
  
  2. Schwartz domination at scale $\delta$: there exists a Schwartz function
  $\Phi$ on $\mathbb{R}^2$, independent of $\ell$ and $\delta$, such that
  $w_{\delta} (k, \ell) \leqslant \Phi (\delta k)$ for all $k \neq 0$, $\delta
  \in (0, 1]$.
  
  Define
  \[ \Sigma_{\delta} (\ell) \assign \delta^2 \sum_{k \in \mathbb{Z}^2
     \setminus \{0\}} w_{\delta} (k, \ell) \hspace{0.17em} (\Pi (k - \ell) -
     \Pi (k)), \]
  where the sum is understood as a limit over symmetric boxes. Then
  \[ | \Sigma_{\delta} (\ell) | \lesssim | \ell |^2 \delta^2 (1 + \log
     \delta^{- 1}), \]
  where the implicit constant depends only on $\Phi$ and is independent of
  $\delta$ and $\ell$.
\end{lemma}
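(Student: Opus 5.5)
The plan is to split the lattice sum into three frequency regions for $k$: a near zone $|k|\leqslant 2|\ell|$, an intermediate zone $2|\ell|<|k|\leqslant \delta^{-1}$, and a far zone $|k|>\delta^{-1}$. The symmetric-box convention and the evenness of $w_\delta(\cdot,\ell)$ will be used only to cancel the odd first-order term of a Taylor expansion of $\Pi(k-\ell)$ around $k$.

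\textbf{Step 1: derivative bounds for $\Pi$.} $\Pi$ is smooth on $\mathbb{R}^2\setminus\{0\}$, even, and homogeneous of degree $0$. Hence $|D\Pi(p)|\lesssim|p|^{-1}$ and $|D^2\Pi(p)|\lesssim|p|^{-2}$, obtained by rescaling from the unit circle. The same argument gives that $D\Pi$ is odd.

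\textbf{Step 2: the region $|k|>2|\ell|$.} Here the segment $\{k-\theta\ell:\theta\in[0,1]\}$ stays at distance at least $|k|/2$ from the origin. Taylor's formula then gives
\[ \Pi(k-\ell)-\Pi(k) = -D\Pi(k)\ell + r(k,\ell), \qquad |r(k,\ell)|\lesssim |\ell|^2|k|^{-2}. \]
\begin{itemize}
\item \emph{First-order term.} The map $k\mapsto w_\delta(k,\ell)D\Pi(k)\ell$ is odd, and the region $\{|k|>2|\ell|\}$ is symmetric under $k\mapsto -k$. Summed over symmetric boxes, this term therefore vanishes exactly. This is the reason for the symmetric-box convention: the first-order summands alone are only $O(|\ell|/|k|)$, which is not absolutely summable in two dimensions.
\item \emph{Remainder, intermediate zone.} Using $w_\delta\leqslant\|\Phi\|_\infty$ and $\sum_{2|\ell|<|k|\leqslant\delta^{-1}}|k|^{-2}\lesssim 1+\log\delta^{-1}$, we get
\[ \delta^2\sum_{2|\ell|<|k|\leqslant\delta^{-1}}\|\Phi\|_\infty|\ell|^2|k|^{-2}\lesssim|\ell|^2\delta^2(1+\log\delta^{-1}). \]
\item \emph{Remainder, far zone.} We rescale $\delta k=x$ and use $\Phi(x)\lesssim|x|^{-2}$ for $|x|\geqslant1$. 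This gives
\[ \delta^2\sum_{|k|>\delta^{-1}}\Phi(\delta k)|\ell|^2|k|^{-2} = |\ell|^2\delta^2\sum_{|k|>\delta^{-1}}\Phi(\delta k)|\delta k|^{-2}\,\delta^2 \lesssim |\ell|^2\delta^2\int_{|x|\geqslant1/2}\Phi(x)|x|^{-2}\,dx \lesssim|\ell|^2\delta^2. \]
\end{itemize}

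\textbf{Step 3: the near zone $|k|\leqslant2|\ell|$.} There is no smoothness here, since $k-\ell$ may vanish or be small. We simply use $|\Pi(k-\ell)-\Pi(k)|\leqslant2$, where the term $k=\ell$ is either excluded or treated as bounded in the same way as in Lemma~\ref{lem:3}. The number of lattice points is $\lesssim|\ell|^2$, so this zone contributes at most $\delta^2|\ell|^2\|\Phi\|_\infty$.

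Summing the three contributions proves the claim, with constants depending only on $\|\Phi\|_\infty$ and $\int\Phi(x)|x|^{-2}1_{|x|\geqslant1/2}\,dx$. If $2|\ell|>\delta^{-1}$, the intermediate zone is empty and the near-zone bound already suffices.

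The main obstacle is making the cancellation of the linear term rigorous. The full first-order sum converges only conditionally, so the argument must use the symmetric-box limit. Since all other pieces converge absolutely, one can split the sum inside each finite symmetric box, where the odd part cancels exactly, and then pass to the limit.
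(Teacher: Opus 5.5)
Your proposal is correct and follows essentially the same route as the paper: derivative bounds for $\Pi$ from homogeneity of degree $0$, a Taylor expansion on $|k|\geqslant 2|\ell|$ in which the odd first-order term cancels by evenness of $w_\delta$, a trivial $\delta^2|\ell|^2$ bound on the near zone, and a split of the remainder sum at $|k|=\delta^{-1}$ that produces the logarithm.

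Two small corrections. First, your claim that the first-order sum converges only conditionally is wrong for fixed $\delta$. The weight $\Phi(\delta k)$ makes every sum here absolutely convergent, including $\sum_k \Phi(\delta k)|\ell|/|k|$. So the odd cancellation is needed for the rate: without it you would only get $O(\delta|\ell|)$. It is not needed for convergence. Second, in the far zone, compare the sum with an integral using a slowly varying majorant such as $(1+|x|)^{-3}$, as the paper does, rather than $\Phi$ itself.
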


\begin{proof}
  Step 1: Basic bounds for $\Pi$, $D \Pi$, $D^2 \Pi$. Write componentwise $\Pi
  (p)_{i j} = \frac{p_i p_j}{|p|^2}$. Then $\Pi$ is smooth on $\mathbb{R}^2
  \setminus \{0\}$, even (i.e. $\Pi (- p) = \Pi (p)$), bounded $| \Pi (p) |
  \leqslant 1$, and homogeneous of degree $0$. Since $D \Pi$ and $D^2 \Pi$ are
  homogeneous of degrees $- 1$ and $- 2$, respectively, it follows that
  \begin{equation}
    | D \Pi | \lesssim | p |^{- 1}, \qquad | D^2 \Pi | \lesssim | p |^{- 2} .
    \label{eq:48}
  \end{equation}
  Finally, differentiating $\Pi (p) = \Pi (- p)$, it holds $D \Pi (- p) = - D
  \Pi (p)$.
  
  Step 2: Taylor expansion of $\Pi (k - \ell)$ around $k$. Let $k \in
  \mathbb{Z}^2 \setminus \{ 0 \}$. Consider the path $g (\theta) = \Pi (k -
  \theta \ell)$ for $\theta \in [0, 1]$. Then $g' (\theta) = - D \Pi (k -
  \theta \ell) \ell$, $g'' (\theta) = D^2 \Pi (k - \theta \ell) (\ell, \ell)$
  hence by Taylor's theorem with integral remainder
  \[ \Pi (k - \ell) - \Pi (k) = g (1) - g (0) = g' (0) + \int_0^1 g'' (\theta)
     (1 - \theta)  d \theta \]
  \[ = - D \Pi (k) \ell + \int_0^1 D^2 \Pi (k - \theta \ell) (\ell, \ell) (1 -
     \theta)  d \theta . \]
  If $| k | \geqslant 2 | \ell |$ then $| k - \theta \ell | \geqslant | k | -
  \theta | \ell | \geqslant | k | / 2$ for all $\theta \in [0, 1]$, and
  \eqref{eq:48} bounds the remainder as
  \[ \left| \int_0^1 D^2 \Pi (k - \theta \ell) (\ell, \ell) (1 - \theta)  d
     \theta \right| \lesssim \int_0^1 \frac{| \ell |^2}{| k - \theta \ell |^2}
     d \theta \lesssim \frac{| \ell |^2}{| k |^2} . \]
  In other words, on $\{ | k | \geqslant 2 | \ell | \}$,
  \begin{equation}
    \Pi (k - \ell) - \Pi (k) = - D \Pi (k) \ell + R (k, \ell), \qquad
    \tmop{with} | R (k, \ell) | \lesssim \frac{| \ell |^2}{| k |^2} .
    \label{eq:49}
  \end{equation}
  Step 3: Cancellation of the first order term in the symmetric sum. Decompose
  the sum $\Sigma_{\delta} (\ell) = \Sigma^{\tmop{small}}_{\delta} (\ell) +
  \Sigma^{\tmop{large}}_{\delta} (\ell)$ into small and large $k$ as
  \[ \Sigma^{\tmop{small}}_{\delta} (\ell) \assign \delta^2 \sum_{k \in
     \mathbb{Z}^2 \setminus \{0\}, | k | < 2 | \ell |} w_{\delta} (k, \ell)
     (\Pi (k - \ell) - \Pi (k)), \]
  \[ \Sigma^{\tmop{large}}_{\delta} (\ell) \assign \delta^2 \sum_{k \in
     \mathbb{Z}^2 \setminus \{0\}, | k | \geqslant 2 | \ell |} w_{\delta} (k,
     \ell) (\Pi (k - \ell) - \Pi (k)) . \]
  For the large part, use \eqref{eq:49} to get
  \[ \Sigma^{\tmop{large}}_{\delta} (\ell) = - \delta^2 \sum_{k \in
     \mathbb{Z}^2 \setminus \{0\}, | k | \geqslant 2 | \ell |} w_{\delta} (k,
     \ell) D \Pi (k) \ell + \delta^2 \sum_{k \in \mathbb{Z}^2 \setminus \{0\},
     | k | \geqslant 2 | \ell |} w_{\delta} (k, \ell) R (k, \ell) . \]
  Now, by the oddness of $D \Pi$ established in Step 1 we have $- D \Pi (k)
  \ell = D \Pi (- k) \ell$, and since $w_{\delta} (k, \ell) = w_{\delta} (- k,
  \ell)$ by assumption 1., the summand $k \mapsto w_{\delta} (k, \ell) D \Pi
  (k) \ell$ is odd. For every $k$ in the summation domain $\{ k \in
  \mathbb{Z}^2 \setminus \{0\}, | k | \geqslant 2 | \ell | \}$, there is also
  $- k$ in the domain. Hence, we conclude that the first sum above vanishes
  and
  \[ \Sigma^{\tmop{large}}_{\delta} (\ell) = \delta^2 \sum_{k \in \mathbb{Z}^2
     \setminus \{0\}, | k | \geqslant 2 | \ell |} w_{\delta} (k, \ell) R (k,
     \ell) . \]
  Step 4: Bound for the small $k$-region. Here we use $| \Pi (k) | \leqslant
  1$, hence $| \Pi (k - \ell) - \Pi (k) | \lesssim 1$ as well as the fact that
  $w_{\delta} (k, \ell) \leqslant \| \Phi \|_{L^{\infty}}$ by assumption 2. It
  follows
  \[ | \Sigma^{\tmop{small}}_{\delta} (\ell) | \lesssim \delta^2 \# \{ k \in
     \mathbb{Z}^2 \setminus \{ 0 \}, | k | < 2 | \ell | \} \lesssim \delta^2 |
     \ell |^2, \]
  since in 2 dimensions, $\# \{ | k | < R \} \lesssim R^2$.
  Step 5: Bound the large $k$-region. By domination $w_{\delta} (k, \ell)
  \leqslant \Phi (\delta k)$ and \eqref{eq:49}, we have
  \[ | \Sigma^{\tmop{large}}_{\delta} (\ell) | \lesssim \delta^2 \sum_{k \in
     \mathbb{Z}^2 \setminus \{0\}, | k | \geqslant 2 | \ell |} \Phi (\delta k)
     \frac{| \ell |^2}{| k |^2} \leqslant | \ell |^2 \delta^2 \sum_{k \in
     \mathbb{Z}^2 \setminus \{0\}} \frac{\Phi (\delta k)}{| k |^2} . \]
  Now, we estimate the last sum. Split $k$ into $| k | \leqslant \delta^{- 1}$
  and $| k | > \delta^{- 1}$. For $| k | > \delta^{- 1}$, since $\Phi$ is Schwartz it holds $\Phi(x)/|x|^{2}\lesssim (1+|x|)^{-3}$ for $|x|\geqslant 1$; as $(1+|x|)^{-3}$ varies at most by a constant factor over each cell $\delta k+[0,\delta)^{2}$, comparing the sum with the corresponding integral yields
  \[ \sum_{k \in \mathbb{Z}^2 \setminus \{0\}, | k | > \delta^{- 1}}
     \frac{\Phi (\delta k)}{| k |^2} = \delta^2 \sum_{k \in \mathbb{Z}^2
     \setminus \{0\}, | \delta k | > 1} \frac{\Phi (\delta k)}{| \delta k |^2}
     \]
     \[
     \lesssim \delta^2 \sum_{k \in \mathbb{Z}^2
     \setminus \{0\}, | \delta k | > 1} (1+|\delta k|)^{-3}
     \lesssim \int_{\mathbb{R}^{2}} (1+|x|)^{-3} d x \lesssim 1. \]
  For $1 \leqslant | k | \leqslant \delta^{- 1}$, we bound $\Phi (\delta k)
  \leqslant \| \Phi \|_{L^{\infty}}$ and the 2D lattice estimate
  \[ \sum_{1 \leqslant | k | \leqslant \delta^{- 1}} \frac{1}{| k |^2}
     \lesssim 1 + \log \delta^{- 1} . \]
  Altogether,
  \[ | \Sigma^{\tmop{large}}_{\delta} (\ell) | \lesssim | \ell |^2 \delta^2 (1
     + \log \delta^{- 1}), \]
  which completes the proof.
\end{proof}

\subsection{Convolution estimates in Lorentz spaces}\label{s:a4}

The following convolution inequality is used in Section~\ref{s:convective}.

\begin{lemma}
  \label{l:lor}Assume that $p, q \in (1, \infty)$, and that $q' \in (1,
  \infty)$ is the conjugate exponent associated to $q$. Then
  \[ \| F \ast (f g) \|_{L^p} \lesssim \| F \|_{L^{q, \infty}} \| f \|_{L^p}
     \| g \|_{{L^{q'}} } . \]
\end{lemma}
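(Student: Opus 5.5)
The natural route is the standard one: Hölder in Lorentz spaces followed by O'Neil's convolution inequality, both from \cite[Section~1.4]{MR2445437} and \cite{ONeil1963}. The functions live on $(0,T)$ or $\mathbb{R}$, extended by zero. The exponent bookkeeping is as follows.

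\textbf{Step 1 (Hölder).} Set $h \assign fg$. Define $r$ by $\frac1r = \frac1p + \frac1{q'}$. Ordinary Hölder gives $\|h\|_{L^r} \leqslant \|f\|_{L^p}\|g\|_{L^{q'}}$, and $L^r = L^{r,r}$. Since $\frac1r > \frac1{q'}$, we have $r<q'$; since $\frac1r>\frac1p$, we have $r<p$. For $r\geqslant 1$ one also needs $\frac1p+\frac1{q'}\leqslant 1$, which can fail. To handle this, the plan is to keep $r\in(0,\infty)$ formally and instead organize the convolution inequality so that $r>1$ is not required. The cleanest way is to avoid the product and use O'Neil's trilinear form through duality. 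For $\phi\in L^{p'}$,
\[
\langle F\ast(fg),\phi\rangle = \iint F(t-s) f(s)g(s)\phi(t)\,ds\,dt .
\]
The generalized Young (O'Neil) inequality bounds the trilinear form $\iint F(t-s)a(s)b(t)$ by $\|F\|_{L^{q,\infty}}\|a\|_{L^{\alpha,\cdot}}\|b\|_{L^{\beta,\cdot}}$ whenever $\frac1q+\frac1\alpha+\frac1\beta=2$ and all exponents lie in $(1,\infty)$. Here we take $a=fg\in L^{r}$ and $b=\phi\in L^{p'}$. The condition becomes $\frac1q + \frac1p+\frac1{q'} + 1-\frac1p = 2$, which holds identically. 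This is exactly the scale-invariant endpoint used in Proposition~\ref{p:2}.

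\textbf{Step 2 (O'Neil).} To land in the genuinely applicable range $1<r<\infty$, I would use the form of O'Neil's inequality $\|F\ast h\|_{L^{p,s}}\lesssim \|F\|_{L^{q,\infty}}\|h\|_{L^{r,s}}$, valid for $1<r<p<\infty$ with $1+\frac1p=\frac1q+\frac1r$. Our $r$ satisfies exactly this relation. Taking $s=p\geqslant r$ and using $L^{r,r}\subset L^{r,p}$ then gives
\[
\|F\ast(fg)\|_{L^p}\lesssim \|F\|_{L^{q,\infty}}\|fg\|_{L^{r}} \leqslant \|F\|_{L^{q,\infty}}\|f\|_{L^p}\|g\|_{L^{q'}}.
\]

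\textbf{Main obstacle.} The delicate case is $r\leqslant 1$, i.e. $\frac1p+\frac1{q'}\geqslant1$, where O'Neil fails for $h\in L^1$. The only use in the paper is on a bounded interval $(0,T)$ with $q\in\{a,b\}$ and arbitrary $p\in(1,\infty)$, so this case genuinely arises for small $p$. There I would exploit the finiteness of $T$. On $(0,T)$, $F\in L^{q,\infty}$ implies $F\in L^{\tilde q,\infty}$ for every $\tilde q<q$, with constant depending on $T$. Also $g\in L^{q'}$ implies $g\in L^{\tilde q'}$ only if $\tilde q'\leqslant q'$, which goes the wrong way, since $\tilde q<q$ forces $\tilde q'>q'$.

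So instead the plan is a duality/interpolation argument. The operator $h\mapsto F\ast(gh)$ is bounded $L^p\to L^p$ for all $p$ with $r>1$, i.e. for $p>q$. Its adjoint is $\phi\mapsto g\,(\check F\ast\phi)$, and it satisfies the dual bound $L^{p'}\to L^{p'}$ via O'Neil followed by Hölder. Here $\check F\ast\phi\in L^{\rho}$ with $\frac1\rho=\frac1{p'}-\frac1{q'}$, and multiplying by $g\in L^{q'}$ returns us to $L^{p'}$; this requires $1<p'<q'$, i.e. $p>q$ again. The self-dual structure therefore extends the bound to $p<q'$ as well. For intermediate $p$ I would interpolate with Riesz--Thorin, since the operator is linear in $f$. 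Checking that the ranges $p>q$ and $p<q'$ (and their duals) together cover all of $(1,\infty)$ is the step I expect to require the most care.
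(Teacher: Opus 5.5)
The part of your proposal that works, Step~2, follows the paper's route. The case $p\le q$ is where it fails, and it cannot be repaired, because the inequality is false there.

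\textbf{The case $p>q$.} Your Step~2 is correct. The paper combines O'Neil's inequality (with $1+1/p=1/q+1/r$) with Lorentz--H\"older, $\|fg\|_{L^{r,p}}\lesssim\|f\|_{L^{p}}\|g\|_{L^{q',\infty}}$. You use ordinary H\"older together with $L^{r}=L^{r,r}\subset L^{r,p}$ instead, which is equivalent. Both routes need $r>1$, i.e.\ $p>q$. The paper's proof invokes O'Neil without checking this, while you flag it.

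\textbf{The gap at $p\le q$: the duality step.} The adjoint of $Th=F\ast(gh)$ is $T^{\ast}\phi=g\,(\check F\ast\phi)$. This is a different operator: the multiplication comes after the convolution rather than before. Boundedness of $T^{\ast}$ on $L^{p'}$ is \emph{equivalent} to boundedness of $T$ on $L^{p}$, so it gives no information about $T$ on $L^{p'}$. There is no self-duality, and hence nothing new for Riesz--Thorin to interpolate between. Your trilinear reformulation in Step~1 does not help either, since it still needs $fg\in L^{r}$ with $r>1$.

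\textbf{Why no argument can close this case.} The inequality fails for $p\le q$, even on a bounded interval, so exploiting finite $T$ cannot help. Take $F(t)=t^{-1/q}\mathbf{1}_{(0,1)}(t)$, so that $\|F\|_{L^{q,\infty}}\leqslant 1$. For $\eta\in(0,1/2)$ let $f=\eta^{-1/p}\mathbf{1}_{(0,\eta)}$ and $g=\eta^{-1/q'}\mathbf{1}_{(0,\eta)}$, so that $\|f\|_{L^p}=\|g\|_{L^{q'}}=1$. For $t\in(2\eta,1)$ one has $(F\ast(fg))(t)\geqslant\eta^{1-1/p-1/q'}\,t^{-1/q}=\eta^{1/q-1/p}\,t^{-1/q}$. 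Hence
\[ \|F\ast(fg)\|_{L^p(0,1)}^p\geqslant\eta^{p/q-1}\int_{2\eta}^{1}t^{-p/q}\,dt, \]
which diverges as $\eta\to0$: like $\eta^{p/q-1}$ when $p<q$, and like $\log\eta^{-1}$ when $p=q$.

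\textbf{Conclusion.} The lemma holds only under the extra hypothesis $p>q$, which is exactly what your Step~2 and the paper's proof establish. Your final paragraph should state this restriction rather than attempt an interpolation scheme. For the use in Proposition~\ref{p:2} with $p=2$ and $q\in\{2/(2-\beta),4/3\}$, the restriction holds as long as $\beta<1$. The value $\beta=1$ is precisely the failing endpoint $p=q=2$.
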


\begin{proof}
  Recall that $L^p = L^{p, p}$ and  O'Neil's convolution inequality
  {\cite{ONeil1963}} yields for
  \begin{equation}
    1 + 1 / p = 1 / q + 1 / r \label{eq:gg}
  \end{equation}
  \[ \| F \ast (f g) \|_{L^p} = \| F \ast (f g) \|_{L^{p, p}} \lesssim \| F
     \|_{L^{q, \infty}} \| f g \|_{L^{r, p}} . \]
  H{\"o}lder inequality in Lorentz spaces
  \[ \| f g \|_{L^{r, p}} \lesssim \| f \|_{L^{r_1, p_1}} \| g \|_{L^{r_2,
     p_2}}, \qquad 1 / r = 1 / r_1 + 1 / r_2, \qquad 1 / p = 1 / p_1 + 1 /
     p_2, \]
  hence in view of $L^{q'} \subset L^{q', \infty}$
  \[ \| f g \|_{L^{r, p}} \lesssim \| f \|_{{L^{p, p}} } \| g \|_{L^{q',
     \infty}} \lesssim \| f \|_{{L^p} } \| g \|_{L^{q'}} \]
  since the requirement $1 / r = 1 / p + 1 / q'$ combined with \eqref{eq:gg}
  gives
  \[ 1 / q' = 1 / r - 1 / p = 1 - 1 / q \]
  so $q'$ is indeed the conjugate exponent to $q$.
\end{proof}

\phantomsection

\

\end{document}